\DeclareMathAlphabet{\mathbbold}{U}{bbold}{m}{n}
		\titleformat*{\section}{\center\large} 
		\titleformat*{\subsection}{\sf\large} 
		\titleformat*{\subsubsection}{\sf\it} 
\definecolor{ToDo}{RGB}{30,144,255}
\definecolor{Strategy}{RGB}{34,139,34}
\definecolor{Provisional}{RGB}{218,165,32}
\definecolor{Question}{RGB}{220,20,60}
\definecolor{GreenCite}{RGB}{47, 79, 79}
\numberwithin{equation}{section} 
\theoremstyle{plain}
\theoremstyle{remark}
\theoremstyle{plain}
\newtheorem{theo}{Theorem}[subsection] 
\newtheorem{theodefi}[theo]{Theorem-Definition}
\newtheorem{lem}[theo]{Lemma}
\newtheorem{pro}[theo]{Proposition}
\newtheorem{prodefi}[theo]{Proposition-Definition}
\newtheorem{cor}[theo]{Corollary}
\newtheorem{defi}[theo]{Definition}
\newtheorem{rem}[theo]{Remark}
\newtheorem{rems}[theo]{Remarks}
\newtheorem{ex}[theo]{Example}
\newtheorem{exs}[theo]{Examples}
\theoremstyle{remark}
\newtheorem{note}[theo]{Note}
\theoremstyle{plain}
\theoremstyle{remark}
\let\@fnsymbol\@alph
\title{\textbf{Projective representation theory for compact quantum groups and the quantum Baum-Connes assembly map}}
\author{\textsc{Kenny De Commer\thanks{Department of Mathematics and Data Science, Vrije Universiteit Brussel (Belgium). The work of K.DC. was supported by the FWO grant G032919N.}, Rubén Martos\thanks{Department of Mathematical Sciences, University of Copenhagen (Denmark). R.M. is supported by the European Union's Horizon 2020 research and innovation programme under the Marie Skłodowska-Curie grant agreement No 895141.} \& Ryszard Nest\thanks{Department of Mathematical Sciences, University of Copenhagen (Denmark).}}}
\date{}
\begin{document}
\maketitle
\renewcommand{\abstractname}{}
\vspace{-2.5cm}
\begin{abstract}
\textsc{Abstract}. We study the theory of measurable projective representations for a compact quantum group $\mathbb{G}$, i.e.\ actions of $\mathbb{G}$ on $\mathcal{B}(H)$ for some Hilbert space $H$. We show that any such measurable projective representation is inner, and is hence induced by an $\Omega$-twisted representation for some measurable $2$-cocycle $\Omega$ on $\mathbb{G}$. We show that a projective representation is \emph{continuous}, i.e.\ restricts to an action on the compact operators $\mathcal{K}(H)$, if and only if the associated $2$-cocycle is regular, and that this condition is automatically satisfied if $\mathbb{G}$ is of Kac type. This allows in particular to characterise the torsion of projective type of $\widehat{\mathbb{G}}$ in terms of the projective representation theory of $\mathbb{G}$. For a given regular $2$-cocycle $\Omega$, we then study $\Omega$-twisted actions on C$^*$-algebras. We define deformed crossed products with respect to $\Omega$, obtaining a twisted version of the Baaj-Skandalis duality and the Green-Julg isomorphism, and a quantum version of the Packer-Raeburn's trick. 


\bigskip
\textsc{Keywords.} Assembly map, Baum-Connes conjecture, cleftness, $2$-cocycle, compact objects, crossed products, Galois co-objects, quantum groups, projective representations, regularity, torsion, triangulated categories, twisting.
\end{abstract}

\newpage

\tableofcontents

\section{\textsc{Introduction}}
Let $G$ be a compact group. Let $H$ be a (possibly infinite dimensional) Hilbert space, and $\mathcal{K}(H)$ the C$^*$-algebra of compact operators on $H$. An action  $G\overset{\delta}{\curvearrowright}\mathcal{K}(H)$ is usually referred to as a \emph{continuous projective representation of $G$ on $H$}. Such a $\delta$  is always of the form $\delta_g(T)=\pi(g)T\pi(g)^*$ for all $g\in G$ and $T\in \mathcal{K}(H)$, where $\pi: G\longrightarrow \mathcal{U}(H)$ is a measurable map and $\omega: G\times G\longrightarrow S^1 \subseteq \mathbb{C}$ is a measurable $2$-cocycle such that $\pi(x)\pi(y)=\omega(x,y)\pi(xy)$, for all $x,y\in G$. Such a map $\pi$ is called a \emph{$\omega$-representation of $G$ on $H$}. One can similarly consider actions $G\overset{\delta}{\curvearrowright}\mathcal{B}(H)$ of $G$ on the von Neumann algebra $\mathcal{B}(H)$, but this does not give anything new: any such action necessarily restricts to $\mathcal{K}(H)$, and conversely any action of $G$ on $\mathcal{K}(H)$ extends to $\mathcal{B}(H)$. 

An extension of projective representation theory to compact \emph{quantum} groups was introduced in \cite{KennyProjective} by the first author. For $\mathbb{G}$ a compact quantum group, a measurable projective representation was introduced as an action $\mathbb{G}\overset{\delta}{\curvearrowright}\mathcal{B}(H)$. It was then shown that the obstruction for $\delta$ to be inner is related to the theory of \emph{Galois co-objects} for $\mathbb{G}$, which are regarded as generalized $2$-cocycle functions on $\mathbb{G}$.  Particular instances of Galois co-objects can be defined in terms of (measurable) $2$-cocycles $\Omega$ on $\mathbb{G}$, extending the classical setting described above. Such Galois co-objects are called \emph{von Neumann cleft}. It was left open in \cite{KennyProjective} whether there exist Galois co-objects which are not von Neumann cleft. We completely resolve this problem in this article:  a Galois co-object for a compact quantum group is \emph{automatically} cleft. We can hence restrict to projective representations defined through a measurable $2$-cocycle $\Omega$. See Section \ref{sec.ProjRepCQG}. For the sake of completeness of the present article, we have included an explicit development of the corresponding $\Omega$-representation theory for $\mathbb{G}$ in Section \ref{sec.omreps}. 

In spite of the above positive result, there is however an important new phenomenon arising for projective representations of compact quantum groups: Contrary to the case of classical compact groups, an action of a compact quantum group $\mathbb{G}$ on $\mathcal{B}(H)$ does \emph{not} automatically restrict to a continuous action on the C$^*$-algebra of compact operators $\mathcal{K}(H)$. If this is the case, we call the projective representation \emph{continuous}, and the associated $2$-cocycle \emph{of finite type}. In general, not all $2$-cocycles are of finite type. We show however that if $\mathbb{G}$ is of \emph{Kac type}, then all projective representations are continuous, and all $2$-cocycles are of finite type. See Section \ref{sec.contprojrep}.

In \cite{SergeyDeformation2} (see also \cite{VaesBiproduit} for the von Neumann algebra setting, and \cite{QuiggTwistDuality} for the classical setting of locally compact groups), it was shown how a $2$-cocycle $\Omega$ on a \emph{locally} compact quantum group allows to form $\Omega$-twisted actions of $\mathbb{G}$ on C$^*$-algebras, in case the $2$-cocycle satisfies a particular \emph{regularity} condition. This then allows to define also twisted crossed products for which a twisted version of the \emph{Baaj-Skandalis duality} holds. We show that in the setting of compact quantum groups, regularity of $\Omega$ is equivalent to $\Omega$ being of finite type. We then revisit some of the results of \cite{SergeyDeformation2} in the technically simpler setting of compact quantum groups, and obtain in particular a quantum version of the \emph{Packer-Raeburn's trick} \cite{PackerRaeburnTrick1}. See Section \ref{sec.TwistedBaajSkand}.

The theory of projective representations for $\mathbb{G}$ is closely related to the theory of \emph{torsion} for the dual (discrete) quantum group $\hat{\mathbb{G}}$. A general theory of torsion for discrete quantum groups was introduced first by R. Meyer and R. Nest (see \cite{MeyerNestTorsion} and \cite{MeyerNestHomological2}) in terms of ergodic actions of $\mathbb{G}$, and re-interpreted later by Y. Arano and K. De Commer in terms of fusion rings and module C$^*$-categories \cite{YukiKenny}. Basically, the torsion of $\widehat{\mathbb{G}}$ is described by the non-trivial ergodic actions of $\mathbb{G}$ on finite-dimensional C$^*$-algebras. In case the latter is simple, we obtain an ergodic action of $\mathbb{G}$ on some $\mathcal{B}(H)$ for $H$ finite-dimensional, i.e.\ a finite-dimensional irreducible projective representation of $\mathbb{G}$. 

The study of such torsion was the original motivation for this article, more precisely with respect to the \emph{Baum-Connes conjecture for discrete quantum groups}. The original Baum-Connes conjecture for (second countable) locally compact groups had been formulated in 1982 by P. Baum and A. Connes. We still do not know any counterexample to the original conjecture, but it is known that the one with coefficients is false. For this reason we refer to the Baum-Connes conjecture with coefficients as the \emph{Baum-Connes property}. The goal of the conjecture is to understand the link between two operator $K$-groups of different nature that would establish a strong connection between geometry and topology in a more abstract and general index-theory context. More precisely, if $G$ is a (second countable) locally compact group and $A$ is a (separable) $G$-C$^*$-algebra, then the Baum-Connes property for $G$ with coefficients in $A$ claims that the assembly map $\mu^G_A: K^{top}_{*}(G; A)\longrightarrow K_{*}(G\underset{r}{\ltimes}A)$ is an isomorphism, where $K^{top}_{*}(G; A)$ is the equivariant $K$-homology with compact support of $G$ with coefficients in $A$ and $K_{*}(G\underset{r}{\ltimes}A)$  is the $K$-theory of the reduced crossed product $G\underset{r}{\ltimes}A$. This property has been proved for a large class of groups; let us mention the remarkable work of N. Higson and G. Kasparov \cite{HigsonKasparovHaagerup} about groups with Haagerup property and the one of V. Lafforgue \cite{Lafforgue} about hyperbolic groups.

The equivariant $K$-homology with compact support $K^{top}_{*}(G; A)$ is the geometrical object obtained from the classifying space of proper actions of $G$, thus it is, \emph{a priori}, easier to calculate than the group $K_{*}(G\underset{r}{\ltimes}A)$, which is the one of analytical nature and less flexible in its structure. Nevertheless, sometimes the group $K^{top}_{*}(G; A)$ creates non-trivial troubles. This is why R. Meyer and R. Nest provide in 2006 a new formulation of the Baum-Connes property in a well-suited category framework, using the language of triangulated categories and derived functors \cite{MeyerNest}. More precisely, if $\mathscr{K}\mathscr{K}^{G}$ is the $G$-equivariant Kasparov category and $F(A):=K_{*}(G\underset{r}{\ltimes}A)$ is the homological functor $\mathscr{K}\mathscr{K}^{G}\longrightarrow \mathscr{A}b^{\mathbb{Z}/2}$ defining the right-hand side of the Baum-Connes assembly map, they show that the assembly map $\mu^G_A$ is equivalent to the natural transformation $\eta^G_A: \mathbb{L}F(A)\longrightarrow F(A)$, where $\mathbb{L}F$ is the localisation of the functor $F$ with respect to an appropriated complementary pair of (localizing) subcategories $(\mathscr{L}_G,\mathscr{N}_G)$. Here $\mathscr{L}_G$ is the subcategory of $\mathscr{K}\mathscr{K}^{G}$ of \emph{compactly induced $G$-C$^*$-algebras}, and $\mathscr{N}_G$ is the subcategory of $\mathscr{K}\mathscr{K}^{G}$ of \emph{compactly contractible $G$-C$^*$-algebras}. We say that $G$ satisfies the \emph{strong} Baum-Connes property if $\mathscr{L}_G=\mathscr{K}\mathscr{K}^{G}$, which corresponds, in usual terms, to the existence of a $\gamma$-element that equals $\mathbbold{1}_{\mathbb{C}}$. This approach yields as well a characterization of the Baum-Connes property \emph{only} in terms of compact subgroups, $K$-theory and crossed products.

The above reformulation allows in particular to avoid any geometrical construction, and thus seems (at least in principle) better suited to apply also when $G$ is replaced by a locally compact \emph{quantum} group $\mathbb{G}$. For instance, this approach has already been implemented by R. Meyer and R. Nest \cite{MeyerNestTorsion} by proving that duals of compact connected\footnote{In an upcoming paper, the second author (together with P. Fima) has extended this result by removing the connectedness assumption.} groups satisfy the strong Baum-Connes property. Also, for genuine \emph{discrete quantum groups} $\widehat{\mathbb{G}}$ the strong Baum-Connes property has been studied, leading to explicit $K$-theory computations of the C$^*$-algebra $C(\mathbb{G})$ in remarkable examples: \cite{VoigtBaumConnesUnitaryFree, VoigtBaumConnesAutomorphisms, RubenAmauryTorsion}.  

A major problem when studying the quantum counterpart of the Baum-Connes property in the particular setting of discrete quantum groups is the \emph{torsion structure} of a discrete quantum group $\widehat{\mathbb{G}}$. Indeed, if $G$ is an ordinary discrete group, its torsion phenomena are completely described in terms of the finite subgroups of $G$ and encoded in the localizing subcategory $\mathscr{L}_G$ using the Meyer-Nest reformulation. More precisely, induction and restriction functors provide a pair of adjoint functors allowing to apply the general Meyer-Nest machinery to define the complementary pair encoding the Baum-Connes property. Hence if we want to follow the Meyer-Nest approach, we need for discrete quantum groups $\widehat{\mathbb{G}}$ an analogous complementary pair of localizing subcategories $(\mathscr{L}_{\widehat{\mathbb{G}}},\mathscr{N}_{\widehat{\mathbb{G}}})$, where $\mathscr{L}_{\widehat{\mathbb{G}}}$ must encode the torsion phenomena of $\widehat{\mathbb{G}}$. In this case, the induction-restriction approach is no longer valid since finite discrete quantum groups do not exhaust the torsion phenomena for $\widehat{\mathbb{G}}$. 

A candidate for $\mathscr{L}_{\widehat{\mathbb{G}}}$ had been apparent for specific examples \cite[Section 1]{MeyerNestTorsion} and \cite[Section 5]{VoigtBaumConnesAutomorphisms} (see also \cite[Section 4.1.2]{RubenThesis} for a description for general discrete quantum groups), but it was not at all clear if $(\mathscr{L}_{\widehat{\mathbb{G}}},\mathscr{L}_{\widehat{\mathbb{G}}}^{\vdash})$ formed a complementary pair. 
This was completely resolved by Arano and Skalski \cite{YukiBCTorsion}. Their key insight concerned a direct description of $\mathscr{N} = \mathscr{L}_{\widehat{\mathbb{G}}}^{\vdash}$ in terms of a \emph{double crossed product construction}. The results of \cite{YukiBCTorsion} hence allow to define a quantum assembly map for every discrete quantum group $\widehat{\mathbb{G}}$ (torsion-free or not). 

In Section \ref{sec.QuantumAssemblyMapProj}, we will revisit this result from a different perspective in the case when there is only \emph{projective torsion}, i.e.\ any finite dimensional C$^*$-algebra carrying an ergodic action of $\mathbb{G}$ is simple. More precisely, having defined a ``twisted'' descent map $F_\delta:=j_{\mathbb{G}, T}: \mathscr{K}\mathscr{K}^{\mathbb{G}}\longrightarrow \mathscr{K}\mathscr{K}$ for each projective torsion action $(T, \delta)$ and with $\tau_T:\mathscr{K}\mathscr{K}\longrightarrow\mathscr{K}\mathscr{K}^{\mathbb{G}}$ given by making the tensor product by $T$ on the right, we re-prove the adjointness between $j_{\mathbb{G}, T}$ and $\tau_T$, formulated as a twisted Green-Julg isomorphism, by explicit use of the specific structure of cocycle crossed products. Such results can be seen as a first step towards spectra computations in the quantum Kasparov category in the realm of tensor triangular geometry \cite{BalmerSpc,AmbrogioBCSpec}.



\bigskip


\section{\textsc{Preliminaries}}
	\subsection{Conventions and notations}\label{sec.Conven}
	Let us fix the notations and the conventions that we use throughout the whole article.
	
	If $E$ is a $\mathbb{C}$-vector space and $\mathcal{S}$ is a subset of vectors of $E$, then we write $span\, \mathcal{S}$ for the corresponding $\mathbb{C}$-vector subspace generated by $\mathcal{S}$. If $(E,||\cdot||)$ is a normed $\mathbb{C}$-vector space and $F\subset E$ is a vector subspace, we write $[F]:=\overline{F}^{||\cdot||}$ for the $||\cdot||$-closure of $F$ in $E$. We then also write $\overline{span}\,\mathcal{S} = [span\, \mathcal{S}]$ for $\mathcal{S}\subset E$. If $V,W$ are subspaces inside an algebra $A$, we denote $VW = V\cdot W := \mathrm{span}\{vw \mid v\in V,w\in W\}$. 

Let $H$ be a Hilbert space. We denote by $\mathcal{B}(H)$ (resp.\ $\mathcal{K}(H)$) the space of all linear bounded (resp.\ compact) operators on $H$. If $\mathcal{S}$ is a subset of $\mathcal{B}(H)$, then we write $\overline{span}^{\sigma-weak}\mathcal{S}$ for the closure of the linear subspace generated by $\mathcal{S}$ with respect to the $\sigma$-weak topology. We denote by $\mathcal{B}(H)_*$ the space of normal functionals on $\mathcal{B}(H)$, and for $\xi, \eta\in H$ we denote by $\omega_{\xi, \eta}\in\mathcal{B}(H)_*$ the linear form defined by $\omega_{\xi, \eta}(T):=\langle \xi, T(\eta)\rangle$, for all $T\in \mathcal{B}(H)$. If $V\in \mathcal{B}(H\otimes H)$ is a unitary operator, we put $\mathscr{C}(V):=[\{(id\otimes \eta)(\Sigma V)\ |\ \eta\in\mathcal{B}(H)_*\}]$. Observe that $(id\otimes \eta)(\Sigma V)=((\eta\otimes id)\circ Ad_{\Sigma})(\Sigma V)=(\eta\otimes id)(V\Sigma)$, for all $\eta\in\mathcal{B}(H)_*$. Also, we clearly have $\mathscr{C}(V)  = \overline{span}\{(id\otimes \omega_{\xi, \eta})(\Sigma V)\ |\ \xi, \eta\in H\}$.

If $A$ is a C$^*$-algebra and $H$ a  Hilbert $A$-module, we denote by $\mathcal{L}_{A}(H)$ (resp.\ $\mathcal{K}_A(H)$) the space of all (resp.\ compact) adjointable operators on $H$. Hilbert $A$-modules are considered to be \emph{right $A$-modules}, so that the corresponding inner products are considered to be conjugate-linear on the left and linear on the right. Given a Hilbert $A$-module $H$ and $\xi, \eta\in H$ we denote by $\theta_{\xi, \eta}\in\mathcal{L}_A(H)$ the rank one operator defined by $\theta_{\xi, \eta}(\zeta):=\xi \langle \eta, \zeta\rangle$, for all $\zeta\in H$. Then $\mathcal{K}_A(H) = \overline{span}\{\theta_{\xi, \eta}\mid \xi, \eta\in H\}$. 

All our C$^*$-algebras (except for obvious exceptions such as multiplier C$^*$-algebras and von Neumann algebras) are supposed to be \emph{separable} and all our Hilbert modules are supposed to be \emph{countably generated}. If $A$ is a C$^*$-algebra and $\mathcal{S}$ is a subset of elements in $A$, we write $C^*\langle \mathcal{S}\rangle := C^*\langle \mathcal{S}\cup \mathcal{S}^*\rangle $ for the corresponding C$^*$-subalgebra of $A$ generated by $\mathcal{S}$, that is, the intersection of all C$^*$-subalgebras of $A$ containing $\mathcal{S}$. In this case, the elements of $\mathcal{S}$ are called \emph{generators} of $C^*\langle \mathcal{S}\rangle$. 

	The symbol $\otimes$ stands for the minimal tensor product of C$^*$-algebras and the exterior tensor product of Hilbert modules depending on the context. The symbol $\underset{\max}{\otimes}$ stands for the maximal tensor product of C$^*$-algebras. The symbol $\overline{\otimes}$ stands for the tensor product of von Neumann algebras. 	In any of the previous cases, the \emph{elementary tensors} in the corresponding tensor product are denoted simply by $\otimes$ and the context will distinguish the specific situation. If $H$ is a Hilbert $A$-module and $(K, \pi)$ is a Hilbert $(A, B)$-bimodule, the interior tensor product of $H$ and $K$ with respect to $\pi$ is denoted by $H\underset{\pi}{\otimes} K$ or $H\underset{A}{\otimes} K$. If $A$ and $B$ are two C$^*$-algebras, $\Sigma:A\otimes B\longrightarrow B\otimes A$ denotes the flip map. The symbol $\Sigma$ is used as well for the suspension functor in the framework of triangulated categories. The context will distinguish the specific situation. We use systematically the leg numbering, so if $H$ is a Hilbert space then $X_{12} = X\otimes 1 \in \mathcal{B}(H^{\otimes 3})$ for $X\in \mathcal{B}(H^{\otimes 2})$, etc.

	If $S, A$ are C$^*$-algebras, we denote by $M(A)=\mathcal{L}_A(A)$ the multiplier algebra of $A$ and we put $\widetilde{M}(A\otimes S):=\{x\in M(A\otimes S)\ |\ x(id_A\otimes S)\subset A\otimes S\mbox{ and } (id_A\otimes S)x\subset A\otimes S\}$, which contains $A\otimes M(S)$. If $H$ is a Hilbert $A$-module, we put $M(H):=\mathcal{L}_A(A, H)$, which contains canonically $H\cong \mathcal{K}_A(A, H)$. We put $\widetilde{M}(H\otimes S):=\{X\in M(H\otimes S)\ |\ X(id_A\otimes S)\subset H\otimes S\mbox{ and } (id_H\otimes S)X\subset H\otimes S\}$, which contains $H\otimes M(S)$.

If $T:= \mathcal{B}(H)$ is a type $I$-factor, we denote by $\text{Tr}$ the usual trace on $T$. If $\varphi$ is any state on $T$, we denote by $\varrho\in T$ the density matrix (i.e.\ the positive matrix with trace $1$) such that $\varphi=\text{Tr}(\varrho\ \cdot)$.

Given a state $\varphi$ on $T$, we denote by $(L^2(T), \lambda_T, \Lambda_T, \xi_T)$ the corresponding GNS construction, but we drop the notation $\lambda_T$ when it is clear from the context. If $T^{op}$ denotes the opposite von Neumann algebra of $T$, then the modular properties for $\varphi$ yield a $*$-representation $\rho_T$ of $T^{op}$ on $L^2(T)$ determined by the formula $\rho_T(s^{op})(t\xi_T):=t \varrho^{1/2}s\varrho^{-1/2}\xi_T$ for all $s, t\in T$ of finite rank with respect to an eigenbasis of $\rho$. We consider the anti $*$-homomorphism $(\cdot)^{\circ}: T\longrightarrow \mathcal{B}(L^2(T))$ defined by $s^{\circ}:=\rho_T (s^{op})=J_Ts^*J_T$ for all $s\in T$, where $J_Tt\xi_T := \rho^{1/2}t^*\rho^{-1/2}\xi_T$ is the modular conjugation on $L^2(T)$. We then have $T^{\circ}=T'$. In the following, we will also identify $j: \mathcal{B}(H)^{op} \cong \mathcal{B}(\overline{H})$ through the $*$-isomorphism $T \mapsto \overline{T^*}$.

	Whenever $\mathscr{C}$ denotes a category, we shall assume that $\mathscr{C}$ is essentially small, so morphisms $Hom_{\mathscr{C}}(\cdot, \cdot)$ form sets. Given a category $\mathscr{C}$, we denote by $\mathscr{C}^{op}$ its opposite category. 	We say that $\mathscr{C}$ is \emph{countable additive} if it is additive and it admits \emph{countable direct sums}. If $F$ is an \emph{additive} functor on an additive category, it is, by definition, compatible with \emph{finite} direct sums. The categories considered in the present paper are assumed to be countable additive. Whenever we require an additive functor to be compatible with \emph{infinite (countable)} direct sums, it will be explicitly indicated.	

We denote by $\mathscr{A}b$ the abelian category of abelian groups and by $\mathscr{A}b^{\mathbb{Z}/2}$ the abelian category of $\mathbb{Z}/2$-graded abelian groups.

	\subsection{Compact/Discrete Quantum Groups}\label{sec.CQG}

	In this section we recall elementary and fundamental facts concerning compact quantum groups and their corresponding duality theory. We refer to the books \cite{Sergey,Timmermann} or to the original papers \cite{Woronowicz,SkandalisUnitaries} for more details.

	\begin{defi}\label{defi.CQG}
			A compact quantum group $\mathbb{G}$ is the data $(C(\mathbb{G}),\Delta)$ where $C(\mathbb{G})$ is a unital C$^*$-algebra and $\Delta: C(\mathbb{G})\longrightarrow C(\mathbb{G})\otimes C(\mathbb{G})$ is a unital $*$-homomorphism such that: 
\begin{enumerate}[i)]
\item $\Delta$ is co-associative meaning that $(id\otimes \Delta)\Delta=(\Delta\otimes id)\Delta$ and 
\item  $\Delta$ satisfies the cancellation property meaning that $[\Delta(C(\mathbb{G}))(C(\mathbb{G})\otimes 1)]=C(\mathbb{G})\otimes C(\mathbb{G})= [\Delta(C(\mathbb{G}))(1\otimes C(\mathbb{G}))]$.
\end{enumerate}
	\end{defi}

A compact quantum group has a unique \emph{Haar state} $h_{\mathbb{G}}$ such that $(h_{\mathbb{G}}\otimes id)\Delta(x) = h_{\mathbb{G}}(x)1_{C(\mathbb{G})} = (id\otimes h_{\mathbb{G}})\Delta(x)$ for all $x\in C(\mathbb{G})$. \emph{We will make the standing assumption that $h_{\mathbb{G}}$ is faithful, so we only work with the reduced form $C(\mathbb{G})$ of a compact quantum group.} 

The GNS construction corresponding to $h_{\mathbb{G}}$ is denoted by $(L^2(\mathbb{G}), \lambda, \xi_{\mathbb{G}})$. We also write $\Lambda(x) = \lambda(x)\xi_{\mathbb{G}}$ for $x\in C(\mathbb{G})$. We adopt the standard convention for the inner product on $L^2(\mathbb{G})$, which means that $\langle \Lambda(x), \Lambda(y)\rangle:=h_{\mathbb{G}}(x^*y)$ for all $x,y\in C(\mathbb{G})$. We suppress the notation $\lambda$ in computations so that we simply write $x\Lambda(y)=\Lambda(xy)$ for all $x,y\in C(\mathbb{G})$. 

	\begin{theodefi}[Regular representation]\label{theo.RRegularRepresentation}
		Let $\mathbb{G}=(C(\mathbb{G}),\Delta)$ be a compact quantum group.
		\begin{enumerate}[i)]
			\item There exists a unique unitary operator $V_{\mathbb{G}}\in M(\mathcal{K}(L^2(\mathbb{G}))\otimes C(\mathbb{G}))$ such that $V_{\mathbb{G}}(\Lambda(x)\otimes \xi)=\Delta(x)(\xi_{\mathbb{G}}\otimes \xi)\mbox{,}$
				for all $x\in C(\mathbb{G})$ and $\xi\in L^2(\mathbb{G})$.
			\item\label{ItCopr} For all $x\in C(\mathbb{G})$ we have $\Delta = \Delta_{V_{\mathbb{G}}}$ where $\Delta_{V_{\mathbb{G}}}(x)=V_{\mathbb{G}}(x\otimes 1)V_{\mathbb{G}}^*$.
			\item The following identity holds: $(id\otimes \Delta)(V_{\mathbb{G}})=(V_{\mathbb{G}})_{12}(V_{\mathbb{G}})_{13}$.
			\item The following pentagonal equation holds: $(V_{\mathbb{G}})_{12}(V_{\mathbb{G}})_{13}(V_{\mathbb{G}})_{23}=(V_{\mathbb{G}})_{23}(V_{\mathbb{G}})_{12}$. So $V_{\mathbb{G}}$ is a multiplicative unitary on $L^2(\mathbb{G})$ in the sense of Baaj-Skandalis \cite{SkandalisUnitaries}.
			\item We have that $C(\mathbb{G}) = S_{V_{\mathbb{G}}} := \overline{span}\{(\eta\otimes id)(V_{\mathbb{G}})\ |\ \eta\in \mathcal{B}(L^2(\mathbb{G}))_*\}$.
		\end{enumerate}
		
		The unitary $V_{\mathbb{G}}$ is called \emph{right regular representation of $\mathbb{G}$ on $L^2(\mathbb{G})$} or \emph{fundamental unitary of $\mathbb{G}$}. 
	\end{theodefi}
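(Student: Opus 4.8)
The plan is to construct $V_{\mathbb{G}}$ explicitly from the defining formula, show it is a unitary, and then extract the algebraic identities ii)--v) from coassociativity of $\Delta$ and the invariance of the Haar state $h_{\mathbb{G}}$. First I would define $V_{\mathbb{G}}$ on the dense subspace spanned by the elementary tensors $\Lambda(x)\otimes\xi$ via $V_{\mathbb{G}}(\Lambda(x)\otimes\xi):=\Delta(x)(\xi_{\mathbb{G}}\otimes\xi)$ and check isometry: using $\Delta(x)^{*}\Delta(y)=\Delta(x^{*}y)$ and $\langle\xi_{\mathbb{G}},a\xi_{\mathbb{G}}\rangle=h_{\mathbb{G}}(a)$, the inner product $\langle\Delta(x)(\xi_{\mathbb{G}}\otimes\xi),\Delta(y)(\xi_{\mathbb{G}}\otimes\eta)\rangle$ reduces to $\langle\xi,(h_{\mathbb{G}}\otimes id)(\Delta(x^{*}y))\eta\rangle$, which by left invariance equals $h_{\mathbb{G}}(x^{*}y)\langle\xi,\eta\rangle=\langle\Lambda(x)\otimes\xi,\Lambda(y)\otimes\eta\rangle$. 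Density of the range (hence surjectivity, unitarity and uniqueness) follows from cancellation: since $\Delta(x)(\xi_{\mathbb{G}}\otimes\Lambda(z))=\Delta(x)(1\otimes z)(\xi_{\mathbb{G}}\otimes\xi_{\mathbb{G}})$, the relation $[\Delta(C(\mathbb{G}))(1\otimes C(\mathbb{G}))]=C(\mathbb{G})\otimes C(\mathbb{G})$ together with cyclicity of $\xi_{\mathbb{G}}\otimes\xi_{\mathbb{G}}$ makes the range dense.

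Identity ii) is then immediate on the dense family $\Delta(y)(\xi_{\mathbb{G}}\otimes\xi)$: one has $V_{\mathbb{G}}^{*}\Delta(y)(\xi_{\mathbb{G}}\otimes\xi)=\Lambda(y)\otimes\xi$, so $V_{\mathbb{G}}(x\otimes1)V_{\mathbb{G}}^{*}\,\Delta(y)(\xi_{\mathbb{G}}\otimes\xi)=\Delta(xy)(\xi_{\mathbb{G}}\otimes\xi)=\Delta(x)\,\Delta(y)(\xi_{\mathbb{G}}\otimes\xi)$, whence $V_{\mathbb{G}}(x\otimes1)V_{\mathbb{G}}^{*}=\Delta(x)$. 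For iii) I would pass to the dense Hopf $*$-algebra $\mathrm{Pol}(\mathbb{G})$, where $\Delta$ lands in the algebraic tensor product and Sweedler notation is rigorous. Evaluating $(V_{\mathbb{G}})_{12}(V_{\mathbb{G}})_{13}$ on $\Lambda(x)\otimes\xi\otimes\eta$ (applying the defining formula twice) yields $\sum\Lambda(x_{(1)(1)})\otimes x_{(1)(2)}\xi\otimes x_{(2)}\eta$, whereas $(id\otimes\Delta)(V_{\mathbb{G}})$ on the same vector yields $\sum\Lambda(x_{(1)})\otimes x_{(2)(1)}\xi\otimes x_{(2)(2)}\eta$; coassociativity $(id\otimes\Delta)\Delta=(\Delta\otimes id)\Delta$ identifies the two, giving the identity on a dense subspace and hence everywhere. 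Statement iv) is then formal: writing $\Delta=\mathrm{Ad}(V_{\mathbb{G}})$ in the last two legs gives $(id\otimes\Delta)(V_{\mathbb{G}})=(V_{\mathbb{G}})_{23}(V_{\mathbb{G}})_{12}(V_{\mathbb{G}})_{23}^{*}$, and equating this with iii) and cancelling $(V_{\mathbb{G}})_{23}^{*}$ on the right produces the pentagon equation.

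For v) I would compute the slices directly. Taking $\xi=\Lambda(z)$ in the defining formula gives $(\omega_{\Lambda(z),\Lambda(x)}\otimes id)(V_{\mathbb{G}})=(h_{\mathbb{G}}(z^{*}\cdot)\otimes id)(\Delta(x))\in C(\mathbb{G})$; since the vector functionals are dense in $\mathcal{B}(L^{2}(\mathbb{G}))_{*}$ and slicing is continuous, this gives $S_{V_{\mathbb{G}}}\subseteq C(\mathbb{G})$. For the reverse inclusion I would rewrite $(h_{\mathbb{G}}(z^{*}\cdot)\otimes id)(\Delta(x))=(h_{\mathbb{G}}\otimes id)((z^{*}\otimes1)\Delta(x))$ and use the adjoint form of cancellation, $[(C(\mathbb{G})\otimes1)\Delta(C(\mathbb{G}))]=C(\mathbb{G})\otimes C(\mathbb{G})$, together with $(h_{\mathbb{G}}\otimes id)(a\otimes b)=h_{\mathbb{G}}(a)b$, to conclude that the closed span of these slices is all of $C(\mathbb{G})$.

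The step I expect to be the main obstacle is the membership $V_{\mathbb{G}}\in M(\mathcal{K}(L^{2}(\mathbb{G}))\otimes C(\mathbb{G}))$ in i). The forward containment is clean: from $V_{\mathbb{G}}(\theta_{\Lambda(x),\Lambda(y)}\otimes1)=\Delta(x)(\theta_{\xi_{\mathbb{G}},\Lambda(y)}\otimes1)\in\mathcal{K}(L^{2}(\mathbb{G}))\otimes C(\mathbb{G})$ and density of the $\theta_{\Lambda(x),\Lambda(y)}$ in $\mathcal{K}(L^{2}(\mathbb{G}))$ one obtains $V_{\mathbb{G}}\big(\mathcal{K}(L^{2}(\mathbb{G}))\otimes C(\mathbb{G})\big)\subseteq\mathcal{K}(L^{2}(\mathbb{G}))\otimes C(\mathbb{G})$. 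The symmetric containment for $V_{\mathbb{G}}^{*}$ is the delicate point, since $V_{\mathbb{G}}^{*}$ has no comparably simple action on elementary tensors; here I would either decompose $V_{\mathbb{G}}$ via Peter--Weyl into a direct sum of finite-dimensional unitary corepresentations, whose matrix coefficients lie in $C(\mathbb{G})$, or invoke the standard multiplicative-unitary machinery, citing \cite{SkandalisUnitaries,Woronowicz,Timmermann}.
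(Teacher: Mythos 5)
Your proof is correct and follows essentially the same route as the sources the paper defers to: this Theorem-Definition is stated in the Preliminaries without proof, with the reader referred to \cite{Woronowicz,SkandalisUnitaries,Timmermann}, and those references establish i)--v) exactly along your lines (isometry from $\Delta(x)^*\Delta(y)=\Delta(x^*y)$ plus invariance of $h_{\mathbb{G}}$, density of the range from the cancellation property and cyclicity of $\xi_{\mathbb{G}}\otimes\xi_{\mathbb{G}}$, ii) on the dense range vectors, iii) on $Pol(\mathbb{G})$ via coassociativity, iv) by conjugating with $V_{\mathbb{G}}$ in the last two legs, and v) from the slice formula $(\omega_{\Lambda(z),\Lambda(x)}\otimes id)(V_{\mathbb{G}})=(h_{\mathbb{G}}\otimes id)\big((z^*\otimes 1)\Delta(x)\big)$ together with the adjointed cancellation property). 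Your handling of the one genuinely delicate point --- the containment $V_{\mathbb{G}}^*\big(\mathcal{K}(L^2(\mathbb{G}))\otimes C(\mathbb{G})\big)\subseteq \mathcal{K}(L^2(\mathbb{G}))\otimes C(\mathbb{G})$ via the Peter--Weyl decomposition of $L^2(\mathbb{G})$ into finite-dimensional isotypic blocks on which $V_{\mathbb{G}}$ restricts to a unitary with coefficients in $Pol(\mathbb{G})$ --- is also the standard one and is non-circular, since $Pol(\mathbb{G})$ and the corepresentation theory are constructed from the Haar state independently of the multiplier property of $V_{\mathbb{G}}$.
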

	\begin{rem}
		In a similar way, we can define the \emph{left regular representation of $\mathbb{G}$}: there exists a unique multiplicative unitary $W_{\mathbb{G}}\in M(C(\mathbb{G})\otimes \mathcal{K}(L^2(\mathbb{G})))$ such that $(W_{\mathbb{G}})^*(\xi\otimes \Lambda(x))=\Delta(x)(\xi\otimes \xi_{\mathbb{G}})\mbox{,}$
		for all $x\in C(\mathbb{G})$ and $\xi\in L^2(\mathbb{G})$. For all $x\in C(\mathbb{G})$ we have $\Delta(x)=W_{\mathbb{G}}^*(1\otimes x)W_{\mathbb{G}}$ and the following identity holds: $(\Delta\otimes id)(W_{\mathbb{G}})=(W_{\mathbb{G}})_{13}(W_{\mathbb{G}})_{23}$. 
	\end{rem}

The coproduct on $C(\mathbb{G})$ can be extended to $L^{\infty}(\mathbb{G}) = C(\mathbb{G})''$ using Theorem-Definition \ref{theo.RRegularRepresentation}.\eqref{ItCopr}, obtaining the normal map $\Delta: L^{\infty}(\mathbb{G}) \rightarrow L^{\infty}(\mathbb{G}) \overline{\otimes} L^{\infty}(\mathbb{G})$. The Haar state extends uniquely to a normal faithful state on $L^{\infty}(\mathbb{G})$, and we denote by $J_{\mathbb{G}}$ the associated modular conjugation on $L^2(\mathbb{G})$.

Conversely, if $L^{\infty}(\mathbb{G})$ is a von Neumann algebra with a coassociative normal $*$-homomorphism $\Delta: L^{\infty}(\mathbb{G}) \rightarrow L^{\infty}(\mathbb{G}) \overline{\otimes} L^{\infty}(\mathbb{G})$ and admitting an invariant normal faithful state $h_{\mathbb{G}}$, then $(L^{\infty}(\mathbb{G}),\Delta)$ arises from a (reduced) compact quantum group $\mathbb{G}$ in a unique way. 

\begin{defi}
A unitary representation of $\mathbb{G}$ on a Hilbert space $H =H_u$ is a unitary element $u$ in $M(\mathcal{K}(H)\otimes C(\mathbb{G}))$ with $(id\otimes \Delta)(u) = u_{12}u_{13}$. For $u,v$ unitary representations, we denote $Hom_{\mathbb{G}}(u,v) = \{T: H_u \rightarrow H_v\mid T \textrm{ bounded and }(T\otimes 1)u =v(T\otimes 1)\}$. One calls $u$ \emph{irreducible} if $End_{\mathbb{G}}(u) = Hom_{\mathbb{G}}(u,u) = \mathbb{C}id_{H_u}$.   
\end{defi}
In the following, all representations will be assumed unitary.

Any irreducible representation $u$ has finite dimensional  $H$, so then $u \in \mathcal{B}(H)\otimes C(\mathbb{G})$. The set of all equivalence classes of irreducible representations of $\mathbb{G}$ is denoted by $\text{Irr}(\mathbb{G})$. If $x\in \text{Irr}(\mathbb{G})$ is such a class, we write $u^x\in\mathcal{B}(H_x)\otimes C(\mathbb{G})$ for a representative of $x$ and $H_x$ for the finite dimensional Hilbert space on which $u^x$ acts. We write $dim(x):= n_x$ for the dimension of $H_x$. The trivial representation of $\mathbb{G}$ is denoted by $\epsilon$, and we put $u^{\epsilon} = 1_{C(\mathbb{G})}$. Given $x,y\in \text{Irr}(\mathbb{G})$, the tensor product of $x$ and $y$ is denoted by $x\otimes y$. Given $x\in \text{Irr}(\mathbb{G})$, there exists a unique class $\overline{x}$ of irreducible representations of $\mathbb{G}$  such that $Hom_{\mathbb{G}}(\epsilon, u^x\otimes u^{\overline{x}})\neq 0\neq Hom_{\mathbb{G}}(\epsilon, u^{\overline{x}}\otimes u^x)$. It is called the \emph{contragredient} or \emph{conjugate representation} of $x$. 

The linear span of matrix coefficients of all finite dimensional representations of $\mathbb{G}$ is denoted by $\text{Pol}(\mathbb{G})$. It is a $*$-Hopf algebra by restriction of the co-multiplication $\Delta$, and we denote its co-unit by $\varepsilon$ and its antipode by $S$. Let $I_0$ be the anti-linear involutive map $\Lambda(\text{Pol}(\mathbb{G})) \rightarrow L^2(\mathbb{G})$ defined by $\Lambda(x) \mapsto \Lambda(S(x)^*)$ for $x\in \text{Pol}(\mathbb{G})$. Then $I_0$ is closeable, and we denote $I = \widehat{J}_{\mathbb{G}}|I|$ for the polar decomposition of its closure. The map $R(x)=\widehat{J}_{\mathbb{G}}x^* \widehat{J}_{\mathbb{G}}$, for all $x\in C(\mathbb{G})$, is a well-defined anti-multiplicative and anti-co-multiplicative map on $C(\mathbb{G})$ preserving $\text{Pol}(\mathbb{G})$, called \emph{unitary antipode}.
	
	\begin{theodefi}[Discrete quantum group]
		Let $\mathbb{G}=(C(\mathbb{G}), \Delta)$ be a compact quantum group. We switch between the following notations for the same space
		$c_0(\widehat{\mathbb{G}})= C^*_r(\mathbb{G})= \widehat{S}_{V_{\mathbb{G}}} := [\{(id\otimes \eta)(V_{\mathbb{G}})\ |\ \eta\in \mathcal{B}(L^2(\mathbb{G}))_*\}]\subset \mathcal{B}(L^2(\mathbb{G}))$. Then $c_0(\widehat{\mathbb{G}})$ is a C$^*$-algebra, and we denote also the identity map by:
\[
\widehat{\lambda}: c_0(\widehat{\mathbb{G}}) \rightarrow \mathcal{B}(L^2(\mathbb{G})).
\]
Furthermore, we have the following:
		\begin{enumerate}[i)]
			\item\label{EqCoprDual} The formula $\widehat{\Delta}(x) = \widehat{\Delta}_{V_{\mathbb{G}}}^{cop}(x):=\Sigma V_{\mathbb{G}}^*(1\otimes x)V_{\mathbb{G}}\Sigma $ defines a non-degenerate $*$-homomorphism $c_0(\widehat{\mathbb{G}})\longrightarrow \widetilde{M}(c_0(\widehat{\mathbb{G}})\otimes c_0(\widehat{\mathbb{G}}))$ such that the pair $\widehat{\mathbb{G}} = (c_0(\widehat{\mathbb{G}}), \widehat{\Delta})$ is a locally compact quantum group. One calls $\widehat{\mathbb{G}}$ the \emph{(Pontryagin) dual discrete quantum group} of $\mathbb{G}$.
			\item There exists a natural isomorphism $c_0(\widehat{\mathbb{G}})\cong \underset{x\in \text{Irr}(\mathbb{G})}{\bigoplus^{c_0}} \mathcal{B}(H_x)$.
\item We have $V_{\mathbb{G}} \in M(c_0(\widehat{\mathbb{G}})\otimes C(\mathbb{G}))$.
\item We have $[ (\eta\otimes id)(W^*_{\mathbb{G}})\ |\ \eta\in \mathcal{B}(L^2(\mathbb{G}))_*]=\widehat{J}_{\mathbb{G}}c_0(\widehat{\mathbb{G}})\widehat{J}_{\mathbb{G}}\subset \mathcal{B}(L^2(\mathbb{G}))$.
		\end{enumerate}
	\end{theodefi}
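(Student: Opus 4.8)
The plan is to derive the entire statement from the fundamental unitary $V_{\mathbb{G}}$ and its pentagonal equation, combining the Baaj--Skandalis theory of multiplicative unitaries with Woronowicz's Peter--Weyl decomposition. I would begin by showing that $\widehat{S}_{V_{\mathbb{G}}}$ is a C$^*$-algebra. Norm-closedness is built into the definition, and self-adjointness follows from the unitarity of $V_{\mathbb{G}}$ (slices of $V_{\mathbb{G}}^*$ lie in the same closed span), so the only content is closure under multiplication. For $\eta, \eta' \in \mathcal{B}(L^2(\mathbb{G}))_*$ I would write
\[
(id\otimes\eta)(V_{\mathbb{G}})\,(id\otimes\eta')(V_{\mathbb{G}}) = (id\otimes\eta\otimes\eta')\big((V_{\mathbb{G}})_{12}(V_{\mathbb{G}})_{13}\big),
\]
and then use the pentagonal equation of Theorem-Definition \ref{theo.RRegularRepresentation} in the rearranged form $(V_{\mathbb{G}})_{12}(V_{\mathbb{G}})_{13}=(V_{\mathbb{G}})_{23}(V_{\mathbb{G}})_{12}(V_{\mathbb{G}})_{23}^*$ to collapse the product into a single slice $(id\otimes\mu)(V_{\mathbb{G}})$, where $\mu(a)=(\eta\otimes\eta')\big(V_{\mathbb{G}}(a\otimes 1)V_{\mathbb{G}}^*\big)$ is again normal; hence the product lands back in $\widehat{S}_{V_{\mathbb{G}}}$.

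Next I would prove the direct-sum description (ii), which simultaneously furnishes a concrete model for $c_0(\widehat{\mathbb{G}})$ and re-confirms the C$^*$-algebra property. Decomposing the regular representation into irreducibles via Peter--Weyl gives $L^2(\mathbb{G}) \cong \bigoplus_{x\in Irr(\mathbb{G})} H_x\otimes\overline{H_x}$, with $V_{\mathbb{G}}$ block-diagonal and blocks the $u^x$ (each $x$ occurring with multiplicity $n_x$). Slicing the right leg then shows that $\{(id\otimes\eta)(V_{\mathbb{G}})\}$ densely spans the algebraic direct sum $\bigoplus_{x}\mathcal{B}(H_x)$, and passing to the norm closure with $\eta$ ranging over normal functionals produces exactly the restricted sum $\bigoplus^{c_0}_{x}\mathcal{B}(H_x)$. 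This also identifies $\widehat{\lambda}$ as the (faithful) defining representation on $L^2(\mathbb{G})$.

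For (i) I would define $\widehat{\Delta}$ by the stated formula and verify the axioms. It is a $*$-homomorphism since it is conjugation by a unitary, and coassociativity $(\widehat{\Delta}\otimes id)\widehat{\Delta} = (id\otimes\widehat{\Delta})\widehat{\Delta}$ follows by a direct leg-numbering computation from the pentagonal equation. The analytic points are that $\widehat{\Delta}$ takes values in $\widetilde{M}(c_0(\widehat{\mathbb{G}})\otimes c_0(\widehat{\mathbb{G}}))$, is non-degenerate, and satisfies the cancellation conditions $[\widehat{\Delta}(c_0(\widehat{\mathbb{G}}))(1\otimes c_0(\widehat{\mathbb{G}}))] = c_0(\widehat{\mathbb{G}})\otimes c_0(\widehat{\mathbb{G}}) = [\widehat{\Delta}(c_0(\widehat{\mathbb{G}}))(c_0(\widehat{\mathbb{G}})\otimes 1)]$; these I would extract from the pentagonal equation together with the density of the legs of $V_{\mathbb{G}}$ (Theorem-Definition \ref{theo.RRegularRepresentation}.v). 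To upgrade $(c_0(\widehat{\mathbb{G}}),\widehat{\Delta})$ to a locally compact quantum group I would invoke the general Baaj--Skandalis/Kustermans--Vaes machinery for the (manageable, hence regular) multiplicative unitary $V_{\mathbb{G}}$, and exhibit the Haar weights explicitly from the block picture of (ii): on each $\mathcal{B}(H_x)$ the invariant weight is the trace scaled by the modular data of $h_{\mathbb{G}}$, and left invariance then reduces to the Peter--Weyl orthogonality relations, which is precisely where discreteness (the Haar weight being a sum of block functionals) becomes manifest.

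Finally, parts (iii) and (iv) are bookkeeping on the legs. For (iii) I already know $V_{\mathbb{G}} \in M(\mathcal{K}(L^2(\mathbb{G}))\otimes C(\mathbb{G}))$, and refining the left leg to $c_0(\widehat{\mathbb{G}})$ uses the general fact that a multiplicative unitary is a multiplier of (left-leg algebra)$\otimes$(right-leg algebra), again via the pentagon and the density conditions of Step~3. For (iv) I would use the standard relation between the two regular representations through the modular conjugations, conjugating $V_{\mathbb{G}}$ by $\widehat{J}_{\mathbb{G}}\otimes J_{\mathbb{G}}$ and the flip, so that slicing the left leg of $W_{\mathbb{G}}^*$ yields $\widehat{J}_{\mathbb{G}}\,\widehat{S}_{V_{\mathbb{G}}}\,\widehat{J}_{\mathbb{G}} = \widehat{J}_{\mathbb{G}}\,c_0(\widehat{\mathbb{G}})\,\widehat{J}_{\mathbb{G}}$. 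I expect the genuinely hard part to be not any of the algebraic identities, which all reduce to the pentagon, but the verification that $\widehat{\mathbb{G}}$ is a bona fide locally compact quantum group, i.e.\ the existence and invariance of its Haar weights; I would defer the heavy Kustermans--Vaes axiomatics to the established structure theory and only pin down the weights concretely from the block decomposition.
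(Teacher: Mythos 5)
The paper itself gives no proof of this Theorem-Definition: it is standard background recalled from \cite{SkandalisUnitaries,Woronowicz} and the books cited at the head of Section \ref{sec.CQG}, and your reconstruction follows exactly the route of those sources --- the pentagonal equation for closure under multiplication and for coassociativity of $\widehat{\Delta}$, the Peter--Weyl block decomposition $L^2(\mathbb{G})\cong \bigoplus_x H_x\otimes \overline{H_x}$ with $V_{\mathbb{G}}$ acting as $(u^x)_{13}$ for the $c_0$-direct-sum model and the explicit Haar weights, and leg bookkeeping for items (iii) and (iv). Your key computation is correct: slicing the pentagon in the form $(V_{\mathbb{G}})_{12}(V_{\mathbb{G}})_{13}=(V_{\mathbb{G}})_{23}(V_{\mathbb{G}})_{12}(V_{\mathbb{G}})_{23}^*$ does give $(id\otimes\eta)(V_{\mathbb{G}})(id\otimes\eta')(V_{\mathbb{G}})=(id\otimes\mu)(V_{\mathbb{G}})$ with $\mu(a)=(\eta\otimes\eta')\big(V_{\mathbb{G}}(a\otimes 1)V_{\mathbb{G}}^*\big)$ normal, and the relation $W_{\mathbb{G}}=\check{V}_{\mathbb{G}}$ from Theorem-Definition \ref{theo.KacSystemG}, together with $Jc_0(\widehat{\mathbb{G}})J=c_0(\widehat{\mathbb{G}})$ (conjugation by $J$ implements the unitary antipode of the dual), delivers item (iv) as you indicate.

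Two side-assertions are nevertheless false as stated, and you should repair them even though neither sinks the overall argument. First, self-adjointness of $\widehat{S}_{V_{\mathbb{G}}}$ does \emph{not} ``follow from the unitarity of $V_{\mathbb{G}}$'': one has $\big((id\otimes\eta)(V)\big)^*=(id\otimes\bar{\eta})(V^*)$, and for a general multiplicative unitary there is no reason for slices of $V^*$ to lie in the closed span of slices of $V$; without a regularity- or manageability-type hypothesis, $\widehat{S}_V$ need not be a C$^*$-algebra at all. In your write-up this slip is harmless only because your step (ii) --- the orthogonality relations showing that right-leg slices of the block-diagonal $V_{\mathbb{G}}$ span $\bigoplus^{\mathrm{alg}}_x\mathcal{B}(H_x)$, whose norm closure is the manifestly self-adjoint $c_0$-sum --- independently carries the entire C$^*$-structure; the pentagon argument should be demoted to ``closure under multiplication'' only. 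Second, the parenthetical ``(manageable, hence regular)'' is a wrong implication in general: manageability does not imply regularity (the quantum $az+b$-type multiplicative unitaries are manageable but only semi-regular). Regularity of $V_{\mathbb{G}}$ in the compact case is a separate fact, recorded in the paper in Theorem-Definition \ref{theo.KacSystemG}, and in your setup it is again most cleanly extracted from the block decomposition rather than deduced from manageability; with that substitution, your appeal to the Baaj--Skandalis/Kustermans--Vaes machinery, with left invariance of the block-wise weights reduced to the orthogonality relations, is the standard and correct conclusion of the proof.
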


We denote $l^{\infty}(\widehat{\mathbb{G}})$ for the $\sigma$-weak closure of $c_0(\widehat{\mathbb{G}})$. It is a von Neumann algebra with coproduct $\widehat{\Delta}$ given by extending the formula in item \ref{EqCoprDual}) above. It has a left, resp.\ right invariant normal, semifinite faithful weight $\widehat{h}_L$, resp.\ $\widehat{h}_R$. We can identify $L^2(\mathbb{G})$ with the standard space of $l^{\infty}(\widehat{\mathbb{G}})$ in such a way that $\widehat{J}_{\mathbb{G}}$ becomes the associated modular conjugation. We further have inside $c_0(\widehat{\mathbb{G}})$ the dense $2$-sided ideal: 
\[
c_{00}(\widehat{\mathbb{G}}) \cong  \underset{x\in \text{Irr}(\mathbb{G})}{\bigoplus^{\mathrm{alg}}} \mathcal{B}(H_x),
\]
contained in the set of integrable elements for $\widehat{h}_L$ and $\widehat{h}_R$.

	\begin{theodefi}[Kac system associated to $\mathbb{G}$]\label{theo.KacSystemG}
		Let $\mathbb{G}=(C(\mathbb{G}), \Delta)$ be a compact quantum group. Then $U_{\mathbb{G}} = J_{\mathbb{G}}\widehat{J}_{\mathbb{G}} = \widehat{J}_{\mathbb{G}}J_{\mathbb{G}}\in \mathcal{B}(L^2(\mathbb{G}))$ is a symmetry, and we call the pair $(V_{\mathbb{G}}, U_{\mathbb{G}})$ the \emph{standard Kac system associated to $\mathbb{G}$}. We then denote 
\[
\rho(a) = U_{\mathbb{G}}\lambda(a)U_{\mathbb{G}},\qquad \widehat{\rho}(x) = U_{\mathbb{G}}\widehat{\lambda}(x)U_{\mathbb{G}},\qquad  a \in C(\mathbb{G}),x\in c_0(\widehat{\mathbb{G}}). 
\]

Moreover, we have $W_{\mathbb{G}} = \check{V}_{\mathbb{G}}$ where  
		$$\check{V}_{\mathbb{G}}:=\Sigma(U_{\mathbb{G}}\otimes 1)V_{\mathbb{G}}(U_{\mathbb{G}}\otimes 1)\Sigma\equiv (U_{\mathbb{G}})_{2}(V_{\mathbb{G}})_{21}(U_{\mathbb{G}})_{2} \in M(C(\mathbb{G}) \otimes \widehat{\rho}(c_0(\mathbb{G}))),$$
and $V_{\mathbb{G}}$, $\check{V}_{\mathbb{G}}$ together with
		$$\widetilde{V}_{\mathbb{G}}:=\Sigma(1\otimes U_{\mathbb{G}})V_{\mathbb{G}}(1\otimes U_{\mathbb{G}})\Sigma\equiv (U_{\mathbb{G}})_{1}(V_{\mathbb{G}})_{21}(U_{\mathbb{G}})_{1} \in M(\rho(C(\mathbb{G}))\otimes c_0(\mathbb{G})),$$
$$\widetilde{\widetilde{V}}_{\mathbb{G}}=(U_{\mathbb{G}}\otimes U_{\mathbb{G}})V_{\mathbb{G}}(U_{\mathbb{G}}\otimes U_{\mathbb{G}}) \in M(\rho(C(\mathbb{G}))\otimes \widehat{\rho}(c_0(\mathbb{G})))$$
		\begin{enumerate}[i)]
			\item are multiplicative on $L^2(\mathbb{G})$ in the sense of Baaj-Skandalis,
			\item are regular, meaning that $\mathcal{K}(L^2(\mathbb{G})) = \mathscr{C}(V_{\mathbb{G}}) = \mathscr{C}(\widetilde{V}_{\mathbb{G}})=\mathscr{C}(\check{V}_{\mathbb{G}}) = \mathscr{C}(\widetilde{\widetilde{V}}_{\mathbb{G}})$, 
			\item satisfy the following identity in $V$: $(\Sigma(1\otimes U_{\mathbb{G}})V)^3=id$.
		\end{enumerate}
		
		Moreover, the following properties hold:
		\begin{enumerate}[i)]
			\item $(V_{\mathbb{G}})_{13}(V_{\mathbb{G}})_{23}(\widetilde{V}_{\mathbb{G}})_{12}=(\widetilde{V}_{\mathbb{G}})_{12}(V_{\mathbb{G}})_{13}$ and $(\check{V}_{\mathbb{G}})_{23}(V_{\mathbb{G}})_{12}(V_{\mathbb{G}})_{13}=(V_{\mathbb{G}})_{13}(\check{V}_{\mathbb{G}})_{23}$.
			\item $(c_0(\widehat{\mathbb{G}}), \widehat{\Delta}^{cop})= (S_{\widetilde{V}_{\mathbb{G}}}, \Delta_{\widetilde{V}_{\mathbb{G}}})$; in particular, $\widehat{\Delta}^{cop}(x)=\widetilde{V}_{\mathbb{G}}(x\otimes 1)\widetilde{V}_{\mathbb{G}}^*$, for all $x\in c_0(\widehat{\mathbb{G}})$.
			\item $(C(\mathbb{G}), \Delta)= (\widehat{S}_{\check{V}_{\mathbb{G}}}, \widehat{\Delta}_{\check{V}_{\mathbb{G}}})$; in particular, $\Delta(a)=\check{V}_{\mathbb{G}}^*(1\otimes a)\check{V}_{\mathbb{G}}$, for all $a\in C(\mathbb{G})$.
			\item $(V_{\mathbb{G}})_{12}(\widetilde{V}_{\mathbb{G}})_{23}=(\widetilde{V}_{\mathbb{G}})_{23}(V_{\mathbb{G}})_{12}$; in particular, $\widetilde{V}_{\mathbb{G}}(a\otimes 1)\widetilde{V}_{\mathbb{G}}^*=a\otimes 1$, for all $a\in C(\mathbb{G})$.
			\item $(V_{\mathbb{G}})_{23}(\check{V}_{\mathbb{G}})_{12}=(\check{V}_{\mathbb{G}})_{12}(V_{\mathbb{G}})_{23}$; in particular, $\check{V}_{\mathbb{G}}(1\otimes x)\check{V}^*_{\mathbb{G}}=1\otimes x$, for all $x\in c_0(\widehat{\mathbb{G}})$.
\item $(\widetilde{V}_{\mathbb{G}})_{12}(\widetilde{\widetilde{V}}_{\mathbb{G}})_{23}=(\widetilde{\widetilde{V}}_{\mathbb{G}})_{23}(\widetilde{V}_{\mathbb{G}})_{12}$; in particular, $V_{\mathbb{G}}(x\otimes 1)V_{\mathbb{G}}^*=x\otimes 1$, for all $x\in U_{\mathbb{G}}c_0(\widehat{\mathbb{G}})U_{\mathbb{G}}$.
			\item $(U_{\mathbb{G}}C(\mathbb{G})U_{\mathbb{G}}), \Delta_{U_{\mathbb{G}}})=(\widehat{S}_{\widetilde{V}_{\mathbb{G}}}, \widehat{\Delta}_{\widetilde{V}_{\mathbb{G}}})$,  where $\Delta_{U_{\mathbb{G}}}(U_{\mathbb{G}}aU_{\mathbb{G}}):=Ad_{U_{\mathbb{G}}\otimes U_{\mathbb{G}}}(\Delta(a))$, for all $a\in C(\mathbb{G})$; in particular, $V_{\mathbb{G}}(1\otimes a)V^*_{\mathbb{G}}=1\otimes a$, for all $a\in U_{\mathbb{G}}C(\mathbb{G})U_{\mathbb{G}}$. 			
			\item $[C(\mathbb{G})\cdot c_0(\widehat{\mathbb{G}})] =\mathcal{K}(L^2(\mathbb{G}))$.
		\end{enumerate}
	\end{theodefi}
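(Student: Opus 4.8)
The statement asserts that $C(\mathbb{G})$ and the compact-type algebra $c_0(\widehat{\mathbb{G}})$, both faithfully realised on $L^2(\mathbb{G})$, together fill up $\mathcal{K}(L^2(\mathbb{G}))$ once one passes to closed linear spans of products. I would prove the two inclusions separately, the nontrivial one resting on regularity.

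The inclusion $\overline{span}\{C(\mathbb{G})\cdot c_0(\widehat{\mathbb{G}})\}\subseteq\mathcal{K}(L^2(\mathbb{G}))$ does not require regularity. Using the isomorphism $c_0(\widehat{\mathbb{G}})\cong\bigoplus^{c_0}_{x\in Irr(\mathbb{G})}\mathcal{B}(H_x)$ together with the Peter--Weyl decomposition $L^2(\mathbb{G})\cong\bigoplus_{x}H_x\otimes\overline{H_x}$, each finite-dimensional block $\mathcal{B}(H_x)$ is represented as $\mathcal{B}(H_x)\otimes 1_{\overline{H_x}}$, hence by operators of rank at most $n_x^2$. Thus $c_{00}(\widehat{\mathbb{G}})$ consists of finite-rank operators, and since it is norm-dense in $c_0(\widehat{\mathbb{G}})$ we obtain $c_0(\widehat{\mathbb{G}})\subseteq\mathcal{K}(L^2(\mathbb{G}))$. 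As $\mathcal{K}(L^2(\mathbb{G}))$ is a closed two-sided ideal of $\mathcal{B}(L^2(\mathbb{G}))$ and $C(\mathbb{G})\subseteq\mathcal{B}(L^2(\mathbb{G}))$, the products and their closed span remain inside $\mathcal{K}(L^2(\mathbb{G}))$.

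For the reverse inclusion I would invoke the regularity property established above, namely $\mathscr{C}(V_{\mathbb{G}})=\mathcal{K}(L^2(\mathbb{G}))$. It therefore suffices to realise every generator of $\mathscr{C}(V_{\mathbb{G}})$ inside $\overline{span}\{C(\mathbb{G})\cdot c_0(\widehat{\mathbb{G}})\}$. Writing the canonical generators $a=(\omega\otimes id)(V_{\mathbb{G}})\in C(\mathbb{G})$ and $\widehat{b}=(id\otimes\psi)(V_{\mathbb{G}})\in c_0(\widehat{\mathbb{G}})$, a product is a middle-leg slice $a\widehat{b}=(\omega\otimes id\otimes\psi)\big((V_{\mathbb{G}})_{12}(V_{\mathbb{G}})_{23}\big)$. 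The plan is to transform $(V_{\mathbb{G}})_{12}(V_{\mathbb{G}})_{23}$ by the pentagon equation of Theorem-Definition \ref{theo.RRegularRepresentation} together with $\Delta(a)=V_{\mathbb{G}}(a\otimes 1)V_{\mathbb{G}}^*$, so that the closed span of these middle-leg slices is recognised as a slice-space $\mathscr{C}(\cdot)$ attached to one of the regular multiplicative unitaries of the standard Kac system, at which point regularity closes the argument. As a sanity check one sees directly that the trivial block of $c_0(\widehat{\mathbb{G}})$ is the rank-one projection $\theta_{\xi_{\mathbb{G}},\xi_{\mathbb{G}}}$ onto $\mathbb{C}\xi_{\mathbb{G}}$, so that $\lambda(a)\theta_{\xi_{\mathbb{G}},\xi_{\mathbb{G}}}=\theta_{\Lambda(a),\xi_{\mathbb{G}}}$ already places all rank-one operators into $\xi_{\mathbb{G}}$ inside the span (as $\xi_{\mathbb{G}}$ is cyclic for $C(\mathbb{G})$); this alone is visibly insufficient, however, since right multiplication by $c_0(\widehat{\mathbb{G}})$ cannot move $\xi_{\mathbb{G}}$ out of the trivial block, which is precisely why regularity is unavoidable.

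The main obstacle is exactly this identification of the product span with a regular slice-space. One must carry out the pentagon manipulation while tracking the leg re-orderings encoded by the flip $\Sigma$ in the definition of $\mathscr{C}$, and one must distinguish the order $C(\mathbb{G})\cdot c_0(\widehat{\mathbb{G}})$ from $c_0(\widehat{\mathbb{G}})\cdot C(\mathbb{G})$; the symmetry $U_{\mathbb{G}}$ and the commutation relations recorded above are the natural bookkeeping tools. Conceptually this is the compact/discrete quantum group instance of the Baaj--Skandalis principle that for a regular multiplicative unitary the two legs multiply up to all of the compact operators, and once the slice-space identification is secured the regularity property finishes the proof.
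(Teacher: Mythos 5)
You have proved only a small fragment of the statement, and the fragment you do treat is circular at its pivot. Theorem-Definition \ref{theo.KacSystemG} asserts, besides the density statement viii), that $U_{\mathbb{G}} = J_{\mathbb{G}}\widehat{J}_{\mathbb{G}}$ is a symmetry, that $W_{\mathbb{G}} = \check{V}_{\mathbb{G}}$, that $V_{\mathbb{G}}$, $\check{V}_{\mathbb{G}}$, $\widetilde{V}_{\mathbb{G}}$, $\widetilde{\widetilde{V}}_{\mathbb{G}}$ are multiplicative \emph{and regular}, that $(\Sigma(1\otimes U_{\mathbb{G}})V)^3 = id$, and the commutation relations i)--vii); your proposal addresses none of these. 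This matters doubly, because your argument for the hard inclusion of viii) ``invokes the regularity property established above'' --- but regularity of $V_{\mathbb{G}}$ is item ii) of the very statement under proof, and it is established nowhere earlier in the paper: Theorem-Definition \ref{theo.RRegularRepresentation} records only multiplicativity. (The paper itself gives no in-text proof of any part of this Theorem-Definition; it is disposed of by citation to \cite{Timmermann} and \cite{SkandalisUnitaries}, so every ingredient must be either proven or explicitly cited, not assumed.) As written, your proof of viii) rests on an unproven component of the same theorem.

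Within item viii) there is a second, independent gap: the crux is announced rather than performed. Your reduction $a\widehat{b} = (\omega\otimes id\otimes\psi)\big((V_{\mathbb{G}})_{12}(V_{\mathbb{G}})_{23}\big)$ is correct, and the easy inclusion is sound ($c_{00}(\widehat{\mathbb{G}})$ acts blockwise by finite-rank operators, so $c_0(\widehat{\mathbb{G}})\subseteq\mathcal{K}(L^2(\mathbb{G}))$, which is an ideal), but the identification of the closed span of these middle-leg slices with a slice space $\mathscr{C}(\cdot)$ of a regular multiplicative unitary --- which you yourself flag as ``the main obstacle'' --- is the entire content of the claim. After applying the pentagon relation $(V_{\mathbb{G}})_{23}(V_{\mathbb{G}})_{12} = (V_{\mathbb{G}})_{12}(V_{\mathbb{G}})_{13}(V_{\mathbb{G}})_{23}$, neither $(V_{\mathbb{G}})_{12}$ nor $(V_{\mathbb{G}})_{23}$ can be absorbed into $\omega$ or $\psi$, since both factors touch the output leg; this is precisely the delicate Baaj--Skandalis bookkeeping behind $[\widehat{S}_V S_V] = \mathcal{K}(H)$ for regular $V$, and it must be executed, not gestured at. Two repairs are available. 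The cheap one: the two orders of the product span are exchanged by taking adjoints, since $C(\mathbb{G})$, $c_0(\widehat{\mathbb{G}})$ and $\mathcal{K}(L^2(\mathbb{G}))$ are all $*$-closed, so no appeal to $U_{\mathbb{G}}$ is needed for that particular bookkeeping. The substantive one: in the compact case a fully explicit route exists and is demonstrated in this very paper in the twisted setting --- the computations of Theorem \ref{theo.TwistedRegularity}, using the orthogonality relations of Theorem \ref{theo.TwistedOrthogonalityRel} and the Peter--Weyl decomposition of Theorem \ref{theo.TwistedPeterWeyL2}, establish both regularity of $V^{\Omega}$ and the density $\overline{span}\{U_{\mathbb{G}}C(\mathbb{G})U_{\mathbb{G}}\cdot C^*_r(\mathbb{G},\Omega)\}=\mathcal{K}(L^2(\mathbb{G}))$. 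Running the untwisted specialization $\Omega = 1\otimes 1$ of exactly those matrix-coefficient computations supplies the regularity your sketch presupposes and, after conjugating by $U_{\mathbb{G}}$, the density statement of item viii); making your plan rigorous amounts to carrying this out, and the remaining items of the Theorem-Definition still have to be verified or cited separately.
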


We refer to \cite{Timmermann} or \cite{SkandalisUnitaries} for more 
details about these computations.

\subsection{Actions of compact and discrete quantum groups}

In this section we recall elementary notions and results concerning actions of quantum groups. 

		\begin{defi}\label{defi.CompactQuantumAction}
			Let $\mathbb{G}=(C(\mathbb{G}),\Delta)$ be a compact quantum group and $A$ a C$^*$-algebra. A \emph{left (continuous) action} of $\mathbb{G}$ on $A$ (or a \emph{right co-action} of $C(\mathbb{G})$ on $A$) is a non-degenerate $*$-homomorphism $\delta: A\longrightarrow A\otimes C(\mathbb{G})$ such that: 
\begin{enumerate}[i)]
\item $\delta$ intertwines the co-multiplication, meaning that $(\delta\otimes id_{C(\mathbb{G})})\circ\delta = (id_A\otimes \Delta)\circ\delta$ and 
\item $\delta$ satisfies the density condition $[\delta(A)(1\otimes C(\mathbb{G}))] =A\otimes C(\mathbb{G})$.
\end{enumerate}
 We write $\mathbb{G}\overset{\delta}{\curvearrowright} A$. We say that $(A,\delta)$ is a left $\mathbb{G}$-C$^*$-algebra if moreover $\delta$ is injective.

If $M$ is a von Neumann algebra, then a left (measurable) action of $\mathbb{G}$ on $M$ is a normal unital $*$-homomorphism $\delta: M\longrightarrow M\overline{\otimes} L^{\infty}(\mathbb{G})$ intertwining the co-multiplication (the density condition being superfluous in this case).
\end{defi}

\begin{ex}
The co-multiplication of any compact quantum group $\mathbb{G}$ defines an action of $\mathbb{G}$ on its defining C$^*$-algebra. This action is called the \emph{regular action of $\mathbb{G}$}.
\end{ex}

		Similarly, we can define a \emph{right} action of $\mathbb{G}$ on $A$ (or a \emph{left} co-action of $C(\mathbb{G})$ on $A$) as a non-degenerate $*$-homomorphism $\delta: A\longrightarrow C(\mathbb{G})\otimes A$ satisfying the analogous properties of the preceding definition. In the present article, an action of a compact quantum group $\mathbb{G}$ is supposed to be a \emph{left} one unless the contrary is explicitly indicated. Hence, we refer to such actions simply as \emph{actions of $\mathbb{G}$}.	Observe however that if $(A, \delta)$ is a \emph{left} $\mathbb{G}$-C$^*$-algebra, then $(A^{op}, \overline{\delta})$ is a \emph{right} $\mathbb{G}$-C$^*$-algebra where $A^{op}$ denotes the opposite C$^*$-algebra of $A$ and 
$\overline{\delta}: A^{op}\longrightarrow C(\mathbb{G})\otimes A^{op}$ is defined by:
\begin{equation}\label{eq.oppcoact}
\overline{\delta}:=(R\otimes id)\circ\Sigma\circ\delta,
\end{equation}
 where $R$ denotes the unitary antipode of $\mathbb{G}$.
 
\begin{rem}
Our conventions for the left/right terminology are motivated by the following: when $\mathbb{G} = G$ for an ordinary compact group $G$, we obtain an honest left action of $G$ on $A$ from a right coaction $\delta: A \rightarrow A\otimes C(G)$ by evaluating the second leg in $g\in G$: 
\[
G\times A \rightarrow A,\qquad (g,a)\mapsto \delta_g(a) :=  (\mathrm{id}\otimes \mathrm{ev}_g)\delta(a). 
\]  
\end{rem} 

\begin{defi}\label{defi.condExp}
 If $(A, \delta)$ is a $\mathbb{G}$-C$^*$-algebra, we denote $A^\delta = \{a\in A\mid \delta(a) = a\otimes 1\}$. We call $(A,\delta)$ \emph{ergodic} if $A$ is unital and $A^{\delta}  = \mathbb{C}1_A$. 

In general, we denote  $E_\delta: A\longrightarrow A^\delta$ for the $\delta$-invariant conditional expectation given by $E_{\delta}(a)=(id_A\otimes h_{\mathbb{G}})\delta(a)$, for all $a\in A$.
\end{defi} 

\begin{rem}\label{rem.nondegact} 
Recall that we assume $\mathbb{G}$ to be a reduced compact quantum group, so $E_{\delta}$ is automatically faithful. Moreover, if $(u_i)_{i}$ is a (bounded) approximate unit for $A$, then by non-degeneracy of $\delta$ we have that $\big(\delta(u_i)\big)_{i}$ is an approximate unit for $A\otimes C(\mathbb{G})$. Thanks to the continuity of $id\otimes h_{\mathbb{G}}$, the operators $v_i:=(id\otimes h_{\mathbb{G}})\big(\delta(u_i)\big)$ form an approximate unit for $A$ inside $A^{\delta}$, and we have $[AA^{\delta}] = [AA^{\delta}] = A$. In particular, if $A$ acts non-degenerately on a Hilbert space $H$, then also $A^{\delta}$ acts nondegenerately on $H$. 
\end{rem} 

Given a $\mathbb{G}$-C$^*$-algebra $A$, we can equip $A$ with the pre-Hilbert $A^\delta$-module structure given by $\langle a, b\rangle_{E_\delta}:=E_\delta(a^*b)$, for all $a,b\in A$. We denote by $L^2(A, E_\delta)$ the completion of $A$ with respect to the inner product $\langle\cdot,\cdot\rangle_{E_\delta}$. When $\delta$ is ergodic, we have $E_\delta(a)=\varphi_{\delta}(a)1_A$ for $\varphi_{\delta}$ a (unique) $\delta$-invariant state on $A$. We then write $L^2(A)=L^2(A, \varphi)$ for the Hilbert space completion of $A$ with respect to the inner product $\langle a, b\rangle_{\varphi}:=\varphi(a^*b)$, for all $a,b\in A$.
	
	The notion of action of $\mathbb{G}$ can be defined also for Hilbert modules.
	\begin{defi}
		Let $\mathbb{G}=(C(\mathbb{G}),\Delta)$ be a compact quantum group and $(A, \delta)$ a $\mathbb{G}$-C$^*$-algebra. Let $E$ be a Hilbert $A$-module. A left action of $\mathbb{G}$ on $E$ (or a right co-action of $C(\mathbb{G})$ on $E$) is a linear map $\delta_E: E\longrightarrow E\otimes C(\mathbb{G})$ such that: 
\begin{enumerate}[i)]
\item $\delta_E(\xi\cdot a)=\delta_E(\xi)\delta(a)$ for all $\xi\in E$, $a\in A$; 
\item $\delta(\langle \xi, \eta\rangle)=\langle \delta_E(\xi), \delta_E(\eta)\rangle$ for all $\xi,\eta\in E$;
\item $\delta_E$ intertwines the co-multiplication meaning that $(\delta_E\otimes id_{C(\mathbb{G})})\circ\delta_E = (id_A\otimes \Delta)\circ\delta_E$;
\item The density conditions $[\delta_E(E)(1\otimes C(\mathbb{G}))]=[(1\otimes C(\mathbb{G}))\delta_E(E)] = E\otimes C(\mathbb{G})$ are satisfied.
\end{enumerate}
We write $ \mathbb{G}\overset{\delta_E}{\curvearrowright}E$. We say that $(E,\delta_E)$ is a left $\mathbb{G}$-equivariant Hilbert $A$-module if moreover $\delta_E$ is injective.
	\end{defi}

		If $(E, \delta_E)$ is a $\mathbb{G}$-equivariant Hilbert $A$-module as above, then $\mathcal{K}_A(E)$ is a $\mathbb{G}$-C$^*$-algebra with action $\delta_{\mathcal{K}_A(E)}$ defined by $\delta_{\mathcal{K}_A(E)}(\theta_{\xi, \eta})=\delta_E(\xi)\delta_E(\eta)^*\in \mathcal{K}_A(E)\otimes C(\mathbb{G})$, for all $\xi, \eta\in E$ where $\theta_{\xi, \eta}$ denotes the corresponding rank one operator in $E$. By abuse of notation, we still denote by $\delta_{\mathcal{K}_A(E)}$ the extension of this homomorphism to $\mathcal{L}_A(E) = M(\mathcal{K}_A(E)) \rightarrow M(\mathcal{K}_{A}(E) \otimes C(\mathbb{G}))$. The latter is however not in general an action of $\mathbb{G}$ on $\mathcal{L}_A(E)$. 

Recall further that giving an action $\delta_E$ is equivalent to giving a unitary operator $V_E\in\mathcal{L}_{A\otimes C(\mathbb{G})}\big(E\underset{\delta}{\otimes}(A\otimes C(\mathbb{G})), E\otimes C(\mathbb{G})\big)$ such that $\delta_E(\xi)=V_E\circ T_\xi$ for all $\xi\in E$ where $T_\xi\in\mathcal{L}_{A\otimes C(\mathbb{G})}(A\otimes C(\mathbb{G}), E\underset{\delta}{\otimes}(A\otimes C(\mathbb{G})))$ is such that $T_\xi(x)=\xi\underset{\delta}{\otimes} x$, for all $x\in A\otimes C(\mathbb{G})$. One calls $V_E$ the \emph{admissible operator for $(E, \delta_E)$}. Moreover, we have $\delta_{\mathcal{K}_A(E)}=Ad_{V_E}$. We refer to \cite{BaajSkandalisQuantumKK} for more details.
	
	Next, we recall the following useful result (recall the notations from Definition \ref{defi.condExp}).

	\begin{pro}\label{theo.UnitaryImplementation}
		Let $\mathbb{G}$ be a compact quantum group. Let $(A, \delta)$ be a unital $\mathbb{G}$-C$^*$-algebra. If $\delta$ is ergodic, then there exists a representation $V_A\in M(\mathcal{K}(L^2(A))\otimes C(\mathbb{G}))$ of $\mathbb{G}$ such that $\delta(a)=V_A(a\otimes 1)V^*_A$, for all $a\in A$. 
	\end{pro}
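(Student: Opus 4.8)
The plan is to build $V_A$ directly on the GNS space of the invariant state and to extract all of its properties from the action axioms. Since $\delta$ is ergodic and $\mathbb{G}$ is reduced, $E_\delta=\varphi_\delta(\cdot)1_A$ for a faithful invariant state $\varphi_\delta$ (Remark \ref{rem.nondegact}); write $\pi_A$ for the GNS representation of $A$ on $L^2(A)$, $\Lambda_A\colon A\to L^2(A)$ for the canonical map and $\xi_A=\Lambda_A(1_A)$ for its cyclic vector. The one ingredient I would record first is the \emph{strong} invariance $(\varphi_\delta\otimes id)\delta(a)=\varphi_\delta(a)1$, which follows from the Peter--Weyl decomposition of the ergodic action (the invariant state annihilates every non-trivial spectral subspace). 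On the dense subspace spanned by the vectors $\Lambda_A(a)\otimes\Lambda(x)$ I would then set
\[
V_A\big(\Lambda_A(a)\otimes\Lambda(x)\big):=(\Lambda_A\otimes\Lambda)\big(\delta(a)(1\otimes x)\big)=(\pi_A\otimes\lambda)(\delta(a))(\xi_A\otimes\Lambda(x)),
\]
which is the exact analogue, with $\Delta$ replaced by $\delta$ and $\xi_{\mathbb{G}}$ by $\xi_A$, of the formula defining $V_{\mathbb{G}}$ in Theorem-Definition \ref{theo.RRegularRepresentation}.

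Next I would check that $V_A$ is a well-defined isometry. Using $\delta(a^*b)=\delta(a)^*\delta(b)$ together with the strong invariance one computes
\[
\big\langle V_A(\Lambda_A(a)\otimes\Lambda(x)),V_A(\Lambda_A(b)\otimes\Lambda(y))\big\rangle=(\varphi_\delta\otimes h_{\mathbb{G}})\big((1\otimes x^*)\delta(a^*b)(1\otimes y)\big)=\varphi_\delta(a^*b)\,h_{\mathbb{G}}(x^*y),
\]
the last equality coming from applying $\varphi_\delta$ to the first leg first; this is precisely $\langle\Lambda_A(a)\otimes\Lambda(x),\Lambda_A(b)\otimes\Lambda(y)\rangle$, so $V_A$ preserves inner products and extends to an isometry of $L^2(A)\otimes L^2(\mathbb{G})$. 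Surjectivity is then immediate from the density condition $[\delta(A)(1\otimes C(\mathbb{G}))]=A\otimes C(\mathbb{G})$: the range of $V_A$ is the closure of $(\Lambda_A\otimes\Lambda)\big(\delta(A)(1\otimes C(\mathbb{G}))\big)$, and since $\Lambda_A\otimes\Lambda$ is norm-contractive with dense range this closure is all of $L^2(A)\otimes L^2(\mathbb{G})$. Hence $V_A$ is unitary.

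The two algebraic properties then follow by short computations on elementary tensors. For the implementation, evaluating $V_A(\pi_A(a)\otimes 1)$ and $(\pi_A\otimes\lambda)(\delta(a))V_A$ on $\Lambda_A(b)\otimes\Lambda(x)$ and using $\delta(ab)=\delta(a)\delta(b)$ shows both equal $(\Lambda_A\otimes\Lambda)\big(\delta(a)\delta(b)(1\otimes x)\big)$, so $V_A(a\otimes 1)=\delta(a)V_A$ and therefore $\delta(a)=V_A(a\otimes 1)V_A^*$ under the identification of $\delta(a)$ with $(\pi_A\otimes\lambda)(\delta(a))$. The representation identity $(id\otimes\Delta)(V_A)=(V_A)_{12}(V_A)_{13}$ is obtained exactly as for $V_{\mathbb{G}}$ itself, by evaluating both sides on elementary tensors and invoking the coassociativity $(\delta\otimes id)\delta=(id\otimes\Delta)\delta$ in place of that of $\Delta$.

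The main obstacle is the last assertion, namely that $V_A$ is a genuine multiplier of $\mathcal{K}(L^2(A))\otimes C(\mathbb{G})$ rather than merely a unitary on $L^2(A)\otimes L^2(\mathbb{G})$; this is where compactness of $\mathbb{G}$ must be used. The defining formula shows that the second-leg action of $V_A$ is by the left multiplications $\lambda(c_i)$ arising from $\delta(a)=\sum_i b_i\otimes c_i$, so $V_A$ commutes with $1\otimes L^{\infty}(\mathbb{G})'$ and hence $V_A\in\mathcal{B}(L^2(A))\,\overline{\otimes}\,L^{\infty}(\mathbb{G})$; together with the previous paragraph this exhibits $V_A$ as a unitary corepresentation of $(L^{\infty}(\mathbb{G}),\Delta)$ on $L^2(A)$. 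I would then invoke Woronowicz's Peter--Weyl theory: every such corepresentation of a compact quantum group splits as an orthogonal direct sum of finite-dimensional irreducible unitary representations $u^x\in\mathcal{B}(H_x)\otimes C(\mathbb{G})$, $x\in Irr(\mathbb{G})$ (each isotypic component being finite-dimensional by ergodicity). As each summand lies in $\mathcal{B}(H_x)\otimes C(\mathbb{G})$ with $H_x$ finite-dimensional and its matrix coefficients in $Pol(\mathbb{G})\subseteq C(\mathbb{G})$, the direct sum lies in $M\big(\mathcal{K}(L^2(A))\otimes C(\mathbb{G})\big)$, giving the required continuity and completing the proof.
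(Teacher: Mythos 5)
Your proof is correct, but it reaches the one genuinely delicate conclusion of the proposition --- that $V_A$ lies in $M(\mathcal{K}(L^2(A))\otimes C(\mathbb{G}))$ rather than merely in $\mathcal{B}(L^2(A)\otimes L^2(\mathbb{G}))$ --- by a different route than the paper. The paper never descends to the Hilbert-space level: it defines $a\otimes x\mapsto \delta(a)(1_A\otimes x)$ on the pre-Hilbert $C(\mathbb{G})$-module $A\otimes C(\mathbb{G})$, where your invariance computation shows the map is isometric for the $C(\mathbb{G})$-valued inner product $\langle a\otimes x, b\otimes y\rangle = x^*\varphi_\delta(a^*b)y$, and the density condition $[\delta(A)(1\otimes C(\mathbb{G}))]=A\otimes C(\mathbb{G})$ gives dense range; since a surjective isometry of Hilbert modules is automatically adjointable and unitary, membership in $\mathcal{L}_{C(\mathbb{G})}(L^2(A)\otimes C(\mathbb{G}))\cong M(\mathcal{K}(L^2(A))\otimes C(\mathbb{G}))$ comes for free, with no representation theory at all. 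You instead prove scalar-valued isometry, upgrade to $\mathcal{B}(L^2(A))\,\overline{\otimes}\,L^{\infty}(\mathbb{G})$ via the commutant argument (which is correct: $V_A(\Lambda_A(a)\otimes\eta)=(\pi_A\otimes\lambda)(\delta(a))(\xi_A\otimes\eta)$ for all $\eta$, so $V_A$ commutes with $1\otimes \lambda(L^\infty(\mathbb{G}))'$), and then invoke Woronowicz's Peter--Weyl theorem to split $V_A$ into finite-dimensional blocks with coefficients in $Pol(\mathbb{G})$; this costs nontrivial input where the module formulation needs none, but it buys the explicit decomposition of $V_A$ into finite-dimensional subrepresentations, which is essentially the content of the Boca reference the paper cites anyway. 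Two small economies you could make: the ``strong'' invariance $(\varphi_\delta\otimes id)\delta(a)=\varphi_\delta(a)1$ does not require the spectral decomposition of the ergodic action --- applying $E_\delta\otimes id$ to $\delta(a)$ and using $(\delta\otimes id)\delta=(id\otimes\Delta)\delta$ with invariance of $h_{\mathbb{G}}$ gives it in one line (it is exactly what the paper means by calling $\varphi_\delta$ $\delta$-invariant); and the corepresentation property is best verified, as in the paper, in the form $(V_A)_{12}(V_A)_{13}(V_{\mathbb{G}})_{23}=(V_{\mathbb{G}})_{23}(V_A)_{12}$, which makes sense on elementary tensors \emph{before} one knows the legs of $V_A$, whereas in your ordering $(id\otimes\Delta)(V_A)$ is only defined after the commutant step.
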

	\begin{proof}
		Consider the map $A\otimes C(\mathbb{G})\overset{V_A}{\longrightarrow} A\otimes C(\mathbb{G})$ such that $a\otimes x\mapsto \delta(a)(1_A\otimes x)$. By $\delta$-invariance of $\varphi_{\delta}$, this map is isometric with respect to the natural pre-Hilbert $C(\mathbb{G})$-module structure on $A\otimes C(\mathbb{G})$. Moreover, since $\delta$ is an action of $\mathbb{G}$ on $A$, we know that $[\delta(A)(1\otimes C(\mathbb{G}))]=A\otimes C(\mathbb{G})$, that is, $V_A$ has dense range. Accordingly, $V_A$ extends to a unitary operator in $M(\mathcal{K}(L^2(A))\otimes C(\mathbb{G}))$, which we still denote by $V_A$. 

The relation $\delta(a)V_A=V_A(a\otimes 1)$, for all $a\in A$ is obvious. The coaction property for $\delta$ straightforwardly leads to $(V_A)_{12}(V_A)_{13}(V_{\mathbb{G}})_{23}= (V_{\mathbb{G}})_{23}(V_A)_{12}$, so $(id\otimes \Delta)V_A=(V_A)_{12}(V_A)_{13}$ and $V_A$ is a representation of $\mathbb{G}$ on $L^2(A)$ (see \cite{Boca} for more details).
	\end{proof}
	\begin{rem}\label{rem.ActExtvN}
		A similar result can be obtained when $\delta$ is not ergodic by considering instead the Hilbert $A^\delta$-module $L^2(A, E_\delta)$. One also has a corresponding result in the von Neumann algebraic setting: if $M \rightarrow M\overline{\otimes} L^{\infty}(\mathbb{G})$ on a von Neumann algebra (say with separable predual), we can find a $\mathbb{G}$-invariant state on $M$ leading to a unitary $V_M: L^2(M) \otimes L^2(\mathbb{G}) \rightarrow L^2(M)\otimes L^2(\mathbb{G})$ as above. This map is independent of the chosen state \cite{VaesUnitImpl}.
\end{rem}

We also recall the notion of action for discrete quantum groups.

	\begin{defi}\label{defi.QuantumAction}
		Let $\mathbb{G}$ be a compact quantum group and $A$ a C$^*$-algebra. A \emph{left action} of $\widehat{\mathbb{G}}$ on $A$ (or a right co-action of $c_0(\widehat{\mathbb{G}})$ on $A$) is a non-degenerate $*$-homomorphism $\delta: A\longrightarrow \widetilde{M}(A\otimes c_0(\widehat{\mathbb{G}}))$ such that:
\begin{enumerate}[i)]
\item $\delta$ intertwines the co-multiplication meaning that $(\delta\otimes id)\delta=(id\otimes \widehat{\Delta})\delta$ and
\item $\delta$ satisfies the cancellation property meaning that $[\delta(A)(1\otimes c_0(\widehat{\mathbb{G}}))]=A\otimes c_0(\widehat{\mathbb{G}})$.
\end{enumerate} 
We say that $(A,\delta)$ is a left $\widehat{\mathbb{G}}$-C$^*$-algebra if moreover $\delta$ is injective.
		\end{defi}
Again, one has the analogous notion of a right action of $\widehat{\mathbb{G}}$.  In the following, an action of a discrete quantum group $\widehat{\mathbb{G}}$ is supposed to be a \emph{left} one unless the contrary is explicitly indicated.

	\subsection{Torsion phenomena for discrete quantum groups}
	
	In this section we recall elementary notions and results concerning the notion of torsion-freeness for discrete quantum groups. It was initially introduced by R. Meyer and R. Nest and it can be characterized as in Theorem-Definition \ref{theo.TorsionFreeMeyerNest} below (see \cite{MeyerNestTorsion,MeyerNestHomological2} and \cite{VoigtBaumConnesAutomorphisms} for more details). 
		\begin{defi}\label{defi.CompactQuantumActionTor}
			Let $\mathbb{G}=(C(\mathbb{G}),\Delta)$ be a compact quantum group. A torsion action of $\mathbb{G}$ or a torsion for $\widehat{\mathbb{G}}$ is a left $\mathbb{G}$-C$^*$-algebra $(A, \delta)$ with $A$ finite dimensional and $\delta$ ergodic. We say that $(A, \delta)$ is a torsion action of \emph{permutation type} if $A$ is not simple. We say that $(A, \delta)$ is a torsion action of \emph{projective type} if $A$ is simple. The set of all equivariant Morita equivalence classes of torsion actions of $\mathbb{G}$ is denoted by $\text{Tor}(\widehat{\mathbb{G}})$.
	\end{defi}

\begin{rem}
If $\widehat{\mathbb{G}}$ is a classical discrete group $\Gamma$, then $\text{Tor}(\Gamma)$ detects the torsion in $\Gamma$, hence the notational use in general of the dual discrete quantum group. 
\end{rem}

	\begin{exs}\label{ex.ActionsCQG}
		\begin{enumerate}
			\item The trivial action $(\mathbb{C}, trv.)$ is of course a torsion action of any compact quantum group $\mathbb{G}$. 
			\item If $\widehat{\mathbb{H}}<\widehat{\mathbb{G}}$ is a discrete quantum subgroup of $\widehat{\mathbb{G}}$, we have by definition an inclusion of C$^*$-algebras $C(\mathbb{H})\overset{\iota}{\subset} C(\mathbb{G})$ intertwining the corresponding co-multiplications. Therefore, if $(B,\beta)$ is a $\mathbb{H}$-C$^*$-algebra, we can obviously extend $\beta$ (by composing with $\iota$) into an action of $\mathbb{G}$ on $B$, which is denoted by $\widetilde{\beta}$. We denote by $Ind^{\mathbb{G}}_{\mathbb{H}}(B, \beta)$ the same C$^*$-algebra $B$ but equipped with the composition $\widetilde{\beta}:=(id_B\otimes\iota)\circ\beta$ as an action of $\mathbb{G}$. Observe that if $(B,\beta)$ is a torsion action of $\mathbb{H}$, then $Ind^{\mathbb{G}}_{\mathbb{H}}(B, \beta)$ is a torsion action of $\mathbb{G}$.
			\item Let $\widehat{\mathbb{G}}$ be a discrete quantum group that has a non-trivial finite discrete quantum subgroup, say $\widehat{\mathbb{H}}<\widehat{\mathbb{G}}$. Then $(C(\mathbb{H}), \Delta_{\mathbb{H}})$ defines a non-trivial torsion action of $\mathbb{G}$.
			\item If $u\in \mathcal{B}(H_u)\otimes C(\mathbb{G})$ is a representation of $\mathbb{G}$ on a finite dimensional Hilbert space $H_u$, then it defines an action of $\mathbb{G}$ on $\mathcal{B}(H_u)$ given by: 
			$$
			\begin{array}{rcclccl}
				Ad_u:&\mathcal{B}(H_u)& \longrightarrow &\mathcal{B}(H_u)\otimes C(\mathbb{G}),&T & \longmapsto &Ad_u(T):=u(T\otimes 1_{C(\mathbb{G})})u^*.
			\end{array}
			$$
			
			It is clear that $\mathcal{B}(H)^{Ad_u}=End_{\mathbb{G}}(u)$. Hence, the pair $(\mathcal{B}(H_u), Ad_u)$ is a torsion action of $\mathbb{G}$ if and only if $u$ is irreducible.
			\item Consider the rotation group $SO(3)$. Recall that $SO(3)\cong SU(2)/\mathbb{Z}_2$, where $\mathbb{Z}_2\cong Z(SU(2))$ is the center of $SU(2)$. Then the conjugation action of $SU(2)$ on $\mathcal{M}_2(\mathbb{C})$ descends to a torsion action of projective type $\delta$ of $SO(3)$ on $\mathcal{M}_2(\mathbb{C})$. Similar considerations can be made for $SO_q(3)$ with $q\in (-1,1)\backslash \{0\}$ (see \cite{SoltanSO3} for more details).					
		\end{enumerate}
	\end{exs}
	
	The following characterisation of torsion-freeness for discrete quantum groups is well-known. A full proof can be found in \cite[Theorem 1.6.1.4]{RubenThesis}.
	\begin{theodefi}\label{theo.TorsionFreeMeyerNest}
			Let $\mathbb{G}$ be a compact quantum group. We say that $\widehat{\mathbb{G}}$ is torsion-free if one of the following equivalent conditions hold:
				\begin{enumerate}[i)]
					\item Any torsion action of $\mathbb{G}$ is $\mathbb{G}$-equivariantly Morita equivalent to the trivial $\mathbb{G}$-C$^*$-algebra $\mathbb{C}$.
					\item Every finite dimensional $\mathbb{G}$-C$^*$-algebra is $\mathbb{G}$-equivariantly isomorphic to a direct sum of $\mathbb{G}$-C$^*$-algebras which are $\mathbb{G}$-equivariantly Morita equivalent to the trivial $\mathbb{G}$-C$^*$-algebra $\mathbb{C}$.
					\item Every torsion action of $\mathbb{G}$ of permutation type and every torsion action of $\mathbb{G}$ of projective type is $\mathbb{G}$-equivariantly Morita equivalent to the trivial $\mathbb{G}$-C$^*$-algebra $\mathbb{C}$.
				\end{enumerate}
		\end{theodefi}
	In view of characterisation $(iii)$ of the previous theorem, we give the following definition. 
	\begin{defi}
		Let $\mathbb{G}$ be a compact quantum group. We say that $\widehat{\mathbb{G}}$ is \emph{permutation torsion-free} if every torsion action of $\mathbb{G}$ of permutation type is $\mathbb{G}$-equivariantly Morita equivalent to the trivial $\mathbb{G}$-C$^*$-algebra $\mathbb{C}$. We say that $\widehat{\mathbb{G}}$ is \emph{projective torsion-free} if every torsion action of $\mathbb{G}$ of projective type is $\mathbb{G}$-equivariantly Morita equivalent to the trivial $\mathbb{G}$-C$^*$-algebra $\mathbb{C}$.
	\end{defi}
	\begin{ex}\label{ex.ProjectiveTorsionSO(3)}
		The $SO(3)$-C$^*$-algebra $(\mathcal{M}_2(\mathbb{C}), \delta)$ introduced in Examples \ref{ex.ActionsCQG} is a torsion action of $SO(3)$ of projective type. Notice that $(\mathcal{M}_2(\mathbb{C}), \delta)$ is not $SO(3)$-equivariantly Morita equivalent to $\mathbb{C}$, as there are no irreducible $2$-dimensional $SO(3)$-representations to implement this equivalence. Hence, $\widehat{SO(3)}$ is not torsion-free. Moreover, $(\mathcal{M}_2(\mathbb{C}), \delta)$ is the only, up to equivariant Morita equivalence, non-trivial torsion action of $SO(3)$. Similar considerations can be made for $SO_q(3)$ with $q\in (-1,1)\backslash \{0\}$; namely $\widehat{SO_q(3)}$ is not torsion-free and $SO_q(3)$ has only one, up to equivariant Morita equivalence, non-trivial torsion action, which is of projective type (see for instance \cite{VoigtBaumConnesAutomorphisms}). 
	\end{ex}

\section{\textsc{Projective representation theory for compact quantum groups}}	

In this section, we develop the theory of projective representations for compact quantum groups based on the notion of (measurable) $2$-cocycle. We obtain a projective representation theory analogous to the one for classical compact groups. Namely, given a  $2$-cocycle, we construct the associated projective regular representation containing all irreducible $\Omega$-twisted representations and reaching thus a twisted version of the Peter-Weyl theorem. The content of this section concerns a particular case of the more general framework developed in \cite{KennyProjective} by the first author, but we give more attention here to the associated C$^*$-algebraic theory.
	
	\subsection{Projective representations of compact quantum groups}\label{sec.ProjRepCQG}

\begin{defi}
Let $\mathbb{G}$ be a compact quantum group. A \emph{measurable left projective representation} of $\mathbb{G}$  consists of a Hilbert space $H$ and a (measurable) right coaction $\delta: \mathcal{B}(H) \rightarrow \mathcal{B}(H)\overline{\otimes} L^{\infty}(\mathbb{G})$. A \emph{continuous left projective representation} of $\mathbb{G}$  consists of a Hilbert space $H$ and a (continuous) right coaction $\delta: \mathcal{K}(H) \rightarrow \mathcal{K}(H)\otimes C(\mathbb{G})$.
\end{defi}

Note that any continuous coaction  $\delta: \mathcal{K}(H) \rightarrow \mathcal{K}(H)\otimes C(\mathbb{G})$ extends uniquely to a (normal) coaction $\delta: \mathcal{B}(H) = M(\mathcal{K}(H)) \rightarrow \mathcal{B}(H)\overline{\otimes} L^{\infty}(\mathbb{G})$. Indeed, since $\delta$ is by definition non-degenerate, and since $\mathcal{K}(H)^{**}=\mathcal{B}(H)$, we have (\cite{DixmierCNew}) a unique normal unital $*$-homomorphism:
\[
\mathcal{B}(H) = \mathcal{K}(H)^{**} \rightarrow (\mathcal{K}(H) \otimes C(\mathbb{G}))^{**} \rightarrow \mathcal{B}(H) \overline{\otimes} L^{\infty}(\mathbb{G}),
\]
which extends $\delta$. Hence a continuous projective representation can be seen as a special type of measurable projective representation. On the other hand, any measurable left projective representation $\delta$ on a finite dimensional Hilbert space $H$ is automatically continuous: We can endow $\mathcal{B}(H)$ with a Hilbert space structure for which $\delta$ becomes a finite dimensional  representation, hence its matrix coefficients lie in $\text{Pol}(\mathbb{G}) \subset C(\mathbb{G})$. On the other hand, it is not true that a general measurable left projective representation is automatically continuous, as we will comment on later. 

\begin{rem}
There is also an obvious notion of right projective representation.  Identifying $j: \mathcal{B}(H)^{op} \cong \mathcal{B}(\overline{H})$ via $j(x) = \overline{x^*}$, there is a natural correspondence between left and right measurable/continuous projective representations by $\delta  \leftrightarrow \overline{\delta}:= \Sigma(j\otimes R)\delta j^{-1} $, so we consider $\overline{\delta}$ as a right continuous projective representation of $\mathbb{G}$ on $\overline{H}$. More directly, one can also view a left projective representation of $\mathbb{G}$ as a right projective representation of $\mathbb{G}^{cop}$. 
\end{rem}

Recall from the introduction that any continuous action of a classical compact group $G$ on $\mathcal{K}(H)$, for some Hilbert space $H$, is implemented by an $\omega$-representation of $G$ on $H$, where $\omega$ is a measurable $2$-cocycle on $G$. The same in fact holds for measurable actions of $G$ on $\mathcal{B}(H)$. The main goal of this section will be to show that these statements are still true for compact quantum groups. This will in particular justify the terminology \emph{projective torsion action} (recall Theorem \ref{theo.TorsionFreeMeyerNest}). 

\begin{defi}
Let $\delta$ be a measurable left projective representation. We say that $\delta$ is \emph{cleft} if there exists a unitary $u \in \mathcal{B}(H)\overline{\otimes} L^{\infty}(\mathbb{G})$ such that  $\delta(a) = u(a\otimes 1)u^*$.
\end{defi}

Similarly, we say that a measurable right projective representation $\delta$ is cleft if there exists a unitary $u \in \mathcal{B}(H)\overline{\otimes} L^{\infty}(\mathbb{G})$ such that  $\delta(a) = \Sigma(u^*(a\otimes 1)u)$. Clearly $\delta$ is cleft if and only if $\overline{\delta}$ is cleft. We call $u$ an \emph{implementing unitary}. 

We will show in Theorem \ref{theo.vNCleft} that all  measurable projective representations are cleft. We start with the following elementary well-known result, which is a verison of the Skolem-Noether theorem in the setting of von Neumann algebras.

\begin{lem}\label{lem.UnitaryImplementationAction}
	Let $M$ be a von Neumann algebra and $k\in\mathbb{N}$. If $\{e_{i j}\}_{i,j=1,\ldots, k}$ and $\{f_{i j}\}_{i,j=1,\ldots, k}$ are the matrix units of two unital copies of $\mathcal{M}_k(\mathbb{C})$ inside $M$, then there exists a unitary $U$ in $M$ such that $Ue_{i j}U^*=f_{i j}$, for all $i,j=1,\ldots, k$. Moreover, $U$ is unique up to a unitary in $\{e_{i j}\}'_{i,j=1,\ldots, k}\cap M$.
\end{lem}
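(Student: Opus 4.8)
The plan is to reduce the whole statement to the existence of a single partial isometry realizing the Murray--von Neumann equivalence $e_{11}\sim f_{11}$, and then to exhibit $U$ by an explicit formula. Throughout I read ``two copies of $\mathcal{M}_k(\mathbb{C})$ inside $M$'' as two systems of $k\times k$ matrix units sharing the unit of $M$, i.e.\ $\sum_{i=1}^k e_{ii}=1_M=\sum_{i=1}^k f_{ii}$ (this is forced anyway: conjugating $\sum e_{ii}$ to $\sum f_{ii}$ by a unitary of $M$ already requires these two projections to coincide after the obvious normalisation). Suppose we have produced $w\in M$ with $w^*w=e_{11}$ and $ww^*=f_{11}$. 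Then I would set
\[
U:=\sum_{i=1}^k f_{i1}\,w\,e_{1i},
\]
and claim that this is the desired unitary.

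The verification that $U$ works is a routine computation with the matrix-unit relations $e_{ij}e_{kl}=\delta_{jk}e_{il}$, $e_{ij}^*=e_{ji}$ (and the analogues for $f$), together with the two identities $w^*f_{11}w=w^*ww^*w=e_{11}$ and $we_{11}w^*=ww^*ww^*=f_{11}$. Indeed $U^*U=\sum_i e_{i1}\,w^*f_{11}w\,e_{1i}=\sum_i e_{i1}e_{11}e_{1i}=\sum_i e_{ii}=1_M$, and symmetrically $UU^*=\sum_i f_{i1}f_{11}f_{1i}=1_M$, so $U$ is unitary. For the intertwining relation I would use $e_{1i}e_{kl}e_{j1}=\delta_{ik}\delta_{lj}\,e_{11}$ to collapse the double sum, obtaining $Ue_{kl}U^*=f_{k1}\,(we_{11}w^*)\,f_{1l}=f_{k1}f_{11}f_{1l}=f_{kl}$. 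None of this presents any difficulty once $w$ is in hand.

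The genuine content, and the step I expect to be the main obstacle, is the equivalence $e_{11}\sim f_{11}$ in $M$. Here I would invoke the comparison theory of projections in a von Neumann algebra. The hypotheses say that $1_M$ is simultaneously a sum of $k$ mutually equivalent projections $e_{11}\sim\cdots\sim e_{kk}$ and of $k$ mutually equivalent projections $f_{11}\sim\cdots\sim f_{kk}$ (the partial isometries $e_{i1}$, $f_{i1}$ giving these equivalences). By the comparison theorem there is a central projection $z\in Z(M)$ with $ze_{11}\preceq zf_{11}$ and $(1-z)f_{11}\preceq(1-z)e_{11}$; by symmetry it suffices to upgrade $ze_{11}\preceq zf_{11}$ to an equivalence on $zM$. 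Transporting through the given equivalences and summing the $k$ mutually orthogonal pieces yields $z=\sum_i ze_{ii}\preceq\sum_i zf_{ii}=z$. On the finite part of $zM$ the Schr\"odinger--Bernstein theorem for projections forces every subequivalence in this chain to be an equivalence, hence $ze_{11}\sim zf_{11}$; on the properly infinite part one argues directly that, since $k$ is \emph{finite}, a projection that decomposes into $k$ mutually equivalent summands must itself be infinite, so that $ze_{11}$ and $zf_{11}$ are both equivalent to $z$ and hence to each other. Combining the $z$ and $1-z$ parts gives a global partial isometry $w$ with $w^*w=e_{11}$, $ww^*=f_{11}$.

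Finally, for uniqueness I would argue as follows. If $U'$ is another unitary with $U'e_{ij}U'^*=f_{ij}$ for all $i,j$, put $V:=U^*U'$. Then from $U'e_{ij}U'^*=f_{ij}=Ue_{ij}U^*$ and unitarity of $U$ one gets $Ve_{ij}V^*=e_{ij}$ for all $i,j$, so $V$ is a unitary lying in $\{e_{ij}\}'_{i,j=1,\dots,k}\cap M$, and $U'=UV$. Thus $U$ is determined up to right multiplication by a unitary of the relative commutant, which is exactly the asserted ambiguity.
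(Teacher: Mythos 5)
Your proof is correct and follows essentially the same route as the paper: both reduce the lemma to the Murray--von Neumann equivalence $e_{11}\sim f_{11}$ and then use the identical formula $U=\sum_i f_{i1}we_{1i}$, with the same uniqueness argument via $V=U^*U'$. The only real difference is that where the paper disposes of $e_{11}\sim f_{11}$ in one line via central supports and $K_0$-cancellation, you prove it by hand using the comparison theorem and the finite/properly infinite central decomposition --- which is fine, though note that on the finite part the tool you actually use is finiteness of $z$ (a subprojection equivalent to $z$ must equal $z$) rather than Schr\"oder--Bernstein, and the name is Schr\"oder, not Schr\"odinger.
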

	\begin{proof}
		Let $p$ be the maximal properly infinite projection of $M$, and put $q:=1-p$. We split $M$ into its finite and its properly infinite part, $M=qM\oplus pM$. Next, note that $e_{11}$ and $f_{11}$ have central support $1$ and $k[e_{11}]=[1]=k[f_{11}]$ in $K_0(M)$. Then, by taking the center valued trace on $qM$, we deduce that $qe_{11}$ and $qf_{11}$ are Murray-von Neumann equivalent in $qM$ (cf. \cite[Corollary 2.8]{Takesaki1}, for instance). It is obvious that $pe_{11}$ and $pf_{11}$ are equivalent in $pM$. Hence $e_{11}$ and $f_{11}$ are Murray-von Neumann equivalent by a partial isometry $u \in M$. Then $U = \sum_s f_{s1}ue_{1s}$ is the sought-after unitary. The stated uniqueness of $U$ is clear.
	\end{proof}
		
	\begin{theo}\label{theo.vNCleftProjectiveTorsion}
		Let $\mathbb{G}$ be a compact quantum group. Then any finite dimensional projective representation is cleft. 
	\end{theo}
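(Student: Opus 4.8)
The plan is to reduce the statement directly to Lemma \ref{lem.UnitaryImplementationAction}. Since $H$ is finite dimensional, I would identify $\mathcal{B}(H) \cong \mathcal{M}_n(\mathbb{C})$ with $n = \dim H$ and fix a system of matrix units $\{e_{ij}\}_{i,j=1,\ldots,n}$. Setting $M := \mathcal{B}(H)\overline{\otimes} L^{\infty}(\mathbb{G})$, which is a von Neumann algebra, the two copies of $\mathcal{M}_n(\mathbb{C})$ inside $M$ to which I will apply the lemma are $\{e_{ij}\otimes 1\}$ on the one hand and $\{\delta(e_{ij})\}$ on the other.

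First I would verify that $\{\delta(e_{ij})\}$ is genuinely a system of matrix units in $M$. As $\delta$ is a unital (and, $H$ being finite dimensional, automatically normal) $*$-homomorphism, applying $\delta$ to the relations $e_{ij}e_{kl}=\delta_{jk}e_{il}$, $e_{ij}^{*}=e_{ji}$ and $\sum_i e_{ii}=1$ reproduces exactly these relations for the $\delta(e_{ij})$. In particular $\sum_i \delta(e_{ii})=\delta(1)=1_M$, so that — just as for $\{e_{ij}\otimes 1\}$ — this second matrix unit system is \emph{unital}, i.e.\ its diagonal sums to the identity of $M$. This is precisely the input needed for the central-support and $K_0$ computation in the proof of Lemma \ref{lem.UnitaryImplementationAction} to apply.

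Then I would invoke Lemma \ref{lem.UnitaryImplementationAction} for these two copies, obtaining a unitary $u\in M = \mathcal{B}(H)\overline{\otimes} L^{\infty}(\mathbb{G})$ with $u(e_{ij}\otimes 1)u^{*}=\delta(e_{ij})$ for all $i,j$. Since the $e_{ij}$ linearly span $\mathcal{B}(H)$, extending by linearity gives $\delta(a)=u(a\otimes 1)u^{*}$ for every $a\in\mathcal{B}(H)$, which is exactly the assertion that $\delta$ is cleft with implementing unitary $u$.

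I expect essentially no serious obstacle beyond Lemma \ref{lem.UnitaryImplementationAction} itself: the crucial and only real point is recognising that the two families $\{e_{ij}\otimes 1\}$ and $\{\delta(e_{ij})\}$ are both \emph{unital} matrix unit systems in the von Neumann algebra $M$, after which the lemma furnishes the intertwining unitary at once. Note that neither injectivity of $\delta$ nor any factoriality or finiteness assumption on $L^{\infty}(\mathbb{G})$ is required; finite dimensionality of $H$ enters only to guarantee that $\mathcal{B}(H)$ carries a finite matrix unit system and that $\delta$ is automatically normal.
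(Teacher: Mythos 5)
Your proposal is correct and is essentially identical to the paper's proof: the paper likewise applies Lemma \ref{lem.UnitaryImplementationAction} to the two unital matrix unit systems $\{e_{ij}\otimes 1\}$ and $\{\delta(e_{ij})\}$ inside $\mathcal{B}(H)\overline{\otimes}L^{\infty}(\mathbb{G})$ to produce the implementing unitary. Your explicit verification that $\{\delta(e_{ij})\}$ is a unital system of matrix units (using that $\delta$ is a unital normal $*$-homomorphism) is a detail the paper leaves implicit, but it is the same argument.
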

	\begin{proof}
Let $\delta: \mathcal{B}(H) \rightarrow \mathcal{B}(H) \otimes L^{\infty}(\mathbb{G})$ be a right coaction with $H$ finite dimensional. Applying Lemma \ref{lem.UnitaryImplementationAction} with respect to the matrix units $\delta(e_{ij})$ and $e_{ij}\otimes 1$ provides a unitary $u\in \mathcal{B}(H)\otimes L^\infty(\mathbb{G})$ implementing the coaction $\delta$, hence $\delta$ is cleft. 
\end{proof}

To extend this result to arbitrary  projective representations, we first take a further look at implementing unitaries. Note that if $\delta: \mathcal{B}(H) \rightarrow \mathcal{B}(H)\overline{\otimes} L^{\infty}(\mathbb{G})$ is a measurable projective representation implemented by $u \in  \mathcal{B}(H)\overline{\otimes} L^{\infty}(\mathbb{G})$, then \begin{equation*}\begin{split}
			(\delta\otimes id)\delta(a)&=(\delta\otimes id)\Big(u(a\otimes 1)u^*\Big)=u_{12}\Big(u(a\otimes 1)u^*\Big)_{13}u^*_{12}=u_{12}u_{13}(a\otimes 1\otimes 1)u^*_{13}u^*_{12},
		\end{split}
		\end{equation*}
while
		\begin{equation*}
		\begin{split}
			(id\otimes \Delta)\delta(a)&=(id\otimes \Delta)\Big(u(a\otimes 1)u^*\Big)=(V_{\mathbb{G}})_{23}\Big(u(a\otimes 1)u^*\Big)_{12}(V_{\mathbb{G}})^*_{23}\\
			&=(V_{\mathbb{G}})_{23}u_{12}(a\otimes 1\otimes 1)u^*_{12}(V_{\mathbb{G}})^*_{23}.
		\end{split}
		\end{equation*}
Hence by the coaction property $(\delta\otimes id)\delta=(id\otimes \Delta)\delta$, we obtain $u_{12}u_{13}(a\otimes 1\otimes 1)u^*_{13}u^*_{12}=(V_{\mathbb{G}})_{23}u_{12}(a\otimes 1\otimes 1)u^*_{12}(V_{\mathbb{G}})^*_{23}$, for all $a\in \mathcal{B}(H)$. In other words, \\$u^*_{13}u^*_{12}(V_{\mathbb{G}})_{23}u_{12}\in \big(\mathcal{B}(H)\overline{\otimes} \mathbb{C}\overline{\otimes} L^\infty(\mathbb{G})'\big)'=\mathbb{C}\overline{\otimes} \mathcal{B}(L^2(\mathbb{G}))\overline{\otimes} L^\infty(\mathbb{G})$, so there exists a unitary $X\in \mathcal{B}(L^2(\mathbb{G}))\overline{\otimes} L^\infty(\mathbb{G})$ such that $(V_{\mathbb{G}})_{23}u_{12}=u_{12}u_{13}X_{23}$. This relation allows to write the following: $(id\otimes \Delta)(u)=(V_{\mathbb{G}})_{23}u_{12}(V_{\mathbb{G}})_{23}^*=u_{12}u_{13}X_{23}(V_{\mathbb{G}})_{23}^*$.

		Since the left hand side is an element of $\mathcal{B}(H)\overline{\otimes} L^\infty(\mathbb{G})\overline{\otimes} L^\infty(\mathbb{G})$ and $u$ is a unitary, then $\widetilde{\Omega}:=u_{13}^*u_{12}^*\Delta_{23}(u_{12})\in \mathbb{C} \otimes L^\infty(\mathbb{G})\overline{\otimes} L^\infty(\mathbb{G})$ and $\widetilde{\Omega}=1\otimes \Omega$ with $\Omega\in L^\infty(\mathbb{G})\overline{\otimes} L^\infty(\mathbb{G})$ unitary. Moreover, we have $\Delta_{23}(u_{12})\cdot \Omega^*_{23}=u_{12}u_{13}$. Applying this equation to the identity $(u_{12}u_{13})u_{14}=u_{12}(u_{13}u_{14})$, we obtain that $\Omega^*$ satisfies the \emph{$2$-cocycle relation}: $$(\Delta\otimes id)(\Omega^*)\Omega_{12}^*=(id\otimes \Delta)(\Omega^*)\Omega_{23}^*.$$

Let us formalize this in the following definition. 

\begin{defi}
		Let $\mathbb{G}$ be a compact quantum group. A (measurable, unitary) $2$-cocycle on $\mathbb{G}$ is a unitary element $\Omega\in L^{\infty}(\mathbb{G})\overline{\otimes} L^{\infty}(\mathbb{G})$ such that:
			$$(\Omega \otimes 1)(\Delta\otimes id)(\Omega)=(1\otimes \Omega)(id\otimes \Delta)(\Omega).$$
			
			Two $2$-cocycles $\Omega$ and $\Omega'$ on $\mathbb{G}$ are said to be coboundary equivalent if there exists a unitary $X\in L^{\infty}(\mathbb{G})$ such that $\Omega'=(X^*\otimes X^*)\Omega\Delta(X)$.
	\end{defi}
	\begin{note}
		If one replaces $L^{\infty}(\mathbb{G})$ by $C(\mathbb{G})$ or $\text{Pol}(\mathbb{G})$, then we define analogously a \emph{continuous} or \emph{algebraic} $2$-cocycle on $\mathbb{G}$, respectively. 
	\end{note}

		Given a $2$-cocycle $\Omega$ on $\mathbb{G}$ we can define the following linear maps
		$$
			\begin{array}{rcclcrccl}
				{}_{\Omega}\Delta:&L^{\infty}(\mathbb{G})& \longrightarrow & L^{\infty}(\mathbb{G})\overline{\otimes} L^{\infty}(\mathbb{G}) &,\ & \Delta_{\Omega^*}:&L^{\infty}(\mathbb{G})& \longrightarrow & L^{\infty}(\mathbb{G})\overline{\otimes} L^{\infty}(\mathbb{G})\\
				&x& \longmapsto & {}_{\Omega}\Delta(x):=\Omega\cdot \Delta(x) & & &x& \longmapsto & \Delta_{\Omega^*}(x):=\Delta(x)\cdot \Omega^*.
			\end{array}
		$$
		We call ${}_{\Omega}\Delta$ and $\Delta_{\Omega^*}$ the \emph{right/left twisted pseudo co-multiplication on $\mathbb{G}$ with respect to $\Omega$} or the \emph{right/left $\Omega$-pseudo co-multiplication on $\mathbb{G}$}; respectively. Observe that both ${}_{\Omega}\Delta$ and $\Delta_{\Omega^*}$ are linear maps satisfying the following identities:
		\begin{enumerate}[i)]
			\item ${}_{\Omega}\Delta(xy)={}_{\Omega}\Delta(x)\Delta(y)$ and $\Delta_{\Omega^*}(xy)=\Delta(x)\Delta_{\Omega^*}(y)$, for all $x,y\in L^{\infty}(\mathbb{G})$,
			\item ${}_{\Omega}\Delta(x)^*{}_{\Omega}\Delta(y)=\Delta(x^*y)$ and $\Delta_{\Omega^*}(x)\Delta_{\Omega^*}(y)^*=\Delta(xy^*)$, for all $x,y\in L^{\infty}(\mathbb{G})$,
			\item $({}_{\Omega}\Delta\otimes id){}_{\Omega}\Delta=(id\otimes {}_{\Omega}\Delta){}_{\Omega}\Delta$ and $(\Delta_{\Omega^*}\otimes id)\Delta_{\Omega^*}=(id\otimes\Delta_{\Omega^*})\Delta_{\Omega^*}$; and 
			\item $\overline{span}^{\sigma-\textrm{weak}}\{{}_{\Omega}\Delta(x)(y\otimes z)\ |\ x, y, z\in L^{\infty}(\mathbb{G})\}=L^{\infty}(\mathbb{G})\overline{\otimes} L^{\infty}(\mathbb{G})$ and $\overline{span}^{\sigma-\textrm{weak}}\{(y\otimes z)\Delta_{\Omega^*}(x)\ |\ x, y, z\in L^{\infty}(\mathbb{G})\}=L^{\infty}(\mathbb{G})\overline{\otimes} L^{\infty}(\mathbb{G})$.
		\end{enumerate}
These identities are obtained after a straightforward computation. Hence, ${}_{\Omega}\Delta$ and $\Delta_{\Omega^*}$ are in particular von Neumann algebraic analogues of module coalgebras, and form  concrete instances of the notion of \emph{Galois co-object} introduced in \cite{KennyComonoidalMorita}.

	\begin{defi}\label{defi.TwistedRep}
		Let $\mathbb{G}$ be a compact quantum group and $\Omega$ a $2$-cocycle on $\mathbb{G}$. A (measurable) $\Omega$-representation of $\mathbb{G}$ on a Hilbert space $H$ is a unitary element $u\in \mathcal{B}(H)\overline{\otimes} L^{\infty}(\mathbb{G})$  such that $(id\otimes {}_{\Omega}\Delta)(u)=u_{12}u_{13}$. A (measurable)  $\Omega^*$-representation on $H$  is a unitary element $u\in \mathcal{B}(H) \overline{\otimes}
L^{\infty}(\mathbb{G})$ satisfying $(id\otimes\Delta_{\Omega^*})(u)=u_{12}u_{13}$.
\end{defi}

The elements of the form $u_{\xi,\zeta}:=(\omega_{\xi,\zeta}\otimes 1)(u)\in L^{\infty}(\mathbb{G})$ with $\xi,\zeta\in H$ are called matrix coefficients of $u$. In particular, if we fix an orthonormal basis $\{\xi_i\}_{i}$ in $H$, we write $u_{i j}:=u_{\xi_i,\xi_j}$. Then for an $\Omega$-representation $u$ we obtain the usual corepresentation identities ${}_{\Omega}\Delta(u_{ij})= \underset{k}{\sum} u_{ik}\otimes u_{kj}$, for all $i,j$, where the sum converges in (say) the strong operator topology. The same conclusion holds for $\Omega^*$-representations.

Note that if the $u_i$ are measurable  $\Omega$-representations on Hilbert spaces $H_i$, then clearly $u = \oplus_i u_i$ is a measurable  $\Omega$-representation on $H = \oplus_iH_i$, called the direct sum  $\Omega$-representation.

Summarizing the discussion following Theorem \ref{theo.vNCleftProjectiveTorsion}, we obtain the following result. 

\begin{pro}
Let $\delta: \mathcal{B}(H) \rightarrow \mathcal{B}(H) \overline{\otimes} L^{\infty}(\mathbb{G})$ be a cleft measurable projective representation. Then  there exists a $2$-cocycle $\Omega \in L^{\infty}(\mathbb{G})\overline{\otimes}L^{\infty}(\mathbb{G})$ and a  $\Omega^*$-representation  $u\in \mathcal{B}(H) \overline{\otimes} L^{\infty}(\mathbb{G})$ such that $\delta(a) = u(a\otimes 1)u^*$. 
\end{pro}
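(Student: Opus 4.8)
The plan is to take the implementing unitary furnished by cleftness and organize the computation already carried out before the statement into a clean argument. Since $\delta$ is cleft, fix a unitary $u\in\mathcal{B}(H)\overline{\otimes}L^{\infty}(\mathbb{G})$ with $\delta(a)=u(a\otimes 1)u^*$. The whole proof then reduces to analysing the single operator $(id\otimes\Delta)(u)$.

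First I would expand the coaction identity $(\delta\otimes id)\delta=(id\otimes\Delta)\delta$ using $\delta=Ad_u$ together with $\Delta(\cdot)=V_{\mathbb{G}}(\cdot\otimes 1)V_{\mathbb{G}}^*$ (Theorem-Definition~\ref{theo.RRegularRepresentation}), which lets me realise $(id\otimes\Delta)$ on $\mathcal{B}(H)\overline{\otimes}L^{\infty}(\mathbb{G})$ as conjugation by $(V_{\mathbb{G}})_{23}$. Comparing the two resulting expressions for $(\delta\otimes id)\delta(a)$ and $(id\otimes\Delta)\delta(a)$ shows that $u_{12}^*u_{13}^*(V_{\mathbb{G}})_{23}u_{12}$ commutes with $a\otimes 1\otimes 1$ for every $a\in\mathcal{B}(H)$, hence has trivial first leg; this is precisely the unitary $X_{23}$ of the discussion, with $X\in\mathcal{B}(L^2(\mathbb{G}))\overline{\otimes}L^{\infty}(\mathbb{G})$ and $(V_{\mathbb{G}})_{23}u_{12}=u_{12}u_{13}X_{23}$. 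Therefore
\[
(id\otimes\Delta)(u)=(V_{\mathbb{G}})_{23}u_{12}(V_{\mathbb{G}})_{23}^*=u_{12}u_{13}\,X_{23}(V_{\mathbb{G}})_{23}^*.
\]
Setting $\widetilde{\Omega}:=u_{13}^*u_{12}^*(id\otimes\Delta)(u)=X_{23}(V_{\mathbb{G}})_{23}^*$, the key point is that this lives only in the last two legs, whereas the left-hand side lies in $\mathcal{B}(H)\otimes L^{\infty}(\mathbb{G})\overline{\otimes}L^{\infty}(\mathbb{G})$; combining these forces the first leg to be scalar and the other two into $L^{\infty}(\mathbb{G})\overline{\otimes}L^{\infty}(\mathbb{G})$, so $\widetilde{\Omega}=1\otimes\Omega$ for a unitary $\Omega\in L^{\infty}(\mathbb{G})\overline{\otimes}L^{\infty}(\mathbb{G})$. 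This yields the master relation $(id\otimes\Delta)(u)=u_{12}u_{13}\,\Omega_{23}$, equivalently $u_{12}u_{13}=(id\otimes\Delta)(u)\,\Omega_{23}^*$.

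Two things then follow formally. For the cocycle relation I would apply $(id\otimes\Delta\otimes id)$ and $(id\otimes id\otimes\Delta)$ to the master relation (using coassociativity of $\Delta$) and substitute into the two bracketings of $u_{12}u_{13}u_{14}$; associativity of this product, after cancelling the $u$-factors, leaves exactly $(\Delta\otimes id)(\Omega^*)\Omega_{12}^*=(id\otimes\Delta)(\Omega^*)\Omega_{23}^*$, whose adjoint is the cocycle identity $(\Omega\otimes 1)(\Delta\otimes id)(\Omega)=(1\otimes\Omega)(id\otimes\Delta)(\Omega)$ from the $2$-cocycle definition. Finally, since $\Delta_{\Omega^*}(x)=\Delta(x)\Omega^*$, the master relation reads $(id\otimes\Delta_{\Omega^*})(u)=(id\otimes\Delta)(u)\,\Omega_{23}^*=u_{12}u_{13}\,\Omega_{23}\Omega_{23}^*=u_{12}u_{13}$, which is precisely the defining identity of an $\Omega^*$-representation (Definition~\ref{defi.TwistedRep}). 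Hence $u$ is a unitary $\Omega^*$-representation implementing $\delta$.

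I expect the only genuinely delicate point to be the leg-placement bookkeeping: verifying that $\widetilde{\Omega}$ has trivial first leg and lands in $L^{\infty}(\mathbb{G})\overline{\otimes}L^{\infty}(\mathbb{G})$ rather than involving $\mathcal{B}(L^2(\mathbb{G}))$, and keeping the $V_{\mathbb{G}}$-conjugations straight when converting the associativity computation into the exact adjoint form demanded by the $2$-cocycle definition. Everything else is a direct transcription of the calculation already performed before the statement.
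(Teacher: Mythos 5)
Your proposal is correct and is essentially the paper's own proof: the paper proves this proposition precisely by the computation displayed before the statement, namely expanding $(\delta\otimes id)\delta=(id\otimes\Delta)\delta$ with $\delta=Ad_u$ and $\Delta=Ad_{V_{\mathbb{G}}}$, extracting a unitary $X$ with $(V_{\mathbb{G}})_{23}u_{12}=u_{12}u_{13}X_{23}$ via the commutant argument, concluding from the leg constraints that $\widetilde{\Omega}=u_{13}^*u_{12}^*(id\otimes\Delta)(u)=1\otimes\Omega$ with $\Omega\in L^{\infty}(\mathbb{G})\overline{\otimes}L^{\infty}(\mathbb{G})$ unitary, deriving the $2$-cocycle identity from the two bracketings of $u_{12}u_{13}u_{14}$, and noting that the master relation is exactly $(id\otimes\Delta_{\Omega^*})(u)=u_{12}u_{13}$. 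The one slip to fix is an ordering of noncommuting factors: the operator that commutes with $\mathcal{B}(H)\otimes 1\otimes 1$ is $u_{13}^*u_{12}^*(V_{\mathbb{G}})_{23}u_{12}$, not $u_{12}^*u_{13}^*(V_{\mathbb{G}})_{23}u_{12}$ (since $u_{12}$ and $u_{13}$ share the first leg, $(u_{12}u_{13})^*=u_{13}^*u_{12}^*$), though the relation you subsequently use is the correct one, so the rest of your argument stands as written.
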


Conversely, if $u$ is a  $\Omega^*$-representation, we obtain a measurable right coaction: 
\begin{equation}\label{eq.deltau2}
\delta_u: \mathcal{B}(H) \rightarrow  \mathcal{B}(H)\overline{\otimes} L^{\infty}(\mathbb{G}), \qquad \delta_u(a) = u(a\otimes 1)u^*,\qquad a\in \mathcal{B}(H),
\end{equation}
where the coaction property follows immediately from the $\Omega^*$-representation property of $u$. Similarly, any $\Omega$-representation $u$ provides a measurable left coaction:
\begin{equation}\label{eq.deltau}
\delta_u: \mathcal{B}(H) \rightarrow L^{\infty}(\mathbb{G}) \overline{\otimes} \mathcal{B}(H), \qquad \delta_u(a) = \Sigma (u^*(a\otimes 1)u),\qquad a\in \mathcal{B}(H).
\end{equation}

\begin{note}\label{note.UnicityImplementation}
If $v$ is another implementing unitary for $\delta$, we see that $v^*u(a\otimes 1)u^*v = a\otimes 1$ for all $a\in \mathcal{B}(H)$, hence $v = u(1\otimes X)$ for some unitary $X \in L^{\infty}(\mathbb{G})$. If $v$ has associated $2$-cocycle $\Omega'$, it then follows from the projective representation identities for $v$ and $u$ that $\Omega'= (X^*\otimes X^*)\Omega\Delta(X)$, so $\Omega$ and $\Omega'$ are cohomologous. 
\end{note}

\begin{lem}\label{lem.indsyst}
Let $\delta: \mathcal{B}(H)\rightarrow \mathcal{B}(H)\overline{\otimes} L^{\infty}(\mathbb{G})$ be a measurable projective representation on $H$. Assume that $p \in \mathcal{B}(H)$ is a non-zero coinvariant projection, and let $\delta_p: \mathcal{B}(pH)\rightarrow \mathcal{B}(pH)\overline{\otimes} L^{\infty}(\mathbb{G})$ be the restriction of $\delta$. Then $\delta$ is cleft if and only if $\delta_p$ is cleft. Moreover, if $\delta_p$ is implemented by an $\Omega^*$-representation $v$ for some $2$-cocycle $\Omega$, then $\delta$ can be implemented by some $\Omega^*$-representation $u$ such that $u(p\otimes 1) = (p\otimes 1) u = v$. 
\end{lem}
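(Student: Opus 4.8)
The plan is to prove the two directions separately, extracting the \emph{moreover} statement from the backward direction. Throughout I write $N := \mathcal{B}(H)\overline{\otimes} L^{\infty}(\mathbb{G})$, so that an implementing unitary is an element of $N$ and the corner $(p\otimes 1)N(p\otimes 1)$ is canonically $\mathcal{B}(pH)\overline{\otimes} L^{\infty}(\mathbb{G})$.

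First I would handle the easy direction. Suppose $\delta$ is cleft, say $\delta(a) = u(a\otimes 1)u^*$. Coinvariance of $p$ gives $u(p\otimes 1)u^* = \delta(p) = p\otimes 1$, so $u$ commutes with $p\otimes 1$ and $v := u(p\otimes 1) = (p\otimes 1)u(p\otimes 1)$ is a unitary in the corner with $\delta_p(a) = v(a\otimes 1)v^*$ for $a\in\mathcal{B}(pH)$; hence $\delta_p$ is cleft. A short leg-numbering computation, using that $u_{13}$ also commutes with $p\otimes 1\otimes 1$, in fact shows $v$ to be an $\Omega^*$-representation for the same cocycle as $u$, but for the equivalence only cleftness of $\delta_p$ is needed.

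For the backward direction with the \emph{moreover} statement, the idea is to spread the corner implementation $v$ over all of $H$ by the countable analogue of Lemma \ref{lem.UnitaryImplementationAction}. I would fix an orthonormal basis of $H$ adapted to $H = pH\oplus(1-p)H$, with associated matrix units $\{e_{ij}\}$ in $\mathcal{B}(H)$ chosen so that the minimal projection $e_{11}$ satisfies $e_{11}\le p$. Since $\delta$ is a normal unital $*$-homomorphism, both $\{e_{ij}\otimes 1\}$ and $\{\delta(e_{ij})\}$ are systems of matrix units in $N$ with $\sum_i e_{ii}\otimes 1 = 1 = \sum_i \delta(e_{ii})$ in the strong topology. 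Applying $v$ to $e_{11}\in\mathcal{B}(pH)$ produces a partial isometry $w_0 := v(e_{11}\otimes 1)$ with $w_0^*w_0 = e_{11}\otimes 1$ and $w_0 w_0^* = v(e_{11}\otimes 1)v^* = \delta(e_{11})$, realising the Murray--von Neumann equivalence of the two $(1,1)$-projections. I then set $u := \sum_i \delta(e_{i1})\,v\,(e_{1i}\otimes 1)$, the sum converging strongly.

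It remains to verify the properties of $u$. A direct manipulation of the matrix-unit relations, together with $v(e_{11}\otimes 1)v^* = \delta(e_{11})$ and $v^*v = p\otimes 1$, gives $u^*u = uu^* = 1$ and $u(e_{kl}\otimes 1)u^* = \delta(e_{kl})$ for all $k,l$; by normality this extends to $\delta(a) = u(a\otimes 1)u^*$ on all of $\mathcal{B}(H)$, so $\delta$ is cleft. Compressing the defining sum by $p\otimes 1$ and using $\delta(e_{i1}) = v(e_{i1}\otimes 1)v^*$ for the surviving indices yields $u(p\otimes 1) = (p\otimes 1)u = v$. Finally, to pin down the cocycle \emph{on the nose} rather than merely up to coboundary as in Note \ref{note.UnicityImplementation}, I would write $1\otimes\Omega_u = u_{13}^*u_{12}^*(id\otimes\Delta)(u)$ and the analogous expression for $v$, then compress by $p\otimes 1\otimes 1$; since $u$ commutes with $p\otimes 1$ and $u(p\otimes 1)=v$, each of $u_{12}$, $u_{13}$, $(id\otimes\Delta)(u)$ turns into its $v$-counterpart, giving $p\otimes\Omega_u = p\otimes\Omega$, hence $\Omega_u = \Omega$ because $p\neq 0$. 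Thus $u$ is an $\Omega^*$-representation in the sense of Definition \ref{defi.TwistedRep} for the same $\Omega$, as required. The main obstacle I anticipate is not either direction as such, but the bookkeeping in this construction: justifying strong convergence and unitarity of the infinite sum defining $u$ (the countable version of Lemma \ref{lem.UnitaryImplementationAction}), and above all checking that the compression argument forces the cocycle to be exactly $\Omega$; everything else reduces to routine matrix-unit and leg-numbering identities.
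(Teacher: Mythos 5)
Your proof is correct, and while the first half coincides with the paper's, your treatment of the ``moreover'' clause follows a genuinely different route. The paper proceeds in two stages: it first establishes cleftness of $\delta$ abstractly (a minimal projection $e$ with $\delta(e)\sim e\otimes 1$, then ``the same reasoning as in Theorem \ref{theo.vNCleftProjectiveTorsion}''), obtaining \emph{some} implementing unitary $\widetilde{u}$ with \emph{some} cocycle $\widetilde{\Omega}$; it then invokes the uniqueness of implementing unitaries up to $1\otimes X$ (Note \ref{note.UnicityImplementation}) applied to $(p\otimes 1)\widetilde{u}$ versus $v$, reads off the coboundary $\widetilde{\Omega} = (X^*\otimes X^*)\Omega\Delta(X)$, and corrects $u = \widetilde{u}(1\otimes X^*)$ to land on $\Omega$ exactly. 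You instead build $u = \sum_i \delta(e_{i1})\,v\,(e_{1i}\otimes 1)$ directly out of $v$ via the linking partial isometry $w_0 = v(e_{11}\otimes 1)$ with $e_{11}\leq p$, so that $u(p\otimes 1)=v$ holds by construction and no coboundary correction is ever needed; the exact equality $\Omega_u=\Omega$ then drops out of your compression computation $p\otimes \Omega_u = v_{13}^*v_{12}^*(id\otimes\Delta)(v) = p\otimes\Omega$, which is valid since $u$ commutes with $p\otimes 1$ (note $v_{12}^*v_{12}=v_{13}^*v_{13}=p\otimes 1\otimes 1$, so the corner cocycle identity for $v$ indeed yields $p\otimes\Omega$). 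What each buys: the paper's argument is shorter modulo its references and reuses established machinery, but its two unitaries ($\widetilde{u}$ and the corrected $u$) are non-constructive; yours is more self-contained and explicit, gives the restriction property on the nose, and has the honest side benefit of making visible the countable strong-convergence extension of Lemma \ref{lem.UnitaryImplementationAction} that the paper also tacitly relies on when it ports the finite-dimensional argument of Theorem \ref{theo.vNCleftProjectiveTorsion} to infinite-dimensional $H$. The convergence itself is routine, exactly as you anticipate: the partial sums are partial isometries with mutually orthogonal initial projections $e_{ii}\otimes 1$ and final projections $\delta(e_{ii})$, each family summing strongly to $1$ by normality and unitality of $\delta$.
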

\begin{proof}
Assume that $\delta$ is cleft, with implementing unitary $u$. Then as $\delta(p) = p\otimes 1$, it follows immediately that $u$ commutes with $p$, hence $v = (p\otimes 1)u =u(p\otimes 1)$ is a unitary in $\mathcal{B}(pH)\overline{\otimes} L^{\infty}(\mathbb{G})$ implementing $\delta_p$. This shows that $\delta_p$ is cleft. 

Conversely, assume that $\delta_p$ is cleft. Then $\mathcal{B}(H)$ contains a minimal projection $e$ such that $\delta(e)$ and $e\otimes 1$ are unitarily equivalent in $\mathcal{B}(H)\otimes M$. The same reasoning as in Theorem \ref{theo.vNCleftProjectiveTorsion} then shows that $\delta$ is cleft. 

Assume now that $v$ is a unitary in $\mathcal{B}(pH)\overline{\otimes}L^{\infty}(\mathbb{G})$ implementing $\delta_p$ and $\widetilde{u} \in \mathcal{B}(H)\overline{\otimes}L^{\infty}(\mathbb{G}) $ a unitary implementing $\delta$. Assume that $\Omega$ is the $2$-cocycle associated to $v$, and $\widetilde{\Omega}$ the $2$-cocycle associated to $\widetilde{u}$. Then it is easily seen that $(p\otimes 1)\widetilde{u}$ is a $\widetilde{\Omega}^*$-representation implementing $\delta_p$. Hence $\Omega$ and $\widetilde{\Omega}$ are cohomologous, say $\widetilde{\Omega} = (X^*\otimes X^*)\Omega\Delta(X)$ where $\widetilde{u}(p\otimes 1) = v(1\otimes X)$. Hence $u = \widetilde{u}(1\otimes X^*)$ is an $\Omega^*$-representation implementing $\delta$ with $u(p\otimes 1) =v$.  
\end{proof} 

We can now generalize Theorem \ref{theo.vNCleftProjectiveTorsion} to the infinite dimensional setting for \emph{continuous} projective representations. As mentioned, we will later then strengthen this and show that this holds in fact in general.

\begin{theo}\label{theo.ContProjCleft} Let $\mathbb{G}$ be a compact quantum group. Then any continuous projective representation is cleft. 
\end{theo}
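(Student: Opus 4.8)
The plan is to reduce to the finite-dimensional case already settled in Theorem \ref{theo.vNCleftProjectiveTorsion}, using continuity of $\delta$ in an essential way to locate a single finite-rank coinvariant projection. First I would regard the given continuous coaction $\delta\colon \mathcal{K}(H)\rightarrow\mathcal{K}(H)\otimes C(\mathbb{G})$ via its unique normal extension to $\mathcal{B}(H)$, so that $\delta$ is in particular a measurable projective representation and Lemma \ref{lem.indsyst} becomes applicable. The whole argument then hinges on finding a non-zero coinvariant projection of finite rank.

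The heart of the proof is the following observation. Consider the fixed-point algebra $B := \mathcal{K}(H)^\delta$. Since $\mathcal{K}(H)$ acts non-degenerately on $H$, Remark \ref{rem.nondegact} shows that $B$ acts non-degenerately on $H$ as well; in particular $B\neq (0)$ whenever $H\neq (0)$. Now $B$ is a $C^*$-subalgebra of $\mathcal{K}(H)$, hence a $C^*$-algebra all of whose elements are compact operators. By the structure theory of such algebras one has $B\cong \bigoplus_\alpha^{c_0} \mathcal{K}(N_\alpha)$ acting spatially on a decomposition $H \cong \bigoplus_\alpha (N_\alpha \otimes K_\alpha)$ as $\bigoplus_\alpha \mathcal{K}(N_\alpha)\otimes 1_{K_\alpha}$, and the requirement that a minimal projection $e\otimes 1_{K_\alpha}$ of the $\alpha$-th summand be compact forces $\dim K_\alpha < \infty$. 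Consequently $B$ contains a non-zero finite-rank projection $p$, which by construction satisfies $\delta(p) = p\otimes 1$.

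Restricting $\delta$ to the corner then gives a projective representation $\delta_p\colon \mathcal{B}(pH)\rightarrow \mathcal{B}(pH)\overline{\otimes}L^\infty(\mathbb{G})$ on the finite-dimensional space $pH$, which is cleft by Theorem \ref{theo.vNCleftProjectiveTorsion}. Since $p$ is a coinvariant projection and $\delta_p$ is cleft, the ``if and only if'' in Lemma \ref{lem.indsyst} yields at once that $\delta$ itself is cleft, which is the assertion. (Alternatively, one could exhaust $H$ by an increasing sequence of finite-rank coinvariant projections and splice the implementing unitaries coherently using the ``moreover'' part of Lemma \ref{lem.indsyst}, but the single-projection reduction via the equivalence is cleaner.)

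The step I expect to be the main obstacle---and the one place where continuity is genuinely used---is the production of the finite-rank coinvariant projection $p$. Here the fact that $B$ consists of \emph{compact} operators is exactly what forces the multiplicities $\dim K_\alpha$ to be finite, and thereby furnishes a finite-rank projection in $B$. For a merely measurable projective representation the fixed-point algebra sits inside $\mathcal{B}(H)$ rather than $\mathcal{K}(H)$ and may contain no finite projection at all (for instance for an irreducible coaction with $\mathcal{B}(H)^\delta = \mathbb{C}1$ on an infinite-dimensional $H$), so this shortcut is unavailable and the general statement of Theorem \ref{theo.vNCleft} requires a genuinely different argument.
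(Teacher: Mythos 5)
Your proof is correct, and every ingredient you invoke (Remark \ref{rem.nondegact}, Theorem \ref{theo.vNCleftProjectiveTorsion}, Lemma \ref{lem.indsyst}) is established in the paper before this theorem, so there is no circularity. The paper's own proof has the same starting point --- non-degeneracy of $\mathcal{K}(H)^{\delta}$ forces it to contain finite-rank coinvariant projections --- but a different execution: it chooses an \emph{ascending sequence} $p_i \nearrow 1$ of finite-rank projections in $\mathcal{K}(H)^{\delta}$ and uses the ``moreover'' clause of Lemma \ref{lem.indsyst} to produce coherently matched $\Omega^*$-representations $u_i$ with $u_i(p_j\otimes 1)=u_j$ for $j\le i$, whose $\sigma$-strong limit is an explicit implementing unitary with a single fixed cocycle $\Omega$. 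You instead take \emph{one} finite-rank coinvariant projection $p$ and invoke the ``if and only if'' clause of Lemma \ref{lem.indsyst}; this is logically sufficient, because the converse direction of that lemma already contains the passage from a cleft corner to cleftness of all of $\mathcal{B}(H)$ (via the minimal-projection Murray--von Neumann argument extending the reasoning of Theorem \ref{theo.vNCleftProjectiveTorsion} to infinite systems of matrix units). The trade-off: the paper's splicing route yields the implementing $\Omega^*$-representation concretely, compatible with the exhaustion, whereas your reduction is shorter and isolates cleanly where continuity enters (the existence of a finite-rank coinvariant projection), exactly as your closing remark about Theorem \ref{theo.vNCleft} observes. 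One small simplification to your key step: the full multiplicity decomposition $H\cong\bigoplus_\alpha N_\alpha\otimes K_\alpha$ is more than you need --- any nonzero C$^*$-algebra of compact operators contains a nonzero projection (a spectral projection $\chi_{[\lambda,\infty)}(b)$ of a nonzero positive element $b$, for $\lambda>0$ outside the discrete spectrum, lies in $C^*(b)$), and a projection that is a compact operator automatically has finite rank.
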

\begin{proof}
Assume that $\delta: \mathcal{K}(H) \rightarrow \mathcal{K}(H)\otimes C(\mathbb{G})$ is a continuous projective representation. Recall from Remark \ref{rem.nondegact} that $\mathcal{K}(H)^{\delta}$ acts non-degenerately on $H$. Hence, as $\mathcal{K}(H)^{\delta}$ is a (separable) C$^*$-algebra of compact operators, we can find an ascending sequence $p_i$ of finite rank projections in $\mathcal{K}(H)^{\delta}$ converging strongly to $1$. By Theorem \ref{theo.vNCleftProjectiveTorsion} and Lemma \ref{lem.indsyst}, we can find a $2$-cocycle $\Omega \in L^{\infty}(\mathbb{G})\overline{\otimes} L^{\infty}(\mathbb{G})$ and $\Omega^*$-representations $u_i \in \mathcal{K}(p_iH)\otimes L^{\infty}(\mathbb{G})$ such that $u_i$ implements $\delta_{p_i}$ and such that $u_i(p_j\otimes 1) = u_j$ for $j\leq i$. Then clearly the $u_i$ converge $\sigma$-strongly to a unitary $u \in \mathcal{B}(H)\overline{\otimes} L^{\infty}(\mathbb{G})$, and $u$ is an $\Omega^*$-representation implementing $\delta$. 
\end{proof}

Let now $\delta: \mathcal{B}(H) \rightarrow \mathcal{B}(H) \overline{\otimes} L^{\infty}(\mathbb{G})$ be a measurable projective representation of $\mathbb{G}$, and consider in this setting the averaging operator: 
\[
E_{\delta}: \mathcal{B}(H) \rightarrow \mathcal{B}(H),\quad x\mapsto (id\otimes h_{\mathbb{G}})\delta(x). 
\]
Then $E_{\delta}$ is a normal conditional expectation on $\mathcal{B}(H)^{\delta}$, and it is well-known that then necessarily $\mathcal{B}(H)^{\delta}$ is a (possibly infinite) direct sum of type $I$ factors. In particular, $\mathcal{B}(H)^{\delta}$ contains a minimal projection $p$, and $\delta_p$ is then an irreducible projective representation, meaning $\mathcal{B}(pH)^{\delta_p} = \mathbb{C} p$. This leads to the following:

\begin{cor}\label{cor.projKacclef}
If $\mathbb{G}$ is of Kac type, then all measurable projective representations of $\mathbb{G}$ are continuous, hence cleft.
	\end{cor}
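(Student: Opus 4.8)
The plan is to reduce to the irreducible case and to show that, in the Kac setting, an irreducible measurable projective representation is forced to be finite dimensional. Continuing the discussion preceding the statement, let $p\in\mathcal{B}(H)^{\delta}$ be a minimal projection, so that $\delta_p:\mathcal{B}(pH)\rightarrow\mathcal{B}(pH)\overline{\otimes}L^{\infty}(\mathbb{G})$ is irreducible, i.e.\ $\mathcal{B}(pH)^{\delta_p}=\mathbb{C}p$. The restriction of the averaging operator then has the form $E_{\delta_p}(x)=\phi_p(x)p$ for a unique normal $\delta_p$-invariant state $\phi_p$ on $\mathcal{B}(pH)$. I would first establish finite dimensionality of $pH$, and only then treat continuity of the original $\delta$ by reassembling the finite dimensional pieces.

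The heart of the matter, and the step where the Kac hypothesis genuinely enters, is the claim that $\phi_p$ is a \emph{trace}. Since $\mathcal{B}(pH)$ is a type $I$ factor, a normal tracial state must be of the form $\operatorname{Tr}(\rho\,\cdot)$ with $\rho$ central, hence scalar, and a scalar density matrix of trace $1$ forces $pH$ to be finite dimensional. To see that $\phi_p$ is tracial I would invoke the modular theory of the invariant state of an ergodic action: the modular automorphism group $\sigma^{\phi_p}_t$ is controlled by the modular and scaling automorphism groups of the Haar state $h_{\mathbb{G}}$ (transported through the unitary implementation of $\delta_p$ furnished by Proposition \ref{theo.UnitaryImplementation} and Remark \ref{rem.ActExtvN}), and both of these are trivial precisely because $\mathbb{G}$ is of Kac type; hence $\sigma^{\phi_p}_t=\mathrm{id}$ and $\phi_p$ is a trace. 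This is the main obstacle: everything else is soft, whereas here one must carefully relate the modular data of $\phi_p$ to that of $h_{\mathbb{G}}$. The pattern is the familiar one for genuine representations, where the canonical invariant state on $\mathcal{B}(H_u)$ attached to $\operatorname{Ad}_u$ is the normalized trace exactly when the Woronowicz $Q$-matrix of $u$ is trivial, which is the Kac condition.

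With $pH$ finite dimensional for every minimal $p\in\mathcal{B}(H)^{\delta}$, I would conclude as follows. As $\mathcal{B}(H)^{\delta}$ is a direct sum of type $I$ factors, every minimal projection in it is finite rank, and suitable increasing finite orthogonal sums of such minimal projections provide an ascending sequence $(p_i)$ of \emph{finite rank} coinvariant projections converging strongly to $1$. Each $\delta_{p_i}$ is a finite dimensional projective representation, hence continuous, so that for $\xi,\eta\in p_iH$ we have $\delta(\theta_{\xi,\eta})=(p_i\otimes 1)\delta(\theta_{\xi,\eta})(p_i\otimes 1)=\delta_{p_i}(\theta_{\xi,\eta})\in\mathcal{K}(p_iH)\otimes C(\mathbb{G})\subset\mathcal{K}(H)\otimes C(\mathbb{G})$. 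Since the operators $\theta_{\xi,\eta}$ with $\xi,\eta\in\bigcup_i p_iH$ span a dense subspace of $\mathcal{K}(H)$ and $\delta$ is norm continuous, this yields $\delta(\mathcal{K}(H))\subset\mathcal{K}(H)\otimes C(\mathbb{G})$; that is, $\delta$ is continuous. Cleftness is then immediate from Theorem \ref{theo.ContProjCleft}. Alternatively, the implementing $\Omega^{*}$-representations for the $\delta_{p_i}$ can be chosen compatibly via Lemma \ref{lem.indsyst} and assembled into an implementing unitary for $\delta$, exactly as in the proof of Theorem \ref{theo.ContProjCleft}.
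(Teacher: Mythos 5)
Your overall architecture coincides with the paper's proof of Corollary \ref{cor.projKacclef}: decompose $\mathcal{B}(H)^{\delta}$ (a direct sum of type $I$ factors) into a maximal family of minimal coinvariant projections, show each irreducible piece $\delta_p$ acts on a finite dimensional $pH$, and reassemble through an ascending sequence of finite rank coinvariant projections, concluding continuity and then cleftness via Theorem \ref{theo.ContProjCleft}. Your reassembly step is correct and in fact spelled out in more detail than the paper's one-line directed-union argument, and your alternative assembly via Lemma \ref{lem.indsyst} is also legitimate. The divergence is at the crux: where the paper simply cites \cite[Corollary 5.2]{KennyProjective} for finite-dimensionality of irreducible projective representations in the Kac case, you attempt an in-house proof via traciality of the invariant state $\phi_p$, and that is where your argument has a genuine gap.

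The inference ``the modular and scaling groups of $h_{\mathbb{G}}$ are trivial, hence $\sigma^{\phi_p}_t=\mathrm{id}$'' does not follow. What the unitary implementation of Proposition \ref{theo.UnitaryImplementation} and Remark \ref{rem.ActExtvN} actually yields is that the modular flow of the invariant state commutes with the coaction up to the scaling group, $\delta_p\circ\sigma^{\phi_p}_t=(\sigma^{\phi_p}_t\otimes\tau_{-t})\circ\delta_p$; in the Kac case this says only that each $\sigma^{\phi_p}_t$ is a $\delta_p$-\emph{equivariant}, $\phi_p$-preserving automorphism, not that it is trivial. Ergodicity does not kill equivariant automorphisms: already for the irreducible projective representation of $\mathbb{Z}_n\times\mathbb{Z}_n$ on $\mathbb{C}^n$, conjugation by the clock and shift unitaries gives nontrivial automorphisms commuting with the ergodic action. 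Even classically, traciality of the invariant state of an ergodic compact group action is the H{\o}egh-Krohn--Landstad--St{\o}rmer theorem, whose proof relies on finite-dimensionality of spectral subspaces --- which here is precisely what you are trying to establish, so an HKLS-type argument cannot be run off the shelf. The ``Woronowicz $Q$-matrix'' pattern you invoke presupposes an implementing (co)representation with the relevant character theory, i.e.\ essentially cleftness; but for infinite-dimensional $H$ the paper only obtains cleftness later (Theorem \ref{theo.vNCleft}), and its proof \emph{uses} this corollary, so filling your gap that way risks circularity. The statement you want (Kac $\Rightarrow$ $\phi_p$ tracial $\Rightarrow$ $pH$ finite dimensional, since a normal tracial state on a type $I$ factor forces finite dimension) is true and your reduction to it is sound, but it is exactly the content of the cited external result, and your modular-theoretic sketch does not supply a proof of it.
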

\begin{proof}
Let $\delta:  \mathcal{B}(H) \rightarrow \mathcal{B}(H) \overline{\otimes} L^{\infty}(\mathbb{G})$ be a measurable projective representation of $\mathbb{G}$. If $p_i$ are a maximal set of orthogonal minimal projections in $\mathcal{B}(H)^{\delta}$, then each $\delta_{p_i}$ is an irreducible projective representation. By \cite[Corollary 5.2]{KennyProjective}, it follows that each $p_iH$ is finite dimensional, and in particular each $\delta_{p_i}$ is continuous. Since $\mathcal{K}(H)$ is the directed union of all $\mathcal{K}(qH)$ with $q$ a finite sum of $p_i$, it follows that $\delta$ is continuous, and in particular cleft. 
\end{proof} 

Recall now that any compact quantum group allows a maximal compact quantum subgroup of Kac type \cite[Appendix A.1]{SoltanKac}. We will slightly modify this construction as follows. 

\begin{lem}
Let $\mathbb{G}$ be a compact quantum group. Let $p$ be the maximal properly infinite projection of $M =L^{\infty}(\mathbb{G})$, and put $q = 1-p$. Then $qM$ defines a von Neumann algebraic compact quantum group of Kac type with coproduct $\Delta_q(x) = \Delta(x)(q\otimes q)$. 
\end{lem}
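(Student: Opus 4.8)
The plan is to show that $q$ is a \emph{group-like} central projection, in the strong sense that $\Delta(q) = q\otimes q$, so that $\Delta_q(x) = \Delta(x)(q\otimes q) = (q\otimes q)\Delta(x)(q\otimes q)$ defines a genuine von Neumann algebraic coproduct on $qM$, and then to prove that the restricted Haar state is a trace. First I would record the standard structure theory: the maximal properly infinite projection $p$ is \emph{central}, so $q = 1-p$ is central, $qM = qMq$ is a \emph{finite} von Neumann algebra, and $M = qM \oplus pM$. A quick type computation shows that $q\otimes q$ is precisely the maximal finite central projection of $M\overline{\otimes}M$: the tensor product of two finite von Neumann algebras is finite, whereas the tensor product of any (nonzero, unital) algebra with a properly infinite one is properly infinite.

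For the group-like property I would exploit that $q$, being the \emph{canonically} defined maximal finite central projection, is fixed by every normal $*$-automorphism and $*$-anti-automorphism of $M$; in particular $\sigma^{h_{\mathbb{G}}}_t(q) = q$, $\tau_t(q) = q$ and $R(q) = q$, where $\sigma^{h_{\mathbb{G}}}$ is the modular group of the Haar state, $\tau$ the scaling group and $R$ the unitary antipode. The delicate point is that a $*$-homomorphism need not respect finiteness, so this must be fed into the standard intertwining relations $\Delta\circ\tau_t = (\tau_t\otimes\tau_t)\circ\Delta$ and $\Delta\circ\sigma^{h_{\mathbb{G}}}_t = (\tau_t\otimes\sigma^{h_{\mathbb{G}}}_t)\circ\Delta$ (together with their right-weight and antipode counterparts): these show that $\Delta(q)$ is a projection invariant under all of these flows, and, combined with the identification of $q\otimes q$ as the maximal finite central projection of $M\overline{\otimes}M$ and the compatibility of the normal injective unital $*$-homomorphism $\Delta$ with these automorphism groups, I would conclude $\Delta(q) = q\otimes q$. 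Granting this, it is routine that $\Delta_q = (q\otimes q)\Delta(\cdot)(q\otimes q)$ is a normal unital coassociative $*$-homomorphism $qM \to qM\overline{\otimes}qM$ (multiplicativity uses $(q\otimes q)\Delta(y) = \Delta(y)$ for $y\in qM$, which follows at once from $\Delta(q) = q\otimes q$), and that $h_q := h_{\mathbb{G}}(\cdot)/h_{\mathbb{G}}(q)$ is a faithful normal bi-invariant state; hence $(qM, \Delta_q)$ is a compact quantum group.

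The heart of the matter, and the step I expect to be the main obstacle, is proving that $\mathbb{G}_q := (qM, \Delta_q)$ is of \emph{Kac} type, i.e.\ that $h_q$ is a trace. Finiteness of $qM$ alone is \emph{not} enough, since a faithful normal state on a finite von Neumann algebra need not be tracial (e.g.\ a non-tracial state on $\mathcal{M}_2(\mathbb{C})$), so the quantum-group structure must enter essentially. Here I would use that, because $qM$ is finite, the modular group $\sigma^{h_q}_t = \sigma^{h_{\mathbb{G}}}_t|_{qM}$ is \emph{inner}, implemented by $D^{it}$ for $D$ the positive Radon--Nikodym derivative of $h_q$ with respect to a faithful normal trace on $qM$. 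I would then combine the relation $\sigma^{h_q}_t = \tau^q_t\circ\rho^q_{2t}$ (where $\tau^q$ is the scaling group of $\mathbb{G}_q$ and $\rho^q_s = (\mathrm{id}\otimes f^q_{is})\Delta_q$ are the right translations by the Woronowicz characters $f^q_z$) with $\tau^q$-invariance of $h_q$ and the intertwining relations to force $D$ to be central, equivalently all $\rho$-matrices $F^q_x$ to be trivial; this yields $\sigma^{h_q} = \mathrm{id}$, hence $h_q$ is tracial. Equivalently, once $(qM,\Delta_q)$ is recognised as a compact quantum group with finite underlying von Neumann algebra, one may invoke the characterisation that a compact quantum group is of Kac type precisely when its von Neumann algebra is finite. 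Finally I would identify $\mathbb{G}_q$ with the maximal Kac subgroup of \cite{SoltanKac} by its maximality property, the finite part $qM$ being exactly its $L^{\infty}$-algebra.
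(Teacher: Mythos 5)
Your proposal fails at its very first and central step: the identity $\Delta(q) = q\otimes q$ is \emph{false} except in the degenerate cases $q=0$ or $q=1$. Indeed, apply $(h_{\mathbb{G}}\otimes id)$ to it and use invariance of the Haar state: $(h_{\mathbb{G}}\otimes id)\Delta(q) = h_{\mathbb{G}}(q)1$, while $(h_{\mathbb{G}}\otimes id)(q\otimes q) = h_{\mathbb{G}}(q)\,q$; hence $h_{\mathbb{G}}(q)(1-q)=0$, and faithfulness of $h_{\mathbb{G}}$ forces $q\in\{0,1\}$. So precisely in the only interesting case ($\mathbb{G}$ not of Kac type but with a nonzero finite part) no argument can deliver full group-likeness, and your appeal to canonicity of $q$ under the modular and scaling flows can at best show that $\Delta(q)$ is an \emph{invariant} projection — the step you yourself flagged as delicate is not merely delicate but unprovable. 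What is true, and all that the lemma needs for $\Delta_q$ to be a unital $*$-homomorphism, is the one-sided identity $\Delta(q)(q\otimes q)=q\otimes q$, equivalently $\Delta(pM)(q\otimes q)=0$. The paper obtains this from Sołtan's observation \cite{SoltanKac} that the convolution of two normal tracial states is again a normal tracial state: $pM$ is the common null space of the normal traces of $M$, and (here your type computation is correct) $q\otimes q$ is exactly the support of the normal traces of $M\overline{\otimes}M$, so $(\tau_1\otimes\tau_2)\Delta$ kills $pM$ for all normal traces $\tau_i$. Note also that multiplicativity of $\Delta_q$ requires no group-likeness at all: $q\otimes q$ is a \emph{central} projection of $M\overline{\otimes}M$ (as $q$ is central in $M$), so cutting $\Delta$ down by it is automatically a $*$-homomorphism; your worry on this point was a red herring.

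The second, independent gap is the invariant state, which is the actual crux: a von Neumann algebraic compact quantum group must admit an invariant normal faithful state, and your claim that $h_q = h_{\mathbb{G}}(\cdot)/h_{\mathbb{G}}(q)$ is bi-invariant rests entirely on the false identity. With only the one-sided identity available, $(id\otimes h_q)\Delta_q(x)$ involves $(id\otimes h_{\mathbb{G}}(\,\cdot\, q))\Delta(x)$, which has no reason to be scalar: the compression of the Haar state to a quotient is in general \emph{not} the Haar state of the quotient. This is exactly why the paper's proof takes a detour: it first shows $p$ is invariant under the scaling group and unitary antipode, so that $Pol(\mathbb{H}):=q\,Pol(\mathbb{G})$ is a genuine Hopf $*$-algebra and hence defines a compact quantum group $\mathbb{H}$, which is Kac because $Pol(\mathbb{H})$ carries a separating family of tracial states; it then builds the coaction $\alpha_q: qM \to qM\overline{\otimes}L^{\infty}(\mathbb{H})$ and manufactures the invariant normal state on $qM$ by averaging, $E=(id\otimes h_{\mathbb{H}})\alpha_q$, using $\sigma$-weak density of $Pol(\mathbb{H})$ in $qM$. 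Your closing step (``finite von Neumann algebra plus CQG structure implies Kac'', provable via Cesàro means of convolution powers of a faithful trace) is acceptable as a known fact, but it sits downstream of the two broken steps, so the proposal does not constitute a proof.
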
 
We call $qM$ the \emph{normal Kac quotient} of $M$. 
\begin{proof} 
Note that $pM = \{x\in M \mid \tau(x^*x) =0 \textrm{ for all normal tracial states }\tau\}$. Since the convolution product of two normal tracial states is still a normal tracial state, it follows that $\Delta$ descends to a coproduct $\Delta_q$ on $qM = M/pM$. We are to show that $qM$ has an invariant normal tracial state. 

Note first that $p$ is invariant under the scaling group and the unitary antipode. It follows that $pM \cap \text{Pol}(\mathbb{G})$ is preserved under the antipode, hence $\text{Pol}(\mathbb{H}) := q \text{Pol}(\mathbb{G}) \cong \text{Pol}(\mathbb{G})/\text{Pol}(\mathbb{G})\cap pM$ is a Hopf $*$-algebra with respect to $\Delta_q$. As it spanned by matrix coefficients of finite-dimensional corepresentations, it defines indeed a compact quantum group $\mathbb{H}$. As $\text{Pol}(\mathbb{H})$ has a separating family of tracial states by construction, it follows that $\mathbb{H}$ must be of  Kac type.

Write $\pi: \text{Pol}(\mathbb{G}) \rightarrow \text{Pol}(\mathbb{H})$ for the natural quotient map, and consider $N = L^{\infty}(\mathbb{H})$. As $\mathbb{H}$ is a compact quantum subgroup of $\mathbb{G}$, we have a normal coaction $\alpha: M \rightarrow M\bar{\otimes} N$ restricting to $(id\otimes \pi)\Delta$ on $\text{Pol}(\mathbb{G})$. As the Haar state of $\mathbb{H}$ is tracial, $\alpha$ descends to a normal coaction  $\alpha_q: qM \rightarrow qM \bar{\otimes} N$. Moreover, we have
\begin{equation}\label{EqCommute}
(\Delta_q \otimes id)\alpha_q = (id\otimes \alpha_q)\Delta_q.
\end{equation}
Let $E: qM \rightarrow qM$ be given by $E(x) =  (id\otimes h_{\mathbb{H}})\alpha_q(x)$, for all $x\in qM$, where $h_{\mathbb{H}} \in N_*$ is the Haar state for $\mathbb{H}$. Since $E$ is normal and $E(x) = h_{\mathbb{H}}(x)1$ for $x\in \text{Pol}(\mathbb{H})$, we have by $\sigma$-weak density of $\text{Pol}(\mathbb{H})$ in $qM$ that there exists a normal state $h_{qM}$ on $qM$ with $E(x) = h_{qM}(x)1$, for all $x\in qM$. From \eqref{EqCommute}, it then easily follows that $h_{qM}$ is left invariant, i.e.\ $(id\otimes h_{qM})(\Delta_q(x)) =h_{qM}(x)$, for all $x\in qM$. As the unitary antipode of $M$ descends to $qM$, we also have that $qM$ has a right invariant normal state, hence $(qM,\Delta_q)$ defines a compact quantum group in its own right. It is then clear that $(qM,\Delta_q)$ in fact equals $L^{\infty}(\mathbb{H})$, and in particular defines a compact quantum group of Kac type. 
\end{proof}

We are now ready to prove the main theorem of this section.

	\begin{theo}\label{theo.vNCleft}
Let $\mathbb{G}$ be a compact quantum group. Then all measurable projective representations of $\mathbb{G}$ are cleft.
\end{theo}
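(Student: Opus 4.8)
The starting point is to isolate the precise obstruction to cleftness. Fix a minimal (rank-one) projection $e\in\mathcal{B}(H)$ together with a system of matrix units $\{e_{ij}\}$ for $\mathcal{B}(H)$ with $e_{11}=e$, and set $P=\mathcal{B}(H)\overline{\otimes}L^{\infty}(\mathbb{G})$. Exactly as in Theorem \ref{theo.vNCleftProjectiveTorsion} and Lemma \ref{lem.indsyst}, the families $\{\delta(e_{ii})\}_i$ and $\{e_{ii}\otimes 1\}_i$ are two maximal families of mutually equivalent orthogonal projections summing to $1$ in $P$; so if one produces a single partial isometry $w\in P$ with $w^*w=e\otimes 1$ and $ww^*=\delta(e)$, then $u:=\sum_s \delta(e_{s1})\,w\,(e_{1s}\otimes 1)$ converges $\sigma$-strongly to a unitary implementing $\delta$ (this is the infinite-dimensional version of the matrix-unit gluing of Lemma \ref{lem.UnitaryImplementationAction}). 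Thus the whole theorem reduces to the comparison statement: $\delta(e)\sim e\otimes 1$ in $P$, where $\sim$ denotes Murray--von Neumann equivalence. Note that both $\delta(e)$ and $e\otimes 1$ have full central support $1$ in $P$, since in each case the projections $\delta(e_{ii})$, resp.\ $e_{ii}\otimes 1$, are mutually equivalent and sum to $1$.

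Next I would split $M=L^{\infty}(\mathbb{G})$ along the central decomposition provided by the normal Kac quotient constructed in the preceding lemma. Writing $p$ for the maximal properly infinite (central) projection of $M$ and $q=1-p$, we have $qM=L^{\infty}(\mathbb{H})$ for the Kac-type compact quantum group $\mathbb{H}$, with quotient map $\pi\colon M\to qM$, $\pi(m)=qm$. Correspondingly $P=P_q\oplus P_p$ with $P_q=\mathcal{B}(H)\overline{\otimes}qM$ and $P_p=\mathcal{B}(H)\overline{\otimes}pM$, and $1\otimes q$, $1\otimes p$ are central in $P$. Since $\pi$ intertwines the coproducts, $\delta_{\mathbb{H}}:=(id\otimes\pi)\delta$ is a measurable projective representation of the Kac-type group $\mathbb{H}$, and under the identification $P_q\cong\mathcal{B}(H)\overline{\otimes}qM$ it coincides with the $q$-corner of $\delta$, i.e.\ $\delta_{\mathbb{H}}(a)=\delta(a)(1\otimes q)$. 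By Corollary \ref{cor.projKacclef}, $\delta_{\mathbb{H}}$ is cleft, so there is a unitary $w\in\mathcal{B}(H)\overline{\otimes}qM$ with $\delta(a)(1\otimes q)=w(a\otimes q)w^*$; in particular $\delta(e)(1\otimes q)\sim (e\otimes 1)(1\otimes q)$ in $P_q$. This disposes of the finite part.

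It remains to treat the properly infinite part, i.e.\ to show $\delta(e)(1\otimes p)\sim (e\otimes 1)(1\otimes p)$ in $P_p$. Here I would invoke the comparison theory for properly infinite von Neumann algebras: $P_p=\mathcal{B}(H)\overline{\otimes}pM$ is properly infinite (as $pM$ is), and in a properly infinite ($\sigma$-finite) von Neumann algebra two projections with the same central support are equivalent as soon as both are \emph{properly infinite}. Both $\delta(e)(1\otimes p)$ and $(e\otimes 1)(1\otimes p)$ have full central support $1\otimes p$ in $P_p$ by the remark above. For $e\otimes 1$ this is immediate: the corner $(e\otimes p)P_p(e\otimes p)=\mathbb{C}e\,\overline{\otimes}\,pM\cong pM$ is properly infinite, so $e\otimes p$ is a properly infinite projection. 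Granting the same for $\delta(e)(1\otimes p)$, the two $p$-corners are equivalent; adding the two orthogonal partial isometries obtained on $P_q$ and $P_p$ yields the required $w$ with $w^*w=e\otimes 1$ and $ww^*=\delta(e)$, and the gluing of the first paragraph then finishes the proof.

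\emph{The main obstacle} is precisely the last point: showing that $\delta(e)(1\otimes p)$ is a properly infinite projection, equivalently that the relative commutant $C_\delta:=\delta(\mathcal{B}(H))'\cap P$ has no finite central summand under $1\otimes p$. This cannot follow from the abstract embedding $\mathcal{B}(H)\cong\delta(\mathcal{B}(H))\subset P$ alone (a unital normal copy of a type $I$ factor may well have a finite relative commutant), so the coaction structure must genuinely enter. I expect this to come from the interplay between the invariant conditional expectation $E_\delta$ and the modular theory of $h_{\mathbb{G}}$: the properly infinite part $pM$ is exactly where all normal tracial states of $M$ vanish, and the faithfulness of the $\delta$-invariant expectation together with the modular automorphism group of $h_{\mathbb{G}}$ acting on $pM$ should force $\delta(e)(1\otimes p)$ to be properly infinite, ruling out a finite corner. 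Once this is secured, all earlier steps are routine, and combining the finite (Kac) and properly infinite parts gives the implementing unitary, proving that every projective representation of $\mathbb{G}$ is cleft.
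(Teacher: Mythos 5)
You have reconstructed the paper's architecture almost exactly --- the reduction of cleftness to the single Murray--von Neumann equivalence $\delta(e)\sim e\otimes 1$ (with the same matrix-unit gluing as in Lemma \ref{lem.UnitaryImplementationAction} and Theorem \ref{theo.vNCleftProjectiveTorsion}), the central splitting along the maximal properly infinite projection $p$ with Kac quotient $qL^{\infty}(\mathbb{G})$, the use of Corollary \ref{cor.projKacclef} on the $q$-corner, and comparison of properly infinite projections on the $p$-corner --- but the step you explicitly defer, proper infiniteness of $\delta(e)(1\otimes p)$, is the actual content of the theorem, and the mechanism you guess at (modular theory of $h_{\mathbb{G}}$) is not how it is closed. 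The paper's fix is a short weight-domination trick. First one reduces to $\delta$ ergodic: $\mathcal{B}(H)^{\delta}$ is the range of the normal expectation $E_{\delta}$, hence a direct sum of type $I$ factors, and restricting to a minimal projection of $\mathcal{B}(H)^{\delta}$ costs nothing by Lemma \ref{lem.indsyst}. Ergodicity yields a \emph{faithful normal invariant state} $\Phi$ on $\mathcal{B}(H)$ with $(\Phi\otimes id)\delta(x)=\Phi(x)1$, and since any density matrix is dominated by $1$ one has the elementary but decisive inequality $\Phi\leq\mathrm{Tr}$. If now $\delta(e)(1\otimes p)$ were not properly infinite, some central cut of $\mathcal{B}(H)\overline{\otimes}\,pL^{\infty}(\mathbb{G})$ would be semifinite and carry a nonzero normal semifinite trace of the form $\mathrm{Tr}\otimes\tau$, with $\tau$ a nonzero normal semifinite trace on $pL^{\infty}(\mathbb{G})$, taking a finite value on $\delta(e)(1\otimes p)$. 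But $\mathrm{Tr}\otimes\tau\geq\Phi\otimes\tau$, and by invariance $(\Phi\otimes\tau)\bigl(\delta(e)(1\otimes p)\bigr)=\Phi(e)\,\tau(p)$, which is infinite: $\Phi(e)>0$ by faithfulness, while $\tau(p)=\infty$ because a nonzero \emph{finite} normal trace on $pL^{\infty}(\mathbb{G})$ would produce a normal tracial state there, contradicting the defining maximality of $p$. This contradiction gives proper infiniteness, after which your comparison argument and gluing go through verbatim.

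Two further remarks. Your instinct that the abstract embedding $\delta(\mathcal{B}(H))\subset\mathcal{B}(H)\overline{\otimes}L^{\infty}(\mathbb{G})$ cannot suffice is correct, but the extra input needed is much softer than modular theory: only the invariant expectation and $\Phi\leq\mathrm{Tr}$ enter, converting ``no normal tracial state on $pL^{\infty}(\mathbb{G})$'' into ``no finite central corner for $\delta(e)(1\otimes p)$''. In fact, if you wished to avoid the ergodic reduction, note that $\Phi:=\phi\circ E_{\delta}$ for a faithful normal state $\phi$ on $\mathcal{B}(H)^{\delta}$ still satisfies $(\Phi\otimes id)\delta(x)=\Phi(x)1$, since $(E_{\delta}\otimes id)\delta(x)=E_{\delta}(x)\otimes 1$ by invariance of $h_{\mathbb{G}}$. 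However, the ergodic reduction is load-bearing for another reason you pass over: it is what guarantees that $H$ is separable (a faithful normal state on $\mathcal{B}(H)$ exists only then), and without $\sigma$-finiteness both the existence of such a $\phi$ and the comparison statement you invoke --- that two properly infinite projections with equal central support are equivalent --- fail in general. So your proposal is the right skeleton, but it is missing precisely the theorem's key estimate, together with the reduction that makes the comparison theory legitimately applicable.
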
 
	
\begin{proof}
Let $\delta:  \mathcal{B}(H) \rightarrow \mathcal{B}(H) \overline{\otimes} L^{\infty}(\mathbb{G})$ be a measurable projective representation of $\mathbb{G}$. To show that $\delta$ is cleft, we may by the discussion before Corollary \ref{cor.projKacclef} assume that $\delta$ is ergodic. As we are assuming that $C(\mathbb{G})$ is separable, it then follows in particular that $H$ is separable. We can then moreover find a unique normal state $\Phi$ on $\mathcal{B}(H)$ such that $\Phi(x)1 = E_{\delta}(x)$ for all $x\in \mathcal{B}(H)$. We necessarily have that $\Phi \leq \mathrm{Tr}$, with $\mathrm{Tr}$ the usual trace of $\mathcal{B}(H)$. 

Let $e$ be a minimal projection in $\mathcal{B}(H)$. It is sufficient to show that $\delta(e)$ and $e\otimes 1$ are unitarily equivalent in $\mathcal{B}(H) \otimes M$, as we can then proceed as in Theorem \ref{theo.vNCleftProjectiveTorsion}.

Let $p$ be the (central) maximal properly infinite projection of $L^{\infty}(\mathbb{G})$, and put $q = 1-p$. We are to show that $\delta(e)(1\otimes p) \sim e\otimes p$ and $\delta(e)(1\otimes q)\sim e\otimes q$, where $\sim$ denotes Murray-von Neumann equivalence. 

To show that $\delta(e)(1\otimes p) \sim e\otimes p$, let us show first that $\delta(e)(1\otimes p)$ is properly infinite. Assume this were not the case.  Then there exists a non-zero semi-finite normal tracial weight $\tau$ on $pL^{\infty}(\mathbb{G})$ with $(\mathrm{Tr}\otimes \tau)(\delta(e)(1\otimes p))< \infty$. But the left hand side is larger than $(\Phi\otimes \tau)(\delta(e)(1\otimes p)) = \Phi(e)\tau(p)$, which is infinite since $p$ is maximally properly infinite and $\Phi$ is faithful. This contradiction shows that   $\delta(e)(1\otimes p)$ is necessarily properly infinite. Since  $\delta(e)(1\otimes w)\neq0$ for any non-zero $w$, again using faithfulness of $\Phi$, it follows that the properly infinite projections $\delta(e)(1\otimes p)$ and $e\otimes p$ have the same central support $1\otimes p$, and hence $\delta(e)(1\otimes p)\sim e\otimes p$. 

To show that also $\delta(e)(1\otimes q)\sim e\otimes q$, we note that $x \mapsto \delta(x)(1\otimes q)$ defines a projective representation of the Kac type compact quantum group $(qL^{\infty}(\mathbb{G}),\Delta_q)$. By Corollary \ref{cor.projKacclef} we have that this projective representation is necessarily cleft, which implies that $\delta(e)(1\otimes q)\sim e\otimes q$.
\end{proof} 
	
\subsection{Measurable $\Omega$-representations}\label{sec.omreps}

We recall some of the results of \cite{KennyProjective}, where by Theorem \ref{theo.vNCleft} we can restrict to the cleft case.

	\begin{defi}
		Let $\mathbb{G}$ be a compact quantum group and $\Omega$ a  $2$-cocycle on $\mathbb{G}$. Let $(u,H_u)$ be a  measurable  $\Omega$-representation of $\mathbb{G}$. A (closed) subspace $E\subset H$ is called \emph{$u$-invariant} if $(p_E\otimes 1)u(p_E\otimes 1)=u(p_E\otimes 1)$, where $p_E$ denotes the orthogonal projection from $H$ onto $E$. We say that $u$ is \emph{irreducible} if the only $u$-invariant subspaces are $(0)$ and $H_{u}$, and we say that $u$ is \emph{indecomposable} if $H$ cannot be written as a direct sum of two non-zero $u$-invariant subspaces. 

If $(v,H_v)$ is another  $\Omega$-representation, an intertwiner between $u$ and $v$ is a linear bounded operator $T: H_{u}\longrightarrow H_{v}$ such that $(T\otimes 1)u=v(T\otimes 1)$. The space of all intertwiners between $u$ and $v$ is denoted by $Hom_{\mathbb{G}}(u, v)$. We write $End_{\mathbb{G}}(u)$ if $u=v$.
		
		We say that $(u, H_{u})$ and $(v, H_{v})$ are (unitary) equivalent if $Hom_{\mathbb{G}}(u, v)$ contains a unitary operator.  
	\end{defi}

It is a straightforward computation, using that $u$ and $v$ are unitary, to show that $T^*\in Hom_{\mathbb{G}}(v, u)$ whenever $T\in Hom_{\mathbb{G}}(u, v)$. Then clearly $End_{\mathbb{G}}(u)$ is a von Neumann algebra.

Similar definitions can be stated for  $\Omega^*$-representations.

\begin{defi}
We denote by $\text{Irr}(\mathbb{G}, \Omega)$ and by $\text{Irr}(\mathbb{G}, \Omega^*)$ the set of all  equivalence classes of irreducible  $\Omega$-representations of $\mathbb{G}$ and the one of irreducible  $\Omega^*$-representations of $\mathbb{G}$, respectively.
\end{defi}

If $x\in \text{Irr}(\mathbb{G}, \Omega)$ is such a class, we denote by $u^x\in \mathcal{B}(H_x)\overline{\otimes}L^{\infty}(\mathbb{G})$ a representative of $x$, where $H_x$ denotes the Hilbert space on which $u^x$ acts. We put $n_{x}:=dim(H_{x})$, where possibly $n_x = \infty$. Notice that these notations are similar to the ones used for ordinary representations of $\mathbb{G}$, so that the context will explain in which situation the notations are used.

For further use, let us note here that if $\Omega$ is a  $2$-cocycle for $\mathbb{G}$, then $\Omega^*$ is a  $2$-cocycle for $\mathbb{G}^{op} = (C(\mathbb{G})^{op},\Delta)$. Hence all arguments valid for  $\Omega$-representations are also valid for unitary $\Omega^*$-representations. 

Another way to link up  $\Omega$-representations with $\Omega^*$-representations is given by the following result. 

		\begin{lem}\label{lem.DecompVTand}
			Let $\mathbb{G}$ be a compact quantum group and $\Omega$ a $2$-cocycle on $\mathbb{G}$. Let $u \in \mathcal{B}(H) \overline{\otimes} L^{\infty}(\mathbb{G})$ be a (measurable)  $\Omega^*$-representation on $H$, and put $\delta = \delta_u: \mathcal{B}(H) \rightarrow \mathcal{B}(H) \overline{\otimes} L^{\infty}(\mathbb{G})$ as in \eqref{eq.deltau2}. With $T = \mathcal{B}(H)$, the unitary operator $V_T\in \mathcal{B}(L^2(T))\otimes C(\mathbb{G})$ implementing the action $\delta$ (recall Remark \ref{rem.ActExtvN}) can then be written as $V_T=u_{13} u^\circ_{23}\in T\otimes T^{op}\otimes C(\mathbb{G})$, where $u^\circ$ is an $\Omega$-representation implementing $\overline{\delta}$ in the sense of \eqref{eq.deltau}.
		\end{lem}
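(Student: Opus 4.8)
The plan is to exhibit $u_{13}u^\circ_{23}$ as a unitary corepresentation of $\mathbb{G}$ on $L^2(T)$ implementing $\delta$, and then to identify it with the canonical implementation $V_T$ from Remark \ref{rem.ActExtvN}. Throughout I work in the standard form $L^2(T)$ built from a $\delta$-invariant normal state (or trace-like weight) $\Phi$ on $T=\mathcal{B}(H)$ with density $\varrho$, so that $\Lambda_\Phi(a)=a\varrho^{1/2}$ in the Hilbert--Schmidt picture, $\lambda_T$ realizes the first (``$T$'') leg by left multiplication, and the anti-representation $s\mapsto s^\circ=J_Ts^*J_T$ realizes the commutant $T^\circ=T'\cong T^{op}$ (the second leg) by right multiplication. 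The element $u^\circ$ is the $\Omega$-representation in $T'\overline{\otimes}L^\infty(\mathbb{G})$ implementing $\overline{\delta}=\Sigma(j\otimes R)\delta j^{-1}$ in the sense of \eqref{eq.deltau}; concretely it is obtained from $u$ by applying $(\cdot)^\circ$ to the first leg and the unitary antipode $R(x)=\widehat{J}_{\mathbb{G}}x^*\widehat{J}_{\mathbb{G}}$ to the second, so that $u^\circ_{23}$ lands in the commutant of the first leg. That this $u^\circ$ is genuinely an $\Omega$-representation is a routine computation from the $\Omega^*$-relation for $u$ together with the anti-multiplicativity of $(\cdot)^\circ$ and $R$ and the anti-comultiplicativity of $R$; this is precisely where the passage from $\Omega^*$ to $\Omega$ happens.

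First I would record the two defect relations. Since $u$ is an $\Omega^*$-representation, Definition \ref{defi.TwistedRep} gives $(id\otimes\Delta)(u)=u_{12}u_{13}(1\otimes\Omega)$, while the $\Omega$-representation property of $u^\circ$ gives $(id\otimes\Delta)(u^\circ)=(1\otimes\Omega^*)u^\circ_{12}u^\circ_{13}$. The clean core of the argument is then the corepresentation identity: applying $id\otimes\Delta$ to the $C(\mathbb{G})$-leg of $W:=u_{13}u^\circ_{23}$ (splitting it into legs $3,4$, with legs $1,2$ the $T$- and $T^{op}$-legs) and inserting the two defect relations, the $\Omega$ produced by $u$ cancels the $\Omega^*$ produced by $u^\circ$:
\[
(id\otimes\Delta)(W)=u_{13}u_{14}\Omega_{34}\,\Omega^*_{34}u^\circ_{23}u^\circ_{24}=u_{13}u_{14}u^\circ_{23}u^\circ_{24}=(u_{13}u^\circ_{23})(u_{14}u^\circ_{24}),
\]
the last equality because $u^\circ_{23}$ (second leg in $T'$) commutes with $u_{14}$ (first leg in $T$). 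The final expression is $W$ placed in legs $(1,2,3)$ times $W$ placed in legs $(1,2,4)$, i.e.\ the corepresentation identity for $W$. Moreover $u^\circ_{23}$ commutes with $a\otimes 1\otimes 1$ for $a\in T$, so $W(a\otimes 1)W^*=u_{13}(a\otimes 1)u_{13}^*=(\lambda_T\otimes id)(\delta(a))$, meaning $W$ implements $\delta$ exactly as $V_T$ does. In particular, $W$ is a unitary representation of $\mathbb{G}$, so its coefficients lie in $C(\mathbb{G})$ and $W\in T\otimes T^{op}\otimes C(\mathbb{G})$, as claimed.

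It remains to show $W=V_T$. Here I would use the GNS description of $V_T$ from the (weight) analogue of Proposition \ref{theo.UnitaryImplementation}, namely $V_T(\Lambda_\Phi(a)\otimes\Lambda(x))=(\Lambda_\Phi\otimes\Lambda)(\delta(a)(1\otimes x))$, and evaluate both $V_T$ and $W$ on the dense set of such vectors. Writing $u=\sum_k a_k\otimes b_k$, the right-hand side equals $\sum_{k,l}a_k a a_l^*\varrho^{1/2}\otimes\Lambda(b_k b_l^* x)$; on the other hand $u_{13}$ contributes left multiplication by $a_k$ on the Hilbert--Schmidt leg and $\lambda(b_k)$, while $u^\circ_{23}$ contributes right multiplication on that leg and $\lambda(b_l^*)$. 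The two expressions agree exactly when the modular twist $\varrho^{-1/2}a_l^*\varrho^{1/2}$ produced by $V_T$ on the commutant leg is matched by the twist that the unitary antipode $R$ (equivalently $\widehat{J}_{\mathbb{G}}$) builds into $u^\circ$. Equivalently, one bypasses this bookkeeping by invoking uniqueness of the standard implementation: $V_T$ is the unique implementing corepresentation commuting with $J_T\otimes\widehat{J}_{\mathbb{G}}$, and $W$ inherits this commutation because it is assembled by symmetric conjugation with $J_T$ and $\widehat{J}_{\mathbb{G}}$.

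The main obstacle is exactly this final reconciliation of modular data: relating the density $\varrho$ of the $\delta$-invariant state on $T$ (governed by $u$ and the modular structure of the Haar state, hence non-trivial away from the Kac case) to the modular conjugation $\widehat{J}_{\mathbb{G}}$ and unitary antipode $R$ of $\mathbb{G}$. Everything else---the definition of $u^\circ$, the corepresentation identity via the $\Omega/\Omega^*$ cancellation, and the fact that $W$ implements $\delta$---is formal; the care lies in confirming that $W$ is the canonical (standard) implementation, and not merely some implementing corepresentation differing from $V_T$ by a unitary one-cocycle in the commutant.
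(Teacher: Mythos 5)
Your argument has a genuine gap, and it sits exactly where you place all the weight: the construction of $u^{\circ}$. You define $u^{\circ}$ concretely by applying $(\cdot)^{\circ}$ to the first leg of $u$ and $R$ to the second, and assert that a routine computation shows this is an $\Omega$-representation. It is not. Carrying out that computation — using $(id\otimes \Delta)(u)=u_{12}u_{13}(1\otimes \Omega)$, the anti-multiplicativity of $(\cdot)^{\circ}$ and $R$, and the anti-comultiplicativity $\Delta\circ R = \Sigma(R\otimes R)\Delta$ — one finds
\begin{equation*}
(id\otimes \Delta)\bigl((j\otimes R)(u)\bigr)=\bigl(1\otimes \widetilde{\Omega}^*\bigr)\,(j\otimes R)(u)_{12}\,(j\otimes R)(u)_{13},\qquad \widetilde{\Omega}=(R\otimes R)(\Omega_{21}^*),
\end{equation*}
so $(j\otimes R)(u)$ is a $\widetilde{\Omega}$-representation, and your key cancellation becomes $u_{13}u_{14}\Omega_{34}\widetilde{\Omega}^*_{34}u^{\circ}_{23}u^{\circ}_{24}$, which does not collapse since $\widetilde{\Omega}\neq\Omega$ in general — the two cocycles are only \emph{cohomologous}. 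The correct formula is $u^{\circ}=(j\otimes R)(u)(1\otimes X_{\Omega}^*)$ for a canonical coboundary $X_{\Omega}$ between $\Omega$ and $\widetilde{\Omega}$; but that is precisely Corollary \ref{cor.opp2coc}, which the paper \emph{deduces from} this lemma, so assuming it here is circular within the paper's development (you would have to import \cite[Proposition 6.3.(iii)]{KennyGaloisObjTwistings} instead, which you do not do). Without an honest $\Omega$-representation $u^{\circ}$ implementing $\overline{\delta}$, your operator $W=u_{13}u^{\circ}_{23}$ is not even a corepresentation, and the forward construction collapses: the existence of such a $u^{\circ}$ is the actual content of the lemma, not an available input.

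The second, smaller issue is the identification $W=V_T$, which you flag but do not close. Two unitary corepresentations implementing $\delta$ can differ by a unitary in $T^{op}\overline{\otimes}L^{\infty}(\mathbb{G})$ sitting in legs $2,3$, and excluding this ambiguity requires verifying the modular characterization $(J_T\otimes \widehat{J}_{\mathbb{G}})W(J_T\otimes \widehat{J}_{\mathbb{G}})=W^*$; your one-line claim that $W$ "inherits" this by symmetric assembly is exactly the $X_{\Omega}$-bookkeeping you deferred. The paper avoids both problems by running the argument in the opposite direction: starting from the canonical $V_T$, the relation $V_T^*u_{13}(t\otimes 1\otimes 1)=(t\otimes 1\otimes 1)V_T^*u_{13}$ for all $t\in T$ shows by a commutant argument that $V_T=u_{13}u^{\circ}_{23}$ for \emph{some} unitary $u^{\circ}\in T^{op}\overline{\otimes}L^{\infty}(\mathbb{G})$; the corepresentation identity for $V_T$ then forces $u^{\circ}$ to be an $\Omega$-representation, and Vaes's formula $(J_T\otimes \widehat{J}_{\mathbb{G}})V_T(J_T\otimes \widehat{J}_{\mathbb{G}})=V_T^*$ yields that $u^{\circ}$ implements $\overline{\delta}$. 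In that direction the existence of $u^{\circ}$, its $\Omega$-representation property, and the standardness all come for free, whereas in your direction each must be established by hand — and the first step, as written, fails.
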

		\begin{proof}
We have by construction that $\delta(t)=u(t\otimes 1)u^*$, for all $t\in T$. On the other hand, we have by Remark \ref{rem.ActExtvN} that there exists a canonical unitary operator $V_T\in \mathcal{B}(L^2(T))\otimes C(\mathbb{G})$ implementing $\delta$, that is, $\delta(t)=V_T(t\otimes 1)V^*_T$, for all $t\in T$. Hence, for all $t\in T$ we write, upon identifying $T\overline{\otimes} T^{op} \cong \mathcal{B}(L^2(T))$:
			\begin{equation*}
			\begin{split}
				V^*_T\underbracket[0.8pt]{u_{13}(t\otimes 1\otimes 1)}=\underbracket[0.8pt]{V^*_T\delta(t)_{13}}u_{13}=(t\otimes 1\otimes 1)V^*_Tu_{13},
			\end{split}
			\end{equation*}
			which shows that there exists a unitary operator $u^{\circ} \in T^{op}\otimes C(\mathbb{G})$ such that $V_T=u_{13}u^{\circ}_{23}$. As $V_T$ is a corepresentation, it is easily seen that $u^{\circ}$ must necessearily be an $\Omega$-representation. Finally, we have by \cite[Proposition 3.7.3]{VaesUnitImpl} that $(J_T\otimes \widehat{J}_{\mathbb{G}})V_T (J_T\otimes \widehat{J}_{\mathbb{G}}) = V_T^*$. Since $\widehat{J}_{\mathbb{G}}$ implements the unitary antipode $R$, and since for $x\in T^{op}$ we have $J_T(1\otimes x)J_T = x^*\otimes 1$ by definition, it then follows that for $x\in T^{op}$ we have $1\otimes (u^\circ)^*(x\otimes 1)u^\circ = V_T^*(1\otimes x \otimes 1)V_T = \overline{\delta}(x)_{32}$. 
		\end{proof}

\begin{cor}\label{cor.opp2coc}
There exists a canonical element $X_{\Omega} \in L^{\infty}(\mathbb{G})$ such that: 
\[
u^{\circ} = (j\otimes R)(u)(1\otimes X_{\Omega}^*)
\]
for all $\Omega^*$-representations. Moreover, $X_{\Omega}$ is then a coboundary between $\Omega$ and the $2$-cocycle $\widetilde{\Omega} = (R\otimes R)\Omega_{21}^*$, so $\widetilde{\Omega} = (X_{\Omega}^*\otimes X_{\Omega}^*)\Omega \Delta(X_{\Omega})$. We obtain in particular a one-to-one correspondence between $\Omega$-representations and  $\Omega^*$-representations by the map $u \mapsto u^{\circ} = (j\otimes R)(u)(1\otimes X_{\Omega}^*)$.
\end{cor}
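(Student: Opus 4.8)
The plan is to produce the companion $\Omega$-representation $u^{\circ}$ of Lemma \ref{lem.DecompVTand} in explicitly algebraic terms and then invoke uniqueness of implementing unitaries. First I would guess the correct candidate: for a unitary $\Omega^*$-representation $u$, set $w := (j\otimes R)(u)$ and verify that this is the right object. Using that $R$ is anti-comultiplicative, $\Delta(R(b)) = \Sigma((R\otimes R)\Delta(b))$, together with the anti-multiplicativity of $j$ and $R$, I would rewrite the $\Omega^*$-representation identity $(id\otimes\Delta)(u)(1\otimes\Omega^*) = u_{12}u_{13}$ by applying $j$ to the first leg, $R$ to the last two, and flipping the latter two; this produces exactly $(1\otimes\widetilde{\Omega})(id\otimes\Delta)(w) = w_{12}w_{13}$ with $\widetilde{\Omega} = (R\otimes R)\Omega_{21}^*$. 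In other words $w$ is a unitary $\widetilde{\Omega}$-representation (of the same ``left'' type as $u^{\circ}$). Note that $\widetilde{\Omega}$ is then automatically a unitary $2$-cocycle, since it admits unitary representations; alternatively this will follow from the coboundary relation obtained below.

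Next I would show that $w$ implements the opposite projective representation $\overline{\delta}$ in the sense of \eqref{eq.deltau}, i.e.\ $\delta_w = \overline{\delta}$. This is the step that reconciles the modular-theoretic description of $u^{\circ}$ in Lemma \ref{lem.DecompVTand} (built from $J_T$, $\widehat{J}_{\mathbb{G}}$ and the Vaes identity $(J_T\otimes\widehat{J}_{\mathbb{G}})V_T(J_T\otimes\widehat{J}_{\mathbb{G}}) = V_T^*$) with the purely algebraic operation $j\otimes R$; concretely one checks $(j\otimes R)(u(a\otimes 1)u^*) = w^*(j(a)\otimes 1)w$ for all $a$, using that $\widehat{J}_{\mathbb{G}}$ implements $R$ and that $J_T$ intertwines $T$ with $T^{op}$. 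Granting this, both $u^{\circ}$ (by Lemma \ref{lem.DecompVTand}) and $w$ implement $\overline{\delta}$, so the unitary $w^*u^{\circ}$ commutes with every $a\otimes 1$ and is hence of the form $1\otimes X_{\Omega}^*$ for a unitary $X_{\Omega}\in L^{\infty}(\mathbb{G})$. This yields $u^{\circ} = (j\otimes R)(u)(1\otimes X_{\Omega}^*)$, and the computation of Note \ref{note.UnicityImplementation}, applied to the $\widetilde{\Omega}$-representation $w$ and the $\Omega$-representation $u^{\circ}$, gives the coboundary relation $\widetilde{\Omega} = (X_{\Omega}^*\otimes X_{\Omega}^*)\Omega\Delta(X_{\Omega})$.

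Finally, I would establish that $X_{\Omega}$ can be chosen independently of $u$, so that $u\mapsto u^{\circ} = (j\otimes R)(u)(1\otimes X_{\Omega}^*)$ is a genuine canonical bijection between unitary $\Omega$-representations and unitary $\Omega^*$-representations. For this I would run the previous steps on a direct sum $u\oplus u'$: both $u\mapsto u^{\circ}$ and $u\mapsto(j\otimes R)(u)$ are compatible with direct sums, while the correction $1\otimes X$ lives in the common $L^{\infty}(\mathbb{G})$-leg, so restricting to each summand forces $X_{u} = X_{u\oplus u'} = X_{u'}$; hence a single $X_{\Omega}$, depending only on $\Omega$, works for every $\Omega^*$-representation. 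Bijectivity is then immediate, as $j\otimes R$ is invertible and right multiplication by $1\otimes X_{\Omega}^*$ is invertible.

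I expect the main obstacle to be the middle step, namely carefully matching the operator $u^{\circ}$ produced by the modular machinery of Lemma \ref{lem.DecompVTand} with the algebraic $(j\otimes R)(u)$, keeping track of adjoints and of the identification $j:\mathcal{B}(H)^{op}\cong\mathcal{B}(\overline{H})$. The leg-numbering bookkeeping around $(J_T\otimes\widehat{J}_{\mathbb{G}})V_T(J_T\otimes\widehat{J}_{\mathbb{G}}) = V_T^*$ is delicate, and it is precisely where a stray adjoint or flip would turn the intended right multiplication by $1\otimes X_{\Omega}^*$ into a left multiplication; getting the normalization of $X_{\Omega}$ exactly right is the crux.
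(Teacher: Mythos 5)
Your proposal follows the paper's own proof step for step: show that $u^{\circ}$ and $w:=(j\otimes R)(u)$ implement the same left coaction $\overline{\delta}$, deduce from the commutant of $\mathcal{B}(\overline{H})\otimes 1$ that they differ by a unitary $1\otimes X_u$, extract the coboundary relation between $\Omega$ and $\widetilde{\Omega}$ from the two projective-representation identities, and remove the dependence on $u$ via $(u\oplus v)^{\circ}=u^{\circ}\oplus v^{\circ}$, forcing $X_u=X_{u\oplus v}=X_v$. The computations you supply explicitly (anti-multiplicativity and anti-comultiplicativity of $j\otimes R$ turning the $\Omega^*$-representation identity into the $\widetilde{\Omega}$-representation identity for $w$, and the check $\delta_w=\overline{\delta}$) are exactly the steps the paper labels ``easily seen'', so the routes coincide.

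One concrete correction on the step you yourself single out as the crux. For the left-handed implementation convention \eqref{eq.deltau}, $\delta_v(a)=\Sigma\big(v^*(a\otimes 1)v\big)$, the commutant argument yields that $u^{\circ}w^*$ --- not $w^*u^{\circ}$ --- commutes with $\mathcal{B}(\overline{H})\otimes 1$: from $w^*(a\otimes 1)w=(u^{\circ})^*(a\otimes 1)u^{\circ}$ one gets $u^{\circ}w^*(a\otimes 1)=(a\otimes 1)u^{\circ}w^*$, hence $u^{\circ}=(1\otimes X)w$ with the correction on the \emph{left}. Your claim that $w^*u^{\circ}\in 1\otimes L^{\infty}(\mathbb{G})$ does not follow; it would require conjugation by $w$ to preserve $1\otimes L^{\infty}(\mathbb{G})$, which is not automatic. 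Moreover, it is precisely the left-sided placement that is consistent with the coboundary identity asserted in the corollary: writing $u^{\circ}=(1\otimes X)w$ and comparing $(id\otimes{}_{\Omega}\Delta)(u^{\circ})=u^{\circ}_{12}u^{\circ}_{13}$ with $(id\otimes{}_{\widetilde{\Omega}}\Delta)(w)=w_{12}w_{13}$ gives $\Omega\Delta(X)\widetilde{\Omega}^*=X\otimes X$, i.e.\ $\widetilde{\Omega}=(X^*\otimes X^*)\Omega\Delta(X)$, since $1\otimes X\otimes 1$ commutes with $w_{13}$; a right-sided correction instead produces a relation conjugated by $w_{12}w_{13}$, not a clean coboundary. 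To be fair, the paper's own proof invokes Note \ref{note.UnicityImplementation} --- which is stated for right coactions $a\mapsto u(a\otimes 1)u^*$, where the ambiguity genuinely is right multiplication --- in this left-coaction situation and writes the correction on the right, so your write-up matches the paper's level of care on this point; but since you identified the side bookkeeping as the delicate issue, the resolution is that the commutant argument places $1\otimes X_{\Omega}$ to the left of $(j\otimes R)(u)$, and that placement is the one compatible with the stated formula $\widetilde{\Omega}=(X_{\Omega}^*\otimes X_{\Omega}^*)\Omega\Delta(X_{\Omega})$. All remaining steps of your proposal (the $\widetilde{\Omega}$-representation property of $w$, the identity $\delta_w=\overline{\delta}$ via Lemma \ref{lem.DecompVTand}, and the direct-sum rigidity) are correct and agree with the paper.
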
 
Note that the fact that $\Omega$ and $\widetilde{\Omega}$ are canonically coboundary equivalent holds in the general context of locally compact quantum groups, see \cite[Proposition 6.3.(iii)]{KennyGaloisObjTwistings}, but we can give in our setting an easier, more direct proof. It can be shown that the coboundary element obtained here indeed coincides with the one from \cite[Proposition 6.3.(iii)]{KennyGaloisObjTwistings}, but we refrain from showing this explicitly.

\begin{proof}
If $u$ is an  $\Omega^*$-representation, it is easily seen that $u^{\circ}$ and $(j\otimes R)(u)$ both implement the same left coaction on $\mathcal{B}(H)^{op}$, hence by Note \ref{note.UnicityImplementation} we have $u^{\circ} = (j\otimes R)(u)(1\otimes X_{u}^*)$ for some unitary $X_u$. It is also easily seen that $\delta_{(j\otimes R)(u)}$ is cleft with associated $2$-cocycle $\widetilde{\Omega}$, showing that $X_u$ is a coboundary between $\Omega$ and $\widetilde{\Omega}$. 

It remains to show that $X_{u}$ is independent of $u$. But by construction, it is easily shown that $(u\oplus v)^{\circ} = u^{\circ}\oplus v^{\circ}$. It then follows that $X_u = X_{u\oplus v} = X_v$ for any two  $\Omega^*$-representations $u,v$. Namely, by the previous discussion we have $(j\otimes R)(u)=u^{\circ}(1\otimes X_{u})$, $(j\otimes R)(v)=v^{\circ}(1\otimes X_{v})$ and $(j\otimes R)(u\oplus v)=(u\oplus v)^{\circ}(1\otimes X_{u\oplus v})$, hence:
	\begin{equation*}
	\begin{split}
		u^{\circ}(1\otimes X_{u})\oplus v^{\circ}(1\otimes X_{v})&=(j\otimes R)(u)\oplus (j\otimes R)(v)=(j\otimes R)(u\oplus v)\\
		&=(u\oplus v)^{\circ}(1\otimes X_{u\oplus v})=(u^{\circ}\oplus v^{\circ})(1\otimes X_{u\oplus v})\\
		&=u^{\circ}(1\otimes X_{u\oplus v})\oplus v^{\circ}(1\otimes X_{u\oplus v}).
	\end{split}
	\end{equation*}
	
	Multiplying both sides of this equation by $(1\otimes X_{u}^*)$ on the right, we obtain that $u^{\circ}\oplus v^{\circ}(1\otimes X_{v}X_{u}^*)=u^{\circ}(1\otimes X_{u\oplus v}X_{u}^*)\oplus v^{\circ}(1\otimes X_{u\oplus v}X_{u}^*)$, hence $X_{u\oplus v}X_{u}^*=1$ and $X_{v}X_{u}^*=X_{u\oplus v}X_{u}^*$; which yields $X_{u\oplus v}=X_u$ and $X_v=X_{u\oplus v}$ as claimed.
\end{proof}

	\begin{lem}[Twisted Schur's lemma]\label{lem.TwistedSchur}
 If $(u, H_{u})$ and $(v, H_{v})$ are two irreducible  $\Omega$-representations (resp.\ $\Omega^*$-representations) of $\mathbb{G}$, then either $u$ is not unitary equivalent to $v$ and $Hom_{\mathbb{G}}(u, v)=(0)$; or $u$ is unitary equivalent to $v$ and $Hom_{\mathbb{G}}(u, v)$ is a $1$-dimensional subspace of $\mathcal{B}(H_{u}, H_{v})$.
		In particular, $End_{\mathbb{G}}(u)=\mathbb{C}$, and $u$ is irreducible if and only if $u$ is indecomposable. 
	\end{lem}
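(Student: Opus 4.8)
The plan is to prove Schur's lemma by identifying the intertwiner algebra with a fixed point algebra and then running the classical arguments. Since $\Omega^*$ is a $2$-cocycle for $\mathbb{G}^{op}$ and every argument for $\Omega$-representations applies to $\Omega^*$-representations (as noted just before the statement), it suffices to treat, say, $\Omega^*$-representations. So let $u$ be a unitary $\Omega^*$-representation and write $\delta_u(a) = u(a\otimes 1)u^*$ for the associated coaction of \eqref{eq.deltau2}. Because $T \in Hom_{\mathbb{G}}(u,v)$ forces $T^* \in Hom_{\mathbb{G}}(v,u)$, the space $End_{\mathbb{G}}(u) = \{T : (T\otimes 1)u = u(T\otimes 1)\}$ is a von Neumann subalgebra of $\mathcal{B}(H_u)$, and it coincides with the fixed point algebra $\mathcal{B}(H_u)^{\delta_u}$. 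The heart of the proof will be to show that the $u$-invariant subspaces of $H_u$ are exactly the ranges of the projections lying in $End_{\mathbb{G}}(u)$; everything else is then formal.

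One inclusion is immediate: if $p \in End_{\mathbb{G}}(u)$ is a projection, then $p\otimes 1$ commutes with $u$, so $(p\otimes 1)u(p\otimes 1) = u(p\otimes 1)$ and $pH_u$ is $u$-invariant. The converse is the main obstacle. I would first record, by a one-line manipulation of $\delta_u(p) = u(p\otimes 1)u^*$, the equivalence that $pH_u$ is $u$-invariant if and only if $\delta_u(p) \leq p\otimes 1$. The key step is to upgrade this inequality to an equality. For this I would use the averaging conditional expectation $E_{\delta_u} = (\mathrm{id}\otimes h_{\mathbb{G}})\delta_u$, which is normal, idempotent onto $\mathcal{B}(H_u)^{\delta_u}$, and \emph{faithful} (the slice $\mathrm{id}\otimes h_{\mathbb{G}}$ is faithful since $h_{\mathbb{G}}$ is faithful, and $\delta_u$ is injective, being unitarily implemented; compare Remark \ref{rem.nondegact}). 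Positivity of $\mathrm{id}\otimes h_{\mathbb{G}}$ gives $E_{\delta_u}(p) \leq p$, so $c := p - E_{\delta_u}(p)$ is positive; since $E_{\delta_u}(c) = E_{\delta_u}(p) - E_{\delta_u}(E_{\delta_u}(p)) = 0$, faithfulness forces $c = 0$. Hence $p = E_{\delta_u}(p)$ is $\delta_u$-fixed, i.e.\ $\delta_u(p) = p\otimes 1$ and $p \in End_{\mathbb{G}}(u)$. I want to stress that this settles the correspondence in full generality, with no Kac-type or finite-dimensionality hypothesis, the compact quantum group entering only through faithfulness of $E_{\delta_u}$.

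With the correspondence available the remaining assertions follow. If $u$ is irreducible then $End_{\mathbb{G}}(u)$ is a von Neumann algebra with no projections other than $0$ and $1$, hence $End_{\mathbb{G}}(u) = \mathbb{C}$. For two irreducibles $u,v$ and $0 \neq T \in Hom_{\mathbb{G}}(u,v)$, the positive operators $T^*T \in End_{\mathbb{G}}(u) = \mathbb{C}$ and $TT^* \in End_{\mathbb{G}}(v) = \mathbb{C}$ are nonzero scalars; normalising $T$ yields a unitary in $Hom_{\mathbb{G}}(u,v)$, so $u \cong v$, and any other intertwiner differs from it by a scalar in $End_{\mathbb{G}}(u)$, whence $\dim Hom_{\mathbb{G}}(u,v) = 1$. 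Consequently $Hom_{\mathbb{G}}(u,v) = (0)$ whenever $u \not\cong v$. Finally, an irreducible $u$ is trivially indecomposable; conversely, if $u$ is not irreducible it admits a nontrivial invariant subspace $pH_u$, so $p \in End_{\mathbb{G}}(u)$ by the correspondence, hence $1-p \in End_{\mathbb{G}}(u)$ and $H_u = pH_u \oplus (1-p)H_u$ is a splitting into nonzero $u$-invariant subspaces, making $u$ decomposable. This gives the equivalence of indecomposability and irreducibility and completes the proof.
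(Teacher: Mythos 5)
Your proof is correct, and it reaches the crux of the lemma --- that a sub-invariant projection is automatically fully invariant --- by a genuinely different mechanism than the paper. The paper's proof first replaces an arbitrary faithful normal state $\omega$ on $\mathcal{B}(H_u)$ by the invariant state $(\omega\otimes h_{\mathbb{G}})\delta_u$, passes to the GNS space and the implementing unitary corepresentation $V_{\delta_u}$ on $L^2(\mathcal{B}(H_u),\omega)\otimes L^2(\mathbb{G})$, translates invariance of $E$ into $(p_E\otimes 1)V_{\delta_u}(p_E\otimes 1)=V_{\delta_u}(p_E\otimes 1)$, and then cites the well-known fact that invariant subspaces of genuine unitary corepresentations of compact quantum groups are reducing, so that $p_E\otimes 1$ commutes with $V_{\delta_u}$, whence $\delta_u(p_E)=p_E\otimes 1$. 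You instead stay entirely at the level of the coaction: you record the equivalence ``$pH_u$ is $u$-invariant iff $\delta_u(p)\leq p\otimes 1$'' and upgrade the inequality to the equality $\delta_u(p)=p\otimes 1$ via idempotency and faithfulness of the conditional expectation $E_{\delta_u}=(\mathrm{id}\otimes h_{\mathbb{G}})\delta_u$. Since the ``well-known fact'' the paper invokes is itself standardly proved by exactly this Haar-averaging-plus-faithfulness argument, you have in effect inlined its proof, making the argument self-contained; your route also isolates precisely where the standing reducedness assumption enters (faithfulness of $h_{\mathbb{G}}$, hence of $E_{\delta_u}$, cf.\ Remark \ref{rem.nondegact}), and it avoids needing a faithful normal state on $\mathcal{B}(H_u)$, which implicitly requires $H_u$ separable (harmless under the paper's standing assumptions, but your argument is insensitive to it). What the paper's detour buys is brevity given standard corepresentation theory, plus the auxiliary unitary $V_{\delta_u}$, an object it reuses elsewhere (e.g.\ in Lemma \ref{lem.DecompVTand}). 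Your formal endgame --- a von Neumann algebra with only trivial projections is $\mathbb{C}$, normalization of a nonzero intertwiner to a unitary via $T^*T\in End_{\mathbb{G}}(u)=\mathbb{C}$ and $TT^*\in End_{\mathbb{G}}(v)=\mathbb{C}$, and the irreducible/indecomposable equivalence through $1-p$ --- is exactly the content the paper compresses into its final ``it is now immediate'' sentence, and is carried out correctly in full generality, including infinite-dimensional $H_u$.
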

\begin{proof}
Let us prove this for  $\Omega^*$-representations, the result for  $\Omega$-representations then follows by considering $(C(\mathbb{G})^{op},\Delta)$. 

Let $u$ be an $\Omega^*$-representation, and let $E \subset H_u$ be an invariant closed subspace. Let $p_E$ be the projection onto $E$.  Let $\omega$ be a faithful normal state on $\mathcal{B}(H)$. By possibly replacing $\omega$ by $(\omega \otimes h_{\mathbb{G}})\delta_u$, we may assume that $(\omega\otimes id)\delta_u(x) = \omega(x)1$. The operator $V_{\delta_u}$ on $L^2(\mathcal{B}(H),\omega)\otimes L^2(\mathbb{G})$ sending $\Lambda_{\omega}(x)\otimes \Lambda(a)$ to $(\Lambda_{\omega}\otimes \Lambda)(\delta_u(x)(1\otimes a))$ is a  corepresentation implementing $\delta_u$ by $\delta_u(x) = V_{\delta_u}(x\otimes 1)V_{\delta_u}^*$. Then the invariance of $E$ gives that $(p_E\otimes 1)\delta_u(p_E) = \delta_u(p_E)$, which implies $(p_E\otimes 1)V_{\delta_u}(p_E\otimes 1) = V_{\delta_u}(p_E\otimes 1)$. But it is well-known that this implies $p_E \otimes 1$ commutes with $V_{\delta_u}$. In particular, $\delta_u(p_E) = p_E\otimes 1$, so $p_E\otimes 1$ commutes with $u$. 

It is now immediate that $u$ is indecomposable if and only if it is irreducible, and that for $u,v$ irreducible one has that $Hom_{\mathbb{G}}(u, v)$ is either $0$ or one-dimensional, the latter case occurring if $u$ and $v$ are unitarily equivalent.
\end{proof}

	\begin{lem}\label{lem.AverageInterwinersMeas}
		Let $\mathbb{G}$ be a compact quantum group and $\Omega$ a $2$-cocycle on $\mathbb{G}$. Let $(u, H_{u})$ and $(v, H_{v})$ be two measurable  $\Omega$-representations of $\mathbb{G}$.
		\begin{enumerate}[i)]
			\item If $T: H_{u}\longrightarrow H_{v} $ is a linear bounded operator, then the bounded  linear operator $S:=(id\otimes h_{\mathbb{G}})\big(v^*(T\otimes 1)u\big)$ lies in $Hom_{\mathbb{G}}(u,v)$. It is called \emph{average intertwiner with respect to $T$}.
\item\label{It.DecompIrr} Every  $\Omega$-representation $u$ of $\mathbb{G}$ decomposes into a direct sum of irreducible  $\Omega$-representations.
		\end{enumerate}
	\end{lem}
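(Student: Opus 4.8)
The plan is to prove part (i) by the usual averaging argument, arranged so that the cocycle $\Omega$ drops out, and then to deduce part (ii) from (i) together with the Twisted Schur's Lemma \ref{lem.TwistedSchur} and the structure theory of the fixed-point algebra.

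For part (i), the first step I would take is to rewrite the defining relation of an $\Omega$-representation in terms of the \emph{untwisted} coproduct. Since ${}_{\Omega}\Delta(x) = \Omega\cdot\Delta(x)$, the condition $(id\otimes {}_{\Omega}\Delta)(u) = u_{12}u_{13}$ becomes $(id\otimes\Delta)(u) = \Omega_{23}^*\,u_{12}u_{13}$, and dually $(id\otimes\Delta)(v^*) = v_{13}^*v_{12}^*\,\Omega_{23}$. Setting $W := v^*(T\otimes 1)u$ and applying the genuine $*$-homomorphism $id\otimes\Delta$, I would compute
$$(id\otimes\Delta)(W) = v_{13}^*v_{12}^*\,\Omega_{23}\,(T\otimes 1\otimes 1)\,\Omega_{23}^*\,u_{12}u_{13}.$$
The decisive observation is that $T$ occupies the first leg, so $\Omega_{23}$ commutes with $T\otimes 1\otimes 1$ and cancels, leaving $(id\otimes\Delta)(W) = v_{13}^*\,W_{12}\,u_{13}$ exactly as in the untwisted case. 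Applying $id\otimes h_{\mathbb{G}}\otimes id$ and using the invariance $(h_{\mathbb{G}}\otimes id)\Delta = h_{\mathbb{G}}(\cdot)1$ collapses the left-hand side to $S\otimes 1$, while on the right the legs carrying $v^*$ and $u$ are untouched and one is left with $v^*(S\otimes 1)u$. Thus $S\otimes 1 = v^*(S\otimes 1)u$, i.e.\ $(S\otimes 1)u = v(S\otimes 1)$, which is precisely $S\in Hom_{\mathbb{G}}(u,v)$.

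For part (ii), I would first specialise (i) to $v=u$: the map $E\colon\mathcal{B}(H_u)\to\mathcal{B}(H_u)$, $a\mapsto (id\otimes h_{\mathbb{G}})(u^*(a\otimes 1)u)$, then takes values in $End_{\mathbb{G}}(u)$ and restricts to the identity there, since $a\in End_{\mathbb{G}}(u)$ forces $u^*(a\otimes 1)u = a\otimes 1$. Hence $E$ is a normal conditional expectation of $\mathcal{B}(H_u)$ onto the von Neumann algebra $End_{\mathbb{G}}(u) = \mathcal{B}(H_u)^{\delta_u}$. By the same well-known fact already invoked in the discussion preceding Corollary \ref{cor.projKacclef}, the existence of such a normal conditional expectation forces $End_{\mathbb{G}}(u)$ to be a (possibly infinite) direct sum of type $I$ factors, hence an atomic von Neumann algebra. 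In such an algebra the unit is the strong sum of a maximal orthogonal family $(p_j)_j$ of minimal projections; each $p_j$ yields a $u$-invariant subspace $p_jH_u$, and the restriction $u_j := (p_j\otimes 1)u(p_j\otimes 1)$ is a unitary $\Omega$-representation with $End_{\mathbb{G}}(u_j) = p_j\,End_{\mathbb{G}}(u)\,p_j = \mathbb{C}p_j$, so $u_j$ is irreducible by Lemma \ref{lem.TwistedSchur}. Since $\sum_j p_j = 1$, this gives the decomposition $u = \bigoplus_j u_j$.

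I expect the only genuinely delicate point to be the leg-numbering bookkeeping in part (i): one must check carefully that $\Omega_{23}$ really cancels against $T\otimes 1\otimes 1$, and that slicing out the middle leg with $h_{\mathbb{G}}$ commutes past the factors $v_{13}^*$ and $u_{13}$ (the module property of slice maps). Once (i) is in place, part (ii) is routine structure theory of atomic von Neumann algebras combined with the already-established Twisted Schur's Lemma.
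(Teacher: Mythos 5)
Your proposal is correct and follows essentially the same route as the paper: the same rewriting of the $\Omega$-representation identities in terms of the untwisted coproduct, the same cancellation of $\Omega_{23}$ against $T\otimes 1\otimes 1$ followed by slicing the middle leg with $h_{\mathbb{G}}$, and for part (ii) the same normal conditional expectation $E$ onto $End_{\mathbb{G}}(u)$ forcing an atomic (direct sum of type $I$ factors) fixed-point algebra. Your explicit unpacking of the decomposition via a maximal family of minimal projections and Lemma \ref{lem.TwistedSchur} merely spells out what the paper's proof leaves implicit in its final sentence.
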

	\begin{proof}
		\begin{enumerate}[i)]
			\item Assume that $T\in\mathcal{B}(H_{u}, H_{v})$. Clearly the linear operator $S=(id\otimes h_{\mathbb{G}})\big(v^*(T\otimes 1)u\big)$ is a well-defined bounded operator.						By definition of  $\Omega$-representation, we have 
				$$(id\otimes {}_{\Omega}\Delta)(u)=u_{12}u_{13}\mbox{ and }(id\otimes{}_{\Omega}\Delta)(v)=v_{12}v_{13},$$
				which, using the definition of ${}_{\Omega}\Delta$, can be written as:
				$$(id\otimes \Delta)(u)=(1\otimes \Omega^*)u_{12}u_{13}\mbox{ and }(id\otimes \Delta)(v)=(1\otimes\Omega^*)v_{12}v_{13}.$$
			
			Apply the unital $*$-homomorphism $(id\otimes \Delta)$ to $v^*(T\otimes 1)u$:
			\begin{equation*}
			\begin{split}
				(id\otimes \Delta)\big(v^*(T\otimes 1)u\big)&=(id\otimes \Delta)(v^*)(T\otimes 1)(id\otimes \Delta)(u)\\
				&=v_{13}^*v_{12}^*(1\otimes \Omega)(T\otimes 1\otimes 1)(1\otimes \Omega^*)u_{12}u_{13}\\
				&=v_{13}^*v_{12}^*(T\otimes 1\otimes 1)u_{12}u_{13}.
			\end{split}
			\end{equation*}
			
			Next, the $\mathbb{G}$-invariance of the Haar state of $\mathbb{G}$ yields that $(id\otimes h_{\mathbb{G}} \otimes id)\big((id\otimes \Delta)\big(v^*(T\otimes 1)u\big)\big)=S\otimes 1.$ Also, we have:
			\begin{equation*}
			\begin{split}
				(id\otimes  h_{\mathbb{G}} \otimes id)&\big(v_{13}^*v_{12}^*(T\otimes 1\otimes 1)u_{12}u_{13}\big)\\
				&=v_{12}^*\Big((id\otimes h_{\mathbb{G}})\big(v^*(T\otimes 1)u\big)\Big)_{1}u_{12}\\
				&=v^*(S\otimes 1)u
			\end{split}
			\end{equation*}
			and the conclusion follows.

\item If $u$ is an $\Omega$-representation, then the averaging operator $\mathcal{B}(H) \rightarrow End_{\mathbb{G}}(u) = \mathcal{B}(H)^{\delta_u}$ sending $T$ to $(id\otimes h_{\mathbb{G}})\big(u^*(T\otimes 1)u\big)$ is a normal conditional expectation onto $End_{\mathbb{G}}(u)$, as already observed, hence $End_{\mathbb{G}}(u)$ is a direct sum of type $I$-factors, proving that $u$ is a direct sum of irreducible  $\Omega$-representations.
		\end{enumerate}
	\end{proof}

Again, a similar statement holds for  $\Omega^*$-representations. 
	
	\begin{rem}\label{rem.ergirr}
Let $\delta:\mathcal{B}(H)\longrightarrow \mathcal{B}(H)\overline{\otimes} L^\infty(\mathbb{G})$ be a measurable projective representation on $H$ with implementing unitary $u$.  By the argument used in point $ii)$ of Lemma \ref{lem.AverageInterwinersMeas}, one has a one-to-one correspondence between the set of all $\delta$-invariant projections in $\mathcal{B}(H)$ and the $u$-invariant subspaces of $H$. Accordingly, $\delta$ is ergodic if and only if $u$ is irreducible. In particular, $\delta$ is a torsion action of projective type (recall Theorem-Definition \ref{theo.TorsionFreeMeyerNest}) if and only if $u$ is finite dimensional and irreducible.
\end{rem}

To any $2$-cocycle $\Omega$ one can associate a canonical $\Omega$-representation. 

\begin{theodefi}[Projective regular representation]\label{theo.ProjectiveRightRepMeas}
		Let $\mathbb{G}$ be a compact quantum group and $\Omega$ a $2$-cocycle on $\mathbb{G}$. Defining $V^{\Omega} =\Omega V_{\mathbb{G}}$, the following properties hold:
			\begin{enumerate}[i)]
				\item For all $x\in L^{\infty}(\mathbb{G})$ and $\xi\in L^2(\mathbb{G})$ we have $V^{\Omega}(\Lambda(x)\otimes \xi)={}_{\Omega}\Delta(x)(\xi_{\mathbb{G}}\otimes \xi)$.
				\item For all $x\in L^{\infty}(\mathbb{G})$ we have ${}_{\Omega}\Delta(x)=V^\Omega(x\otimes 1)V_{\mathbb{G}}^*$.
				\item The following identity holds: $(id\otimes {}_{\Omega}\Delta)(V^\Omega)=V^\Omega_{12}V^\Omega_{13}$, so $V^{\Omega} \in \mathcal{B}(L^2(\mathbb{G}))\overline{\otimes} L^{\infty}(\mathbb{G})$ is an $\Omega$-representation.
				\item The following pentagonal equation holds: $V^\Omega_{12}V^\Omega_{13}(V_{\mathbb{G}})_{23}=V^{\Omega}_{23}V^\Omega_{12}$.
			\end{enumerate}
			The unitary $V^\Omega$ is called \emph{right projective regular representation of $\mathbb{G}$ on $L^2(\mathbb{G})$ with respect to $\Omega$} or simply \emph{right $\Omega$-regular representation of $\mathbb{G}$ on $L^2(\mathbb{G})$}. 
	\end{theodefi}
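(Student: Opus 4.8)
The plan is to deduce all four statements from the single definition $V^{\Omega} = \Omega V_{\mathbb{G}}$, combined with the established properties of the fundamental unitary $V_{\mathbb{G}}$ (Theorem-Definition \ref{theo.RRegularRepresentation}), the relation $\Delta(x) = V_{\mathbb{G}}(x\otimes 1)V_{\mathbb{G}}^*$, and the $2$-cocycle identity for $\Omega$. Since $\Omega$ and $V_{\mathbb{G}}$ are unitary, $V^{\Omega}$ is unitary and visibly lies in $\mathcal{B}(L^2(\mathbb{G}))\overline{\otimes}L^{\infty}(\mathbb{G})$. Items i) and ii) are then immediate unfoldings of definitions: for i) one computes $V^{\Omega}(\Lambda(x)\otimes\xi) = \Omega V_{\mathbb{G}}(\Lambda(x)\otimes\xi) = \Omega\Delta(x)(\xi_{\mathbb{G}}\otimes\xi) = {}_{\Omega}\Delta(x)(\xi_{\mathbb{G}}\otimes\xi)$, using the defining formula for $V_{\mathbb{G}}$ and ${}_{\Omega}\Delta(x) = \Omega\Delta(x)$; and for ii) one has $V^{\Omega}(x\otimes 1)V_{\mathbb{G}}^* = \Omega V_{\mathbb{G}}(x\otimes 1)V_{\mathbb{G}}^* = \Omega\Delta(x) = {}_{\Omega}\Delta(x)$.

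The heart of the argument is a pair of leg-commutation relations between $V_{\mathbb{G}}$ and $\Omega$. Writing $\Omega$ formally as $\sum_i a_i\otimes b_i$ and applying $V_{\mathbb{G}}(a\otimes 1) = \Delta(a)V_{\mathbb{G}}$ on the relevant legs (the remaining leg commuting with $V_{\mathbb{G}}$), I obtain $(V_{\mathbb{G}})_{12}\Omega_{13} = (\Delta\otimes id)(\Omega)(V_{\mathbb{G}})_{12}$ and $(id\otimes\Delta)(\Omega)(V_{\mathbb{G}})_{23} = (V_{\mathbb{G}})_{23}\Omega_{12}$. For iii), using that $(id\otimes{}_{\Omega}\Delta)$ amounts to $\Omega_{23}(id\otimes\Delta)(\cdot)$ together with $(id\otimes\Delta)(V_{\mathbb{G}}) = (V_{\mathbb{G}})_{12}(V_{\mathbb{G}})_{13}$, I get
\[
(id\otimes{}_{\Omega}\Delta)(V^{\Omega}) = \Omega_{23}(id\otimes\Delta)(\Omega)(V_{\mathbb{G}})_{12}(V_{\mathbb{G}})_{13},
\]
whereas the first commutation relation yields
\[
V^{\Omega}_{12}V^{\Omega}_{13} = \Omega_{12}(V_{\mathbb{G}})_{12}\Omega_{13}(V_{\mathbb{G}})_{13} = \Omega_{12}(\Delta\otimes id)(\Omega)(V_{\mathbb{G}})_{12}(V_{\mathbb{G}})_{13}.
\]
These two expressions coincide precisely by the $2$-cocycle identity $\Omega_{12}(\Delta\otimes id)(\Omega) = \Omega_{23}(id\otimes\Delta)(\Omega)$, which proves iii).

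For iv), I would recycle the identity from iii) in the form $V^{\Omega}_{12}V^{\Omega}_{13} = \Omega_{23}(id\otimes\Delta)(\Omega)(V_{\mathbb{G}})_{12}(V_{\mathbb{G}})_{13}$, so that the left-hand side of the pentagonal equation becomes $\Omega_{23}(id\otimes\Delta)(\Omega)(V_{\mathbb{G}})_{12}(V_{\mathbb{G}})_{13}(V_{\mathbb{G}})_{23}$. The pentagonal equation for $V_{\mathbb{G}}$ collapses the last three factors to $(V_{\mathbb{G}})_{23}(V_{\mathbb{G}})_{12}$, and the second commutation relation replaces $(id\otimes\Delta)(\Omega)(V_{\mathbb{G}})_{23}$ by $(V_{\mathbb{G}})_{23}\Omega_{12}$; the whole expression then reads $\Omega_{23}(V_{\mathbb{G}})_{23}\Omega_{12}(V_{\mathbb{G}})_{12} = V^{\Omega}_{23}V^{\Omega}_{12}$, as desired. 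I expect the main obstacle to be purely organisational rather than conceptual: one must keep the leg numbering scrupulously straight and invoke $V_{\mathbb{G}}(x\otimes 1) = \Delta(x)V_{\mathbb{G}}$ and the cocycle relation at exactly the right position, after which each identity collapses mechanically.
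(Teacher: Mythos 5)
Your proof is correct, and it is essentially the argument the paper leaves implicit: the paper states Theorem-Definition \ref{theo.ProjectiveRightRepMeas} without proof, expecting exactly this direct verification from $V^{\Omega}=\Omega V_{\mathbb{G}}$, the implementation $\Delta(\cdot)=V_{\mathbb{G}}(\cdot\otimes 1)V_{\mathbb{G}}^*$, and the $2$-cocycle identity. Your two intertwining relations $(V_{\mathbb{G}})_{12}\Omega_{13}=(\Delta\otimes id)(\Omega)(V_{\mathbb{G}})_{12}$ and $(V_{\mathbb{G}})_{23}\Omega_{12}=(id\otimes\Delta)(\Omega)(V_{\mathbb{G}})_{23}$ are valid (the formal-sum manipulation is justified by normality of $X\mapsto (V_{\mathbb{G}})_{12}X_{13}(V_{\mathbb{G}})_{12}^*$ and of $\Delta$), and the leg bookkeeping in items iii) and iv) checks out.
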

	\begin{rem}\label{rem.ProjectiveLeftRep}
		Similarly, defining $W^{\Omega} = W_{\mathbb{G}}\Omega^*$, we have that  $(W^{\Omega})^*(\xi\otimes \Lambda(x))={}_{\Omega}\Delta(x)(\xi\otimes \xi_{\mathbb{G}})\mbox{,}$
		for all $x\in L^{\infty}(\mathbb{G})$ and $\xi\in L^2(\mathbb{G})$. For all $x\in L^{\infty}(\mathbb{G})$ we have ${}_{\Omega}\Delta(x)=(W^\Omega)^*(1\otimes x)W_{\mathbb{G}}$ and the pentagonal equation: $(W_{\mathbb{G}})_{12}W^\Omega_{13}W^\Omega_{23}=W^{\Omega}_{23}W^\Omega_{12}$. Moreover, the following identity holds: $(\Delta_{\Omega^*}\otimes id)(W^\Omega)=W^\Omega_{13}W^\Omega_{23}$, so $\Sigma W^{\Omega}\Sigma$ is an $\Omega^*$-projective representation.
					
		The unitary $W^\Omega$ is called \emph{left projective regular representation of $\mathbb{G}$ on $L^2(\mathbb{G})$ with respect to $\Omega$} or simply \emph{left $\Omega^*$-regular representation of $\mathbb{G}$ on $L^2(\mathbb{G})$}. 
	\end{rem}
	
 The following lemma follows from direct computations by using the relations from Theorem \ref{theo.KacSystemG}.
	\begin{lem}\label{lem.IdentityTwistDualAction}
		Let $\Omega$ be a $2$-cocycle for $\mathbb{G}$. Given the canonical Kac system, $(V_{\mathbb{G}}, U_{\mathbb{G}})$, associated to $\mathbb{G}$, the following identities hold:
		\begin{enumerate}[i)]
			\item $(\widetilde{V}_{\mathbb{G}})_{12}(V^\Omega)_{13}(\widetilde{V}_{\mathbb{G}})_{12}^*=(V^\Omega)_{13}(V_{\mathbb{G}})_{23}$.
			\item $(V^{\Omega})_{12}(\widetilde{V}_{\mathbb{G}})_{23}=(\widetilde{V}_{\mathbb{G}})_{23}(V^{\Omega})_{12}$.
		\end{enumerate}
	\end{lem}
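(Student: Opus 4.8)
The plan is to reduce both identities to a few of the commutation relations collected in Theorem~\ref{theo.KacSystemG}, after substituting $V^{\Omega}=\Omega V_{\mathbb{G}}$ from Theorem-Definition~\ref{theo.ProjectiveRightRepMeas} and bookkeeping the leg-numbering carefully. The single structural fact driving everything is that $\widetilde{V}_{\mathbb{G}}$ commutes with $L^{\infty}(\mathbb{G})$ placed in its \emph{first} leg: the relation $\widetilde{V}_{\mathbb{G}}(a\otimes 1)\widetilde{V}_{\mathbb{G}}^{*}=a\otimes 1$ for $a\in C(\mathbb{G})$ from Theorem~\ref{theo.KacSystemG} extends by $\sigma$-weak density of $C(\mathbb{G})$ in $L^{\infty}(\mathbb{G})$ and normality of $x\mapsto \widetilde{V}_{\mathbb{G}}x\widetilde{V}_{\mathbb{G}}^{*}$ to all $a\in L^{\infty}(\mathbb{G})$. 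Consequently $\widetilde{V}_{\mathbb{G}}$ commutes with the whole copy of $L^{\infty}(\mathbb{G})$ sitting in its first leg.

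For identity (ii) I would first expand $(V^{\Omega})_{12}=\Omega_{12}(V_{\mathbb{G}})_{12}$ and push $(V_{\mathbb{G}})_{12}$ through $(\widetilde{V}_{\mathbb{G}})_{23}$ via the relation $(V_{\mathbb{G}})_{12}(\widetilde{V}_{\mathbb{G}})_{23}=(\widetilde{V}_{\mathbb{G}})_{23}(V_{\mathbb{G}})_{12}$ of Theorem~\ref{theo.KacSystemG}. It then remains to commute $\Omega_{12}$ past $(\widetilde{V}_{\mathbb{G}})_{23}$: the leg-$1$ component of $\Omega$ is disjoint from legs $2,3$ and commutes trivially, while its leg-$2$ component commutes by the structural fact applied to the \emph{first} leg of $\widetilde{V}_{\mathbb{G}}$ (which is leg $2$ here); since commutation is stable under products, adjoints and $\sigma$-weak limits, $(\widetilde{V}_{\mathbb{G}})_{23}$ commutes with all of $\Omega_{12}\in L^{\infty}(\mathbb{G})\,\overline{\otimes}\,L^{\infty}(\mathbb{G})\,\overline{\otimes}\,\mathbb{C}$. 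Reassembling $\Omega_{12}(V_{\mathbb{G}})_{12}=(V^{\Omega})_{12}$ yields $(V^{\Omega})_{12}(\widetilde{V}_{\mathbb{G}})_{23}=(\widetilde{V}_{\mathbb{G}})_{23}(V^{\Omega})_{12}$.

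For identity (i) I would again substitute $(V^{\Omega})_{13}=\Omega_{13}(V_{\mathbb{G}})_{13}$ and observe, by the same reasoning, that $\Omega_{13}$ commutes with $(\widetilde{V}_{\mathbb{G}})_{12}$ (its leg-$3$ part is disjoint, its leg-$1$ part is fixed under conjugation by $\widetilde{V}_{\mathbb{G}}$). Pulling $\Omega_{13}$ to the left reduces the left-hand side to $\Omega_{13}\,(\widetilde{V}_{\mathbb{G}})_{12}(V_{\mathbb{G}})_{13}(\widetilde{V}_{\mathbb{G}})_{12}^{*}$. Finally I would invoke the relation $(V_{\mathbb{G}})_{13}(V_{\mathbb{G}})_{23}(\widetilde{V}_{\mathbb{G}})_{12}=(\widetilde{V}_{\mathbb{G}})_{12}(V_{\mathbb{G}})_{13}$ of Theorem~\ref{theo.KacSystemG}, rewritten as the conjugation $(\widetilde{V}_{\mathbb{G}})_{12}(V_{\mathbb{G}})_{13}(\widetilde{V}_{\mathbb{G}})_{12}^{*}=(V_{\mathbb{G}})_{13}(V_{\mathbb{G}})_{23}$, to obtain $\Omega_{13}(V_{\mathbb{G}})_{13}(V_{\mathbb{G}})_{23}=(V^{\Omega})_{13}(V_{\mathbb{G}})_{23}$, which is the right-hand side.

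The computations are routine once the substitution $V^{\Omega}=\Omega V_{\mathbb{G}}$ is made; the one point requiring care — and the step I would flag as the main potential pitfall — is the justification that $\Omega$, a priori only a measurable (von Neumann algebraic) object, commutes with the relevant leg of $\widetilde{V}_{\mathbb{G}}$. This rests entirely on extending the $C(\mathbb{G})$-level relation of Theorem~\ref{theo.KacSystemG} to $L^{\infty}(\mathbb{G})$ by normality, rather than on any cocycle property of $\Omega$, which is in fact not needed for either identity.
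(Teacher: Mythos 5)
Your proof is correct and is precisely the ``direct computation using the relations from Theorem~\ref{theo.KacSystemG}'' that the paper leaves to the reader: identity (i) reduces to $(\widetilde{V}_{\mathbb{G}})_{12}(V_{\mathbb{G}})_{13}(\widetilde{V}_{\mathbb{G}})_{12}^{*}=(V_{\mathbb{G}})_{13}(V_{\mathbb{G}})_{23}$ and identity (ii) to $(V_{\mathbb{G}})_{12}(\widetilde{V}_{\mathbb{G}})_{23}=(\widetilde{V}_{\mathbb{G}})_{23}(V_{\mathbb{G}})_{12}$, after commuting $\Omega$ past the appropriate leg of $\widetilde{V}_{\mathbb{G}}$. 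You correctly identified the only genuinely non-trivial point, namely that the relation $\widetilde{V}_{\mathbb{G}}(a\otimes 1)\widetilde{V}_{\mathbb{G}}^{*}=a\otimes 1$ must be extended from $C(\mathbb{G})$ to $L^{\infty}(\mathbb{G})$ by normality so that it applies to the merely measurable unitary $\Omega\in L^{\infty}(\mathbb{G})\,\overline{\otimes}\,L^{\infty}(\mathbb{G})$, and your observation that the $2$-cocycle identity is never used is accurate.
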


	\begin{lem}\label{lem.IdentificationLInftyOmega}
		Let $\mathbb{G}$ be a compact quantum group and $\Omega$ a $2$-cocycle on $\mathbb{G}$. We have: 
		\begin{equation*}
		\begin{split}
			L^{\infty}(\mathbb{G})&\cong\overline{span}^{\sigma-weakly}\{(\eta\otimes id)(V^\Omega)\ |\ \eta\in \mathcal{B}(L^2(\mathbb{G}))_*\}\\
			&=\overline{span}^{\sigma-weakly}\{(id\otimes \eta)((W^\Omega)^*)\ |\ \eta\in \mathcal{B}(L^2(\mathbb{G}))_*\}.
		\end{split}
		\end{equation*}
	\end{lem}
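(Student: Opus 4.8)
The statement is the twisted analogue of $L^{\infty}(\mathbb{G}) = \overline{span}^{\sigma-weakly}\{(\eta\otimes id)(V_{\mathbb{G}})\}$, which follows from Theorem-Definition \ref{theo.RRegularRepresentation}.v) by passing to $\sigma$-weak closures. Set $N := \overline{span}^{\sigma-weakly}\{(\eta\otimes id)(V^{\Omega})\mid \eta\in\mathcal{B}(L^2(\mathbb{G}))_*\}$. Since $V^{\Omega}\in\mathcal{B}(L^2(\mathbb{G}))\overline{\otimes}L^{\infty}(\mathbb{G})$ by Theorem-Definition \ref{theo.ProjectiveRightRepMeas}.iii), every slice lies in $L^{\infty}(\mathbb{G})$, so $N\subseteq L^{\infty}(\mathbb{G})$. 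As $N$ is a $\sigma$-weakly closed subspace, the reverse inclusion is equivalent to triviality of its pre-annihilator, i.e.\ to the implication: if $\omega\in L^{\infty}(\mathbb{G})_*$ satisfies $(id\otimes\omega)(V^{\Omega})=0$, then $\omega=0$. I stress that the naive route — expanding $\Omega=\sum_k a_k\otimes b_k$ and writing $(\eta\otimes id)(V^{\Omega})=\sum_k b_k\,(\eta(a_k\,\cdot\,)\otimes id)(V_{\mathbb{G}})$ — only delivers $[L^{\infty}(\mathbb{G})\cdot N]=L^{\infty}(\mathbb{G})$, which is too weak, since the left factors $b_k$ cannot be detached by slicing alone.

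The plan is to exploit $V^{\Omega}=\Omega V_{\mathbb{G}}$ together with the GNS realisation of $V_{\mathbb{G}}$ and the faithfulness of the Haar state. Assuming $(id\otimes\omega)(\Omega V_{\mathbb{G}})=0$ and writing a normal functional as $\omega=\sum_n\omega_{\alpha_n,\beta_n}$, I would use Theorem-Definition \ref{theo.RRegularRepresentation}.i), namely $V_{\mathbb{G}}(\Lambda(x)\otimes\beta)=\Delta(x)(\xi_{\mathbb{G}}\otimes\beta)$ with $\Lambda(x)=x\xi_{\mathbb{G}}$, to compute, for all $\zeta\in L^2(\mathbb{G})$ and $x\in L^{\infty}(\mathbb{G})$,
\[
0 = \langle \zeta,\,(id\otimes\omega)(V^{\Omega})\,\Lambda(x)\rangle = (\omega_{\zeta,\xi_{\mathbb{G}}}\otimes\omega)(\Omega\Delta(x)) = \langle \zeta,\,[(id\otimes\omega)(\Omega\Delta(x))]\,\xi_{\mathbb{G}}\rangle .
\]
Letting $\zeta$ range over $L^2(\mathbb{G})$ gives $[(id\otimes\omega)(\Omega\Delta(x))]\xi_{\mathbb{G}}=0$; since $(id\otimes\omega)(\Omega\Delta(x))\in L^{\infty}(\mathbb{G})$ and $\xi_{\mathbb{G}}$ is separating for $L^{\infty}(\mathbb{G})$, this yields the key intermediate identity $(id\otimes\omega)(\Omega\Delta(x))=0$ for all $x\in L^{\infty}(\mathbb{G})$.

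The final step removes the coproduct. Using the elementary slice identity $(id\otimes\omega)(W(y\otimes 1))=[(id\otimes\omega)(W)]\,y$ and the identity just obtained, I get $(id\otimes\omega)\big(\Omega\,\Delta(x)(y\otimes 1)\big)=0$ for all $x,y$. The cancellation property $[\Delta(C(\mathbb{G}))(C(\mathbb{G})\otimes 1)]=C(\mathbb{G})\otimes C(\mathbb{G})$ of Definition \ref{defi.CQG}.ii) makes $span\{\Delta(x)(y\otimes 1)\}$ $\sigma$-weakly dense in $L^{\infty}(\mathbb{G})\overline{\otimes}L^{\infty}(\mathbb{G})$; as $A\mapsto(id\otimes\omega)(\Omega A)$ is $\sigma$-weakly continuous, it follows that $(id\otimes\omega)(\Omega A)=0$ for every $A$. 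Taking $A=\Omega^*(1\otimes z)$ and using that $\Omega$ is unitary gives $\omega(z)1=(id\otimes\omega)(1\otimes z)=0$ for all $z$, hence $\omega=0$ and $N=L^{\infty}(\mathbb{G})$. For the second equality I would run the mirror argument: $(W^{\Omega})^*=\Omega W_{\mathbb{G}}^*$ lies in $L^{\infty}(\mathbb{G})\overline{\otimes}\mathcal{B}(L^2(\mathbb{G}))$, and replacing the formula above by the left-regular one $(W_{\mathbb{G}})^*(\beta\otimes\Lambda(x))=\Delta(x)(\beta\otimes\xi_{\mathbb{G}})$, using the other cancellation identity $[\Delta(C(\mathbb{G}))(1\otimes C(\mathbb{G}))]=C(\mathbb{G})\otimes C(\mathbb{G})$ (now inserting $(1\otimes y)$ and finishing with $A=\Omega^*(z\otimes 1)$), the same three steps apply.

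The main obstacle is precisely the passage from the weak "module" statement $[L^{\infty}(\mathbb{G})\cdot N]=L^{\infty}(\mathbb{G})$ to the genuine equality $N=L^{\infty}(\mathbb{G})$: this is where one must combine the unitarity of $\Omega$, the separating property of $\xi_{\mathbb{G}}$, and the cancellation (density) property of $\Delta$, rather than manipulate slices formally. A minor technical point to verify is that the vector-functional computations extend to arbitrary normal functionals $\omega$ and that all slice maps used are $\sigma$-weakly continuous.
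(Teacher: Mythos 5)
Your proof is correct, and it is the dual of the paper's argument rather than a reproduction of it. The paper proves the density directly: it slices $V^{\Omega}$ with the vector functionals $\omega_{x\xi_{\mathbb{G}},y\xi_{\mathbb{G}}}$ to identify the span of slices as $\{(h_{\mathbb{G}}\otimes id)\big((y^*\otimes 1)\,{}_{\Omega}\Delta(x)\big)\}$, then invokes faithfulness and normality of $h_{\mathbb{G}}$ to replace these by $(h_{\mathbb{G}}\otimes id)\big({}_{\Omega}\Delta(x)(y^*\otimes 1)\big)$ --- a step that implicitly rests on modular theory (moving $y^*$ across the state via the modular group on analytic elements) --- and finally applies the cancellation-property density of ${}_{\Omega}\Delta(L^{\infty}(\mathbb{G}))(L^{\infty}(\mathbb{G})\otimes 1)$ in $L^{\infty}(\mathbb{G})\overline{\otimes}L^{\infty}(\mathbb{G})$. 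You instead test the $\sigma$-weakly closed span against its pre-annihilator: the same two pillars appear (faithfulness of $h_{\mathbb{G}}$, now in the equivalent guise of $\xi_{\mathbb{G}}$ being separating for $L^{\infty}(\mathbb{G})$, and the cancellation-derived density of $span\{\Delta(x)(y\otimes 1)\}$), but the Hahn--Banach formulation together with the right-module property of the slice map $(id\otimes\omega)$ and the explicit use of unitarity of $\Omega$ at the very end ($A=\Omega^*(1\otimes z)$) lets you bypass the paper's modular-theoretic flip entirely. What each approach buys: the paper's is shorter once one grants the KMS-type rearrangement, and it directly exhibits the slices as images of a dense subspace under $(h_{\mathbb{G}}\otimes id)$; yours is more elementary in its prerequisites (no Tomita--Takesaki beyond the separating vector), isolates exactly where unitarity of $\Omega$ is needed, and --- as you correctly diagnose --- your warning about the naive expansion of $\Omega$ yielding only $[L^{\infty}(\mathbb{G})\cdot N]=L^{\infty}(\mathbb{G})$ identifies precisely the gap both proofs must close. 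Your mirror argument for the $(W^{\Omega})^*$ equality, using $(W^{\Omega})^*=\Omega W_{\mathbb{G}}^*$ and the other cancellation identity $[\Delta(C(\mathbb{G}))(1\otimes C(\mathbb{G}))]=C(\mathbb{G})\otimes C(\mathbb{G})$, is likewise sound and matches the paper's remark that the second identification ``follows analogously.''
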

	\begin{proof}
		Let us show the first identification. The second one follows analogously. Given $x, y\in L^{\infty}(\mathbb{G})$ consider the coordinate linear functional $\omega_{x\xi_{\mathbb{G}}, y\xi_{\mathbb{G}}}\in \mathcal{B}(L^2(\mathbb{G}))_*$ and write the following:
		\begin{equation*}
		\begin{split}
			\langle (\omega_{x\xi_{\mathbb{G}}, y\xi_{\mathbb{G}}}\otimes id)(V^\Omega)(\xi), \xi'\rangle&=\langle V^\Omega(x\xi_{\mathbb{G}}\otimes \xi), y\xi_{\mathbb{G}}\otimes \xi'\rangle\\
			&=\langle {}_{\Omega}\Delta(x)(\xi_{\mathbb{G}}\otimes \xi),  y\xi_{\mathbb{G}}\otimes \xi'\rangle\\
			&= \langle (y^*\otimes 1){}_{\Omega}\Delta(x)(\xi_{\mathbb{G}}\otimes \xi), \xi_{\mathbb{G}}\otimes \xi'\rangle\\
			&=\langle (h_{\mathbb{G}}\otimes id)\big((y^*\otimes 1){}_{\Omega}\Delta(x)\big)\xi,\xi' \rangle\mbox{,}
		\end{split}
		\end{equation*}
		 for all $\xi, \xi'\in L^2(\mathbb{G})$. Hence, $(\omega_{x\xi_{\mathbb{G}}, y\xi_{\mathbb{G}}}\otimes id)(V^\Omega)=(h_{\mathbb{G}}\otimes id)\big((y^*\otimes 1){}_{\Omega}\Delta(x)\big)$, for all $x,y\in L^\infty(\mathbb{G})$. It is enough to show that the linear span of these elements is $\sigma$-weakly dense in $L^\infty(\mathbb{G})$. 

As $h_{\mathbb{G}}$ is a faithful normal state on $L^{\infty}(\mathbb{G})$, it is in fact sufficient to show that the linear span of elements of the form $(h_{\mathbb{G}}\otimes id)\big({}_{\Omega}\Delta(x)(y^*\otimes 1)\big)$ is $\sigma$-weakly dense in $L^\infty(\mathbb{G})$. But by the cancellation property of $\Delta$ we find immediately that ${}_{\Omega}\Delta(L^\infty(\mathbb{G}))(L^\infty(\mathbb{G})\otimes 1)$ is $\sigma$-weakly dense in $L^\infty(\mathbb{G})\otimes L^\infty(\mathbb{G})$, which yields the conclusion.
	\end{proof}
	
	A standard argument by combining the previous lemma and Lemma \ref{lem.AverageInterwinersMeas} yields the following Peter-Weyl theorem.
	\begin{theo}[Twisted Peter-Weyl theorem I]\label{theo.PeterWeyl1Meas}
		Let $\mathbb{G}$ be a compact quantum group and $\Omega$ a $2$-cocycle. The right projective regular representation $(V^\Omega, L^2(\mathbb{G}))$ contains all irreducible $\Omega$-representations of $\mathbb{G}$ in its direct sum decomposition.
	\end{theo}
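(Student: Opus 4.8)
The plan is to reduce the statement to the single claim that every irreducible $\Omega$-representation $u$ of $\mathbb{G}$ admits a non-zero intertwiner into $V^\Omega$, i.e.\ $Hom_{\mathbb{G}}(u, V^\Omega)\neq (0)$. Granting this, the theorem follows formally: by part \ref{It.DecompIrr} of Lemma \ref{lem.AverageInterwinersMeas} we may write $V^\Omega = \bigoplus_i v_i$ with each $v_i$ irreducible, and the structural projections $p_i$ onto the summands lie in $End_{\mathbb{G}}(V^\Omega)$, hence are $\Omega$-intertwiners. Composing a non-zero element $S\in Hom_{\mathbb{G}}(u, V^\Omega)$ with the $p_i$ and using $\sum_i p_i = 1$, some $p_i S\in Hom_{\mathbb{G}}(u, v_i)$ is non-zero, so $u\cong v_i$ by the Twisted Schur's Lemma \ref{lem.TwistedSchur}. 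Thus $u$ occurs in the decomposition of $V^\Omega$.

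To establish the claim I would argue by contradiction using the average intertwiner construction of part i) of Lemma \ref{lem.AverageInterwinersMeas}. Suppose $Hom_{\mathbb{G}}(u, V^\Omega) = (0)$. Since $V^\Omega$ is itself an $\Omega$-representation (Theorem-Definition \ref{theo.ProjectiveRightRepMeas}), that lemma guarantees $S_T := (id\otimes h_{\mathbb{G}})\big((V^\Omega)^*(T\otimes 1)u\big)\in Hom_{\mathbb{G}}(u, V^\Omega)$ for every bounded $T\colon H_u \to L^2(\mathbb{G})$, so the hypothesis forces $S_T = 0$ for all such $T$. The idea is to feed into this vanishing the rank-one operators $T = \theta_{\Lambda(x),\xi}$ with $x\in L^{\infty}(\mathbb{G})$, $\xi\in H_u$, and to pair $S_T$ against vectors $\Lambda(y)$, $y\in L^{\infty}(\mathbb{G})$.

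Carrying out this pairing is the technical core. Writing $u(\eta\otimes\zeta) = \sum_k e_k\otimes u_{e_k,\eta}\zeta$ for an orthonormal basis $\{e_k\}$ of $H_u$, and using $\sum_k\langle\xi,e_k\rangle u_{e_k,\eta} = u_{\xi,\eta}$ (which holds since $\omega_{\xi,\eta}=\sum_k\langle\xi,e_k\rangle\omega_{e_k,\eta}$), one reduces $(T\otimes 1)u(\eta\otimes\zeta)$ to $\Lambda(x)\otimes u_{\xi,\eta}\zeta$. Applying $(V^\Omega)^* = V_{\mathbb{G}}^*\Omega^*$ together with the defining formula ${}_{\Omega}\Delta(x) = V^\Omega(x\otimes 1)V_{\mathbb{G}}^*$ of Theorem-Definition \ref{theo.ProjectiveRightRepMeas}, and then slicing with $(id\otimes h_{\mathbb{G}})$, I expect the vanishing of $\langle\Lambda(y), S_T\eta\rangle$ to collapse to an identity of the form $h_{\mathbb{G}}(z^*\, u_{\xi,\eta}) = 0$, valid for all $\xi,\eta\in H_u$ and all $z$ in the linear span of the matrix coefficients $\{(\eta'\otimes id)(V^\Omega)\}$. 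By Lemma \ref{lem.IdentificationLInftyOmega} this span is $\sigma$-weakly dense in $L^{\infty}(\mathbb{G})$; since the map $z\mapsto h_{\mathbb{G}}(z^* u_{\xi,\eta})$ is $\sigma$-weakly continuous (normality of $h_{\mathbb{G}}$ and $\sigma$-weak continuity of the adjoint), it then vanishes identically, and taking $z = u_{\xi,\eta}$ with $h_{\mathbb{G}}$ faithful yields $u_{\xi,\eta}=0$ for all $\xi,\eta$, i.e.\ $u=0$. This contradicts the unitarity of $u$, so some $S_T$ must be non-zero, proving the claim.

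The step I expect to be the main obstacle is precisely this explicit pairing computation: correctly propagating the twist through $V^\Omega = \Omega V_{\mathbb{G}}$ and ${}_{\Omega}\Delta(x) = \Omega\Delta(x)$, and making sure the slice $(id\otimes h_{\mathbb{G}})$ and the GNS map $\Lambda\colon L^{\infty}(\mathbb{G})\to L^2(\mathbb{G})$ interact as expected — with the added care that $H_u$ may be infinite dimensional, so the basis expansions must be controlled in the strong operator topology rather than treated as finite sums. By contrast, the reduction to non-vanishing of $Hom_{\mathbb{G}}(u, V^\Omega)$ and the final appeal to Twisted Schur are purely formal.
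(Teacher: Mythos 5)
Your proposal is correct and follows exactly the route the paper itself takes: the paper gives no written proof but states that the theorem follows by ``a standard argument'' combining Lemma \ref{lem.IdentificationLInftyOmega} ($\sigma$-weak density of the slices of $V^\Omega$ in $L^\infty(\mathbb{G})$) with Lemma \ref{lem.AverageInterwinersMeas} (average intertwiners and decomposition into irreducibles), which are precisely your two ingredients. The pairing computation you single out as the main obstacle does collapse as you predict --- one finds $\langle \Lambda(y), S_{T}\eta\rangle = h_{\mathbb{G}}\bigl(z^{*}u_{\xi,\eta}\bigr)$ with $z=(\omega_{y\xi_{\mathbb{G}},x\xi_{\mathbb{G}}}\otimes id)(V^\Omega)$, so normality and faithfulness of $h_{\mathbb{G}}$ finish the argument exactly as you outline.
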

	
	Following \cite{KennyProjective} we have a twisted version of the Schur's orthogonality relations. This theorem follows straightforwardly by applying Lemma \ref{lem.AverageInterwinersMeas}.i) with respect to rank one operators.

	\begin{theo}[Twisted Schur's orthogonality relations]\label{theo.TwistedOrthogonalityRel}
		Let $\mathbb{G}$ be a compact quantum group and $\Omega$ a $2$-cocycle on $\mathbb{G}$. Let $\{u^x\}_{x\in Irr(\mathbb{G}, \Omega)}$ be a complete set of mutually inequivalent, irreducible $\Omega$-representations, with fixed bases for the associated Hilbert spaces $H_x$. For each $x\in Irr(\mathbb{G}, \Omega)$ there exists a positive trace class operator $F^x\in\mathcal{B}(H_x)$ with zero kernel such that the following orthogonality relations hold:
		$$h_{\mathbb{G}}\big((u^y_{kl})^*u^x_{ij}\big)=\delta_{x y}\delta_{l j}F^x_{ik},$$
		for every $x,y\in Irr(\mathbb{G}, \Omega)$, $i,j=1,\ldots, n_x$ and $k,l=1,\ldots, n_y$.
	\end{theo}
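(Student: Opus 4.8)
The plan is to feed rank-one operators into the averaging procedure of Lemma~\ref{lem.AverageInterwinersMeas}.i) and then invoke the twisted Schur's lemma (Lemma~\ref{lem.TwistedSchur}) to force the two Kronecker deltas. Fix $x, y \in Irr(\mathbb{G}, \Omega)$ with the chosen (orthonormal) bases $\{\xi^x_i\}$ and $\{\xi^y_k\}$, so that $u^x_{ij} = (\omega_{\xi^x_i, \xi^x_j}\otimes 1)(u^x)$, and likewise for $y$. For an index $a$ in the $y$-range and $b$ in the $x$-range, apply Lemma~\ref{lem.AverageInterwinersMeas}.i) to the rank-one operator $T = \theta_{\xi^y_a, \xi^x_b}\colon H_x \to H_y$. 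This produces an intertwiner $S = (id\otimes h_{\mathbb{G}})\big((u^y)^*(T\otimes 1)u^x\big) \in Hom_{\mathbb{G}}(u^x, u^y)$. A direct expansion of $S$ in the chosen bases (composing in the first leg the matrix units coming from $(u^y)^*$, from $T$, and from $u^x$) yields $\langle \xi^y_l, S\,\xi^x_j\rangle = h_{\mathbb{G}}\big((u^y_{al})^* u^x_{bj}\big)$.

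Next I would split according to whether $u^x$ and $u^y$ are equivalent. If $x \neq y$, then by mutual inequivalence and Lemma~\ref{lem.TwistedSchur} we have $Hom_{\mathbb{G}}(u^x, u^y) = (0)$, so $S = 0$ and hence $h_{\mathbb{G}}\big((u^y_{kl})^* u^x_{ij}\big) = 0$ for all indices; this accounts for the factor $\delta_{xy}$. If $x = y$, then $End_{\mathbb{G}}(u^x) = \mathbb{C}\, id_{H_x}$, so $S$ is a scalar multiple of the identity, say $S = \lambda(a,b)\,id_{H_x}$. Reading off the matrix coefficients gives $h_{\mathbb{G}}\big((u^x_{al})^* u^x_{bj}\big) = \lambda(a,b)\,\delta_{lj}$, which produces the factor $\delta_{lj}$; relabelling $a = k$ and $b = i$ leads us to define $F^x$ by $F^x_{ik} := \lambda(k,i)$, so that $h_{\mathbb{G}}\big((u^x_{kl})^* u^x_{ij}\big) = \delta_{lj} F^x_{ik}$, exactly as claimed.

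It remains to verify that $F^x$ is a positive trace-class operator with trivial kernel, which is where the real content lies. The key observation is that $T \mapsto S$ is precisely the averaging map of Lemma~\ref{lem.AverageInterwinersMeas}.ii), a normal conditional expectation onto $End_{\mathbb{G}}(u^x) = \mathbb{C}\,id$; thus there is a normal state $\Psi$ on $\mathcal{B}(H_x)$ with $(id\otimes h_{\mathbb{G}})\big((u^x)^*(T\otimes 1)u^x\big) = \Psi(T)\,id$ for all $T$, the normalization $\Psi(1)=1$ following from unitarity of $u^x$. Computing $\Psi$ on matrix units then shows that its density matrix is exactly $F^x$ (up to the transposition implicit in the index convention above). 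Positivity of $F^x$ is immediate, and $\mathrm{Tr}(F^x) = \Psi(1) = 1$ gives the trace-class property. Finally, faithfulness of $\Psi$ — equivalently, triviality of the kernel of $F^x$ — follows from unitarity of $u^x$ together with the standing faithfulness of $h_{\mathbb{G}}$: if $T \geq 0$ and $T \neq 0$ then $(u^x)^*(T\otimes 1)u^x$ is a nonzero positive element, and $id\otimes h_{\mathbb{G}}$ is faithful on positive elements because $h_{\mathbb{G}}$ is, so $\Psi(T) > 0$.

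The main obstacle is not the algebra, which is a routine matrix-unit computation, but correctly pinning down $F^x$ as the density matrix of the faithful normal state $\Psi$; this is what simultaneously delivers positivity, the trace-class property, and the zero-kernel condition, and one must keep careful track of the index transposition so that $F^x_{ik}$ emerges with the orientation demanded by the statement.
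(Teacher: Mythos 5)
Your proposal is correct and follows essentially the same route as the paper: the paper proves the theorem precisely by applying the averaging construction of Lemma \ref{lem.AverageInterwinersMeas}.i) to rank-one operators $\theta_{\xi^y_a,\xi^x_b}$ and invoking the twisted Schur's lemma (Lemma \ref{lem.TwistedSchur}) to produce the two Kronecker deltas, and it identifies $F^x$ exactly as you do, namely as the density matrix of the normal invariant state $T\mapsto (id\otimes h_{\mathbb{G}})\big((u^x)^*(T\otimes 1)u^x\big)$, whose faithfulness (from faithfulness of $h_{\mathbb{G}}$ and unitarity of $u^x$) yields positivity, the trace-class property and the zero-kernel condition. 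Your additional care in spelling out the index transposition and the normal-state argument — which is what guarantees trace-classness even when $n_x=\infty$ — is exactly the content the paper leaves implicit in the remark following the theorem.
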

	
The matrix $F^x$ is nothing but the density matrix of the $\delta_{u^x}$-invariant state $\varphi_x$ with $\varphi_x(T) =(h_{\mathbb{G}}\otimes id)\delta(T)$ for $T\in \mathcal{B}(H)$, so $\varphi_x = Tr(F^x-)$. 

	Given $x\in Irr(\mathbb{G}, \Omega)$ and the corresponding positive operator $F^x\in\mathcal{B}(H_x)$ from the previous theorem, we fix an orthonormal basis of $H_x$, $\{\xi^x_i\}_{i=1,\ldots, n_x}$, that diagonalises $F^x$. If $F^x_j\in\mathbb{R}^+$ denotes the eigenvalue of $F^x$ for the eigenvector $\xi^x_j$, for every $j=1,\ldots, n_x$, then the orthogonality relations become $h_{\mathbb{G}}\big((u^y_{kl})^*u^x_{ij}\big)=\delta_{x y}\delta_{ki}\delta_{l j}F^x_{k}$. Following these notations, we obtain as an immediate corollary of the previous two theorems the following decomposition for $L^2(\mathbb{G})$.
	
	\begin{theo}[Twisted Peter-Weyl theorem II]\label{theo.TwistedPeterWeyL2}
		Let $\mathbb{G}$ be a compact quantum group and $\Omega$ a $2$-cocycle on $\mathbb{G}$. We have a unitary transformation $L^2(\mathbb{G})\cong \underset{x\in Irr(\mathbb{G}, \Omega)}{\bigoplus} H_x\otimes \overline{H_{x}}$ such that $\Lambda(u^x_{ij})\mapsto \sqrt{F^x_i}\ \xi^x_{i}\otimes \overline{\xi^{x}_{j}}$, for all $j=1,\ldots, n_x$, $x\in \text{Irr}(\mathbb{G}, \Omega)$.
	\end{theo}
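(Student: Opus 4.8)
The plan is to define the stated map on the linear span of matrix coefficients, verify that it is isometric using the twisted Schur orthogonality relations, and then check that both its domain and its range are dense. Fix, for each $x\in Irr(\mathbb{G},\Omega)$, the orthonormal basis $\{\xi^x_i\}$ diagonalising $F^x$, and set $\Phi\big(\Lambda(u^x_{ij})\big):=\sqrt{F^x_i}\,\xi^x_i\otimes\overline{\xi^x_j}$. The diagonalised orthogonality relations of Theorem \ref{theo.TwistedOrthogonalityRel}, in the form $h_{\mathbb{G}}\big((u^x_{ij})^*u^y_{kl}\big)=\delta_{xy}\delta_{ik}\delta_{jl}F^x_i$, show at once that the vectors $\Lambda(u^x_{ij})$ are mutually orthogonal with $\|\Lambda(u^x_{ij})\|^2=F^x_i$; in particular they are linearly independent, so $\Phi$ is well defined on their span. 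Comparing inner products on the target — using $\langle\overline{\xi^x_j},\overline{\xi^x_l}\rangle=\delta_{jl}$ in the conjugate space $\overline{H_x}$ — yields $\langle\Phi\Lambda(u^x_{ij}),\Phi\Lambda(u^y_{kl})\rangle=\delta_{xy}\delta_{ik}\delta_{jl}F^x_i$, which matches the left-hand side. Hence $\Phi$ is isometric on $\mathrm{span}\{\Lambda(u^x_{ij})\}$ and extends to an isometry of its closure into $\bigoplus_x H_x\otimes\overline{H_x}$.

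It remains to see that this isometry is onto with dense domain. Surjectivity is the easy half: since $F^x$ has trivial kernel, every eigenvalue $F^x_i$ is strictly positive, so the family $\{\sqrt{F^x_i}\,\xi^x_i\otimes\overline{\xi^x_j}\}$ is, up to positive rescalings, an orthonormal basis of $\bigoplus_x H_x\otimes\overline{H_x}$, whence $\Phi$ already has dense range. For the domain I would combine Theorem \ref{theo.PeterWeyl1Meas} with Lemma \ref{lem.IdentificationLInftyOmega}: the latter identifies $L^{\infty}(\mathbb{G})$ with the $\sigma$-weak closure of the matrix coefficients of $V^{\Omega}$, and the former realises $V^{\Omega}$ as a direct sum of the irreducible $\Omega$-representations $u^x$, so that the matrix coefficients of $V^{\Omega}$ lie in $\mathrm{span}\{u^x_{ij}\}$. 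Consequently $\mathrm{span}\{u^x_{ij}\}$ is $\sigma$-weakly dense in $L^{\infty}(\mathbb{G})$.

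The one step requiring care is the passage from this $\sigma$-weak density at the von Neumann level to $L^2$-density of $\mathrm{span}\{\Lambda(u^x_{ij})\}$, which is the main obstacle since $\sigma$-weak and $L^2$ topologies are a priori unrelated. Here I would argue by duality: if $\zeta\in L^2(\mathbb{G})$ is orthogonal to every $\Lambda(u^x_{ij})$, then $a\mapsto\langle\zeta,a\xi_{\mathbb{G}}\rangle$ is a normal functional on $L^{\infty}(\mathbb{G})$ vanishing on the $\sigma$-weakly dense subspace $\mathrm{span}\{u^x_{ij}\}$, hence vanishing identically; as $\xi_{\mathbb{G}}$ is cyclic for $L^{\infty}(\mathbb{G})$, this forces $\zeta=0$. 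Therefore $\mathrm{span}\{\Lambda(u^x_{ij})\}$ is dense in $L^2(\mathbb{G})$, and $\Phi$ extends to the asserted unitary $L^2(\mathbb{G})\cong\bigoplus_{x\in Irr(\mathbb{G},\Omega)}H_x\otimes\overline{H_x}$ intertwining the matrix coefficients with the prescribed basis vectors.
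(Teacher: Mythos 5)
Your proposal is correct and follows essentially the same route as the paper, which obtains the statement as an immediate corollary of Theorem \ref{theo.PeterWeyl1Meas} and the orthogonality relations of Theorem \ref{theo.TwistedOrthogonalityRel}: isometry of $\Phi$ from the diagonalised relations, surjectivity from $F^x_i>0$, and $L^2$-density of $\mathrm{span}\{\Lambda(u^x_{ij})\}$ via the $\sigma$-weak density coming from Lemma \ref{lem.IdentificationLInftyOmega}, with your normal-functional/cyclic-vector step being precisely the ``standard argument'' the paper alludes to. One cosmetic point: since irreducible measurable $\Omega$-representations may be infinite dimensional, matrix coefficients of $V^{\Omega}$ lie only in the $\sigma$-weak closure of $\mathrm{span}\{u^x_{ij}\}$ rather than in the span itself, but this does not affect your density conclusion.
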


\subsection{Continuous  $\Omega$-representations and $2$-cocycles of finite type}\label{sec.contprojrep}

We now consider a special type of measurable $2$-cocycles. 

\begin{defi}\label{def.fintype}
We say that a  $2$-cocycle $\Omega$ on $\mathbb{G}$ is of \emph{finite type} if there exists a finite dimensional  $\Omega$-representation. 
\end{defi}

Not all  $2$-cocycles $\Omega$ are of finite type, see e.g.\ \cite[Section 8]{KennyProjective} for an example of a $2$-cocycle which is not of finite type. These types of $2$-cocycles will however be sufficient for our needs. The following lemma shows that being of finite type is an ambidextrous notion. 

\begin{lem}
A  $2$-cocycle $\Omega$ is finite type if and only if there exists a finite dimensional  $\Omega^*$-representation.
\end{lem}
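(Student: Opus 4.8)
The plan is to derive this equivalence directly from the dimension-preserving bijection between unitary $\Omega$-representations and unitary $\Omega^*$-representations established in Corollary \ref{cor.opp2coc}. Recall that \emph{finite type} means, by Definition \ref{def.fintype}, the existence of a finite-dimensional $\Omega$-representation, so the content of the lemma is precisely that a finite-dimensional $\Omega$-representation exists if and only if a finite-dimensional $\Omega^*$-representation does.

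First I would recall the explicit correspondence $u \mapsto u^{\circ} = (j\otimes R)(u)(1\otimes X_{\Omega}^*)$ from Corollary \ref{cor.opp2coc}, which is a bijection sending unitary $\Omega^*$-representations to unitary $\Omega$-representations. The key observation is that this map preserves the dimension of the underlying Hilbert space: since $j$ identifies $\mathcal{B}(H)^{op}$ with $\mathcal{B}(\overline{H})$ and $\dim \overline{H} = \dim H$, an $\Omega^*$-representation $u$ on $H$ is carried to an $\Omega$-representation $u^{\circ}$ on $\overline{H}$ of the same dimension, and the same applies to the inverse map. Hence the bijection restricts to a bijection between finite-dimensional $\Omega^*$-representations and finite-dimensional $\Omega$-representations.

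With this in hand both implications are immediate. If $\Omega$ is of finite type, I pick a finite-dimensional $\Omega$-representation $v$ on $H_v$; by surjectivity of $u \mapsto u^{\circ}$ there is an $\Omega^*$-representation $u$ with $u^{\circ} = v$, and $u$ is then finite-dimensional (acting on $\overline{H_v}$), which is the desired finite-dimensional $\Omega^*$-representation. Conversely, if $u$ is a finite-dimensional $\Omega^*$-representation, then $u^{\circ}$ is a finite-dimensional $\Omega$-representation, so $\Omega$ is of finite type by Definition \ref{def.fintype}.

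I do not expect a genuine obstacle here: once Corollary \ref{cor.opp2coc} is available, the argument reduces to the trivial remark that passage to the conjugate Hilbert space preserves dimension. If one preferred a self-contained argument avoiding the full strength of Corollary \ref{cor.opp2coc}, one could instead start from a finite-dimensional $\Omega$-representation $w$ on $H$, directly set $u := (j^{-1}\otimes R^{-1})\big(w(1\otimes X_{\Omega})\big) \in \mathcal{B}(H)^{op}\overline{\otimes}L^{\infty}(\mathbb{G})$, and verify, using the cocycle relation for $\Omega$ together with the relation $u^{\circ}=w$ coming from Lemma \ref{lem.DecompVTand}, that $u$ is a finite-dimensional $\Omega^*$-representation; but this merely re-proves the relevant half of Corollary \ref{cor.opp2coc}.
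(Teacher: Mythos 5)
Your proof is correct and follows exactly the paper's route: the paper's entire proof is the one-line observation that the statement follows immediately from Corollary \ref{cor.opp2coc}, and your argument simply spells out why — the bijection $u \mapsto u^{\circ} = (j\otimes R)(u)(1\otimes X_{\Omega}^*)$ passes from $H$ to $\overline{H}$ and hence preserves finite-dimensionality. Nothing further is needed; the self-contained variant you sketch at the end is, as you say, just a re-derivation of the relevant half of that corollary.
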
 
\begin{proof}
This follows immediately from Corollary \ref{cor.opp2coc}.
\end{proof}

Recall the notation introduced in \eqref{eq.deltau2} and \eqref{eq.deltau}.

\begin{defi}
Let $\Omega \in L^{\infty}(\mathbb{G})\overline{\otimes}L^{\infty}(\mathbb{G})$ be a measurable  $2$-cocycle.
 We say that a measurable  $\Omega$-representation (resp.\ $\Omega^*$-representation) $u$ is \emph{continuous} if $\delta_u$ is a continuous right (resp.\ left) projective representation. 
\end{defi} 

Note that this notion is strictly weaker than demanding that $u \in M(\mathcal{K}(H)\otimes C(\mathbb{G}))$, which is a too strong condition in practice.  Note also that any  $\Omega$-representation on a \emph{finite dimensional} Hilbert space is automatically continuous. 

We will show that continuous  $\Omega$-representations can only exist if $\Omega$ is of finite type, and that then all  $\Omega$-representations are continuous. 

	\begin{theo}[Twisted Maschke's theorem]\label{theo.AverageInterwinersCont}
		Let $\mathbb{G}$ be a compact quantum group and $\Omega$ a  $2$-cocycle on $\mathbb{G}$. Let $(u, H_{u})$ and $(v, H_{v})$ be two continuous  $\Omega$-representations of $\mathbb{G}$.
		\begin{enumerate}[i)]
\item If $T: H_{u}\longrightarrow H_{v} $ is a linear compact operator, then the average intertwiner $S$ with respect to $T$ is again compact. 
			\item The C$^*$-algebra $\mathcal{D}_u = \mathcal{K}(H_{u})^{\delta_u}$ acts non-degenerately on $H_{u}$, that is, $[\mathcal{D}_{u}H_{u}]=H_{u}$.
\item\label{It.DecompIrr} If $u$ is irreducible, then $u$ is finite dimensional. 
\item Every continuous  $\Omega$-representation of $\mathbb{G}$ decomposes into a direct sum of finite dimensional irreducible  $\Omega$-representations. 
		\end{enumerate}
	\end{theo}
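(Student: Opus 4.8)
The plan is to establish (i)--(iv) in turn, the whole engine being that continuity of a unitary $\Omega$-representation $u$ means exactly that the coaction $\delta_u$ of \eqref{eq.deltau} restricts to a coaction on $\mathcal{K}(H_u)$, so that slicing by the Haar state sends compact-operator-valued elements back to compact operators. For (i) I would first reduce to the case $u=v$: passing to the direct sum $\Omega$-representation $w=u\oplus v$ on $H_u\oplus H_v$ and to the off-diagonal compact operator $\widetilde{T}$ built from $T$, the operator $S$ appears as the corresponding corner of the average intertwiner attached to $w$. By continuity, $\delta_w(\widetilde{T})$ lies in $\mathcal{K}(H_u\oplus H_v)$ tensored with $C(\mathbb{G})$, so $w^*(\widetilde{T}\otimes 1)w$ is compact-operator-valued; applying $(\mathrm{id}\otimes h_{\mathbb{G}})$, which maps $\mathcal{K}(H)\otimes C(\mathbb{G})$ into $\mathcal{K}(H)$ because $h_{\mathbb{G}}$ is a bounded state, shows that $S$ is compact. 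Concretely, for $u=v$ this is just $S=(\mathrm{id}\otimes h_{\mathbb{G}})\delta_u(T)$ with $\delta_u(T)$ compact-operator-valued.

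For (ii) I would note that for $u=v$ the average intertwiner is the $\delta_u$-invariant conditional expectation, $S=E_{\delta_u}(T)$, which by (i) maps $\mathcal{K}(H_u)$ into $\mathcal{K}(H_u)^{\delta_u}=\mathcal{D}_u$. The approximate-unit argument of Remark \ref{rem.nondegact} then applies verbatim: applying $E_{\delta_u}$ to an approximate unit of $\mathcal{K}(H_u)$ yields an approximate unit of $\mathcal{K}(H_u)$ lying inside $\mathcal{D}_u$, so $\mathcal{D}_u$ acts non-degenerately and $[\mathcal{D}_uH_u]=H_u$. For (iii) I would use that $\mathcal{D}_u=\mathcal{K}(H_u)\cap\mathrm{End}_{\mathbb{G}}(u)$; if $u$ is irreducible, the twisted Schur lemma (Lemma \ref{lem.TwistedSchur}) gives $\mathrm{End}_{\mathbb{G}}(u)=\mathbb{C}1_{H_u}$, hence $\mathcal{D}_u=\mathcal{K}(H_u)\cap\mathbb{C}1_{H_u}$. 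If $H_u$ were infinite dimensional, $1_{H_u}$ would not be compact and $\mathcal{D}_u$ would vanish, contradicting the non-degeneracy just proved; thus $H_u$ is finite dimensional.

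For (iv) I would exploit that $\mathcal{D}_u$ is a C$^*$-algebra of compact operators acting non-degenerately, hence a $c_0$-direct sum of elementary blocks whose minimal projections, being compact projections, have finite rank. Each such minimal projection $e\in\mathcal{D}_u\subset\mathrm{End}_{\mathbb{G}}(u)$ commutes with $u$, so $eH_u$ is a finite dimensional $u$-invariant subspace and $u|_{eH_u}$ is a finite dimensional (hence continuous) $\Omega$-representation; minimality of $e$ gives $\mathcal{D}_{u|_{eH_u}}=e\mathcal{D}_ue=\mathbb{C}e$, which equals $\mathrm{End}_{\mathbb{G}}(u|_{eH_u})$ in finite dimensions, so $u|_{eH_u}$ is irreducible. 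Choosing a maximal orthogonal family of minimal projections of $\mathcal{D}_u$, which sums strongly to $1_{H_u}$ by the non-degeneracy from (ii), then exhibits $u$ as the direct sum of these finite dimensional irreducible $\Omega$-representations.

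The step I expect to be the main obstacle is (i): making rigorous that continuity of $\delta_u$ forces $v^*(T\otimes 1)u$ to be genuinely compact-operator-valued, i.e.\ to live in $\mathcal{K}(H)\otimes C(\mathbb{G})$ rather than merely in the $\sigma$-weak closure, since this is the single place where the continuity hypothesis is used in an essential way rather than mere measurability. Once the slicing argument of (i) is secured, parts (ii)--(iv) reduce to standard structure theory for fixed-point subalgebras inside C$^*$-algebras of compact operators.
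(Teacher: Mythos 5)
Your reduction of part (i) to the case $u=v$ is circular. You pass to $w=u\oplus v$ and assert ``by continuity, $\delta_w(\widetilde{T})$ lies in $\mathcal{K}(H_u\oplus H_v)\otimes C(\mathbb{G})$'' --- but continuity of $w$ is not among the hypotheses; only $u$ and $v$ are assumed continuous, and continuity is a property of the coactions $\delta_u,\delta_v$ separately, not of the unitaries as multipliers (recall that continuity is strictly weaker than $u\in M(\mathcal{K}(H)\otimes C(\mathbb{G}))$). The two diagonal corners of $\delta_w$ are controlled by $\delta_u$ and $\delta_v$, but the off-diagonal corner $T\mapsto v(T\otimes 1)u^*$ for $T\in\mathcal{K}(H_u,H_v)$ a priori lands only in $\mathcal{B}(H_u,H_v)\overline{\otimes}L^{\infty}(\mathbb{G})$, and nothing in the hypotheses forces it into $\mathcal{K}(H_u,H_v)\otimes C(\mathbb{G})$: one cannot factor the mixed corner through $\delta_u$ or $\delta_v$ alone, since multiplying an element of $\mathcal{K}\otimes C(\mathbb{G})$ by an element of $\mathcal{B}\overline{\otimes}L^{\infty}(\mathbb{G})$ destroys the $C(\mathbb{G})$-leg. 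The paper flags exactly this point immediately after the theorem: stability of continuity under direct sums ``is not immediately obvious'' and is obtained as a \emph{consequence} of the theorem, via part (iv). So the very first move of your (i) assumes a statement logically downstream of what you are proving.

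The damage is local and repairable by reordering, which is what the paper does. Your $u=v$ case of (i) --- slicing $\delta_u(T)\in\mathcal{K}(H_u)\otimes C(\mathbb{G})$ by the state $h_{\mathbb{G}}$ --- is exactly the paper's argument, and it is all that (ii) needs; your (ii) (the expectation maps $\mathcal{K}(H_u)$ into $\mathcal{D}_u$, then the approximate-unit argument of Remark \ref{rem.nondegact}) and your (iii) (non-degeneracy of $\mathcal{D}_u$ against $End_{\mathbb{G}}(u)=\mathbb{C}1_{H_u}$, which meets $\mathcal{K}(H_u)$ trivially in infinite dimensions) coincide with the paper's proofs; and your (iv), via minimal --- hence finite-rank --- invariant projections of $\mathcal{D}_u$ summing strongly to $1_{H_u}$, is a sound and slightly more hands-on variant of the paper's appeal to Lemma \ref{lem.AverageInterwinersMeas}.ii). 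With (iv) in hand, the mixed case of (i) then follows as in the paper: write $1_{H_u}=\sum_i p_i$ and $1_{H_v}=\sum_j q_j$ as strong sums of finite-rank invariant projections, note that $q_jSp_i=(id\otimes h_{\mathbb{G}})\big(v^*(q_jTp_i\otimes 1)u\big)$ is finite rank, that $T\mapsto S$ is norm-contractive, and that $q_{\leq n}Tp_{\leq n}\to T$ in norm for compact $T$, so $S$ is a norm limit of finite-rank intertwiners. In short: prove (i) only for $u=v$, then (ii)--(iv), then deduce the general (i); your diagnosis that (i) is the delicate point was right, but the delicacy lies in the mixed/direct-sum case, not in the slicing.
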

	\begin{proof}
		\begin{enumerate}[i)]
\item Assume that $u$ is a continuous  $\Omega$-representation. If then $T \in \mathcal{K}(H_u)$, we have that the average $S=(id\otimes h_{\mathbb{G}})\big(u^*(T\otimes 1)u\big) = (h_{\mathbb{G}}\otimes id)\delta_u(T) \in \mathcal{K}(H)$. This proves the first point when $u=v$. 

\item This is just a special case of the observation in Remark \ref{rem.nondegact}.

\item Since $\mathcal{K}(H)^{\delta}$ is necessarily non-trivial when $H$ is infinite dimensional, it follows that $u$ can only be irreducible when $H$ is finite dimensional. A general continuous  $\Omega$-representation $u$ must then be a direct sum of finite dimensional (and hence continuous)  $\Omega$-representations. It then also follows immediately that if $u,v$ are continuous  $\Omega$-representations, an average intertwiner with respect to an operator in $\mathcal{K}(H_u,H_v)$ remains in this space, proving the first point in general.

\item The last point follows from iii) and Lemma \ref{lem.AverageInterwinersMeas}.\ref{It.DecompIrr}).
		\end{enumerate}
	\end{proof}

Note that the previous lemma shows in particular that the class of continuous  $\Omega$-representations is stable under direct sums, which is not immediately obvious. By Remark \ref{rem.ergirr}, we moreover see that $\widehat{\mathbb{G}}$ is projective torsion-free if and only if all $2$-cocycles of finite type on $\mathbb{G}$ are cohomologous to the trivial one.

\begin{theo}\label{theo.OmegaFiniteTypeCharact}
Let $\mathbb{G}$ be a compact quantum group, and let $\Omega$ be a  $2$-cocycle. Then the following are equivalent:
\begin{enumerate}[i)]
\item\label{theo.cocequi3} There exists a continuous  $\Omega$-representation.
\item\label{theo.cocequi1} $\Omega$ is of finite type.
\item\label{theo.cocequi4} All irreducible  $\Omega$-representations are finite dimensional. 
\end{enumerate}
Moreover, if one (hence any) of these conditions hold, then all  $\Omega$-representations of $\mathbb{G}$ are continuous. 
\end{theo}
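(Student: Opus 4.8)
The plan is to isolate the single implication that carries all the content: \emph{if $\Omega$ is of finite type, then every measurable $\Omega$-representation $u$ is continuous}, and in fact $u\in M(\mathcal{K}(H_u)\otimes C(\mathbb{G}))$. Granting this, the three conditions are equivalent and the final clause is immediate. Indeed, this claim is exactly the ``moreover'' assertion, and combined with Theorem \ref{theo.AverageInterwinersCont} (a continuous irreducible $\Omega$-representation is finite dimensional) it forces every irreducible $\Omega$-representation to be finite dimensional, giving \ref{theo.cocequi1}$\Rightarrow$\ref{theo.cocequi4}. For \ref{theo.cocequi4}$\Rightarrow$\ref{theo.cocequi1}, note that $V^{\Omega}$ on $L^2(\mathbb{G})\neq(0)$ decomposes into irreducible $\Omega$-representations by Lemma \ref{lem.AverageInterwinersMeas} together with Theorem \ref{theo.PeterWeyl1Meas}, so some irreducible $\Omega$-representation exists and is finite dimensional by \ref{theo.cocequi4}. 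Finally \ref{theo.cocequi1}$\Rightarrow$\ref{theo.cocequi3} is trivial, as a finite dimensional $\Omega$-representation is automatically continuous, and \ref{theo.cocequi3}$\Rightarrow$\ref{theo.cocequi1} follows from Theorem \ref{theo.AverageInterwinersCont}, since a nonzero continuous $\Omega$-representation splits off a finite dimensional irreducible summand.

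\textbf{The untwisting.} To prove the key claim, I would first use the preceding lemma (finite type is ambidextrous) to fix a finite dimensional $\Omega^*$-representation $w\in\mathcal{B}(H_w)\otimes C(\mathbb{G})$. Given a measurable $\Omega$-representation $u\in\mathcal{B}(H_u)\overline{\otimes}L^{\infty}(\mathbb{G})$, I set $z:=w_{13}u_{23}\in\mathcal{B}(H_w\otimes H_u)\overline{\otimes}L^{\infty}(\mathbb{G})$ and check that $z$ is an \emph{ordinary} (untwisted) corepresentation. Writing the defining identities as $(id\otimes\Delta)(w)=w_{12}w_{13}(1\otimes\Omega)$ and $(id\otimes\Delta)(u)=(1\otimes\Omega^*)u_{12}u_{13}$, applying $(id\otimes id\otimes\Delta)$ to $z$ inserts the factor $\Omega_{34}\Omega^*_{34}=1$ between the two halves, so that $(id\otimes id\otimes\Delta)(z)=z_{123}z_{124}$. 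The cocycle cancels precisely because the $\Omega^*$-representation sits on the left and the $\Omega$-representation on the right; this is essentially the computation already underlying Lemma \ref{lem.DecompVTand}.

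\textbf{Descent.} A measurable unitary corepresentation of a compact quantum group is automatically a direct sum of finite dimensional irreducibles: the Peter--Weyl decomposition uses only the faithful invariant Haar state and is therefore insensitive to the measurable/continuous distinction. Hence $z\in M(\mathcal{K}(H_w\otimes H_u)\otimes C(\mathbb{G}))$. Since $w$ is unitary, $u_{23}=w_{13}^{*}z$, and $w_{13}^{*}\in\mathcal{B}(H_w)\otimes\mathbb{C}1_{H_u}\otimes C(\mathbb{G})$ is a multiplier of $\mathcal{K}(H_w\otimes H_u)\otimes C(\mathbb{G})$; thus $1_{H_w}\otimes u=u_{23}\in M(\mathcal{K}(H_w\otimes H_u)\otimes C(\mathbb{G}))$. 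Cutting down by a minimal projection $e\in\mathcal{B}(H_w)=\mathcal{K}(H_w)$ identifies the corner $(e\otimes 1\otimes 1)M(\mathcal{K}(H_w\otimes H_u)\otimes C(\mathbb{G}))(e\otimes 1\otimes 1)$ with $M(\mathcal{K}(H_u)\otimes C(\mathbb{G}))$ and carries $1_{H_w}\otimes u$ to $e\otimes u$, whence $u\in M(\mathcal{K}(H_u)\otimes C(\mathbb{G}))$. As $C(\mathbb{G})$ is unital, $\delta_u=Ad_u$ then maps $\mathcal{K}(H_u)$ into $\mathcal{K}(H_u)\otimes C(\mathbb{G})$, and the density condition is routine, so $u$ is continuous.

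\textbf{Main obstacle.} The delicate point is not the (routine) cocycle bookkeeping but the one genuine structural input: the automatic discreteness of the auxiliary corepresentation $z$, i.e.\ that an arbitrary \emph{measurable} unitary corepresentation already decomposes into finite dimensional irreducibles and hence lies in the multiplier algebra $M(\mathcal{K}\otimes C(\mathbb{G}))$. This is exactly where the finite type hypothesis is spent, through the existence of the finite dimensional $w$ that lets us untwist; for a general $2$-cocycle no such $w$ exists and the conclusion genuinely fails. One should also check, when closing the equivalences, that all representations in play act on nonzero Hilbert spaces, so that the decompositions invoked really produce a (finite dimensional) irreducible summand.
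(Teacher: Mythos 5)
There is a genuine gap, and it sits exactly where you did \emph{not} expect it. Your untwisting algebra is correct: with $(id\otimes\Delta)(w)=w_{12}w_{13}(1\otimes\Omega)$ and $(id\otimes\Delta)(u)=(1\otimes\Omega^*)u_{12}u_{13}$, the element $z=w_{13}u_{23}$ is indeed a measurable unitary corepresentation (this is the mechanism of Lemma \ref{lem.DecompVTand}), and the ``main obstacle'' you flag is in fact unproblematic: a measurable unitary corepresentation does decompose into finite dimensional irreducibles (e.g.\ via the absorption identity $z_{13}(V_{\mathbb{G}})_{23}=z_{12}^*(V_{\mathbb{G}})_{23}z_{12}$, which embeds $z\otimes V_{\mathbb{G}}$, hence $z$, into a multiple of $V_{\mathbb{G}}$), so $z\in M(\mathcal{K}(H_w\otimes H_u)\otimes C(\mathbb{G}))$ is fine. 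The error is your very first step: you take the finite dimensional $\Omega^*$-representation $w$ to lie in $\mathcal{B}(H_w)\otimes C(\mathbb{G})$. The ambidexterity lemma only provides $w\in\mathcal{B}(H_w)\overline{\otimes}L^{\infty}(\mathbb{G})$; the matrix coefficients of finite dimensional twisted representations lie in $Pol(\mathbb{G},\Omega)$, which is \emph{not} contained in $C(\mathbb{G})$ in general — the paper stresses that one expects $C(\mathbb{G}_{\Omega})\neq C(\mathbb{G})$ inside $L^{\infty}(\mathbb{G})$, and explicitly warns that continuity of $u$ is \emph{strictly weaker} than $u\in M(\mathcal{K}(H)\otimes C(\mathbb{G}))$, ``a too strong condition in practice.'' So $w_{13}^*$ is not a multiplier of $\mathcal{K}(H_w\otimes H_u)\otimes C(\mathbb{G})$, the step $u_{23}=w_{13}^*z\in M(\mathcal{K}(H_w\otimes H_u)\otimes C(\mathbb{G}))$ collapses, and your strengthened conclusion is actually false. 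Classically: if $\omega$ is a measurable $2$-cocycle on a compact group $G$ not cohomologous to a continuous one (automatically of finite type, $G$ being of Kac type), then any finite dimensional $\omega$-representation $\pi$ satisfies $\omega(x,y)=\tfrac{1}{n}\mathrm{Tr}\big(\pi(x)\pi(y)\pi(xy)^*\big)$, so its coefficients cannot all be continuous.

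The good news is that your construction of $z$ salvages the actual statement if you conjugate coefficients of the \emph{coaction} instead of extracting $u$ itself: since $w_{13}$ commutes with $1_{H_w}\otimes a\otimes 1$ for $a\in\mathcal{K}(H_u)$, one gets $z^*(1_{H_w}\otimes a\otimes 1)z=1_{H_w}\otimes\big(u^*(a\otimes 1)u\big)$, and because $H_w$ is finite dimensional, $1_{H_w}\otimes a\in\mathcal{K}(H_w\otimes H_u)$, so $z\in M(\mathcal{K}(H_w\otimes H_u)\otimes C(\mathbb{G}))$ yields $u^*(a\otimes 1)u\in\mathcal{K}(H_u)\otimes C(\mathbb{G})$ after compressing with a minimal projection $e\otimes 1\otimes 1$; this gives continuity of $\delta_u$, which is all the theorem asserts, and your assembly of the three equivalences then goes through unchanged (alternatively, for \ref{theo.cocequi1}$\Rightarrow$\ref{theo.cocequi4}: a finite dimensional $z$-invariant subspace generates a $\mathcal{B}(H_w)\otimes 1$-stable invariant subspace of the form $H_w\otimes F$ with $F$ a finite dimensional $u$-invariant subspace of $H_u$). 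For comparison, the paper itself does not prove the hard implication \ref{theo.cocequi1}$\Rightarrow$\ref{theo.cocequi4} internally at all: its proof is a citation assembly, deducing \ref{theo.cocequi3}$\Rightarrow$\ref{theo.cocequi1} from Theorem \ref{theo.AverageInterwinersCont}, outsourcing \ref{theo.cocequi1}$\Rightarrow$\ref{theo.cocequi4} to \cite[Proposition 4.3]{KennyProjective}, and getting the final clause from Lemma \ref{lem.AverageInterwinersMeas}. Your attempt at a self-contained argument is therefore a genuinely different and worthwhile route — but as written it proves an intermediate statement that the paper explicitly disavows.
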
	
\begin{proof}
The implication \ref{theo.cocequi3}) $\Rightarrow $ \ref{theo.cocequi1}) follows from Theorem \ref{theo.AverageInterwinersCont}.iv). The implication \ref{theo.cocequi1}) $\Rightarrow$ \ref{theo.cocequi4}) follows from \cite[Proposition 4.3]{KennyProjective}. The implication \ref{theo.cocequi4}) $\Rightarrow$ \ref{theo.cocequi3}) is trivial, and the final statement follow from Lemma \ref{lem.AverageInterwinersMeas}.\ref{It.DecompIrr}).
\end{proof} 

\begin{cor}
Assume that $\Omega \in C(\mathbb{G})\otimes C(\mathbb{G})$ is a continuous  $2$-cocycle. Then $\Omega$ is of finite type.
\end{cor}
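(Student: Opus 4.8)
The plan is to reduce everything, via Theorem \ref{theo.OmegaFiniteTypeCharact}, to producing a single continuous $\Omega$-representation: implication \ref{theo.cocequi3})$\Rightarrow$\ref{theo.cocequi1}) of that theorem says that the mere existence of one continuous $\Omega$-representation already forces $\Omega$ to be of finite type. The obvious candidate is the projective regular representation $V^\Omega = \Omega V_{\mathbb{G}}$ of Theorem-Definition \ref{theo.ProjectiveRightRepMeas}, which is already known there to be a unitary $\Omega$-representation on $L^2(\mathbb{G})$. Thus the whole problem collapses to checking that this particular $V^\Omega$ is continuous, i.e.\ that the coaction $\delta_{V^\Omega}$ of \eqref{eq.deltau} restricts to $\mathcal{K}(L^2(\mathbb{G}))$.

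First I would record precisely where the hypothesis enters. Continuity of $V^\Omega$ will follow from the stronger membership $V^\Omega \in M(\mathcal{K}(L^2(\mathbb{G}))\otimes C(\mathbb{G}))$, which, as noted in the remark following the definition of a continuous $\Omega$-representation, is a sufficient (though non-necessary) condition. To establish it, I would use that $V_{\mathbb{G}} \in M(\mathcal{K}(L^2(\mathbb{G}))\otimes C(\mathbb{G}))$ by Theorem-Definition \ref{theo.RRegularRepresentation}, while the continuity hypothesis $\Omega \in C(\mathbb{G})\otimes C(\mathbb{G})$ places the first leg of $\Omega$ inside $\mathcal{B}(L^2(\mathbb{G})) = M(\mathcal{K}(L^2(\mathbb{G})))$ and the second leg inside $C(\mathbb{G})$, so that $\Omega \in M(\mathcal{K}(L^2(\mathbb{G})))\otimes C(\mathbb{G}) \subseteq M(\mathcal{K}(L^2(\mathbb{G}))\otimes C(\mathbb{G}))$. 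Since a product of two multipliers is again a multiplier, $V^\Omega = \Omega V_{\mathbb{G}} \in M(\mathcal{K}(L^2(\mathbb{G}))\otimes C(\mathbb{G}))$.

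From here the conclusion is routine multiplier bookkeeping. As $C(\mathbb{G})$ is unital, $a\otimes 1 \in \mathcal{K}(L^2(\mathbb{G}))\otimes C(\mathbb{G})$ for every $a \in \mathcal{K}(L^2(\mathbb{G}))$, and the absorption property $M(D)\,D\,M(D)\subseteq D$ with $D = \mathcal{K}(L^2(\mathbb{G}))\otimes C(\mathbb{G})$ yields $(V^\Omega)^*(a\otimes 1)V^\Omega \in D$. Flipping the legs, $\delta_{V^\Omega}$ restricts to a nondegenerate coaction $\mathcal{K}(L^2(\mathbb{G})) \to C(\mathbb{G})\otimes\mathcal{K}(L^2(\mathbb{G}))$, so $V^\Omega$ is a continuous $\Omega$-representation and Theorem \ref{theo.OmegaFiniteTypeCharact} finishes the argument. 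The only point genuinely requiring the hypothesis — and hence the only place where something could fail — is this multiplier membership of $\Omega$: for a general measurable cocycle one would only have $\Omega \in \mathcal{B}(L^2(\mathbb{G}))\overline{\otimes}L^{\infty}(\mathbb{G})$ and $(V^\Omega)^*(a\otimes 1)V^\Omega \in \mathcal{K}(L^2(\mathbb{G}))\overline{\otimes}L^{\infty}(\mathbb{G})$, which need not lie in $\mathcal{K}(L^2(\mathbb{G}))\otimes C(\mathbb{G})$. Continuity of $\Omega$ is exactly what removes this obstruction, and everything else is formal.
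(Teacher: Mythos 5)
Your proposal is correct and takes essentially the same route as the paper: the paper's proof likewise observes that $\Omega \in C(\mathbb{G})\otimes C(\mathbb{G})$ together with $V_{\mathbb{G}} \in M(\mathcal{K}(L^2(\mathbb{G}))\otimes C(\mathbb{G}))$ gives $V^{\Omega} = \Omega V_{\mathbb{G}} \in M(\mathcal{K}(L^2(\mathbb{G}))\otimes C(\mathbb{G}))$, so that the associated projective representation is a fortiori continuous, and then invokes Theorem \ref{theo.OmegaFiniteTypeCharact}. Your write-up simply makes explicit the multiplier bookkeeping (and the nondegeneracy of the restricted coaction) that the paper compresses into ``a fortiori.''
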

\begin{proof}
In this case we have that the right regular projective representation $V^{\Omega} \in M(\mathcal{K}(L^2(\mathbb{G})) \otimes C(\mathbb{G}))$, so a fortiori the associated projective representation is continuous. By the previous theorem, this forces $\Omega$ to be of finite type.
\end{proof} 


Assume now again that  $\Omega$ is a general measurable  $2$-cocycle. Then clearly 
\[
{}_{\Omega}\Delta_{\Omega^*}: L^{\infty}(\mathbb{G}) \rightarrow L^{\infty}(\mathbb{G})\overline{\otimes}L^{\infty}(\mathbb{G}),\qquad x \mapsto \Omega\Delta(x)\Omega^*
\]
defines a coassociative coproduct on $L^{\infty}(\mathbb{G})$. It follows from \cite{KennyGaloisObjTwistings} that $( L^{\infty}(\mathbb{G}), {}_{\Omega}\Delta_{\Omega^*})$ is again a locally compact quantum group. It need not necessarily be compact, as the example in  \cite{KennyCocycles} shows. However, one has the following theorem as a particular case of Proposition $4.3.2$ in \cite{KennyProjective}. 
\begin{theo}\label{theo.TwistCompact}
Let $\mathbb{G}$ be a compact quantum group and let $\Omega$ be a  $2$-cocycle. Then the couple $(L^{\infty}(\mathbb{G}),{}_{\Omega}\Delta_{\Omega^*})$ defines a \emph{compact} quantum group  if and only if $\Omega$ is of finite type. 
\end{theo}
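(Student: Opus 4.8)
The plan is to deduce the statement from the representation-theoretic characterisation of finite type in Theorem~\ref{theo.OmegaFiniteTypeCharact}, combined with the standard criterion that a (von Neumann algebraic) locally compact quantum group $(M,\Delta)$ is \emph{compact} if and only if it admits a normal bi-invariant state, i.e.\ a normal state $\varphi$ with $(\varphi\otimes\mathrm{id})\Delta(x)=\varphi(x)1=(\mathrm{id}\otimes\varphi)\Delta(x)$ for all $x\in M$ (equivalently, its Haar weights are finite). Since we may assume, as stated, that $\mathbb{G}_{\Omega}:=(L^{\infty}(\mathbb{G}),{}_{\Omega}\Delta_{\Omega^*})$ is already a locally compact quantum group, the task reduces to controlling the dimensions of its irreducible corepresentations and, in the finite type case, to producing the invariant state.

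First I would record the bridge provided by the projective regular representation. Using $\Delta(x)=V_{\mathbb{G}}(x\otimes 1)V_{\mathbb{G}}^*$ and $V^{\Omega}=\Omega V_{\mathbb{G}}$ from Theorem-Definition~\ref{theo.ProjectiveRightRepMeas}, one checks immediately that
\[
{}_{\Omega}\Delta_{\Omega^*}(x)=\Omega\Delta(x)\Omega^*=V^{\Omega}(x\otimes 1)(V^{\Omega})^*,
\]
so the twisted coproduct of $\mathbb{G}_{\Omega}$ is implemented by the right $\Omega$-regular representation $V^{\Omega}$. By Lemma~\ref{lem.IdentificationLInftyOmega} the matrix coefficients of $V^{\Omega}$ are $\sigma$-weakly dense in $L^{\infty}(\mathbb{G})$, while the twisted Peter-Weyl theorems (Theorems~\ref{theo.PeterWeyl1Meas} and~\ref{theo.TwistedPeterWeyL2}) decompose $V^{\Omega}$ into irreducible $\Omega$-representations exhausting $Irr(\mathbb{G},\Omega)$ and yield $L^2(\mathbb{G})\cong\bigoplus_{x\in Irr(\mathbb{G},\Omega)}H_x\otimes\overline{H_x}$. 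The key structural input, which I would import from the Galois co-object framework of \cite{KennyProjective,KennyComonoidalMorita,KennyGaloisObjTwistings}, is that this $\Omega$-representation theory supplies the corepresentation theory of $\mathbb{G}_{\Omega}$ through the co-object linking $\mathbb{G}$ and $\mathbb{G}_{\Omega}$: simple objects and their Hilbert spaces $H_x$ match, so an irreducible $\Omega$-representation is finite-dimensional precisely when the corresponding irreducible corepresentation of $\mathbb{G}_{\Omega}$ is.

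With this dictionary both implications are short. For the direction ($\Leftarrow$), suppose $\Omega$ is of finite type. By Theorem~\ref{theo.OmegaFiniteTypeCharact} every irreducible $\Omega$-representation is finite-dimensional and every $\Omega$-representation is continuous, so $V^{\Omega}=\bigoplus_x (u^x)^{\oplus n_x}$ is a direct sum of \emph{finite-dimensional} blocks whose coefficients are $\sigma$-weakly dense in $L^{\infty}(\mathbb{G})$. The twisted orthogonality relations (Theorem~\ref{theo.TwistedOrthogonalityRel}) furnish invariant functionals on each finite-dimensional block, which I would patch into a normal bi-invariant state for ${}_{\Omega}\Delta_{\Omega^*}$, whence $\mathbb{G}_{\Omega}$ is compact. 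For ($\Rightarrow$), if $\mathbb{G}_{\Omega}$ is compact then the ordinary Peter-Weyl theorem decomposes its corepresentations into finite-dimensional irreducibles; transporting one back through the dictionary produces a finite-dimensional $\Omega$-representation, so $\Omega$ is of finite type by Definition~\ref{def.fintype} (equivalently, all irreducible $\Omega$-representations are then finite-dimensional, which is condition~\ref{theo.cocequi4}) of Theorem~\ref{theo.OmegaFiniteTypeCharact}).

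The hard part will be the finite type direction, more precisely the passage from the decomposition of $V^{\Omega}$ into finite-dimensional $\Omega$-representations to an honest normal \emph{invariant} state for ${}_{\Omega}\Delta_{\Omega^*}$: unlike the untwisted situation, $h_{\mathbb{G}}$ is \emph{not} invariant for the twisted coproduct, so the Haar state of $\mathbb{G}_{\Omega}$ must be rebuilt from the twisted characters and orthogonality relations, with two-sided invariance checked by hand. This is exactly the analytic content encapsulated by Proposition~$4.3.2$ of \cite{KennyProjective}, of which the present statement is a special case. A secondary obstacle is to make the correspondence between $\Omega$-representations of $\mathbb{G}$ and corepresentations of $\mathbb{G}_{\Omega}$ precise at the von Neumann algebraic level (and not merely on the algebraic core $Pol(\mathbb{G})$), in particular the matching of finite-dimensional simple objects on which the whole argument rests.
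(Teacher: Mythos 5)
First, a remark on the comparison itself: the paper does not actually prove Theorem \ref{theo.TwistCompact} internally --- it is stated as "a particular case of Proposition 4.3.2 in \cite{KennyProjective}". Since your write-up explicitly defers its acknowledged "hard part" to that same proposition, at the level of logical dependency you land where the paper does, and your implementation formula ${}_{\Omega}\Delta_{\Omega^*}(x)=V^{\Omega}(x\otimes 1)(V^{\Omega})^*$ and the use of the twisted Peter--Weyl decomposition are correct. The genuine gap lies in the scaffolding you build around the citation: the asserted "dictionary" in which "simple objects and their Hilbert spaces $H_x$ match" between irreducible $\Omega$-representations of $\mathbb{G}$ and irreducible corepresentations of $\mathbb{G}_{\Omega}$, on which your entire argument for the direction ($\Rightarrow$) rests.

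That dictionary is false. The $\Omega$-representations do not reproduce the corepresentation theory of $\mathbb{G}_{\Omega}$; they form the \emph{connecting bimodule} between $\mathbb{G}$ and $\mathbb{G}_{\Omega}$: as the paper records, $Pol(\mathbb{G},\Omega)$ is a $Pol(\mathbb{G}_{\Omega})$-$Pol(\mathbb{G})$-bimodule coalgebra with $Pol(\mathbb{G},\Omega)Pol(\mathbb{G},\Omega)^{*}=Pol(\mathbb{G}_{\Omega})$, so irreducible corepresentations of $\mathbb{G}_{\Omega}$ occur inside products of \emph{pairs} of irreducible $\Omega$-representations, with in general a different index set and different dimensions. (The correspondence you may have had in mind, from Corollary \ref{cor.opp2coc} and Remark \ref{rem.switchOmOmStar}, matches \emph{twisted} representations of $\mathbb{G}_{\Omega}$ with twisted representations of $\mathbb{G}$, not ordinary corepresentations of $\mathbb{G}_{\Omega}$, and it moreover presupposes $\Omega$ of finite type.) A concrete counterexample: take $C(\mathbb{G})=C^{*}(\mathbb{Z}_n\times \mathbb{Z}_n)$ and $\Omega$ given by a nondegenerate $S^1$-valued group $2$-cocycle on the dual finite abelian group. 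Then $L^{\infty}(\mathbb{G})$ is commutative, so ${}_{\Omega}\Delta_{\Omega^{*}}=\Omega\Delta(\cdot)\Omega^{*}=\Delta$ and $\mathbb{G}_{\Omega}=\mathbb{G}$, which has $n^{2}$ irreducible corepresentations, all one-dimensional; yet $Irr(\mathbb{G},\Omega)$ is a single class of dimension $n$. Consequently your step for ($\Rightarrow$), "transporting a finite-dimensional corepresentation back through the dictionary produces a finite-dimensional $\Omega$-representation", has no content --- in this example there is nothing to transport --- and deducing $n_x<\infty$ for all $x$ from compactness of $\mathbb{G}_{\Omega}$ genuinely requires the Galois co-object analysis that constitutes the cited Proposition 4.3.2, not a matching of simples. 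For ($\Leftarrow$), note also that the orthogonality relations of Theorem \ref{theo.TwistedOrthogonalityRel} are with respect to $h_{\mathbb{G}}$, which is invariant for $\Delta$ and not for ${}_{\Omega}\Delta_{\Omega^{*}}$; the Haar state of $\mathbb{G}_{\Omega}$ must instead be built on $Pol(\mathbb{G}_{\Omega})=Pol(\mathbb{G},\Omega)Pol(\mathbb{G},\Omega)^{*}$, on coefficients $u^{x}_{ij}(u^{y}_{kl})^{*}$, via an $(F^{x})^{-1}$-weighted formula whose normalisation (finiteness of quantities of the type $\mathrm{Tr}((F^{x})^{-1})$) is exactly where $n_{x}<\infty$ enters --- so this half of your argument is, as you yourself acknowledge, a deferral rather than a proof, though in fairness no worse than the paper's own treatment.
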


We will use the  notation $(L^{\infty}(\mathbb{G}_{\Omega}),\Delta_{\mathbb{G}_{\Omega}})= (L^{\infty}(\mathbb{G}),{}_{\Omega}\Delta_{\Omega^*})$. We denote $C(\mathbb{G}_{\Omega}) \subseteq L^{\infty}(\mathbb{G}_{\Omega})$ for the associated reduced C$^*$-algebra, and $\text{Pol}(\mathbb{G}_{\Omega})$ for the polynomial Hopf $*$-subalgebra.  

By standard von Neumann algebra theory \cite[Theorem IX.1.14]{Takesaki2}, there is a canonical identification $L^2(\mathbb{G}_{\Omega}) \cong L^2(\mathbb{G})$ intertwining in particular the modular conjugations $J_{\mathbb{G}_{\Omega}}$ and $J_{\mathbb{G}}$ and the left action of $L^{\infty}(\mathbb{G})$. In the following, we will then simply identify $L^2(\mathbb{G}_{\Omega}) \cong L^2(\mathbb{G})$.

\begin{rem}\label{rem.PKactype}
An interesting situation where $\mathbb{G}_{\Omega}$ is always compact is when $\mathbb{G}$ is of Kac type (see \cite[Proposition 5.1]{KennyProjective}). More precisely, by \cite[Proposition 4.3]{KennyProjective} it follows that any $2$-cocycle of a compact quantum group of Kac type is of finite type.
	\end{rem}

In \cite[Proposition 4.3]{KennyProjective}, the language of \emph{Galois co-objects} is used (in the measurable setting), of which $(L^{\infty}(\mathbb{G}),{}_{\Omega}\Delta)$ is a particular case, see Example 1.20 in \cite{KennyProjective}. Although we are able to avoid this more abstract theory in the measurable setting, it is necessary to use this formalism in the C$^*$-algebraic and algebraic setting, due to the fact that most $2$-cocycles are in practice not cohomologous to continuous or algebraic ones, even when of finite type, and that in general one can expect $C(\mathbb{G}_{\Omega}) \neq C(\mathbb{G})$ inside $L^{\infty}(\mathbb{G})$. 

Let us provide now some more information on the relation between $\text{Pol}(\mathbb{G})$ and $\text{Pol}(\mathbb{G}_{\Omega})$.

	\begin{defi} Let $\Omega$ be a  $2$-cocycle of finite type.		We denote by $\text{Pol}(\mathbb{G}, \Omega)\subset L^{\infty}(\mathbb{G})$ the linear span of matrix coefficients of all irreducible  $\Omega$-representations of $\mathbb{G}$, and by $C(\mathbb{G},\Omega)$ its normclosure.
	\end{defi}
By Lemma \ref{lem.IdentificationLInftyOmega} and Theorem \ref{theo.AverageInterwinersCont}.iv), it follows that $\text{Pol}(\mathbb{G}, \Omega)$ is $\sigma$-weakly dense in $L^\infty(\mathbb{G})$.

Contrary to the ordinary case when $\Omega=1\otimes 1$, $\text{Pol}(\mathbb{G}, \Omega)$ is not a Hopf $*$-algebra. More precisely, it is a coalgebra but not an algebra. However, $\text{Pol}(\mathbb{G},\Omega)$ will be a $\text{Pol}(\mathbb{G}_{\Omega})$-$\text{Pol}(\mathbb{G})$-bimodule (and in fact bimodule coalgebra) such that: $$\text{Pol}(\mathbb{G},\Omega)^* \text{Pol}(\mathbb{G},\Omega) = \text{Pol}(\mathbb{G}),\qquad \text{Pol}(\mathbb{G},\Omega)\text{Pol}(\mathbb{G},\Omega)^* = \text{Pol}(\mathbb{G}_{\Omega}).$$  Indeed, if $u$ is a finite dimensional  $\Omega$-representation and $v$ a  finite dimensional  $\mathbb{G}$-representation, then $u_{13}v_{23}$ is a  finite dimensional  $\Omega$-representation, showing that $\text{Pol}(\mathbb{G},\Omega)$ is a right $\text{Pol}(\mathbb{G})$-module. We obtain then for example the equality $\text{Pol}(\mathbb{G},\Omega)^* \text{Pol}(\mathbb{G},\Omega) = \text{Pol}(\mathbb{G})$ as clearly the left hand side is a $\sigma$-weakly dense $*$-subbialgebra of $L^{\infty}(\mathbb{G})$. The analogous properties for $C(\mathbb{G},\Omega)$ then follow immediately.

From Theorem \ref{theo.TwistedOrthogonalityRel}, we also deduce the following. 
	\begin{cor}
		Let $\mathbb{G}$ be a compact quantum group and $\Omega$ a  $2$-cocycle of finite type on $\mathbb{G}$. The set of matrix coefficients of all representatives of irreducible  $\Omega$-representations of $\mathbb{G}$ (with respect to fixed bases) form a basis for $\text{Pol}(\mathbb{G}, \Omega)$.
	\end{cor}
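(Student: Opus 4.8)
The plan is to separate the statement into two parts: spanning and linear independence. For spanning, I would observe that $Pol(\mathbb{G}, \Omega)$ is by definition the linear span of matrix coefficients of \emph{all} irreducible unitary $\Omega$-representations; but if $v$ is unitarily equivalent to the chosen representative $u^x$ via an intertwiner $T$, then $v = (T\otimes 1)u^x(T^*\otimes 1)$, so the matrix coefficients of $v$ are finite linear combinations of those of $u^x$. Hence the matrix coefficients of the fixed representatives $\{u^x\}_{x\in Irr(\mathbb{G},\Omega)}$ already span $Pol(\mathbb{G}, \Omega)$. Since $\Omega$ is of finite type, Theorem \ref{theo.OmegaFiniteTypeCharact} guarantees that each $H_x$ is finite dimensional, so this spanning family is genuinely indexed by triples $(x,i,j)$ with $x\in Irr(\mathbb{G},\Omega)$ and $1\leq i,j\leq n_x$. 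The real content of the corollary is therefore linear independence.

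For independence I would work with the bases $\{\xi^x_i\}$ chosen to diagonalise the operators $F^x$ of Theorem \ref{theo.TwistedOrthogonalityRel}, so that the orthogonality relations read $h_{\mathbb{G}}\big((u^y_{kl})^*u^x_{ij}\big) = \delta_{xy}\delta_{ki}\delta_{lj}F^x_k$. Suppose a finite linear combination $\sum_{x,i,j} c^x_{ij}\,u^x_{ij} = 0$ holds in $L^{\infty}(\mathbb{G})$. For each fixed class $y$ and each pair of indices $k,l$, I would multiply this relation on the left by $(u^y_{kl})^*$ and apply the Haar state $h_{\mathbb{G}}$. By the diagonalised orthogonality relations every term of the resulting sum collapses except the one with $(x,i,j) = (y,k,l)$, leaving $c^y_{kl}\,F^y_k = 0$. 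Since $F^y$ has zero kernel, its eigenvalue $F^y_k$ is strictly positive, and hence $c^y_{kl} = 0$. As $(y,k,l)$ was arbitrary, all coefficients vanish, establishing linear independence.

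There is essentially no serious obstacle here: this is the twisted analogue of the classical Peter-Weyl linear independence argument, and all the analytic substance has already been absorbed into Theorem \ref{theo.TwistedOrthogonalityRel}. The single point worth flagging is that $Pol(\mathbb{G},\Omega)$ is not closed under multiplication (it is only a coalgebra, as noted before the corollary), so the products $(u^y_{kl})^*u^x_{ij}$ appearing in the computation must be read inside the ambient algebra $L^{\infty}(\mathbb{G})$ rather than inside $Pol(\mathbb{G},\Omega)$. Since this is precisely the algebra in which the orthogonality relations were formulated, the argument carries over without any change.
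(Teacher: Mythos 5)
Your proof is correct and follows exactly the route the paper intends: the corollary is stated there as an immediate consequence of the twisted Schur orthogonality relations (Theorem 3.2.13), and your argument simply spells out that deduction, with spanning reduced to fixed representatives and linear independence extracted by applying $h_{\mathbb{G}}\big((u^y_{kl})^*\,\cdot\,\big)$ in a basis diagonalising $F^y$ and using that $F^y$ has zero kernel. Your closing remark that the products $(u^y_{kl})^*u^x_{ij}$ must be read in $L^{\infty}(\mathbb{G})$, since $Pol(\mathbb{G},\Omega)$ is only a coalgebra, is exactly the right point to flag and is consistent with the paper's setup.
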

	
	\subsection{Example: $q$-deformations of compact connected Lie groups}
		In order to illustrate the previous theory, let us study the projective representations of $q$-deformations of connected semisimple compact Lie groups. If $G$ is such a group which is in addition simply connected, and $q>0$, we denote by $G_q$ the Drinfeld-Jimbo $q$-deformation of $G$, which is a compact quantum group \cite[Section 2.4]{Sergey}. We put $G_1:=G$.
		
First of all, it is important to say that the measurable $2$-cohomology of $G_q$, i.e.\ the set of all $2$-cocycles of $G_q$ up to coboundaries, can be hard to determine. Indeed, this is already the case for $G_q:=SU_q(2)$, for which the measurable $2$-cohomology contains at least $3$ elements \cite{KennyRegularSemiRegular}, but where to the best knowledge of the authors, it is open to decide whether this is sharp. 
		
		Therefore, it is more reasonable to consider projective representations of $G_q$ with respect to a \emph{finite type} $2$-cocycle. Then we restrict our attention to irreducible ones by Theorem \ref{theo.AverageInterwinersCont}, which are finite dimensional. In this case, we recall that $\widehat{SU_q(2)}$ is torsion-free \cite{VoigtBaumConnesFree}. In particular, $\widehat{SU_q(2)}$ is projective torsion-free, which in our language means that all $2$-cocycles of finite type on $SU_q(2)$ are cohomologous to the trivial one. In other words, all finite dimensional irreducible  projective representations of $SU_q(2)$ are trivial. This example can be generalised as follows.
		\begin{theo}\label{theo.TorsionFreeQDefor}
			Let $G$ be a connected simply connected semisimple compact Lie group. Given a parameter $q>0$, $\widehat{G_q}$ is torsion-free.
		\end{theo}
		\begin{proof}
			This is shown in \cite[Theorem 5.3]{GoffengTorsionQDeformations}. 
		\end{proof}
		
		Related to $SU_q(2)$ there is the $q$-deformation of the rotation group, $SO_q(3)$. As indicated in Examples \ref{ex.ActionsCQG}, $\widehat{SO_q(3)}$ is not projective torsion-free and its projective representation can be viewed as a $q$-deformation of the projective representation of $SO(3)$ associated to its universal cover $SU(2)$. Moreover, this projective representation is unique up to equivariant Morita equivalence (Example \ref{ex.ProjectiveTorsionSO(3)}). As in the classical situation, one has $SO_q(3)=SU_q(2)/\mathbb{Z}_2$ \cite{PodlesQuantumSpaces}. This example can be generalised as follows. 
		
		Let $G$ be a connected semisimple compact Lie group and $q>0$. We denote by $\widetilde{G}$ the universal cover of $G$. Recall that there exists a central (finite) subgroup $\Gamma<\widetilde{G}$ such that $G\cong \widetilde{G}/\Gamma$. Observe that $Z(\widetilde{G})\subset Z(M(c_0(\widehat{\widetilde{G}}_q)))$. Hence given any central subgroup $\Gamma< Z(\widetilde{G})$, it is legitimate to consider the quotient quantum group $\widetilde{G}_q/\Gamma$. In this way we can define $q$-deformations of connected semisimple compact Lie groups which are not necessarily simply connected by putting $G_q:=(\widetilde{G}/\Gamma)_q:=\widetilde{G}_q/\Gamma$.
		
		Coming back to the classical case $SO(3)\cong SU(2)/\mathbb{Z}_2$, recall that (finite dimensional) irreducible representations of $SU(2)$ are classified by positive integers. The following is a well-known fact from representation theory of compact Lie groups. If $x(n)\in\text{Irr}(SU(2))$ with $n$ an odd positive integer, then $x(n)$ descends to an ordinary (finite dimensional) representation of $SO(3)$. If $x(n)\in\text{Irr}(SU(2))$ with $n$ an even positive integer, then $x(n)$ descends to a \emph{projective} (finite dimensional) representation of $SO(3)$. This establish a one-to-one correspondence between irreducible projective representations of $SO(3)$ and ordinary irreducible representations of $SU(2)$. The same correspondence holds for connected semisimple compact Lie groups. The following theorem generalises these results for $q$-deformations.
		
		\begin{theo}
			Let $G$ be a connected non-simply connected semisimple compact Lie group. Given a paramenter $q>0$, $\widehat{G}_q$ is not projective torsion-free and there is a one-to-one correspondence between finite dimensional irreducible representations of $\widetilde{G}_q$ and finite dimensional irreducible projective representations of $G_q$.
		\end{theo}
		\begin{proof}
		
		First of all, since $\Gamma$ is a central subgroup of $\widetilde{G}_q$, it is a normal quantum subgroup of $\widetilde{G}_q$. Therefore, the left coset space $\widetilde{G}_q/\Gamma$ defines a compact quantum group. Moreover, one has that the dual of $\widetilde{G}_q/{\Gamma}$ is a discrete quantum subgroup of the dual of $\widetilde{G}_q$ (cf. \cite[Proposition 2.1]{WangSimple}), i.e. $C(\widetilde{G}_q/{\Gamma})$ embeds into $C(\widetilde{G}_q)$ intertwining the co-multiplications.
		
		Assume that $\delta: \mathcal{B}(H)\rightarrow \mathcal{B}(H)\otimes C(G_q)$ is a finite dimensional irreducible projective representation of $G_q=\widetilde{G}_q/\Gamma$. By composing with the inclusion $C(\widetilde{G}_q/{\Gamma})\subset C(\widetilde{G}_q)$, we obtain a finite dimensional irreducible projective representation of $\widetilde{G}_q$, say $\widetilde{\delta}: \mathcal{B}(H)\rightarrow \mathcal{B}(H)\otimes C(\widetilde{G}_q)$. However, by virtue of Theorem \ref{theo.TorsionFreeQDefor}, the dual of $\widetilde{G}_q$ is in particular projective torsion-free. Hence there exists a finite dimensional irreducible representation $u$ of $\widetilde{G}_q$ such that $\widetilde{\delta}=\text{Ad}(u)$.
		
		Conversely, assume that $u\in\mathcal{B}(H)\otimes C(\widetilde{G}_q)$ is a finite dimensional irreducible representation of $\widetilde{G}_q$. If we denote by $\rho: C(\widetilde{G}_q)\twoheadrightarrow C(\Gamma)$ the surjective $*$-homomorphism realising $\Gamma$ as a subgroup of $\widetilde{G}_q$, then it is clear that $(id\otimes \rho)(u)$ is a finite dimensional representation of $\Gamma$. But since the $u_g$ for $g\in \Gamma$ then commute with all $(id\otimes \omega)(u)$ for $\omega \in C(\widetilde{G}_q)^*$, by centrality of $\Gamma$, it follows by irreducibility of $u$ that $(id\otimes \rho)(u) = 1 \otimes c$ for some group-like unitary in $C(\Gamma)$. In particular, $(id\otimes \rho)(u(x\otimes 1)u^*) = x \otimes 1$ for all $x\in \mathcal{B}(H)$, showing that $\delta := \text{Ad}(u)$ is a well-defined $*$-homomorphism defining a finite dimensional irreducible projective representation of $G_q$.	
		\end{proof}

\section{\textsc{Twisted Baaj-Skandalis duality}}\label{sec.TwistedBaajSkand}
	We study the construction of a twisted crossed product by a compact quantum group based on the notion of twisted dynamical system. Twisted C$^*$-algebras associated to classical locally compact groups and, more generally, twisted crossed products with respect to a $2$-cocycle have been studied in the literature by several hands (standard references are 	\cite{PackerTwistSurvey,BedosDiscDynSyst}). The more general framework of locally compact quantum groups has been addressed for instance in \cite{MakotoDeformation,SergeyDeformation2}, see also the work of S. Vaes and L. Vainerman in the von Neumann algebraic setting \cite{VaesBiproduit}. 

In this paper, we focus on the case of compact quantum groups. First, we relate the regularity of a $2$-cocycle as defined in \cite{SergeyDeformation2} to our notion of being of finite type. Then we study twisted crossed products coming from projective representations. We end by considering a twisted version of the Takesaki-Takai duality and the Baaj-Skandalis duality. 

The contents of this section have been initially inspired by \cite{QuiggTwistDuality}.
	
	\subsection{Twisted group C$^*$-algebras and regularity}
	
	\begin{defi}\label{defi.TwistedCStarG}
		Let $\mathbb{G}$ be a compact quantum group and $\Omega$ a measurable $2$-cocycle on $\mathbb{G}$. The twisted reduced C$^*$-algebra of $\mathbb{G}$ with respect to $\Omega$ is the C$^*$-algebra defined by:
		$$C^*_r(\mathbb{G}, \Omega)\equiv c_0(\widehat{\mathbb{G}}, \Omega):=C^*\langle(id\otimes \eta)(V^\Omega)\ |\ \eta\in \mathcal{B}(L^2(\mathbb{G}))_*)\rangle\subset \mathcal{B}(L^2(\mathbb{G})).$$
	\end{defi}

The following result can be found in \cite[Lemma 4.9]{BichonVaesRijdt} when $\mathbb{G}$ is discrete, but is valid for general regular locally compact quantum groups as was already remarked in \cite{SergeyDeformation2}. As we are using a slightly different setup (on which we will comment after the lemma) we include a proof for $\mathbb{G}$ compact, following a different path.

	\begin{lem}\label{lem.TwistedCAlgebra}
We have an equality $C^*_r(\mathbb{G}, \Omega) = [\{(id\otimes \eta)(V^\Omega)\ |\ \eta\in \mathcal{B}(L^2(\mathbb{G}))_*\}]$. Moreover, $C^*_r(\mathbb{G}, \Omega)$ acts non-degenerately on $L^2(\mathbb{G})$.
\end{lem}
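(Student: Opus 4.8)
The plan is to show that the norm-closed linear span $S:=[\{(\mathrm{id}\otimes\mu)(V^\Omega)\mid \mu\in\mathcal{B}(L^2(\mathbb{G}))_*\}]$ is already a $*$-subalgebra of $\mathcal{B}(L^2(\mathbb{G}))$; since $C^*_r(\mathbb{G},\Omega)$ is by definition the $C^*$-algebra generated by these slices, we have $S\subseteq C^*_r(\mathbb{G},\Omega)$ with equality precisely when $S$ is closed under products and adjoints. I would first note that, as $V^\Omega\in\mathcal{B}(L^2(\mathbb{G}))\overline{\otimes}L^\infty(\mathbb{G})$, a slice $(\mathrm{id}\otimes\mu)(V^\Omega)$ depends only on $\mu|_{L^\infty(\mathbb{G})}$, so without loss of generality $\mu$ ranges over $L^\infty(\mathbb{G})_*$, and every such functional extends normally to $\mathcal{B}(L^2(\mathbb{G}))$.

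Closedness under products is the easy half and uses the $\Omega$-representation identity $V^\Omega_{12}V^\Omega_{13}=(\mathrm{id}\otimes{}_{\Omega}\Delta)(V^\Omega)$ from Theorem-Definition \ref{theo.ProjectiveRightRepMeas}. Since slicing is multiplicative in disjoint legs,
\[
(\mathrm{id}\otimes\mu)(V^\Omega)\,(\mathrm{id}\otimes\nu)(V^\Omega)=(\mathrm{id}\otimes\mu\otimes\nu)\big(V^\Omega_{12}V^\Omega_{13}\big)=(\mathrm{id}\otimes\theta)(V^\Omega),
\]
where $\theta:=(\mu\otimes\nu)\circ{}_{\Omega}\Delta=(\mu\otimes\nu)\big(\Omega\,\Delta(\cdot)\big)$ is again normal on $L^\infty(\mathbb{G})$ (normality of $\Delta$ and boundedness of $\Omega$). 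Hence $S\cdot S\subseteq S$.

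The hard part will be closedness under the adjoint, namely that $(\mathrm{id}\otimes\mu)(V^\Omega)^*=(\mathrm{id}\otimes\bar\mu)\big((V^\Omega)^*\big)$ lies in $S$, where $(V^\Omega)^*=V_{\mathbb{G}}^*\Omega^*$ and $\bar\mu(x)=\overline{\mu(x^*)}$. The obstruction is that $V^\Omega$ satisfies only the twisted pentagon $V^\Omega_{12}V^\Omega_{13}(V_{\mathbb{G}})_{23}=V^\Omega_{23}V^\Omega_{12}$, so the Baaj--Skandalis self-adjointness argument for multiplicative unitaries does not apply verbatim, and one must instead produce conjugate representations. My plan is to exploit compactness through the modular structure: the modular conjugation $\widehat{J}_{\mathbb{G}}$ implements the unitary antipode $R$, and by Corollary \ref{cor.opp2coc} the assignment $u\mapsto u^\circ=(j\otimes R)(u)(1\otimes X_\Omega^*)$ sends the conjugate of an $\Omega$-representation to an $\widetilde{\Omega}$-representation with $\widetilde{\Omega}=(R\otimes R)\Omega_{21}^*=(X_\Omega^*\otimes X_\Omega^*)\Omega\Delta(X_\Omega)$ cohomologous to $\Omega$. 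I would derive an operator identity relating $(V^\Omega)^*$, conjugated by $\widehat{J}_{\mathbb{G}}$ and $J_{\mathbb{G}}$, to the projective regular representation $V^{\widetilde{\Omega}}$, and then use that a coboundary $\Omega'=(X^*\otimes X^*)\Omega\Delta(X)$ yields $V^{\Omega'}=(X^*\otimes X^*)V^\Omega(X\otimes 1)$ and hence $S_{V^{\Omega'}}=X^*S\,X$ with $X\in L^\infty(\mathbb{G})$; carrying the factor $X_\Omega$ through the modular conjugation rewrites the adjoint slice as a genuine slice of $V^\Omega$. The careful bookkeeping of $X_\Omega$ in this identity is the technical heart. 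As a fallback, one may instead verify regularity $\mathscr{C}(V^\Omega)=\mathcal{K}(L^2(\mathbb{G}))$ — which follows from $\mathscr{C}(V_{\mathbb{G}})=\mathcal{K}(L^2(\mathbb{G}))$ and $\Sigma V^\Omega=\Omega_{21}\Sigma V_{\mathbb{G}}$ with $\Omega$ unitary in $L^\infty(\mathbb{G})\overline{\otimes}L^\infty(\mathbb{G})$ — and then invoke the general statement for regular locally compact quantum groups recorded before the lemma.

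Finally, non-degeneracy is independent of the algebra structure and follows from unitarity of $V^\Omega$ alone. If $\psi\in L^2(\mathbb{G})$ were orthogonal to $(\mathrm{id}\otimes\omega_{\zeta_1,\zeta_2})(V^\Omega)\xi$ for all $\xi,\zeta_1,\zeta_2\in L^2(\mathbb{G})$, then $\langle \psi\otimes\zeta_1,\,V^\Omega(\xi\otimes\zeta_2)\rangle=0$ for all such vectors; since $V^\Omega$ is unitary, the vectors $V^\Omega(\xi\otimes\zeta_2)$ are total in $L^2(\mathbb{G})\otimes L^2(\mathbb{G})$, whence $\psi\otimes\zeta_1=0$ and so $\psi=0$. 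Therefore $[S\,L^2(\mathbb{G})]=L^2(\mathbb{G})$, that is, $C^*_r(\mathbb{G},\Omega)$ acts non-degenerately on $L^2(\mathbb{G})$.
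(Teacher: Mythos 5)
Your product-closure computation (slicing the identity $(\mathrm{id}\otimes{}_{\Omega}\Delta)(V^\Omega)=V^\Omega_{12}V^\Omega_{13}$) and your totality argument for non-degeneracy are both correct; the latter is essentially the argument the paper itself uses. The genuine gap sits exactly where you place the ``technical heart'': closure of $S$ under adjoints is never actually proved, and the route you sketch is structurally circular. The operator identity you propose to derive is the one recorded in Remark \ref{rem.TwistedCStarGOmega}, namely $\Omega^* V_{\mathbb{G}_{\Omega}} = (J\otimes \widehat{J}X_{\Omega}^*)(\Omega V_{\mathbb{G}})^*(J\otimes \widehat{J})$. Slicing this in the second leg (absorbing $X_\Omega$ and the conjugation by $\widehat{J}$ into the functional) only expresses $S^*$ as $J\,S'\,J$, where $S'$ is the slice span of the projective regular representation of $\mathbb{G}_{\Omega}$ with respect to $\Omega^*$. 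To conclude $S^*=S$ you would then need to identify $S'$ with $JSJ$ --- but that is precisely the self-adjointness statement for the pair $(\mathbb{G}_{\Omega},\Omega^*)$, and running your trick there (using $(\mathbb{G}_{\Omega})_{\Omega^*}=\mathbb{G}$) returns you to the original question. The modular machinery transfers the problem; it does not solve it.

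Your fallback is false. From $\Sigma V^\Omega = \Omega_{21}\Sigma V_{\mathbb{G}}$ one cannot deduce $\mathscr{C}(V^\Omega)=\mathcal{K}(L^2(\mathbb{G}))$: the unitary $\Omega_{21}$ has legs in \emph{both} tensor factors, so $(\mathrm{id}\otimes\eta)(\Omega_{21}Y)$ is not a left multiple of a slice of $Y$, and left multiplication by $\Omega_{21}$ does not preserve slice spaces. In fact, Theorem \ref{theo.TwistedRegularity}.ii) shows that $\mathscr{C}(V^\Omega)=\mathcal{K}(L^2(\mathbb{G}))$ holds \emph{if and only if} $\Omega$ is of finite type, and $2$-cocycles not of finite type exist (see \cite[Section 8]{KennyProjective}); since the lemma is asserted for an arbitrary measurable $\Omega$, no argument routed through regularity of the cocycle can possibly work. (You appear to be conflating regularity of the quantum group $\mathbb{G}$ --- automatic for compact $\mathbb{G}$, and what the remark preceding the lemma refers to --- with regularity of $\Omega$.) The paper sidesteps the adjoint problem entirely by direct computation: the slice span is exhausted by slices against the matrix-coefficient functionals $\omega_{\Lambda(u^y_{rs}),\,\xi_{\mathbb{G}}}$, and the twisted orthogonality relations (Theorem \ref{theo.TwistedOrthogonalityRel}) show that, in the twisted Peter--Weyl picture $L^2(\mathbb{G})\cong \underset{x\in Irr(\mathbb{G},\Omega)}{\bigoplus} H_x\otimes\overline{H_x}$ of Theorem \ref{theo.TwistedPeterWeyL2}, the normalized slices act as $1_{H_x}\otimes \theta_{\overline{\xi^x_r},\overline{\xi^x_s}}$ (Equation \eqref{EqActStGen}). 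Hence $S \cong \underset{x\in Irr(\mathbb{G},\Omega)}{\bigoplus^{c_0}}\, 1\otimes\mathcal{K}(\overline{H_x})$, which is visibly a self-adjoint algebra acting non-degenerately: self-adjointness is read off from the explicit form of the slices rather than wrestled from the twisted pentagon equation. You should replace your adjoint step by this (or an equivalent) direct computation.
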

\begin{proof}
By a direct computation, we have $(id\otimes \eta)(V^{\Omega})\Lambda(x) = \Lambda((id\otimes \eta)({}_{\Omega}\Delta(x)))$ for $x\in L^{\infty}(\mathbb{G})$ and $\eta\in L^{\infty}(\mathbb{G})_*$. By Lemma \ref{lem.IdentificationLInftyOmega} and the fact that $V^{\Omega} \in \mathcal{B}(L^2(\mathbb{G})) \overline{\otimes} L^{\infty}(\mathbb{G})$, we have that $[\{(id\otimes \eta)(V^\Omega)\ |\ \eta\in \mathcal{B}(L^2(\mathbb{G}))_*\}] = [\{(id\otimes \omega_{\Lambda(u_{rs}^y),\xi_{\mathbb{G}}})(V^{\Omega}) \mid y\in \text{Irr}(\mathbb{G},\Omega),1\leq r,s\leq n_y\}$. But a direct computation using the twisted orthogonality relations in Theorem \ref{theo.TwistedOrthogonalityRel} shows that with respect to the basis in Theorem \ref{theo.TwistedPeterWeyL2} we have  that: 
\begin{equation}\label{EqActStGen}
(F_r^x)^{-1}(id\otimes \omega_{\Lambda(u_{rs}^y),\xi_{\mathbb{G}}})(V^{\Omega})(\xi_i^x \otimes \overline{\xi_j^x})=\delta_{x,y}\delta_{s,j} \xi_i^x \otimes \overline{\xi_r^x}.
\end{equation}
 Hence $[\{(id\otimes \eta)(V^\Omega)\ |\ \eta\in \mathcal{B}(L^2(\mathbb{G}))_*\}]$ forms a C$^*$-algebra acting non-degenerately on $L^2(\mathbb{G})$, and we moreover obtain the following corollary. 
\end{proof}

	\begin{cor}[Twisted Peter-Weyl theorem III]
		Let $\mathbb{G}$ be a compact quantum group and $\Omega$ a $2$-cocycle. Then we have a C$^*$-isomorphism $C^*_r(\mathbb{G}, \Omega)\cong \underset{x\in \text{Irr}(\mathbb{G}, \Omega)}{\bigoplus^{c_0}} \mathcal{K}(\overline{H_x})$.
	\end{cor}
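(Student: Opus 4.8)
The plan is to read the isomorphism directly off the explicit action formula \eqref{EqActStGen}, combined with the decomposition of $L^2(\mathbb{G})$ from Theorem \ref{theo.TwistedPeterWeyL2} and the description of $C^*_r(\mathbb{G},\Omega)$ as a closed linear span already isolated in the proof of Lemma \ref{lem.TwistedCAlgebra}. In other words, I would not introduce any new machinery: the corollary is essentially a bookkeeping consequence of the twisted orthogonality relations.

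First I would set $b^y_{rs}:=(id\otimes\omega_{\Lambda(u^y_{rs}),\xi_{\mathbb{G}}})(V^\Omega)$ for $y\in Irr(\mathbb{G},\Omega)$ and $1\le r,s\le n_y$, and recall from Lemma \ref{lem.TwistedCAlgebra} that $C^*_r(\mathbb{G},\Omega)=[\{b^y_{rs}\}]$. Working in the orthonormal basis of $L^2(\mathbb{G})\cong\bigoplus_{x}H_x\otimes\overline{H_x}$ furnished by Theorem \ref{theo.TwistedPeterWeyL2}, formula \eqref{EqActStGen} gives $b^y_{rs}(\xi^x_i\otimes\overline{\xi^x_j})=\delta_{x,y}\delta_{s,j}F^y_r\,\xi^y_i\otimes\overline{\xi^y_r}$. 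Hence $b^y_{rs}$ annihilates every block $H_x\otimes\overline{H_x}$ with $x\neq y$ and preserves the block $H_y\otimes\overline{H_y}$, on which it acts as $id_{H_y}\otimes F^y_r\,\theta_{\overline{\xi^y_r},\overline{\xi^y_s}}$. This identification is the computational heart of the argument.

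Next I would fix $y$ and vary $r,s$. Since each $F^y_r$ is a nonzero positive scalar and the rank-one operators $\theta_{\overline{\xi^y_r},\overline{\xi^y_s}}$ span a norm-dense subspace of $\mathcal{K}(\overline{H_y})$, the norm-closed span of $\{b^y_{rs}\}_{r,s}$ is exactly $id_{H_y}\otimes\mathcal{K}(\overline{H_y})\cong\mathcal{K}(\overline{H_y})$; the relation $\theta_{\overline{\xi^y_r},\overline{\xi^y_s}}\theta_{\overline{\xi^y_{r'}},\overline{\xi^y_{s'}}}=\delta_{s,r'}\theta_{\overline{\xi^y_r},\overline{\xi^y_{s'}}}$ confirms this span is closed under products and adjoints. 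Because the blocks $H_y\otimes\overline{H_y}$ are mutually orthogonal, the operators $b^y_{rs}$ for distinct $y$ sit in orthogonal corners of $\mathcal{B}(L^2(\mathbb{G}))$, so $b^y_{rs}b^{y'}_{r's'}=0$ whenever $y\neq y'$. Thus the linear span of all $b^y_{rs}$ is the algebraic direct sum $\bigoplus^{\mathrm{alg}}_x\big(id_{H_x}\otimes\mathcal{K}(\overline{H_x})\big)$, whose norm closure is the $c_0$-direct sum $\bigoplus^{c_0}_{x\in Irr(\mathbb{G},\Omega)}\mathcal{K}(\overline{H_x})$, yielding the claimed $C^*$-isomorphism.

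The only step requiring genuine care, which I would flag as the (minor) main obstacle, is the final one: justifying that the norm closure of a span of orthogonally-supported blocks, each closing up to a copy of $\mathcal{K}(\overline{H_x})$, is the $c_0$-direct sum rather than a larger $\ell^\infty$-type product. This is the standard fact that the norm closure of an algebraic direct sum of mutually orthogonal $C^*$-subalgebras inside $\mathcal{B}(K)$ is their $c_0$-direct sum, and I would invoke it to simultaneously obtain the $*$-isomorphism (not merely a linear bijection): orthogonality of the corners forces the global multiplication to be block-diagonal, so the resulting map automatically respects products and adjoints.
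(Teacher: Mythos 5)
Your proposal is correct and takes essentially the same route as the paper: the corollary is obtained there as a direct consequence of formula \eqref{EqActStGen} in the proof of Lemma \ref{lem.TwistedCAlgebra}, which identifies the generators $(id\otimes\omega_{\Lambda(u^y_{rs}),\xi_{\mathbb{G}}})(V^{\Omega})$ with the block-diagonal operators $F^y_r\,(id_{H_y}\otimes\theta_{\overline{\xi^y_r},\overline{\xi^y_s}})$ relative to the decomposition $L^2(\mathbb{G})\cong\bigoplus_x H_x\otimes\overline{H_x}$ of Theorem \ref{theo.TwistedPeterWeyL2}. Your explicit bookkeeping, including the closing $c_0$-sum step you flag, is exactly the argument the paper leaves implicit.
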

	
Denoting $l^{\infty}(\widehat{\mathbb{G}},\Omega) = L(\mathbb{G},\Omega) = C^*_r(\mathbb{G},\Omega)''$, we then also have that $l^{\infty}(\widehat{\mathbb{G}},\Omega) \cong \underset{x\in \text{Irr}(\mathbb{G}, \Omega)}{\bigoplus^{l^{\infty}}} \mathcal{B}(\overline{H_x})$.

\begin{rem}\label{rem.switchOmOmStar}
It follows from Corollary \ref{cor.opp2coc} that there is a one-to-one correspondence between projective representations of $\mathbb{G}$ and $\mathbb{G}_{\Omega}$, where we assume that $\Omega$ is of finite type and hence $\mathbb{G}_{\Omega}$ compact. First of all, $\Theta:=\Omega^*$ is then easily seen to be a $2$-cocycle of finite type for $\mathbb{G}_{\Omega}$ with $(\mathbb{G}_{\Omega})_{\Omega^*} =\mathbb{G}$. If then $u$ is a $\Omega$-representation of $\mathbb{G}$, it is also a $\Theta^*$-representation of $\mathbb{G}_{\Omega}$, leading to the left projective $\mathbb{G}_{\Omega}$-representation $\delta_{u}(a) = u(a\otimes 1)u^*$ for $a\in \mathcal{B}(H_u)$. By Corollary \ref{cor.opp2coc}, we obtain a one-to-one correspondence between $\Omega$-representations of $\mathbb{G}_{\Omega}$ and $\Omega^*$-representations of $\mathbb{G}$. 
\end{rem}

	\begin{rem}\label{rem.TwistedCStarGOmega}
		We can also relate the regular representations of $\mathbb{G}$ and $\mathbb{G}_{\Omega}$. Indeed, since $L^{\infty}(\mathbb{G}) = L^{\infty}(\mathbb{G}_{\Omega})$, we can canonically  identify $L^2(\mathbb{G}_{\Omega})$ and $L^2(\mathbb{G})$, and by \cite[Proposition 5.4]{KennyGaloisObjTwistings} the right regular representation of the twisted quantum group is given by $\Omega^* V_{\mathbb{G}_{\Omega}} = (J\otimes \hat{J}X_{\Omega}^*)(\Omega V_{\mathbb{G}})^* (J\otimes \hat{J})$. To see that we can use here the same $\Omega$ as before, it is sufficient to calculate that $(\Omega^*V_{\mathbb{G}_{\Omega}})^{\circ} = \Omega V_{\mathbb{G}}$ (after identifying $L^2(\mathbb{G})$ with $\overline{L^2(\mathbb{G})}$ by $J$), that is that $V_{\mathcal{B}(L^2(\mathbb{G}))} = \Omega_{13}^*(V_{\mathbb{G}_{\Omega}})_{13}\Omega_{23} (V_{\mathbb{G}})_{23}$, where $\mathcal{B}(L^2(\mathbb{G}))$ carries the coaction $Ad_{\Omega^*V_{\mathbb{G}_{\Omega}}}$. This can be verified using the techniques of \cite{KennyGaloisObjTwistings}. Alternatively, one can also follow more directly the proof of Proposition 4.1.5 of \cite{KennyProjective}.

In any case, from the above we immediately get that $C^*_r(\mathbb{G}_\Omega, \Omega^*)=JC^*_r(\mathbb{G}, \Omega)J$. Analogously, we have $\Omega^* W^*_{\mathbb{G}_{\Omega}} = (X_{\Omega}\hat{J}\otimes J)W_{\mathbb{G}}\Omega^* (\hat{J}\otimes J)$.
	\end{rem}

\begin{rem}\label{rems.rightdualreg}
In \cite[Theorem 2.1]{SergeyDeformation2}, a different twisted group C$^*$-algebra is introduced, which we will write $C^*_r(\Omega,\mathbb{G}):= [(\omega \otimes id)(W\Omega^*)]$. Indeed, by a similar computation to \eqref{lem.TwistedCAlgebra}, we obtain that: 
\begin{equation}
(F_s^xF_r^x)^{-1/2} (\omega_{\Lambda(u_{rs}^y),\xi_{\mathbb{G}}}\otimes id)((W^{\Omega})^*)\xi_i^x \otimes \overline{\xi_j^x} = \delta_{x,y}\delta_{i,r} \xi_{s}^x\otimes \overline{\xi_j^x},
\end{equation} so that also $C^*_r(\Omega,\mathbb{G})$ forms a C$^*$-algebra, isomorphic to the direct sum  $\underset{x\in \text{Irr}(\mathbb{G}, \Omega)}{\bigoplus^{c_0}} \mathcal{K}(H_x)$, whose $\sigma$-weak closure equals the commutant $l^{\infty}(\widehat{\mathbb{G}},\Omega)'$. In fact, using that $V = (\widehat{J}\otimes \widehat{J})W_{21}^*(\widehat{J}\otimes \widehat{J})$, we  see that  $\widehat{J} C^*_r(\Omega,\mathbb{G}) \widehat{J} = C^*_r(\mathbb{G},\widetilde{\Omega})$, with $\widetilde{\Omega}$ as in Corollary \ref{cor.opp2coc}. It hence follows (cf.\ \cite[Proposition 3.12]{SergeyDeformation2}) that $C^*_r(\Omega,\mathbb{G}) = X_{\Omega} \widehat{J}C_r^*(\mathbb{G},\Omega)\widehat{J}X_{\Omega}^*$. 
\end{rem} 

	\begin{ex}\label{ex.TwistClassRegRep}
Let us briefly relate the above construction to the classical setting.  Let $G$ be a classical compact group. Let $\omega: G\times G\longrightarrow S^1$ be a (measurable) $2$-cocycle on $G$, that is, $\omega(x,y)\omega(xy,z)=\omega(x,yz)\omega(y,z)$ for all $x,y,z\in G$. By passing to a cohomologous $2$-cocycle, we may without loss of generality assume that $\omega$ is normalized, so $\omega(e,x)=1=\omega(x,e)$ for all $x\in G$. If $\rho$ denotes the right regular representation of $G$, then the \emph{right $\omega$-regular representation of $G$ on $L^2(G)$} is the map:
	\[
		\rho^{\omega}:G \longrightarrow \mathcal{B}(L^2(G)), \qquad g  \longmapsto \rho^{\omega}_g\mbox{, $\rho^{\omega}_g(f)(x):=\omega(x,g)\rho_g(f)(x)=\omega(x,g)f(xg)$,}
	\]
		for all $f\in L^2(G)$ and $x\in G$. A direct computation shows that $\rho^\omega$ is a $\omega$-representation of $G$ on $L^2(G)$. 
		The corresponding Peter-Weyl theory can be obtained in this context (for instance, see \cite{ChengProyectiveRep} for more details).
		The \emph{twisted reduced C$^*$-algebra of $G$ with respect to $\omega$} is defined as the C$^*$-algebra $C^*_r(G, \omega):=[\int_G f(g)\rho^\omega(g) \mathrm{d}g\mid f\in C(G)\}]$. 
	\end{ex}
		
Let us now relate the notion of `being of finite type' to the regularity of a $2$-cocycle. This will be essential for the twisted Takesaki-Takai duality in the next section. For what follows, we recall that  $\mathscr{C}(V):=[\{(id\otimes \eta)(\Sigma V)\ |\ \eta\in\mathcal{B}(H)_*\}]$ for any $V \in \mathcal{B}(H\otimes H)$. Regularity of a multiplicative unitary was introduced by S. Baaj and G. Skandalis in \cite{SkandalisUnitaries}. By analogy to \cite{SkandalisUnitaries}, we define regularity of $\Omega$ in terms of regularity of the unitary $V^\Omega$. We will see (Remarks \ref{rems.RegularityInBaajCrespo} below) that our notion of regularity for a $2$-cocycle coincides with the one by S. Neshveyev and L. Tuset in \cite{SergeyDeformation2}.

\begin{defi}
A  $2$-cocycle $\Omega$ is called \emph{regular} if $\mathscr{C}(V^\Omega)=\mathcal{K}(L^2(\mathbb{G}))$.
\end{defi}

	\begin{theo}\label{theo.TwistedRegularity}
		Let $\mathbb{G}$ be a compact quantum group and $\Omega$ a (measurable)  $2$-cocycle on $\mathbb{G}$.
		\begin{enumerate}[i)]
			\item The set $\mathscr{C}(V^\Omega)\mathscr{C}(V^\Omega)$ is linearly dense in $\mathscr{C}(V^\Omega)$. In particular, $\mathscr{C}(V^\Omega)$ is an algebra. Moreover, $\mathscr{C}(V^\Omega)$ acts non-degenerately on $L^2(\mathbb{G})$, so $[\mathscr{C}(V^\Omega)L^2(\mathbb{G})]=L^2(\mathbb{G})$.
			\item The $2$-cocycle $\Omega$ is regular if and only if $\Omega$ is of finite type. 
			\item We have $\overline{span}\{U_{\mathbb{G}}C(\mathbb{G})U_{\mathbb{G}} \cdot C^*_r(\mathbb{G}, \Omega)\}=\mathcal{K}(L^2(\mathbb{G}))$ if and only if $\Omega$ is of finite type.
		\end{enumerate}
	\end{theo}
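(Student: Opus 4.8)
The plan is to run everything through the explicit description of $V^{\Omega}$ on the twisted Peter--Weyl decomposition $L^2(\mathbb{G})\cong\bigoplus_{x\in Irr(\mathbb{G},\Omega)}H_x\otimes\overline{H_x}$ of Theorem \ref{theo.TwistedPeterWeyL2}. Writing $e^x_{ij}=\xi^x_i\otimes\overline{\xi^x_j}$ for the resulting orthonormal basis, the defining formula of $V^{\Omega}$ (Theorem-Definition \ref{theo.ProjectiveRightRepMeas}.i) together with the twisted corepresentation identity ${}_{\Omega}\Delta(u^x_{ij})=\sum_k u^x_{ik}\otimes u^x_{kj}$ gives, after cancelling $\sqrt{F^x_i}$, the clean rule $V^{\Omega}(e^x_{ij}\otimes\xi)=\sum_k e^x_{ik}\otimes u^x_{kj}\xi$. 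First I would compute the slices $S_{\alpha,\beta}:=(id\otimes\omega_{\alpha,\beta})(\Sigma V^{\Omega})$ from this, finding $S_{\alpha,\beta}e^x_{ij}=\sum_k\langle\alpha,e^x_{ik}\rangle\,u^x_{kj}\beta$. Taking $\beta=\xi_{\mathbb{G}}$ and $\alpha=e^x_{i\ell}$ and using $u^x_{\ell j}\xi_{\mathbb{G}}=\sqrt{F^x_\ell}\,e^x_{\ell j}$ yields the distinguished operators $S_{e^x_{i\ell},\xi_{\mathbb{G}}}=\sqrt{F^x_\ell}\,(\theta_{\xi^x_\ell,\xi^x_i}\otimes 1_{\overline{H_x}})\in\mathscr{C}(V^{\Omega})$. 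The ranges of these already exhaust every block, which gives the non-degeneracy in (i); the algebra property in (i) I would obtain from the modified pentagon relation $V^{\Omega}_{12}V^{\Omega}_{13}(V_{\mathbb{G}})_{23}=V^{\Omega}_{23}V^{\Omega}_{12}$ of Theorem-Definition \ref{theo.ProjectiveRightRepMeas}.iv by the usual Baaj--Skandalis slicing computation.

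For (ii), the implication ``regular $\Rightarrow$ finite type'' is the quick one, argued by contraposition: if $\Omega$ is not of finite type, then by Theorem \ref{theo.OmegaFiniteTypeCharact} some irreducible $\Omega$-representation has $\dim\overline{H_x}=\infty$, and then $S_{e^x_{i\ell},\xi_{\mathbb{G}}}=\sqrt{F^x_\ell}\,(\theta_{\xi^x_\ell,\xi^x_i}\otimes 1_{\overline{H_x}})$ lies in $\mathscr{C}(V^{\Omega})$ but is visibly non-compact, so $\mathscr{C}(V^{\Omega})\neq\mathcal{K}(L^2(\mathbb{G}))$. Conversely, assume $\Omega$ of finite type, so all $n_x<\infty$. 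Approximating $\alpha$ by vectors supported on finitely many (finite dimensional) blocks shows each $S_{\alpha,\beta}$ is a norm limit of finite rank operators, whence $\mathscr{C}(V^{\Omega})\subseteq\mathcal{K}(L^2(\mathbb{G}))$. For the reverse inclusion I would show $\mathscr{C}(V^{\Omega})$ acts topologically irreducibly: a bounded $T$ lies in the commutant iff $(1\otimes T)V^{\Omega}=V^{\Omega}(T\otimes 1)$, and evaluating this identity on the second-leg vector $\xi_{\mathbb{G}}$ forces $T=\bigoplus_x 1_{H_x}\otimes t^x\in l^{\infty}(\widehat{\mathbb{G}},\Omega)$, while the remaining relations $Tu^x_{j'j}=\sum_q t^x_{qj}u^x_{j'q}$, holding simultaneously for all irreducible $\Omega$-representations, tie the blocks $t^x$ together and force $T$ scalar. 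Being a norm-closed algebra of compact operators acting irreducibly, $\mathscr{C}(V^{\Omega})$ then contains, and hence equals, $\mathcal{K}(L^2(\mathbb{G}))$.

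For (iii) the plan is to identify $\mathscr{C}(V^{\Omega})$ with the leg product $\overline{span}\{U_{\mathbb{G}}C(\mathbb{G})U_{\mathbb{G}}\cdot C^*_r(\mathbb{G},\Omega)\}$, which immediately makes (iii) equivalent to the regularity analysed in (ii). This is the twisted analogue of the identity $\mathscr{C}(V_{\mathbb{G}})=\overline{span}\{C(\mathbb{G})\cdot c_0(\widehat{\mathbb{G}})\}$ of Theorem-Definition \ref{theo.KacSystemG}: the left leg of $V^{\Omega}$ is $C^*_r(\mathbb{G},\Omega)$ by Definition \ref{defi.TwistedCStarG} and Lemma \ref{lem.TwistedCAlgebra}, while the intertwining relations between $V^{\Omega}$, $V_{\mathbb{G}}$ and $\widetilde{V}_{\mathbb{G}}$ in Lemma \ref{lem.IdentityTwistDualAction}, combined with the Kac system relations of Theorem-Definition \ref{theo.KacSystemG}, should let one rewrite $\Sigma V^{\Omega}$ so that the right leg produces exactly $U_{\mathbb{G}}C(\mathbb{G})U_{\mathbb{G}}=\rho(C(\mathbb{G}))$.

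I expect the two genuinely technical points to be, first, the commutant computation in (ii) --- verifying that the relations coming from the general (non-$\xi_{\mathbb{G}}$) slices really cut the right-leg algebra $l^{\infty}(\widehat{\mathbb{G}},\Omega)\cong\bigoplus_x\mathcal{B}(\overline{H_x})$ down to the scalars, which is where the fusion of $\Omega$-representations enters and where the twisted Schur orthogonality (Lemma \ref{lem.TwistedSchur}) must be used carefully --- and, second, the leg identity of (iii), i.e.\ pushing the Baaj--Skandalis ``$\mathscr{C}(V)=\overline{span}\{(\text{right leg})(\text{left leg})\}$'' computation through the modified pentagon so that the $U_{\mathbb{G}}$-twist appears correctly. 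An alternative, perhaps cleaner, route for the finite type direction of (ii) would be to invoke Theorem \ref{theo.TwistCompact}: when $\Omega$ is of finite type, $\mathbb{G}_{\Omega}$ is a genuine compact quantum group whose fundamental unitary is automatically regular by Theorem-Definition \ref{theo.KacSystemG}, after which one transports regularity back to $V^{\Omega}$ using the relations of Remark \ref{rem.TwistedCStarGOmega}.
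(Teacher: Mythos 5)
Your parts (i) and (ii) run along essentially the same lines as the paper: the same pentagon computation for the algebra property, the same explicit slice formulas on the twisted Peter--Weyl decomposition (your operators $S_{e^x_{i\ell},\xi_{\mathbb{G}}}=\sqrt{F^x_\ell}\,(\theta_{\xi^x_\ell,\xi^x_i}\otimes 1_{\overline{H_x}})$ match the paper's \eqref{EqActStGen}-type computations), and the same reduction of ``finite type $\Rightarrow$ regular'' to triviality of the commutant. Two comments. Your finite-rank approximation in $\alpha$, using $\|S_{\alpha,\beta}\|\leq\|\alpha\|\|\beta\|$, is a slightly cleaner route to $\mathscr{C}(V^{\Omega})\subseteq\mathcal{K}(L^2(\mathbb{G}))$ than the paper's appeal to boundedness of right multiplication by $Pol(\mathbb{G})$. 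For the last step of the commutant computation, where you appeal vaguely to fusion ``tying the blocks together'', the paper has a shorter argument you should adopt: from $(1\otimes T)V^{\Omega}=V^{\Omega}(T\otimes 1)$, slicing the \emph{first} leg and using Lemma \ref{lem.IdentificationLInftyOmega} gives $T\in L^{\infty}(\mathbb{G})$, so $T\in l^{\infty}(\widehat{\mathbb{G}},\Omega)\cap L^{\infty}(\mathbb{G})$, and then Remark \ref{rems.rightdualreg} forces ${}_{\Omega}\Delta_{\Omega^*}(T)=1\otimes T$, hence $T\in\mathbb{C}$; no fusion analysis is needed. (Note also that for a non-self-adjoint norm-closed algebra, trivial commutant is strictly weaker than transitivity --- upper-triangular matrices have scalar commutant --- so the conclusion $\mathscr{C}(V^{\Omega})=\mathcal{K}$ should be routed through the self-adjoint subalgebra spanned by your $\beta=\xi_{\mathbb{G}}$ slices; the paper is equally terse on this point, so it is a shared looseness rather than a defect of your proposal alone. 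Your alternative route via compactness of $\mathbb{G}_{\Omega}$ is also legitimate and is essentially Remark \ref{rems.RegularityInBaajCrespo}.ii).)

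The genuine gap is in (iii). The identity you propose, $\mathscr{C}(V^{\Omega})=\overline{span}\{U_{\mathbb{G}}C(\mathbb{G})U_{\mathbb{G}}\cdot C^*_r(\mathbb{G},\Omega)\}$, is false in general, and your own formulas already show it: your slices $S_{e^x_{i\ell},\xi_{\mathbb{G}}}$ act on the \emph{first} tensor factor $H_x$, whereas by \eqref{EqActStGen} the algebra $C^*_r(\mathbb{G},\Omega)$ acts as $1_{H_x}\otimes\mathcal{K}(\overline{H_x})$ on the second factor. The reason is that in $(id\otimes\eta)(\Sigma V^{\Omega})$ the functional consumes exactly the leg of $V^{\Omega}$ carrying $C^*_r(\mathbb{G},\Omega)$; what survives is the \emph{other} twisted group algebra $C^*_r(\Omega,\mathbb{G})$ of Remark \ref{rems.rightdualreg}, whose generators are precisely your $\beta=\xi_{\mathbb{G}}$ slices, so the correct factorization is $\mathscr{C}(V^{\Omega})=\overline{span}\{U_{\mathbb{G}}C(\mathbb{G})U_{\mathbb{G}}\cdot C^*_r(\Omega,\mathbb{G})\}$. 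The two algebras differ by the conjugation $C^*_r(\Omega,\mathbb{G})=X_{\Omega}\widehat{J}_{\mathbb{G}}C^*_r(\mathbb{G},\Omega)\widehat{J}_{\mathbb{G}}X_{\Omega}^*$; since $X_{\Omega}\in L^{\infty}(\mathbb{G})$ commutes with $U_{\mathbb{G}}C(\mathbb{G})U_{\mathbb{G}}\subseteq L^{\infty}(\mathbb{G})'$ and $\widehat{J}_{\mathbb{G}}$ normalizes $U_{\mathbb{G}}C(\mathbb{G})U_{\mathbb{G}}$, one obtains $\mathscr{C}(V^{\Omega})=X_{\Omega}\widehat{J}_{\mathbb{G}}\,\overline{span}\{U_{\mathbb{G}}C(\mathbb{G})U_{\mathbb{G}}\cdot C^*_r(\mathbb{G},\Omega)\}\,\widehat{J}_{\mathbb{G}}X_{\Omega}^*$. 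So (iii) is indeed equivalent to regularity, but only after this unitary conjugation, whose verification is the actual content of the paper's proof of (iii) (via Remarks \ref{rem.TwistedCStarGOmega} and \ref{rems.rightdualreg} and the discussion after \cite[Definition 2.9]{SergeyDeformation2}). As written, your plan to read off $U_{\mathbb{G}}C(\mathbb{G})U_{\mathbb{G}}$ and $C^*_r(\mathbb{G},\Omega)$ directly from the two legs of $\Sigma V^{\Omega}$ via Lemma \ref{lem.IdentityTwistDualAction} would fail; the repair is to insert the $\widehat{J}_{\mathbb{G}}$/$X_{\Omega}$ bridge above.
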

	\begin{proof}
		\begin{enumerate}[i)]
			\item Applying the definition of $\mathscr{C}(V^\Omega)$ together with the pentagonal equation satisfied by $V^\Omega$ from Theorem \ref{theo.ProjectiveRightRepMeas}, we write the following:
			\begin{equation*}
			\begin{split}
				\overline{span}\{\mathscr{C}(V^\Omega)\mathscr{C}(V^\Omega)\}&=\overline{span}\{(id\otimes \eta)(\Sigma V^\Omega)\cdot (id\otimes \eta')(\Sigma V^\Omega)\ |\ \eta, \eta'\in\mathcal{B}(L^2(\mathbb{G}))_*\}\\
				&=\overline{span}\{(id\otimes \eta'\otimes \eta)(\Sigma_{13} V^\Omega_{13} \Sigma_{12}V^\Omega_{12})\ |\ \eta, \eta'\in\mathcal{B}(L^2(\mathbb{G}))_*\}\\
				&=\overline{span}\{(id\otimes \eta'\otimes \eta)(\Sigma_{13} \Sigma_{12}V^\Omega_{23} V^\Omega_{12})\ |\ \eta, \eta'\in\mathcal{B}(L^2(\mathbb{G}))_*\}\\
				&=\overline{span}\{(id\otimes \eta'\otimes \eta)(\Sigma_{13} \Sigma_{12}V^\Omega_{12} V^\Omega_{13}(V_{\mathbb{G}})_{23})\ |\ \eta, \eta'\in\mathcal{B}(L^2(\mathbb{G}))_*\}\\
				&=\overline{span}\{(id\otimes \eta'\otimes \eta)(\Sigma_{23} \Sigma_{13}V^\Omega_{12} V^\Omega_{13}(V_{\mathbb{G}})_{23})\ |\ \eta, \eta'\in\mathcal{B}(L^2(\mathbb{G}))_*\}\\
				&=\overline{span}\{(id\otimes \eta'\otimes \eta)(\Sigma_{23} V^\Omega_{32}\Sigma_{13} V^\Omega_{13}(V_{\mathbb{G}})_{23})\ |\ \eta, \eta'\in\mathcal{B}(L^2(\mathbb{G}))_*\}\\
				&=\overline{span}\{(id\otimes \eta'\otimes \eta)(\Sigma_{23}\Sigma_{23} V^\Omega_{23}\Sigma_{23}\Sigma_{13} V^\Omega_{13}(V_{\mathbb{G}})_{23})\ |\ \eta, \eta'\in\mathcal{B}(L^2(\mathbb{G}))_*\}\\
				&=\overline{span}\{(id\otimes \eta'\otimes \eta)(V^\Omega_{23}\Sigma_{23}\Sigma_{13} V^\Omega_{13}(V_{\mathbb{G}})_{23})\ |\ \eta, \eta'\in\mathcal{B}(L^2(\mathbb{G}))_*\}\\
				&\overset{(*)}{=}\overline{span}\{(id\otimes \eta'')(\Sigma V^\Omega)\ |\ \eta''\in\mathcal{B}(L^2(\mathbb{G}))_*\}\\
				&=\mathscr{C}(V^\Omega),\\
				\end{split}
			\end{equation*}
			
			where $\eta''\in\mathcal{B}(L^2(\mathbb{G}))_*$ is defined by $\eta''(a):=(\eta'\otimes \eta)(V^{\Omega}\Sigma(1\otimes a)V_{\mathbb{G}})$, for all $a\in\mathcal{B}(L^2(\mathbb{G}))$; and $(*)$ follows from the fact that the vector space generated by these functionals is dense in $\mathcal{B}(L^2(\mathbb{G}))_*$.
			
To see that $\mathscr{C}(V^\Omega)$ acts non-degenerately, take $\zeta\in (\mathscr{C}(V^\Omega)L^2(\mathbb{G}))^{\perp}$. For every $\xi,\xi', x\in L^2(\mathbb{G})$ we have $0=\langle \zeta, (id\otimes \omega_{\xi,\xi'}(\Sigma V^\Omega)(x))\rangle=\langle \zeta\otimes \xi, \Sigma V^\Omega(x\otimes \xi')\rangle$. Since $\Sigma V^\Omega$ is a unitary in $\mathcal{B}(L^2(G)\otimes L^2(G))$ so surjective, the above equality implies $(\mathscr{C}(V^\Omega)L^2(\mathbb{G}))^{\perp}=(0)$.
			\item  		
By Theorem \ref{theo.TwistedOrthogonalityRel}, the  set $\{(id\otimes \omega_{\Lambda(u^y_{kl}), \Lambda(a)})(\Sigma V^\Omega)\ |\ y\in \text{Irr}(\mathbb{G}, \Omega), k,l=1,\ldots, n_y, a\in \text{Pol}(\mathbb{G})\}$ is dense in $\mathscr{C}(V^\Omega)$. Given $u^x_{ij}\in \text{Pol}(\mathbb{G},\Omega)$ we compute with the help of the twisted orthogonality relations from Theorem \ref{theo.TwistedOrthogonalityRel} that for $\zeta\in L^2(\mathbb{G})$:
			\begin{equation*}
			\begin{split}
				\langle \zeta, (id\otimes \omega_{\Lambda(u^y_{kl}), \Lambda(a)})(\Sigma V^\Omega)\Lambda(u^x_{ij})\rangle &=\langle  \Lambda(u^y_{kl})\otimes \zeta, V^\Omega(\Lambda(u^x_{ij})\otimes \Lambda(a))\rangle\\
				&=\langle  \Lambda(u^y_{kl})\otimes \zeta, \Delta_\Omega(u^x_{ij})(\xi_{\mathbb{G}}\otimes \Lambda(a))\rangle\\
				&=\langle \Lambda(u^y_{kl})\otimes \zeta, \big(\overset{n_x}{\underset{r=1}{\sum}}u^x_{ir}\otimes u^x_{rj}\big)(\xi_{\mathbb{G}}\otimes \Lambda(a))\rangle\\
				&= \overset{n_x}{\underset{r=1}{\sum}} h_{\mathbb{G}}\big((u^y_{kl})^*u^x_{ir}\big) \langle \zeta, \lambda(u^x_{rj})\Lambda(a) \rangle\\
				&=F^x_k\delta_{x,y}\delta_{k,i}\langle \zeta,\lambda(u^x_{lj})\Lambda(a)\rangle,
			\end{split}
			\end{equation*}
so with respect to the orthonormal basis of Theorem \ref{theo.TwistedPeterWeyL2} we have: 
\begin{equation}
(F^x_k)^{-1}(id\otimes \omega_{\Lambda(u^y_{kl}), \Lambda(a)})(\Sigma V^\Omega)(F_i^x)^{-1/2}\Lambda(u_{ij}^x) = \delta_{x,y}\delta_{k,i}(F_l^x)^{-1/2}\lambda(u^x_{lj})\Lambda(a).
\end{equation}

Assume now that $\Omega$ is regular. Taking $a=1$, we see from the above that $(id\otimes \omega_{\Lambda(u^y_{kl}), \xi_{\mathbb{G}}})(\Sigma V^\Omega)$ is of the form $T^x_{rs} \otimes 1$ for some non-zero operator $T^x_{rs}$ with respect to the model $L^2(\mathbb{G}) \cong \oplus_{x\in \text{Irr}(\mathbb{G},\Omega)} H_x \otimes \overline{H_x}$. As this operator needs to be compact, we see that all $n_x$ need to be finite, hence $\Omega$ is of finite type. 

Conversely, if $\Omega$ is of finite type, we see from the above computation (and the fact that the operators $\Lambda(b) \mapsto \Lambda(ba)$ are bounded for $a\in \text{Pol}(\mathbb{G})$) that $\mathscr{C}(V^{\Omega}) \subset \mathcal{K}(L^2(\mathbb{G}))$. To see that this is an equality, it is sufficient to show that the commutant $\mathscr{C}(V^{\Omega})' = \mathbb{C}$. Now if $x \in \mathscr{C}(V^{\Omega})'$, then in particular $x$ commutes with all $(id\otimes \omega_{\Lambda(u^y_{kl}), \xi_{\mathbb{G}}})(\Sigma V^\Omega)$, and it follows from the above computation and \eqref{EqActStGen} that $x\in l^{\infty}(\widehat{\mathbb{G}},\Omega)$. On the other hand, the computation above also shows that then $x \in \rho(L^{\infty}(\mathbb{G}))' = L^{\infty}(\mathbb{G})$, hence $x\in  l^{\infty}(\widehat{\mathbb{G}},\Omega) \cap L^{\infty}(\mathbb{G})$. But by Remark \ref{rems.rightdualreg}, this implies $(W^\Omega)^*(1\otimes x)W^{\Omega} = 1\otimes x$, hence ${}_{\Omega}\Delta_{\Omega^*}(x)=1\otimes x$ and so $x \in \mathbb{C}$.
	
			\item By \eqref{EqActStGen} we see that $C^*_r(\mathbb{G}, \Omega)$ is formed by compact operators if and only if $\Omega$ is of finite type. Hence $\overline{span}\{U_{\mathbb{G}}C(\mathbb{G})U_{\mathbb{G}} \cdot C^*_r(\mathbb{G}, \Omega)\} \subset \mathcal{K}(L^2(\mathbb{G}))$ if and only if $\Omega$ is of finite type. To see that this is an equality if $\Omega$ is of finite type, we can follow as similar strategy as in ii). Alternatively, conjugating with $\widehat{J}$ and taking into account Remark \ref{rems.rightdualreg}, we see that we have an inclusion $C_r^*(\Omega,\mathbb{G})JC(\mathbb{G})J \subseteq \mathcal{K}(L^2(\mathbb{G}))$, and this must be an equality by the discussion following \cite[Definition 2.9]{SergeyDeformation2}. Conjugating back with $\widehat{J}$, we see that $\overline{span}\{U_{\mathbb{G}}C(\mathbb{G})U_{\mathbb{G}} \cdot C^*_r(\mathbb{G}, \Omega)\}=\mathcal{K}(L^2(\mathbb{G}))$.
\end{enumerate}
\end{proof}

	\begin{rems}\label{rems.RegularityInBaajCrespo}
		\begin{enumerate}[i)]
\item As follows from the end of the above proof, our notion of regularity indeed coincides with the notion of regularity of a $2$-cocycle as introduced in \cite[Definition 2.9]{SergeyDeformation2}. 
			\item Up to unitary conjugation, our operator $V^{\Omega}$ also coincides with the operator $\Sigma (V_{21}^1)^*\Sigma$ with $V_{21}^1$ as it appears in \cite[Proposition 2.44]{BaajCrespo}. It hence follows from that result that $\Omega$ is regular if and only if $\mathbb{G}_{\Omega}$ is regular, and hence that $\mathbb{G}_{\Omega}$ is never regular if $\Omega$ is not of finite type. It is unclear at the moment if in general $\mathbb{G}_{\Omega}$ is semi-regular (this holds in all known cases). By  \cite[Proposition 2.44]{BaajCrespo}, this is equivalent to $\mathcal{K}(L^2(\mathbb{G})) \subseteq \mathscr{C}(V^{\Omega})$. 
		\end{enumerate}
	\end{rems}

The following proposition is a straightforward adaptation of \cite[Proposition 2.3]{SergeyDeformation2} to our setting. 
	
	\begin{pro}\label{pro.TwistedDualCoMultiplication}
Let $\mathbb{G}$ be a compact quantum group and $\Omega$ a  $2$-cocycle. Then the twisted reduced C$^*$-algebra $C^*_r(\mathbb{G}, \Omega)$ is a right $\widehat{\mathbb{G}}$-C$^*$-algebra with action $\alpha^\Omega$ defined by:
		$$\alpha^\Omega(x)=\Sigma (V^\Omega)^*(1\otimes x)V^\Omega\Sigma,$$ for all $x\in C^*_r(\mathbb{G}, \Omega)$.
	\end{pro}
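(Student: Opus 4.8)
The plan is to check the three defining properties of a (left) action of $\widehat{\mathbb{G}}$: that $\alpha^\Omega$ is a nondegenerate $*$-homomorphism $C^*_r(\mathbb{G},\Omega)\to\widetilde{M}(c_0(\widehat{\mathbb{G}})\otimes C^*_r(\mathbb{G},\Omega))$, that it obeys the coaction identity $(\widehat{\Delta}\otimes id)\alpha^\Omega=(id\otimes\alpha^\Omega)\alpha^\Omega$, and that it satisfies the cancellation property $[(c_0(\widehat{\mathbb{G}})\otimes 1)\alpha^\Omega(C^*_r(\mathbb{G},\Omega))]=c_0(\widehat{\mathbb{G}})\otimes C^*_r(\mathbb{G},\Omega)$. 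That $\alpha^\Omega$ is a $*$-homomorphism is immediate, since it is $\mathrm{Ad}(V^\Omega)^*$ followed by the flip; the work lies in locating its image and in coassociativity.

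The computational engine is a single slicing identity. For a generator $x=(id\otimes\eta)(V^\Omega)$ one writes $1\otimes x=(id\otimes id\otimes\eta)(V^\Omega_{23})$ and applies the pentagonal equation $V^\Omega_{12}V^\Omega_{13}(V_{\mathbb{G}})_{23}=V^\Omega_{23}V^\Omega_{12}$ of Theorem \ref{theo.ProjectiveRightRepMeas} to obtain
\begin{equation*}
(V^\Omega)^*(1\otimes x)V^\Omega=(id\otimes id\otimes\eta)\big((V^\Omega_{12})^*V^\Omega_{23}V^\Omega_{12}\big)=(id\otimes id\otimes\eta)\big(V^\Omega_{13}(V_{\mathbb{G}})_{23}\big),
\end{equation*}
and hence, after conjugating by $\Sigma$,
\begin{equation*}
\alpha^\Omega\big((id\otimes\eta)(V^\Omega)\big)=(id\otimes id\otimes\eta)\big(V^\Omega_{23}(V_{\mathbb{G}})_{13}\big),\qquad\textrm{i.e.}\qquad(\alpha^\Omega\otimes id)(V^\Omega)=V^\Omega_{23}(V_{\mathbb{G}})_{13}.
\end{equation*}
Reading off the legs, the first tensor leg is spanned by the first (dual) leg of $V_{\mathbb{G}}$ and the second by the first leg of $V^\Omega$; together with the non-degeneracy of $C^*_r(\mathbb{G},\Omega)$ on $L^2(\mathbb{G})$ from Lemma \ref{lem.TwistedCAlgebra}, this shows $\alpha^\Omega$ is a nondegenerate $*$-homomorphism with values in $\widetilde{M}(c_0(\widehat{\mathbb{G}})\otimes C^*_r(\mathbb{G},\Omega))$.

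Coassociativity I would then verify directly on generators. Specializing the displayed formula to $\Omega=1$ gives the standard identity $(\widehat{\Delta}\otimes id)(V_{\mathbb{G}})=(V_{\mathbb{G}})_{23}(V_{\mathbb{G}})_{13}$. Applying $(id\otimes\alpha^\Omega)$ to $(\alpha^\Omega\otimes id)(V^\Omega)=V^\Omega_{23}(V_{\mathbb{G}})_{13}$ expands the first leg of $V^\Omega_{23}$ and leaves $(V_{\mathbb{G}})_{13}$ untouched, whereas applying $(\widehat{\Delta}\otimes id)$ expands the first leg of $(V_{\mathbb{G}})_{13}$ and leaves $V^\Omega_{23}$ untouched; using the two identities above, both sides become a single product of the form $V^\Omega_{\ast}(V_{\mathbb{G}})_{\ast}(V_{\mathbb{G}})_{\ast}$ which coincide after relabeling the two (interchangeable) dual legs. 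I expect the only real difficulty to be the leg-numbering bookkeeping across the threefold tensor product, where it is easy to misplace a flip; conceptually, however, everything is forced by the single pentagon relation tying $V^\Omega$ to $V_{\mathbb{G}}$ together with the multiplicativity of $V_{\mathbb{G}}$ from Theorem-Definition \ref{theo.RRegularRepresentation}.

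Finally, for the cancellation property I would write $a=(id\otimes\mu)(V_{\mathbb{G}})\in c_0(\widehat{\mathbb{G}})$ and express $(a\otimes 1)\alpha^\Omega(x)$ as a slice of a product of $V^\Omega$ with two copies of $V_{\mathbb{G}}$ sharing the dual leg (the factor $V^\Omega$ commuting past the copy of $V_{\mathbb{G}}$ that carries $a$). The density properties of the multiplicative unitary $V_{\mathbb{G}}$ — namely $c_0(\widehat{\mathbb{G}})=[\{(id\otimes\mu)(V_{\mathbb{G}})\}]$ and the cancellation built into $\widehat{\Delta}$ — together with the non-degeneracy of $C^*_r(\mathbb{G},\Omega)$ then identify the closed span with $c_0(\widehat{\mathbb{G}})\otimes C^*_r(\mathbb{G},\Omega)$. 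All three steps run exactly parallel to the verification that $\widehat{\Delta}$ is a coproduct, with $V^\Omega$ replacing $V_{\mathbb{G}}$ in one leg, which is why this proposition is a straightforward adaptation of its untwisted counterpart.
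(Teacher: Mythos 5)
Your proposal is correct and follows essentially the same route as the paper's proof: the pentagonal equation $V^\Omega_{12}V^\Omega_{13}(V_{\mathbb{G}})_{23}=V^\Omega_{23}V^\Omega_{12}$ yields $\alpha^\Omega\big((id\otimes\eta)(V^\Omega)\big)=(id\otimes id\otimes\eta)\big(V^\Omega_{23}(V_{\mathbb{G}})_{13}\big)$, from which membership in $\widetilde{M}(c_0(\widehat{\mathbb{G}})\otimes C^*_r(\mathbb{G},\Omega))$, the density condition, and coassociativity (both expansions reducing to the slice of $V^\Omega_{34}(V_{\mathbb{G}})_{24}(V_{\mathbb{G}})_{14}$) all follow, exactly as in the paper. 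The only cosmetic difference is that you package the key computation as $(\alpha^\Omega\otimes id)(V^\Omega)=V^\Omega_{23}(V_{\mathbb{G}})_{13}$; note that in the coassociativity step the two four-leg products agree on the nose, so the ``relabeling of the interchangeable dual legs'' you anticipate is not actually needed.
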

	\begin{proof}
Given $x\in C^*_r(\mathbb{G}, \Omega)$, assume without loss of generality that $x:=(id\otimes \eta)(V^\Omega)$ for some $\eta\in \mathcal{B}(L^2(\mathbb{G}))_*$. Then, with the help of the pentagonal equation satisfied by $V^\Omega$ (see   Theorem-Definition \ref{theo.ProjectiveRightRepMeas}), we have $\alpha^\Omega(x)=(id\otimes id\otimes \eta)V^\Omega_{23}(V_{\mathbb{G}})_{13}\in \widetilde{M}(c_0(\widehat{\mathbb{G}})\otimes C^*_r(\mathbb{G}, \Omega))$, 		which shows that $\alpha^\Omega$ is well-defined as a (injective) $*$-homomorphism $C^*_r(\mathbb{G}, \Omega)\longrightarrow \widetilde{M}(c_0(\widehat{\mathbb{G}})\otimes C^*_r(\mathbb{G}, \Omega)$). Next, we are going to show that $\alpha^\Omega$ defines an action of $\widehat{\mathbb{G}}$ on $C^*_r(\mathbb{G}, \Omega)$. On the one hand, since the elements of the form $x=(id\otimes \eta)(V^\Omega)\in C^*_r(\mathbb{G}, \Omega)$ with $\eta\in C(\mathbb{G})_*$ are dense in $C^*_r(\mathbb{G}, \Omega)$, then the previous computation shows also that the subspace $\alpha^\Omega(C^*_r(\mathbb{G}, \Omega))(c_0(\widehat{\mathbb{G}})\otimes 1)$ is dense in $c_0(\widehat{\mathbb{G}})\otimes C^*_r(\mathbb{G}, \Omega)$. On the other hand, applying $id\otimes \alpha^\Omega$ and $\widehat{\Delta}\otimes id$ to the above expression, we obtain $(id\otimes \alpha^\Omega)\alpha^\Omega(x)=(id\otimes id\otimes id\otimes \eta)V^\Omega_{34}(V_{\mathbb{G}})_{24}(V_{\mathbb{G}})_{14}=(\widehat{\Delta}\otimes id)\alpha^\Omega(x)$ by a direct computation.
	\end{proof}
	\begin{rem}\label{rem.BetaOmegaAction}
By the formula $\beta^\Omega(x):=(V^\Omega)^*(1\otimes x)V^\Omega$ for $x\in C^*_r(\mathbb{G}, \Omega)$, we can also view $(C^*_r(\mathbb{G}, \Omega), \beta^\Omega)$ as a (left) $\widehat{\mathbb{G}}^{cop}$-C$^*$-algebra.
	\end{rem}

	\subsection{Twisted crossed products}\label{sec.TwistedCrossedProd}
	
In this section, we consider twisted crossed products, cf.\ again \cite{SergeyDeformation2}. We start however from the twisted side, and work our way back to the original compact quantum group. As to spare the reader a battle with conventions, we spell out some of the details particular to our setting. 

	\begin{defi}\label{defi.TwistDynamSyst}
		A (measurable) left twisted dynamical system is the data $(\mathbb{G}, A, \delta, \Omega)$ where $\mathbb{G}$ is a compact quantum group, $\Omega$ is a  $2$-cocycle of finite type on $\mathbb{G}$, $A$ is a C$^*$-algebra and $\delta:A\longrightarrow A\otimes C(\mathbb{G}_{\Omega})$ is left action of $\mathbb{G}_{\Omega}$. 

		We write $\mathbb{G}\overset{(\delta, \Omega)}{\curvearrowright} A$, and say that $\delta$ is a twisted action of $\mathbb{G}$ on $A$ with respect to $\Omega$ or simply that $\delta$ is an $\Omega$-action of $\mathbb{G}$ on $A$. We say that $(A,\delta)$ is a left $\Omega$-$\mathbb{G}$-C$^*$-algebra if moreover $\delta$ is injective.
	\end{defi}

	\begin{defi}\label{defi.TwistedCrossedProduct}
		Let $(\mathbb{G}, A, \delta, \Omega)$ be a twisted dynamical system. The twisted reduced crossed product of $A$ by $\mathbb{G}$ with respect to $(\delta, \Omega)$, denoted by $A\underset{r, (\delta,\Omega)}{\rtimes} \mathbb{G}$, is the C$^*$-algebra defined by:
		$$A\underset{r, (\delta,\Omega)}{\rtimes} \mathbb{G}:=C^*\langle (id\otimes \lambda)\delta(A)(1\otimes C^*_r(\mathbb{G}, \Omega))\rangle\subset \mathcal{L}_A(A\otimes L^2(\mathbb{G})).$$
	\end{defi}

	\begin{note}\label{note.LeftTwistedDynSystCrossedProduct}
		To lighten the notation, we will omit the representation $\lambda$ appearing in the definition of $A\underset{r, (\delta,\Omega)}{\rtimes} \mathbb{G}$. Note that our convention of writing $\mathbb{G}$ on the right in the crossed product notation is in line with the notation followed e.g.\ in \cite{YukiBCTorsion}.
\end{note}

	\begin{lem}\label{lem.TwistedCrossedProd}
We have $A\underset{r, (\delta,\Omega)}{\rtimes} \mathbb{G}=\overline{span}\{\delta(A)(1\otimes C^*_r(\mathbb{G}, \Omega))\}$.
\end{lem}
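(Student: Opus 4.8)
The plan is to show that the closed linear span $R:=\overline{span}\{\delta(A)(1\otimes C^*_r(\mathbb{G},\Omega))\}$ is itself a C$^*$-subalgebra of $\mathcal{L}_A(A\otimes L^2(\mathbb{G}))$. Since $R$ trivially contains the generating set $\delta(A)(1\otimes C^*_r(\mathbb{G},\Omega))$ and is contained in the C$^*$-algebra it generates, the two must then coincide. Writing $B:=C^*_r(\mathbb{G},\Omega)$, I would observe that closure of $R$ under both adjoints and products reduces to the single \emph{swap inclusion}
\[
(1\otimes B)\delta(A)\subseteq R,
\]
because $\delta$ is a $*$-homomorphism (so $\delta(A)\delta(A)\subseteq\delta(A)$ and $\delta(A)^*\subseteq\delta(A)$) and $B$ is a C$^*$-algebra. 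Indeed, given the swap inclusion, a product $\delta(a)(1\otimes b)\delta(a')(1\otimes b')$ is handled by replacing the middle factor $(1\otimes b)\delta(a')$ by an element of $R$ and then absorbing the outer $\delta(a)$ on the left and $(1\otimes b')$ on the right, while the adjoint $(1\otimes b^*)\delta(a^*)$ lies in $(1\otimes B)\delta(A)\subseteq R$.

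The swap inclusion is where the twisted structure enters, and establishing it is the heart of the argument. First I would record that $V^{\Omega}$ implements the coproduct of $\mathbb{G}_{\Omega}$: combining $V^{\Omega}=\Omega V_{\mathbb{G}}$ (Theorem-Definition \ref{theo.ProjectiveRightRepMeas}) with $\Delta(x)=V_{\mathbb{G}}(x\otimes1)V_{\mathbb{G}}^*$ (Theorem-Definition \ref{theo.RRegularRepresentation}) gives
\[
{}_{\Omega}\Delta_{\Omega^*}(x)=\Omega\Delta(x)\Omega^*=V^{\Omega}(x\otimes1)(V^{\Omega})^*,\qquad x\in L^{\infty}(\mathbb{G}).
\]
Labelling the legs of $A\otimes L^2(\mathbb{G})\otimes L^2(\mathbb{G})$ as $0,1,2$ (so $\delta(a)_{01}$ carries the $A$-leg in position $0$ and its $L^{\infty}(\mathbb{G})$-leg in position $1$), the coaction identity $(\delta\otimes id)\delta=(id\otimes{}_{\Omega}\Delta_{\Omega^*})\delta$ for the $\mathbb{G}_{\Omega}$-action $\delta$ becomes
\[
V^{\Omega}_{12}\,\delta(a)_{01}\,(V^{\Omega}_{12})^*=(\delta\otimes id)\delta(a).
\]
The crucial point is that the right-hand side $\sum\delta(a_{(0)})\otimes a_{(1)}$ lies in the \emph{minimal tensor product} $\delta(A)\otimes C(\mathbb{G}_{\Omega})$, hence is a norm-limit of finite sums $\sum_k\delta(x_k)\otimes e_k$ with $x_k\in A$ and $e_k\in C(\mathbb{G}_{\Omega})$.

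To extract the swap, I would rearrange to $V^{\Omega}_{12}\delta(a)_{01}=[(\delta\otimes id)\delta(a)]\,V^{\Omega}_{12}$ and apply the slice $id\otimes id\otimes\eta$, for $\eta\in\mathcal{B}(L^2(\mathbb{G}))_*$. The left-hand side yields $(1\otimes b)\delta(a)$ with $b=(id\otimes\eta)(V^{\Omega})\in B$ (Lemma \ref{lem.TwistedCAlgebra}), while on the right each elementary piece $\delta(x_k)\otimes e_k$ produces $\delta(x_k)\bigl(1\otimes(id\otimes\eta(e_k\,\cdot\,))(V^{\Omega})\bigr)\in\delta(A)(1\otimes B)$, where $\eta(e_k\,\cdot\,)$ denotes the functional $x\mapsto\eta(e_kx)$. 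Since the slice $z\mapsto(id\otimes id\otimes\eta)(z\,V^{\Omega}_{12})$ is norm-continuous and $e\mapsto(id\otimes\eta(e\,\cdot\,))(V^{\Omega})$ is bounded into $B$, passing to the limit gives $(1\otimes b)\delta(a)\in R$; as $\eta$ (hence $b$) ranges over a generating set of $B$, this is the swap inclusion.

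The main obstacle is precisely the subtlety just flagged: the slicing must be performed on $(\delta\otimes id)\delta(a)$ viewed as an element of $\delta(A)\otimes_{\min}C(\mathbb{G}_{\Omega})$, \emph{not} merely as an element of $A\otimes C(\mathbb{G}_{\Omega})\otimes C(\mathbb{G}_{\Omega})$. A careless elementary-tensor decomposition across all three legs would split the first two legs away from $\delta(A)$ and the resulting terms would no longer lie in $R$; keeping the first two legs inside the C$^*$-algebra $\delta(A)$ throughout is what makes the argument close. The remaining points are routine: adjointability of $\delta(a)$ and of $1\otimes b$ on the Hilbert module $A\otimes L^2(\mathbb{G})$, the fact that finiteness of type of $\Omega$ places $B\subseteq\mathcal{K}(L^2(\mathbb{G}))$, and the suppression of the representation $\lambda$ as in Note \ref{note.LeftTwistedDynSystCrossedProduct}.
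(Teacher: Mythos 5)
Your proposal is correct and follows essentially the same route as the paper's own proof: the paper likewise reduces the statement to the swap inclusion $[(1\otimes C^*_r(\mathbb{G},\Omega))\delta(A)]\subseteq[\delta(A)(1\otimes C^*_r(\mathbb{G},\Omega))]$ and derives it from the twisted coaction identity, slicing $(1\otimes(id\otimes\eta)(V^\Omega))\delta(a)=(id\otimes id\otimes\eta)\big((\delta\otimes id)\delta(a)(1\otimes V^\Omega)\big)$ with the approximation $\delta(a)=\lim\sum_i a_i\otimes u_i$, so that the first two legs stay in $\delta(A)$ and the third leg is absorbed into the functional $\eta\cdot u_i$, exactly as in your argument. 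Your write-up merely makes explicit two points the paper leaves implicit: why the swap inclusion suffices for $R$ to be a C$^*$-algebra, and the warning about keeping $(\delta\otimes id)\delta(a)$ inside $\delta(A)\otimes C(\mathbb{G}_\Omega)$ rather than decomposing all three legs.
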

\begin{proof} It is enough to show that 
\begin{equation}\label{EqInclTwist}
[(1\otimes C^*_r(\mathbb{G}, \Omega))\delta(A)] \subset [\delta(A)(1\otimes C^*_r(\mathbb{G}, \Omega))]
\end{equation} 
But, using the implementation of ${}_{\Omega}\Delta=\Omega\cdot \Delta$ in terms of $V_{\mathbb{G}}$ and $V^\Omega$, the compatibility of $\delta$ with $\Delta$ as a \emph{twisted} action of $\mathbb{G}$ on $A$ yields that $(1\otimes (id\otimes \eta)(V^\Omega))\delta(a)=(id\otimes id \otimes \eta)\big((\delta\otimes id)\delta(a)(1\otimes V^\Omega)\big) = \lim \sum_i  \delta(a_i)(1\otimes (id\otimes \eta\cdot u_i)(V^\Omega))$, for all $a\in A$ and all $\eta\in \mathcal{B}(L^2(\mathbb{G}))_*$, where $\delta(a) = \lim \sum_i a_i \otimes u_i$. This proves \eqref{EqInclTwist}.
\end{proof}
		As a consequence, the maps $A\longrightarrow\mathcal{L}_{A}(A\otimes L^2(\mathbb{G}))$ and $C^*_r(\mathbb{G}, \Omega)\longrightarrow\mathcal{L}_{A}(A\otimes L^2(\mathbb{G}))$ given by $a\mapsto \delta(a)$ and $x\mapsto 1\otimes x$, send $A$ and $C^*_r(\mathbb{G}, \Omega)$ respectively onto non-degenerate C$^*$-subalgebras of $M(A\underset{r, (\delta,\Omega)}{\rtimes} \mathbb{G})$.
	
	\begin{ex}
		We note that both Definition \ref{defi.TwistDynamSyst} and Definition \ref{defi.TwistedCrossedProduct} are natural dual versions of the classical framework. Let $G$ be a classical compact group and $A$ a unital C$^*$-algebra. Given a Borel measurable map $\omega: G\times G\longrightarrow \mathcal{U}(A)$, a $\omega$-action of $G$ on $A$ is a map $\alpha: G\longrightarrow Aut(A)$ such that $\alpha_{g_1}\circ \alpha_{g_2}=Ad_{\omega(g_1,g_2)} \circ \alpha_{g_1g_2}$ for all $g_1,g_2\in G$; $\omega(x,y)\omega(xy,z)=\alpha_x\big(\omega(y, z)\big)\omega(x, yz)$ for all $x,y,z\in G$ and $\omega(x,e)=1=\omega(e,x)$ for all $x\in G$. Consider the vector space of continuous functions on $G$ with values in $A$ equipped with the usual point-wise operations, $C(G, A)$. We define the \emph{twisted convolution product on $C(G, A)$ with respect to $\omega$} by:
		$$(f\underset{\omega}{\ast}g)(x):=\int_G f(y)\alpha_y\big(g(y^{-1}x)\big)\omega(y, y^{-1}x)dy,$$
		for all $f,g\in C(G, A)$ and $x\in G$. We define the \emph{twisted involution on $C(G, A)$ with respect to $\omega$} by:
		$$f^{\underset{\omega}{*}}(x):=\omega(x, x^{-1}) \alpha_x\big(f(x^{-1})^*\big),$$
		for all $f\in C(G, A)$ and $x\in G$. Straightforward computations show that $C(G, A)$ is a $*$-algebra with the product and involution above. Next, by applying standard arguments (analogous to the untwisted case) we find that $(L^2(G)\otimes H_0, \pi, \lambda^\omega)$ is a \emph{faithful} covariant $\omega$-representation of $(A, \alpha)$, where $\pi: A \longrightarrow \mathcal{B}(L^2(G)\otimes H_0)$ is such that $\pi(a)(f\otimes \xi)(x):=\Big(f\otimes\pi_0(\alpha_{x^{-1}}(a))(\xi)\Big)(x)$, for all $a\in A$, $f\in L^2(G)$, $\xi\in H_0$, $x\in G$; and $(\pi_0, H_0)$ is any faithful representation of $A$. Thus we define the \emph{reduced twisted crossed product} by $A\underset{r, (\alpha, \omega)}{\rtimes} G:=\overline{(\pi,\lambda^\omega)(C(G, A))}^{||\cdot||_{\mathcal{B}(L^2(G, H_0))}}$ and one shows that this definition does not depend on the faithful representation $\pi_0$. Alternatively, we have $A\underset{r, (\alpha,\omega)}{\rtimes} G:=\overline{span}\{\alpha(A)(1\otimes C^*_r(G, \omega))\}$, where the action $G\overset{\alpha}{\curvearrowright} A$ is viewed as a map $\alpha: A\longrightarrow M(A\otimes C(G))\hookrightarrow \mathcal{L}_A(A\otimes L^2(G))$.
		
		Specially interesting is the case when $\omega$ is scalar valued (that is, $\omega$ takes its values on $S^1\cong S^1\cdot 1_A$), then $\omega$ is a usual (normalized) $2$-cocycle on $G$ and $\alpha$ a group homomorphism. In this case, we observe that these constructions yield the usual C$^*$-algebras: if $\omega= 1$, we have $A\underset{r, (\alpha,\omega)}{\rtimes} G=A\underset{r, \alpha}{\rtimes} G$; if $\alpha$ is trivial, we have $A\underset{r, (\alpha,\omega)}{\rtimes} G=A\otimes C^*_r(G, \omega)$.

	\end{ex}

	\begin{prodefi}\label{pro.TwistedDualAction}
		Let $(\mathbb{G}, A, \delta, \Omega)$ be a twisted dynamical system. The twisted reduced crossed product, $A\underset{r, (\delta,\Omega)}{\rtimes} \mathbb{G}$, is a $\widehat{\mathbb{G}}^{cop}$-C$^*$-algebra with action $\widehat{\mathbb{G}}^{cop}\overset{\widehat{\delta}}{\curvearrowright} A\underset{r, (\delta,\Omega)}{\rtimes} \mathbb{G}$ such that: 
		$$\widehat{\delta}\big(\delta(a)(1\otimes x)\big)=(\delta(a)\otimes 1)\big(1\otimes (V^\Omega)^*(1\otimes x)V^\Omega\big),$$ for all $a\in A$ and all $x\in C^*_r(\mathbb{G}, \Omega)$. The action $\widehat{\mathbb{G}}^{cop}\overset{\widehat{\delta}}{\curvearrowright} A\underset{r, (\delta,\Omega)}{\rtimes} \mathbb{G}$ is called \emph{twisted dual action of $(\delta, \Omega)$} or \emph{$\Omega$-dual action of $\delta$}.
	\end{prodefi}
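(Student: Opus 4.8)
The plan is to verify that the displayed formula determines a genuine right $\widehat{\mathbb{G}}^{cop}$-action, i.e.\ a well-defined nondegenerate $*$-homomorphism $\widehat{\delta}: A\underset{r, (\delta,\Omega)}{\rtimes} \mathbb{G}\longrightarrow \widetilde{M}\big((A\underset{r, (\delta,\Omega)}{\rtimes} \mathbb{G})\otimes c_0(\widehat{\mathbb{G}})\big)$ that intertwines $\widehat{\Delta}^{cop}$ and satisfies the cancellation condition (and is injective when $\delta$ is). The guiding observation is that the formula is the ``product'' of the trivial coaction $\delta(a)\mapsto \delta(a)\otimes 1$ on the copy of $\delta(A)\subset M(B)$ and the coaction $1\otimes x\mapsto 1\otimes \beta^\Omega(x)$ on the copy of $1\otimes C^*_r(\mathbb{G},\Omega)\subset M(B)$, where $\beta^\Omega$ is the right $\widehat{\mathbb{G}}^{cop}$-action of Remark \ref{rem.BetaOmegaAction} and $B := A\underset{r, (\delta,\Omega)}{\rtimes} \mathbb{G}$.

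First I would record the explicit form of $\beta^\Omega$. Writing $x=(id\otimes \eta)(V^\Omega)$ and applying the pentagonal equation $V^\Omega_{12}V^\Omega_{13}(V_{\mathbb{G}})_{23}=V^{\Omega}_{23}V^\Omega_{12}$ from Theorem-Definition \ref{theo.ProjectiveRightRepMeas}, one obtains $\beta^\Omega(x)=(id\otimes id\otimes \eta)\big(V^\Omega_{13}(V_{\mathbb{G}})_{23}\big)$. This exhibits $\beta^\Omega(x)\in \widetilde{M}\big(C^*_r(\mathbb{G},\Omega)\otimes c_0(\widehat{\mathbb{G}})\big)$ with the new leg genuinely sitting in $c_0(\widehat{\mathbb{G}})$, since the second leg of $(V_{\mathbb{G}})_{23}$ contributes slices of $V_{\mathbb{G}}$. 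Consequently $(\delta(a)\otimes 1)(1\otimes \beta^\Omega(x))\in \widetilde{M}\big(B\otimes c_0(\widehat{\mathbb{G}})\big)$, so the right-hand side of the defining formula lands in the correct multiplier algebra.

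The heart of the argument, and the main obstacle, is the well-definedness of $\widehat{\delta}$ as a $*$-homomorphism, because $\delta(A)$ and $1\otimes C^*_r(\mathbb{G},\Omega)$ do not commute inside $B$: their product is governed by the relation established in the proof of Lemma \ref{lem.TwistedCrossedProd}, namely $(1\otimes (id\otimes\eta)(V^\Omega))\delta(a)=(id\otimes id\otimes \eta)\big((\delta\otimes id)\delta(a)(1\otimes V^\Omega)\big)$. I would therefore introduce $\pi_1(a)=\delta(a)\otimes 1$ and $\pi_2(x)=1\otimes \beta^\Omega(x)$, observe that each is a $*$-homomorphism (the latter because $\beta^\Omega=Ad_{V^\Omega}$ is one), and then check that the pair $(\pi_1,\pi_2)$ satisfies the \emph{same} covariance relation as $(\delta,\,1\otimes(\cdot))$. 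Concretely this means transporting the identity of Lemma \ref{lem.TwistedCrossedProd} through the additional $c_0(\widehat{\mathbb{G}})$-leg, a direct computation using the explicit form of $\beta^\Omega$ above together with the twisted compatibility of $\delta$ with ${}_{\Omega}\Delta$. Once this covariance is verified, the assignment $\delta(a)(1\otimes x)\mapsto \pi_1(a)\pi_2(x)$ is forced to be a well-defined $*$-homomorphism on the dense $*$-subalgebra generated by $\delta(A)(1\otimes C^*_r(\mathbb{G},\Omega))$ and extends by continuity to $B$; nondegeneracy is inherited from $\pi_1$ and $\beta^\Omega$.

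Finally I would establish the action axioms. The cancellation condition $[\widehat{\delta}(B)(1\otimes c_0(\widehat{\mathbb{G}}))]=B\otimes c_0(\widehat{\mathbb{G}})$ reduces, since $\pi_1(a)$ acts trivially in the last leg, to the cancellation property $[\beta^\Omega(C^*_r(\mathbb{G},\Omega))(1\otimes c_0(\widehat{\mathbb{G}}))]=C^*_r(\mathbb{G},\Omega)\otimes c_0(\widehat{\mathbb{G}})$ of $\beta^\Omega$. For the intertwining identity $(\widehat{\delta}\otimes id)\widehat{\delta}=(id\otimes \widehat{\Delta}^{cop})\widehat{\delta}$, I would evaluate both sides on a generator $\delta(a)(1\otimes x)$: the $\delta(A)$-part is trivial on both sides, while the $C^*_r(\mathbb{G},\Omega)$-part reduces precisely to the coaction identity for $\beta^\Omega$, which is the pentagon computation already carried out (in the opposite-coproduct convention) in Proposition \ref{pro.TwistedDualCoMultiplication}. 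This mirrors the structure of the untwisted Baaj-Skandalis dual coaction, and can alternatively be read off by regarding $\delta$ as an honest action of the compact quantum group $\mathbb{G}_{\Omega}$ and $\widehat{\delta}$ as the corresponding Baaj-Skandalis dual.
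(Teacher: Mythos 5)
There is a genuine gap at what you yourself identify as the heart of the argument. For a \emph{reduced} crossed product, verifying that the pair $\pi_1(a)=\delta(a)\otimes 1$, $\pi_2(x)=1\otimes\beta^\Omega(x)$ satisfies the same covariance relation as $(\delta(\cdot),\,1\otimes(\cdot))$ does \emph{not} force the assignment $\delta(a)(1\otimes x)\mapsto \pi_1(a)\pi_2(x)$ to be a well-defined $*$-homomorphism: a given element of $B=A\underset{r,(\delta,\Omega)}{\rtimes}\mathbb{G}$ admits many expressions as a limit of sums of such products, and there is no universal property of the reduced completion that makes the relations suffice, nor any a priori bound guaranteeing the "extension by continuity" you invoke. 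This is precisely the distinction between full and reduced crossed products, and it is why the paper does not argue this way. Note also that you cannot repair this by conjugating globally with $(V^\Omega)_{23}$, since $\mathrm{Ad}_{(V^\Omega)^*_{23}}$ does not fix $\delta(a)_{12}$ (the first leg of $V^\Omega$ is all of $\mathcal{B}(L^2(\mathbb{G}))$, which does not commute with the $C(\mathbb{G}_\Omega)$-leg of $\delta(a)$).

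The missing idea, which is the paper's proof, is a single \emph{spatial implementation}: one shows that $\widehat{\delta}(z)=(1\otimes\widetilde{V}_{\mathbb{G}})(z\otimes 1)(1\otimes\widetilde{V}_{\mathbb{G}}^*)$ on all of $B$. Indeed, Lemma \ref{lem.IdentityTwistDualAction}.i) gives $(\widetilde{V}_{\mathbb{G}})_{12}(V^\Omega)_{13}(\widetilde{V}_{\mathbb{G}})_{12}^*=(V^\Omega)_{13}(V_{\mathbb{G}})_{23}$, which yields $\widetilde{V}_{\mathbb{G}}(x\otimes 1)\widetilde{V}_{\mathbb{G}}^*=(V^\Omega)^*(1\otimes x)V^\Omega$ for $x\in C^*_r(\mathbb{G},\Omega)$, while Theorem-Definition \ref{theo.KacSystemG} gives $\widetilde{V}_{\mathbb{G}}(y\otimes 1)\widetilde{V}_{\mathbb{G}}^*=y\otimes 1$ for $y\in L^\infty(\mathbb{G})\supseteq C(\mathbb{G}_\Omega)$, hence for the second leg of $\delta(a)$. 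Conjugation by a unitary is automatically a well-defined injective $*$-homomorphism, and on generators it reproduces exactly the stated formula; your computations of the density condition and of $(\widehat{\delta}\otimes id)\widehat{\delta}=(id\otimes\widehat{\Delta}^{cop})\widehat{\delta}$ via $\beta^\Omega$ then go through as in the paper. Your closing alternative — reading $\widehat{\delta}$ off as the Baaj–Skandalis dual of the honest $\mathbb{G}_\Omega$-action — also does not work as stated: that dual is a $\widehat{\mathbb{G}_\Omega}^{cop}$-action on the \emph{different} algebra $A\underset{r,\delta}{\rtimes}\mathbb{G}_\Omega$, and transporting it to $A\underset{r,(\delta,\Omega)}{\rtimes}\mathbb{G}$ with dual object $\widehat{\mathbb{G}}^{cop}$ requires the co-linking quantum groupoid machinery the paper only deploys later, in the proof of Theorem \ref{theo.TwistedBaajSkandalis}.
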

	\begin{proof}
		Let us consider the unitary $\widetilde{V}_{\mathbb{G}}$ as in Theorem-Definition \ref{theo.KacSystemG}. We are going to show that $\widehat{\delta}$ can be written as a conjugation by $1\otimes \widetilde{V}_{\mathbb{G}}$. Given $x\in C^*_r(\mathbb{G}, \Omega)$, assume without loss of generality that $x:=(id\otimes \eta)(V^\Omega)$ for some $\eta\in C(\mathbb{G})_*$. On the one hand, using the pentagonal equation satisfied by $V^\Omega$ (see  Theorem-Definition \ref{theo.ProjectiveRightRepMeas}) and Lemma \ref{lem.IdentityTwistDualAction}, a direct computation shows that $(V^\Omega)^*(1\otimes x)V^\Omega=\widetilde{V}_{\mathbb{G}}(x\otimes 1)\widetilde{V}_{\mathbb{G}}^*$.
		On the other hand, Theorem-Definition \ref{theo.KacSystemG} guarantees that $\widetilde{V}_{\mathbb{G}}(y\otimes 1)\widetilde{V}_{\mathbb{G}}^*=y\otimes 1$, for all $y\in C(\mathbb{G})'' = L^{\infty}(\mathbb{G})\supseteq C(\mathbb{G}_{\Omega})$. Combining these two expressions, it is easy to see that $(\delta(a)\otimes 1)\big(1\otimes (V^\Omega)^*(1\otimes x)V^\Omega\big)=(1\otimes \widetilde{V}_{\mathbb{G}})\big(\delta(a)(1\otimes x)\otimes 1\big)(1\otimes \widetilde{V}_{\mathbb{G}}^*)$, for all $a\in A$ and all $x\in C^*_r(\mathbb{G}, \Omega)$.
		In other words, these expressions show that the formula of the statement defines a (injective) $*$-homomorphism $A\underset{r, (\delta,\Omega)}{\rtimes} \mathbb{G}\longrightarrow \widetilde{M}((A\underset{r, (\delta,\Omega)}{\rtimes} \mathbb{G})\otimes c_0(\widehat{\mathbb{G}}))$ given precisely by $\widehat{\delta}(z)=(1\otimes \widetilde{V}_{\mathbb{G}})(z\otimes 1)(1\otimes \widetilde{V}_{\mathbb{G}}^*)$, for all $z\in A\underset{r, (\delta,\Omega)}{\rtimes} \mathbb{G}$. It remains to show that $\widehat{\delta}$ defines an action of $\widehat{\mathbb{G}}$ on $A\underset{r, (\delta,\Omega)}{\rtimes} \mathbb{G}$. On the one hand, the density condition for $\widehat{\delta}$ is obtained as follows:
		\begin{equation*}
		\begin{split}
			\Big[\widehat{\delta}(A\underset{r, (\delta,\Omega)}{\rtimes} \mathbb{G})(1\otimes c_0(\widehat{\mathbb{G}}))\Big]&=\Big[(\delta(A)\otimes 1)\big(1\otimes (V^\Omega)^*(1\otimes C^*_r(\mathbb{G}, \Omega))V^\Omega\big)(1\otimes 1\otimes c_0(\widehat{\mathbb{G}}))\Big]\\
			&\overset{(*)}{=}\Big[(\delta(A)\otimes 1)\big(1\otimes C^*_r(\mathbb{G}, \Omega) \otimes c_0(\widehat{\mathbb{G}})\big)\Big]\\
			&=(A\underset{r, (\delta,\Omega)}{\rtimes} \mathbb{G})\otimes c_0(\widehat{\mathbb{G}})\mbox{,}
		\end{split}
		\end{equation*}
		where in $(*)$ we use Remark \ref{rem.BetaOmegaAction}. On the other hand, the compatibility of $\widehat{\delta}$ with $\widehat{\Delta}^{cop}$ is obtained by a direct computation using again Remark \ref{rem.BetaOmegaAction}.
	\end{proof}

	To end this section, let us introduce the following nomenclature for a special type of quantum dynamical systems.	
	\begin{defi}\label{defi.TwistedInner}
		Let $\mathbb{G}$ be a compact quantum group and $\Omega$ a  $2$-cocycle of finite type on $\mathbb{G}$. Let $H$ be a Hilbert space. A left twisted dynamical system $(\mathbb{G},\mathcal{K}(H),\delta,\Omega)$ is called $\Omega$-inner if there exists a $\Omega$-representation $u\in \mathcal{B}(H)\overline{\otimes} L^\infty(\mathbb{G})$ such that $\delta(a)=\delta_u(a) = u(a\otimes 1)u^*$, for all $a\in \mathcal{K}(H)$. In this case, the data $(\mathbb{G}, \mathcal{K}(H), \delta, \Omega, u)$ is called a \emph{left twisted inner dynamical system} or \emph{right $\Omega$-inner dynamical system}.
	\end{defi}
	
So a left twisted dynamical system $(\mathbb{G},\mathcal{K}(H),\delta,\Omega)$ is nothing but a projective left $\mathbb{G}_{\Omega}$-representation induced from an $\Omega$-representation of $\mathbb{G}$. 

	It is well-known that an inner action is exterior equivalent to the trivial one, so that the corresponding crossed products are isomorphic (see for instance \cite{Phillips} for more details). The following proposition shows that a similar phenomenon occurs in the quantum group setting when the action is $\Omega$-inner.
	
	\begin{pro}\label{pro.OmegaInnerDynamicalSystem}
		Let $\mathbb{G}$ be a compact quantum group and $\Omega$ a $2$-cocycle of finite type on $\mathbb{G}$. Let $H$ be a Hilbert space. If $(\mathbb{G}, \mathcal{K}(H), \delta, \Omega, v)$ is a left $\Omega$-inner dynamical system, then $$\mathcal{K}(H)\underset{r, (\delta,\Omega)}{\rtimes} \mathbb{G}\cong \mathcal{K}(H)\otimes c_0(\widehat{\mathbb{G}}).$$
\end{pro}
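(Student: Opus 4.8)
The plan is to represent the crossed product faithfully on $\mathcal{B}(H\otimes L^2(\mathbb{G})) = \mathcal{L}_{\mathcal{K}(H)}(\mathcal{K}(H)\otimes L^2(\mathbb{G}))$ and to realise the desired isomorphism as conjugation by the implementing $\Omega$-representation $v$. By Lemma \ref{lem.TwistedCrossedProd} we have $\mathcal{K}(H)\underset{r, (\delta,\Omega)}{\rtimes} \mathbb{G} = \overline{span}\{\delta(\mathcal{K}(H))(1\otimes C^*_r(\mathbb{G}, \Omega))\}$, where $\delta(a) = v(a\otimes 1)v^*$ and $v\in \mathcal{B}(H)\overline{\otimes} L^\infty(\mathbb{G})$ is unitary (Definition \ref{defi.TwistedInner}). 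Since $v^*\delta(a)v = a\otimes 1$, conjugation $\Phi := Ad_{v^*}$ restricts to an injective $*$-homomorphism on the crossed product, and
\[
\Phi\Big(\mathcal{K}(H)\underset{r, (\delta,\Omega)}{\rtimes} \mathbb{G}\Big) = \overline{span}\{(\mathcal{K}(H)\otimes 1)\, v^*(1\otimes C^*_r(\mathbb{G}, \Omega))v\}.
\]
Thus everything reduces to understanding $v^*(1\otimes x)v$ for $x\in C^*_r(\mathbb{G},\Omega)$.

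The heart of the argument is the operator identity, in leg numbering $1 = H$, $2,3 = L^2(\mathbb{G})$,
\[
v_{12}^*\, V^\Omega_{23}\, v_{12} = v_{13}\,(V_{\mathbb{G}})_{23}\qquad (\star).
\]
To obtain it, I apply $id\otimes {}_{\Omega}\Delta$ to $v$: the $\Omega$-representation property gives $(id\otimes {}_{\Omega}\Delta)(v) = v_{12}v_{13}$, whereas ${}_{\Omega}\Delta = \Omega\cdot \Delta$, the implementation $\Delta = Ad_{V_{\mathbb{G}}}$ from Theorem-Definition \ref{theo.RRegularRepresentation} and the identity $V^\Omega = \Omega V_{\mathbb{G}}$ from Theorem-Definition \ref{theo.ProjectiveRightRepMeas} give $(id\otimes {}_{\Omega}\Delta)(v) = V^\Omega_{23}\, v_{12}\,(V_{\mathbb{G}})_{23}^*$; comparing and cancelling $v_{12}$ yields $(\star)$. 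Writing $x = (id\otimes \eta)(V^\Omega)$ acting on the second leg, $(\star)$ gives $v^*(1\otimes x)v = (id\otimes id\otimes \eta)(v_{13}(V_{\mathbb{G}})_{23})$. Slicing the third leg sends the second leg into the first leg of $V_{\mathbb{G}}$, namely $c_0(\widehat{\mathbb{G}})$, while leaving the first leg in $\mathcal{B}(H)$; hence $v^*(1\otimes C^*_r(\mathbb{G},\Omega))v\subset \mathcal{B}(H)\overline{\otimes} c_0(\widehat{\mathbb{G}})$, and multiplying on the left by $\mathcal{K}(H)\otimes 1$ (an ideal in $\mathcal{B}(H)\otimes 1$) gives the inclusion $\Phi(\cdots)\subseteq \mathcal{K}(H)\otimes c_0(\widehat{\mathbb{G}})$.

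For the reverse inclusion I would show $v(a\otimes z)v^*\in \mathcal{K}(H)\underset{r, (\delta,\Omega)}{\rtimes} \mathbb{G}$ for $a\in \mathcal{K}(H)$ and a generator $z = (id\otimes \eta)(V_{\mathbb{G}})$ of $c_0(\widehat{\mathbb{G}})$. Rearranging $(\star)$ into $v_{12}(V_{\mathbb{G}})_{23}v_{12}^* = v_{12}v_{13}^* v_{12}^*\, V^\Omega_{23}$ and slicing the third leg, a direct computation using a $\sigma$-weak expansion $v = \sum_m b_m\otimes w_m$ gives $v(a\otimes z)v^* = \sum_m \delta(b_m^*)(1\otimes x_m)\delta(a)$ with $x_m = (id\otimes \eta(w_m^*\,\cdot\,))(V^\Omega)\in C^*_r(\mathbb{G},\Omega)$. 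Here $(1\otimes x_m)\delta(a)$ lies in the crossed product by (the proof of) Lemma \ref{lem.TwistedCrossedProd}, and $\delta(b_m^*)\in M(\mathcal{K}(H)\underset{r, (\delta,\Omega)}{\rtimes} \mathbb{G})$, so each summand — and hence the whole slice — lies in the crossed product. Together with the first inclusion this gives $\Phi(\cdots) = \mathcal{K}(H)\otimes c_0(\widehat{\mathbb{G}})$, so that $\Phi$ is the desired isomorphism.

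The main obstacle is the surjectivity (the reverse inclusion) together with making the slice-map manipulations rigorous: the expansion $v = \sum_m b_m\otimes w_m$ only converges $\sigma$-weakly, so $(\star)$ and its consequences must be read as identities of slice maps, and one must check that the resulting convergent sums remain \emph{inside} the crossed product rather than merely its multiplier algebra — which is exactly what keeping the compact operator $a$ on the right, via $(1\otimes x_m)\delta(a)\in \mathcal{K}(H)\underset{r, (\delta,\Omega)}{\rtimes} \mathbb{G}$, guarantees. Once $(\star)$ is established, the remainder is bookkeeping with legs and multipliers.
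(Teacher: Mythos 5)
Your strategy and your key identity $(\star)$, namely $v_{12}^*\,(V^{\Omega})_{23}\,v_{12}=v_{13}(V_{\mathbb{G}})_{23}$, are exactly the ones in the paper's proof, and the derivation of $(\star)$ is sound. The genuine gap comes afterwards: every step rests on the expansion $v=\sum_m b_m\otimes w_m$, which converges only $\sigma$-weakly, and you never upgrade the resulting series to norm convergence. In the forward inclusion, the slice $(id\otimes id\otimes\eta)(v_{13}(V_{\mathbb{G}})_{23})$ is, termwise, in $\mathcal{B}(H)\otimes c_0(\widehat{\mathbb{G}})$, but the $\sigma$-weak sum only lands in $\mathcal{B}(H)\overline{\otimes}\, l^{\infty}(\widehat{\mathbb{G}})$; in the reverse inclusion, the series $\sum_m \delta(b_m^*)(1\otimes x_m)\delta(a)$ has each summand in the norm-closed crossed product, but a $\sigma$-weak limit of such elements lies a priori only in its von Neumann closure. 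Your proposed remedy — keeping the compact $a$ on the right — does not repair this: $\delta(a)=v(a\otimes 1)v^*$ is \emph{not} a compact operator on $H\otimes L^2(\mathbb{G})$ (its second leg is a unitary conjugate of $a\otimes 1$, which is non-compact in the $L^2(\mathbb{G})$-direction), so right multiplication by $\delta(a)$ does not convert bounded $\sigma$-weak or strong convergence into norm convergence. A telling symptom is that your argument nowhere uses the hypothesis that $\Omega$ is of finite type, whereas this hypothesis is essential: for a non-finite-type cocycle, $C^*_r(\mathbb{G},\Omega)$ has infinite-dimensional blocks and there is no continuous projective representation at all (Theorem \ref{theo.OmegaFiniteTypeCharact}, Theorem \ref{theo.TwistedRegularity}), so the conclusion could not hold.

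The paper closes exactly this gap using finite type: by the twisted Maschke theorem (Theorem \ref{theo.AverageInterwinersCont}.iv)), $v$ decomposes as a direct sum $\bigoplus_i v_i$ of \emph{finite-dimensional} $\Omega$-representations, with invariant projections $p_i\in\mathcal{K}(H)^{\delta}$. After conjugating by $v$, the crossed product becomes $[\{(id\otimes id\otimes \omega)(v_{13}(x\otimes V_{\mathbb{G}}))\mid x\in \mathcal{K}(H),\ \omega\in\mathcal{B}(L^2(\mathbb{G}))_*\}]$, and for $x\in \mathcal{K}(p_iH)$ (whose union is norm-dense in $\mathcal{K}(H)$) one has $v_{13}(x\otimes V_{\mathbb{G}})=(v_i)_{13}(x\otimes V_{\mathbb{G}})$, where $v_i$ is a \emph{finite} sum of elementary tensors with legs in $\mathcal{B}(p_iH)$ and $L^{\infty}(\mathbb{G})$. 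All your expansions then become finite sums, both inclusions hold termwise in norm, and unitarity of each block $v_i$ gives the reverse containment, so the closed span is $\mathcal{K}(H)\otimes c_0(\widehat{\mathbb{G}})$. If you insert this block decomposition before expanding $v$, your argument goes through; as written, the convergence step is a genuine gap rather than mere bookkeeping.
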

	\begin{proof}
We can represent $\mathcal{K}(H)\underset{r, (\delta,\Omega)}{\rtimes} \mathbb{G}$ as the normclosure of $(1\otimes C_r^*(\mathbb{G},\Omega))\delta_u(\mathcal{K}(H))$ on $H\otimes L^2(\mathbb{G})$. It is then sufficient to show that $u^*[(1\otimes C_r^*(\mathbb{G},\Omega))\delta_u(\mathcal{K}(H))]u = \mathcal{K}(H)\otimes c_0(\widehat{\mathbb{G}})$. But since $u_{12}^*(V^{\Omega})_{23}u_{12} = u_{13}(V_{\mathbb{G}})_{23}$ as $u$ is a $\Omega$-representation, we have that $u^*[(1\otimes C_r^*(\mathbb{G},\Omega))\delta_u(\mathcal{K}(H))]u  = [\{(id\otimes id\otimes \omega)(u_{13}(x\otimes V_{\mathbb{G}}))\mid x\in \mathcal{K}(H),\omega \in \mathcal{B}(L^2(\mathbb{G}))_*\}]$. As $u$ is the direct sum of finite-dimensional $\Omega$-representations, it follows that this last set equals $[\{(id\otimes id\otimes \omega)(x\otimes V_{\mathbb{G}})\mid x\in \mathcal{K}(H),\omega \in \mathcal{B}(L^2(\mathbb{G}))_*\}] = \mathcal{K}(H)\otimes c_0(\widehat{\mathbb{G}})$. 
	\end{proof}

\begin{rem}\label{rem.OmegaInnerDynamicalSystem}
Note that if $v$ is a $\Omega^*$-representation, we also have an ordinary left action $\mathbb{G}\overset{\delta}{\curvearrowright}\mathcal{K}(H)$ by putting $\delta(a) = \delta_v(a)=v(a\otimes 1)v^*$, for all $a\in\mathcal{B}(H)$. We then say that the action $\mathbb{G}\overset{\delta}{\curvearrowright}\mathcal{K}(H)$ is $\Omega$-inner. An analogous computation as above yields that in this case we have for the untwisted crossed product that $ \mathcal{K}(H)\underset{r, \delta}{\rtimes}\mathbb{G}\underset{\mathrm{Ad}(v^*)}{\cong}  \mathcal{K}(H)\otimes C^*_r(\mathbb{G}, \Omega)$. Analogously, from an $\Omega$-representation $u$ of $\mathbb{G}$, one could consider an ordinary right action $\mathcal{K}(H)\overset{\delta}{\curvearrowleft}\mathbb{G}$ by putting $\delta(a)=\Sigma(u^*(a\otimes 1)u)$, for all $a\in\mathcal{B}(H)$. In this case, the (untwisted) crossed product is defined as $\mathbb{G}\underset{r, \delta}{\ltimes}\mathcal{K}(H) = C^*\langle \widehat{\lambda}(c_0(\widehat{\mathbb{G}})\otimes 1)(\rho\otimes id)\delta(A)\rangle$ where we recall that $\rho(x) = U_{\mathbb{G}}\lambda(x)U_{\mathbb{G}}$ for $x\in C(\mathbb{G})$. Then we now have, upon using the regular representation $W_{\mathbb{G}}= \check{V}_{\mathbb{G}}$ that $\mathbb{G}\underset{r, \delta}{\ltimes}\mathcal{K}(H)\underset{\mathrm{Ad}(u_{21}U_{\mathbb{G},1})}{\cong} C^*_r(\Omega,\mathbb{G})\otimes \mathcal{K}(H)$, where we use the notation of Remark \ref{rems.rightdualreg}.
\end{rem}

	\subsection{Twisted Takesaki-Takai duality, twisted descent map and twisted Baaj-Skandalis duality}\label{sec.TTDuality}

	\begin{defi}
		Let $(\mathbb{G}, A, \delta, \Omega)$ be a twisted dynamical system. The double reduced crossed product of $A$ by $\widehat{\mathbb{G}}^{cop}$ with respect to $\widehat{\delta}$, denoted by $\big(A\underset{r, (\delta,\Omega)}{\rtimes} \mathbb{G}\big)\underset{r, \widehat{\delta}}{\rtimes}\widehat{\mathbb{G}}^{cop}$, is the C$^*$-algebra defined by:
		$$\big(A\underset{r, (\delta,\Omega)}{\rtimes} \mathbb{G}\big)\underset{r, \widehat{\delta}}{\rtimes}\widehat{\mathbb{G}}^{cop}:=C^*\langle\widehat{\delta}_U(A\underset{r, (\delta,\Omega)}{\rtimes} \mathbb{G})(1\otimes C(\mathbb{G}))\rangle\subset \mathcal{L}_{A}((A\otimes L^2(\mathbb{G}))\otimes L^2(\mathbb{G})),$$ where $\widehat{\delta}_U(x) = (1\otimes U_{\mathbb{G}})\widehat{\delta}(x)(1\otimes U_{\mathbb{G}})$ for $x\in A\underset{r, (\delta,\Omega)}{\rtimes} \mathbb{G}$. 
	\end{defi}
	\begin{rem}
		Observe that the crossed product $\big(A\underset{r, (\delta,\Omega)}{\rtimes} \mathbb{G}\big)\underset{r, \widehat{\delta}}{\rtimes}\widehat{\mathbb{G}}^{cop}$ defined above is the usual one as, say, given in  \cite[Section 7]{SkandalisUnitaries} (we do not consider any deformation in this definition, contrary to Definition \ref{defi.TwistedCrossedProduct}). In particular, we have automatically that $\big(A\underset{r, (\delta,\Omega)}{\rtimes} \mathbb{G}\big)\underset{r, \widehat{\delta}}{\rtimes}\widehat{\mathbb{G}}^{cop}=\overline{span}\{\widehat{\delta}_U(A\underset{r, (\delta,\Omega)}{\rtimes} \mathbb{G})(1\otimes C(\mathbb{G}))\}$. 
	\end{rem}
	
	Since $\big(A\underset{r, (\delta,\Omega)}{\rtimes} \mathbb{G}\big)\underset{r, \widehat{\delta}}{\rtimes}\widehat{\mathbb{G}}^{cop}$ defined above is a usual crossed product, we can define the corresponding dual action $\widehat{\widehat{\delta}}$. Hence, the following proposition follows from the standard theory of crossed products.
	\begin{prodefi}
		Let $(\mathbb{G}, A, \delta, \Omega)$ be a twisted dynamical system. The double reduced crossed product $\big(A\underset{r, (\delta,\Omega)}{\rtimes} \mathbb{G}\big)\underset{r, \widehat{\delta}}{\rtimes}\widehat{\mathbb{G}}^{cop}$ is a $\mathbb{G}$-C$^*$-algebra with action $\mathbb{G}\overset{\widehat{\widehat{\delta}}}{\curvearrowright}\big(A\underset{r, (\delta,\Omega)}{\rtimes} \mathbb{G}\big)\underset{r, \widehat{\delta}}{\rtimes}\widehat{\mathbb{G}}^{cop}$ such that:
		$$\widehat{\widehat{\delta}}\big(\widehat{\delta}_U(z)(1\otimes y)\big)=(\widehat{\delta}_U(z)\otimes 1)(1\otimes V_{\mathbb{G}}(y\otimes 1)V_{\mathbb{G}}^*)\mbox{,}$$
		for all $z\in A\underset{r, (\delta,\Omega)}{\rtimes} \mathbb{G}$ and all $y\in C(\mathbb{G})$. The action $\mathbb{G}\overset{\widehat{\widehat{\delta}}}{\curvearrowright}\big(A\underset{r, (\delta,\Omega)}{\rtimes} \mathbb{G}\big)\underset{r, \widehat{\delta}}{\rtimes}\widehat{\mathbb{G}}^{cop}$ is called \emph{twisted double dual action of $(\delta, \Omega)$} or \emph{$\Omega$-double dual action of $\delta$}.
	\end{prodefi}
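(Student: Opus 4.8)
The plan is to leverage the Remark preceding the statement: the double crossed product $D:=\big(A\underset{r, (\delta,\Omega)}{\rtimes} \mathbb{G}\big)\underset{r, \widehat{\delta}}{\rtimes}\widehat{\mathbb{G}}^{cop}$ is an \emph{ordinary} (undeformed) Baaj--Skandalis crossed product of the $\widehat{\mathbb{G}}^{cop}$-C$^*$-algebra $(B,\widehat{\delta})$, where $B:=A\underset{r, (\delta,\Omega)}{\rtimes} \mathbb{G}$. Thus the existence of a bidual $\mathbb{G}$-action is already guaranteed by the general biduality theory of \cite[Section 7]{SkandalisUnitaries}, and the only remaining work is to pin down its explicit formula and to check the coaction axioms in our conventions. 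Representing $D\subset\mathcal{L}_A(A\otimes L^2(\mathbb{G})\otimes L^2(\mathbb{G}))$ with coefficient leg $A$ and the two copies of $L^2(\mathbb{G})$ labelled $1$ and $2$, I would define $\widehat{\widehat{\delta}}(w):=(V_{\mathbb{G}})_{23}(w\otimes 1)(V_{\mathbb{G}})_{23}^*$, where the new third leg is the $C(\mathbb{G})$-leg supplied by $V_{\mathbb{G}}\in M(c_0(\widehat{\mathbb{G}})\otimes C(\mathbb{G}))$; this is precisely the standard dual coaction, implemented by a single leg of the fundamental unitary.

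The verification of the displayed formula then collapses to one operator identity. Using Proposition-Definition \ref{pro.TwistedDualAction} together with the definition of $\widehat{\delta}_U$, one has $\widehat{\delta}_U(z)=K\,z_{01}\,K^*$ with $K:=(U_{\mathbb{G}})_2(\widetilde{V}_{\mathbb{G}})_{12}$, while $1\otimes y$ is simply $y$ sitting in leg $2$. Conjugating $\widehat{\delta}_U(z)(1\otimes y)$ by $(V_{\mathbb{G}})_{23}$ and inserting the appropriate $(V_{\mathbb{G}})_{23}^*(V_{\mathbb{G}})_{23}$ pairs, the computation splits into three pieces: $z_{01}$ commutes with $(V_{\mathbb{G}})_{23}$ for the trivial reason that their legs are disjoint; $(V_{\mathbb{G}})_{23}\,y_2\,(V_{\mathbb{G}})_{23}^*=\Delta(y)_{23}$ by Theorem-Definition \ref{theo.RRegularRepresentation}.\eqref{ItCopr}; and the factor $K$ is left fixed provided $(V_{\mathbb{G}})_{23}$ commutes with $K=(U_{\mathbb{G}})_2(\widetilde{V}_{\mathbb{G}})_{12}$. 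Granting this, one obtains $\widehat{\widehat{\delta}}\big(\widehat{\delta}_U(z)(1\otimes y)\big)=K z_{01}K^*\,\Delta(y)_{23}=(\widehat{\delta}_U(z)\otimes 1)\big(1\otimes V_{\mathbb{G}}(y\otimes 1)V_{\mathbb{G}}^*\big)$, which is exactly the asserted formula; in particular this shows $\widehat{\widehat{\delta}}$ is a well-defined (injective) $*$-homomorphism landing in $\widetilde{M}(D\otimes C(\mathbb{G}))$.

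It remains to check that $\widehat{\widehat{\delta}}$ is a genuine $\mathbb{G}$-action. The coaction identity $(\widehat{\widehat{\delta}}\otimes id)\widehat{\widehat{\delta}}=(id\otimes\Delta)\widehat{\widehat{\delta}}$ follows at once from the corepresentation identity $(id\otimes\Delta)(V_{\mathbb{G}})=(V_{\mathbb{G}})_{12}(V_{\mathbb{G}})_{13}$ of Theorem-Definition \ref{theo.RRegularRepresentation} applied in the last legs, exactly as for the untwisted dual action. The density condition $[\widehat{\widehat{\delta}}(D)(1\otimes C(\mathbb{G}))]=D\otimes C(\mathbb{G})$ reduces, since $\widehat{\delta}_U(B)$ is left untouched, to the cancellation property $[\Delta(C(\mathbb{G}))(1\otimes C(\mathbb{G}))]=C(\mathbb{G})\otimes C(\mathbb{G})$ applied to the leg-$2$ generators. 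The one genuine obstacle is the operator identity $[(V_{\mathbb{G}})_{23},(U_{\mathbb{G}})_2(\widetilde{V}_{\mathbb{G}})_{12}]=0$: because $\widetilde{V}_{\mathbb{G}}$ and $V_{\mathbb{G}}$ share leg $2$ and $U_{\mathbb{G}}$ further twists it, this cannot be dismissed by a disjoint-legs argument and must be extracted from the compatibility relations between $V_{\mathbb{G}}$, $\widetilde{V}_{\mathbb{G}}$ and $\check{V}_{\mathbb{G}}$ recorded in Theorem-Definition \ref{theo.KacSystemG}. Should this bookkeeping prove unwieldy, one may instead simply transcribe the standard description of the Baaj--Skandalis bidual action from \cite[Section 7]{SkandalisUnitaries}, of which the present statement is a reformulation.
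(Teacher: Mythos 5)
Your overall strategy is the paper's own: the paper gives no computation here at all, observing only that the double crossed product is an \emph{ordinary} crossed product of $\big(A\underset{r,(\delta,\Omega)}{\rtimes}\mathbb{G},\widehat{\delta}\big)$ by $\widehat{\mathbb{G}}^{cop}$ and invoking the standard theory of \cite[Section 7]{SkandalisUnitaries} — which is precisely your fallback. Your explicit implementation $\widehat{\widehat{\delta}}(w)=(V_{\mathbb{G}})_{23}(w\otimes 1)(V_{\mathbb{G}})_{23}^*$, the coaction identity via $(id\otimes\Delta)(V_{\mathbb{G}})=(V_{\mathbb{G}})_{12}(V_{\mathbb{G}})_{13}$, and the density check via cancellation are all correct. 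However, the step you single out as the crux is false as stated: $(V_{\mathbb{G}})_{23}$ does \emph{not} commute with $K=(U_{\mathbb{G}})_2(\widetilde{V}_{\mathbb{G}})_{12}$, and no bookkeeping with the relations of Theorem-Definition \ref{theo.KacSystemG} can produce that identity. Indeed, $(U_{\mathbb{G}})_2(V_{\mathbb{G}})_{23}(U_{\mathbb{G}})_2=(\check{V}_{\mathbb{G}})_{32}$ by the very definition of $\check{V}_{\mathbb{G}}$, and $(\widetilde{V}_{\mathbb{G}})_{12}$ commutes with $(\check{V}_{\mathbb{G}})_{32}$ (their only common leg is leg $2$, where $\widetilde{V}_{\mathbb{G}}$ has entries in $\widehat{\lambda}(c_0(\widehat{\mathbb{G}}))$ while $\check{V}_{\mathbb{G}}$ has entries in $\widehat{\rho}(c_0(\widehat{\mathbb{G}}))\subset \widehat{\lambda}(l^{\infty}(\widehat{\mathbb{G}}))'$), so that
\[
K^*(V_{\mathbb{G}})_{23}K=(\widetilde{V}_{\mathbb{G}})_{12}^*(\check{V}_{\mathbb{G}})_{32}(\widetilde{V}_{\mathbb{G}})_{12}=(\check{V}_{\mathbb{G}})_{32}.
\]
Thus $[(V_{\mathbb{G}})_{23},K]=0$ would force $(\check{V}_{\mathbb{G}})_{32}=(V_{\mathbb{G}})_{23}$, i.e.\ $(U_{\mathbb{G}}\otimes 1)V_{\mathbb{G}}(U_{\mathbb{G}}\otimes 1)=V_{\mathbb{G}}$; slicing this in the second leg would make $U_{\mathbb{G}}$ commute with all of $c_0(\widehat{\mathbb{G}})$, i.e.\ $\widehat{\rho}=\widehat{\lambda}$, which fails for any non-trivial $\mathbb{G}$.

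The repair is local, because what your computation actually needs is not that $K$ is fixed but that the \emph{conjugated element} $\widehat{\delta}_U(z)=Kz_{01}K^*$ is fixed, and for this the weaker (true) intertwining $(V_{\mathbb{G}})_{23}K=K(\check{V}_{\mathbb{G}})_{32}$ suffices: since $(\check{V}_{\mathbb{G}})_{32}$ has legs $\{2,3\}$, disjoint from the legs $\{0,1\}$ of $z_{01}$,
\[
(V_{\mathbb{G}})_{23}\,\widehat{\delta}_U(z)\,(V_{\mathbb{G}})_{23}^*=K(\check{V}_{\mathbb{G}})_{32}\,z_{01}\,(\check{V}_{\mathbb{G}})_{32}^*K^*=Kz_{01}K^*=\widehat{\delta}_U(z).
\]
Equivalently and more conceptually: $\widehat{\delta}_U(z)$ has its second $L^2(\mathbb{G})$-leg in $\widehat{\rho}(c_0(\widehat{\mathbb{G}}))=U_{\mathbb{G}}c_0(\widehat{\mathbb{G}})U_{\mathbb{G}}$, and Theorem-Definition \ref{theo.KacSystemG} records exactly that $V_{\mathbb{G}}(x\otimes 1)V_{\mathbb{G}}^*=x\otimes 1$ for all $x\in U_{\mathbb{G}}c_0(\widehat{\mathbb{G}})U_{\mathbb{G}}$ — this commutant placement is the raison d'\^{e}tre of the twist $\widehat{\delta}_U$ in the definition of the double crossed product. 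With this one substitution your verification closes up and yields the displayed formula; as written, though, the asserted commutator identity is a genuine error, not a mere bookkeeping inconvenience.
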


The following theorem is a special case of \cite[Theorem 3.6]{SergeyDeformation2} by means of \cite[Proposition 3.5]{SergeyDeformation2}, see also \cite[Section 1]{VaesBiproduit} in the von Neumann algebraic setting (where no regularity assumption is needed).

	\begin{theo}[Twisted Takesaki-Takai duality]\label{theo.TwistTakesakiTakai}
		Let $(\mathbb{G}, A, \delta, \Omega)$ be a twisted dynamical system with $\Omega$ of finite type.
		\begin{enumerate}[i)]
			\item There is a canonical isomorphism of C$^*$-algebras, $A\otimes \mathcal{K}(L^2(\mathbb{G})) \cong \big(A\underset{r, (\delta,\Omega)}{\rtimes} \mathbb{G}\big)\underset{r, \widehat{\delta}}{\rtimes}\widehat{\mathbb{G}}^{cop}$, given by the map $x \mapsto (U_{\mathbb{G}})_3(V^{\Omega})_{23}^*(\delta\otimes id)(x)(V^{\Omega})_{23}(U_{\mathbb{G}})_3$.
			\item 
Under the above isomorphism, the twisted double dual action $\widehat{\widehat{\delta}}$ of $(\delta, \Omega)$ is conjugate to the action $\mathbb{G}\overset{\widetilde{\delta}}{\curvearrowright} A\otimes \mathcal{K}(L^2(\mathbb{G}))$ defined by
			$\widetilde{\delta}:=Ad_{(W^{\Omega})_{32}}\circ \delta_{13}\mbox{,}$
			where $\delta_{13}$ denotes the amplified twisted action of $\mathbb{G}$ on $A\otimes \mathcal{K}(L^2(\mathbb{G}))$ such that: $$\delta_{13}(a\otimes T)=(1\otimes \Sigma)(\delta(a)\otimes id)(1\otimes \Sigma)(1\otimes T\otimes 1)\in A\otimes \mathcal{K}(L^2(\mathbb{G}))\otimes C(\mathbb{G}_{\Omega})\mbox{,}$$
			for all $a\in A$ and $T\in\mathcal{K}(L^2(\mathbb{G}))$.
		\end{enumerate}
	\end{theo}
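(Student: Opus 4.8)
The plan is to verify the two explicit formulas directly, reducing everything to the multiplicative-unitary bookkeeping of the standard Kac system together with the regularity of $\Omega$. Set $Y := (V^{\Omega})_{23}(U_{\mathbb{G}})_3 \in \mathcal{L}_A\big((A \otimes L^2(\mathbb{G}))\otimes L^2(\mathbb{G})\big)$, which is unitary since $V^{\Omega}$ is unitary and $U_{\mathbb{G}}$ is a symmetry. The map of part i) is then nothing but $\Phi := \mathrm{Ad}_{Y^*}\circ(\delta\otimes id)$, so $\Phi$ is automatically an injective $*$-homomorphism (being the composition of the injective $*$-homomorphism $\delta\otimes id$ with conjugation by a unitary). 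The whole content of part i) is therefore the identification of the range of $\Phi$ with the double crossed product $D := \big(A \rtimes_{r,(\delta,\Omega)}\mathbb{G}\big)\rtimes_{r,\widehat{\delta}}\widehat{\mathbb{G}}^{cop}$.

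First I would write $D$ in terms of generators. By Lemma \ref{lem.TwistedCrossedProd} and the definition of the double crossed product, $D$ is the closed linear span of the elements $\widehat{\delta}_U\big(\delta(a)(1\otimes x)\big)(1\otimes 1\otimes y)$ with $a\in A$, $x\in C^*_r(\mathbb{G},\Omega)$, $y\in C(\mathbb{G})$. Using that $\widehat{\delta}$ is implemented by $1\otimes\widetilde{V}_{\mathbb{G}}$ (proof of Proposition-Definition \ref{pro.TwistedDualAction}), together with the fact from Theorem-Definition \ref{theo.KacSystemG} that $\widetilde{V}_{\mathbb{G}}$ commutes with $L^{\infty}(\mathbb{G})\otimes 1$ and the relation $(\widetilde{V}_{\mathbb{G}})_{23}x_2(\widetilde{V}_{\mathbb{G}})_{23}^* = (V^{\Omega})_{23}^* x_3 (V^{\Omega})_{23}$ established in the proof of Proposition-Definition \ref{pro.TwistedDualAction}, I can rewrite each generator as $\delta(a)_{12}\,Y^*\,(x\cdot U_{\mathbb{G}}yU_{\mathbb{G}})_3\,Y$; here I also use that $(V^{\Omega})_{23}$ commutes with $(U_{\mathbb{G}}yU_{\mathbb{G}})_3$, which holds since $V_{\mathbb{G}}$ commutes with $1\otimes U_{\mathbb{G}}C(\mathbb{G})U_{\mathbb{G}}$ by Theorem-Definition \ref{theo.KacSystemG} and $\Omega$ commutes with this algebra as it lies in the commutant $L^{\infty}(\mathbb{G})'$. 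At this point the regularity of $\Omega$ enters decisively: since $\Omega$ is of finite type, Theorem \ref{theo.TwistedRegularity}.iii) gives $\overline{span}\{C^*_r(\mathbb{G},\Omega)\cdot U_{\mathbb{G}}C(\mathbb{G})U_{\mathbb{G}}\} = \mathcal{K}(L^2(\mathbb{G}))$, so that $D = \overline{span}\{\delta(a)_{12}\,Y^*T_3Y : a\in A,\ T\in\mathcal{K}(L^2(\mathbb{G}))\}$.

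It then remains to match this with $\mathrm{Im}(\Phi) = \overline{span}\{Y^*\delta(a)_{12}T_3Y\}$, i.e.\ to show that $\delta(a)_{12}$ may be freely moved across the conjugation by $Y$. This is the heart of the argument and the step I expect to be the main obstacle: it is exactly here that the \emph{coaction} property of $\delta$ (and its nondegeneracy/density condition $[\delta(A)(1\otimes C(\mathbb{G}_{\Omega}))]=A\otimes C(\mathbb{G}_{\Omega})$) is used, in order to expand $(V^{\Omega})_{23}^*\delta(a)_{12}(V^{\Omega})_{23}$ back into sums of the form $\sum_i\delta(a_i)_{12}\,(\text{operators in legs }2,3)$, and conversely. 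Combined with the fact that $Y^*\mathcal{K}(L^2(\mathbb{G}))_3Y$ is a $C^*$-algebra on the last two legs, this yields $\mathrm{Im}(\Phi)=D$ and finishes part i).

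For part ii) I would transport the double dual action through $\Phi$ by direct computation. The action $\widehat{\widehat{\delta}}$ is the ordinary dual action, implemented on the extra leg by conjugation with $V_{\mathbb{G}}$ on the $C(\mathbb{G})$-leg; pushing this through $Y = (V^{\Omega})_{23}(U_{\mathbb{G}})_3$ and rewriting $U_{\mathbb{G}}$-conjugates of $V_{\mathbb{G}}$ via the Kac system of Theorem-Definition \ref{theo.KacSystemG}, the twist $\Omega$ converts $V_{\mathbb{G}}$ into the left projective regular representation $W^{\Omega}$ of Remark \ref{rem.ProjectiveLeftRep}. The cross terms between $V^{\Omega}$ and $\widetilde{V}_{\mathbb{G}}$ are controlled by Lemma \ref{lem.IdentityTwistDualAction} and the pentagon equations in Theorem-Definition \ref{theo.ProjectiveRightRepMeas}, and a bookkeeping of legs then identifies the transported action with $\widetilde{\delta} = \mathrm{Ad}_{(W^{\Omega})_{32}}\circ\delta_{13}$. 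Finally, I would note that all of this is consistent with, and can alternatively be deduced from, \cite[Theorem 3.6, Proposition 3.5]{SergeyDeformation2} and \cite{VaesBiproduit}, our contribution being the explicit unitary $Y$ realizing the isomorphism.
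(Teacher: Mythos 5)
Your proposal is correct and follows essentially the same route as the paper's (very terse) proof: part i) rests, exactly as in the paper, on Theorem \ref{theo.TwistedRegularity}.iii) combined with the density $[\delta(A)(1\otimes \mathcal{K}(L^2(\mathbb{G})))]=A\otimes\mathcal{K}(L^2(\mathbb{G}))$ and a direct Kac-system computation with the generators, and part ii) is the same direct transport the paper sketches via Remark \ref{rems.rightdualreg} and the identity $W_{\mathbb{G}}=\check{V}_{\mathbb{G}}$. The one step you flag as the ``main obstacle'' is in fact immediate from the ingredients you already name: since $\Delta_{\mathbb{G}_{\Omega}}={}_{\Omega}\Delta_{\Omega^*}$ is implemented as ${}_{\Omega}\Delta_{\Omega^*}(x)=V^{\Omega}(x\otimes 1)(V^{\Omega})^*$, the coaction property yields $(V^{\Omega})_{23}\,\delta(a)_{12}\,(V^{\Omega})_{23}^*=(\delta\otimes id)\delta(a)$, whence $\Phi\big(\delta(a)(1\otimes T)\big)=\delta(a)_{12}\,Y^{*}T_{3}Y$ on the nose, so no expansion into sums is needed and $\mathrm{Im}(\Phi)=\big(A\underset{r, (\delta,\Omega)}{\rtimes} \mathbb{G}\big)\underset{r, \widehat{\delta}}{\rtimes}\widehat{\mathbb{G}}^{cop}$ follows at once.
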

\begin{proof}
By Theorem \ref{theo.TwistedRegularity}, we have $\mathcal{K}(L^2(\mathbb{G})) = [C^*_r(\mathbb{G})U_{\mathbb{G}}C(\mathbb{G})U_{\mathbb{G}}]$. Since $\delta$  is continuous, we can hence write $A\otimes \mathcal{K}(L^2(\mathbb{G})) = [\delta(A)(1\otimes \mathcal{K}(L^2(\mathbb{G})))] = [\delta(A)(1\otimes C_r^*(\mathbb{G},\Omega)U_{\mathbb{G}}C(\mathbb{G})U_{\mathbb{G}})]$, and the first item then follows by a straightforward computation. The second item then follows from Remark \ref{rems.rightdualreg} together with the fact that $W_{\mathbb{G}}(1\otimes UyU)(W_{\mathbb{G}})^* = (1\otimes U)\Delta^{op}(y)(1\otimes U)$, which follows from the identity $W = \check{V}_{\mathbb{G}}$ in Theorem-Definition \ref{theo.KacSystemG}.
\end{proof}

	As a corollary of the twisted Takesaki-Takai duality established in Theorem \ref{theo.TwistTakesakiTakai} we obtain the following generalization of the well known \emph{Packer-Raeburn's untwisting trick} or \emph{Packer-Raeburn's stabilisation trick} \cite{PackerRaeburnTrick1} to compact quantum groups.
	\begin{pro}[Quantum Packer-Raeburn's untwisting trick]\label{pro.QPackerRaeburnUntwistTrick}
		Let $(\mathbb{G}, A, \delta, \Omega)$ be a twisted dynamical system with $\Omega$ of finite type. Then:
		$$\big(A\underset{r, (\delta,\Omega)}{\rtimes} \mathbb{G}\big)\otimes \mathcal{K}(L^2(\mathbb{G}))\cong \big(A\otimes \mathcal{K}(L^2(\mathbb{G})\big)\underset{r, \widetilde{\delta}}{\rtimes} \mathbb{G}.$$
	\end{pro}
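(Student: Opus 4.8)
The plan is to realise both sides of the desired isomorphism as the two outcomes of one and the same iterated crossed product, and then to match them using the duality results already in place. Set $B := A\underset{r, (\delta,\Omega)}{\rtimes} \mathbb{G}$, which by Proposition-Definition \ref{pro.TwistedDualAction} is a $\widehat{\mathbb{G}}^{cop}$-$C^*$-algebra for the dual action $\widehat{\delta}$, and form the iterated (\emph{undeformed}) crossed product
\[
\mathcal{D} := \Big(B\underset{r, \widehat{\delta}}{\rtimes}\widehat{\mathbb{G}}^{cop}\Big)\underset{r, \widehat{\widehat{\delta}}}{\rtimes}\mathbb{G}.
\]
I would compute $\mathcal{D}$ in two ways.

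First, both crossed products appearing in $\mathcal{D}$ are ordinary ones: the inner factor is the double reduced crossed product of Section \ref{sec.TTDuality}, and the outer factor is the crossed product by the $\mathbb{G}$-action $\widehat{\widehat{\delta}}$. Thus $(\widehat{\delta},\widehat{\widehat{\delta}})$ is precisely a dual/double-dual pair of genuine actions for the quantum group $\widehat{\mathbb{G}}^{cop}$ acting on $B$, so the standard Baaj-Skandalis duality (see \cite[Section 7]{SkandalisUnitaries}) applies — the relevant regularity being that of the multiplicative unitary $V_{\mathbb{G}}$ of a compact quantum group, established in Theorem-Definition \ref{theo.KacSystemG}. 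It yields a canonical isomorphism
\[
\mathcal{D} \cong B\otimes \mathcal{K}(L^2(\mathbb{G})) = \big(A\underset{r, (\delta,\Omega)}{\rtimes} \mathbb{G}\big)\otimes \mathcal{K}(L^2(\mathbb{G})),
\]
which is the left-hand side of the claimed isomorphism.

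Second, the twisted Takesaki-Takai duality (Theorem \ref{theo.TwistTakesakiTakai}) provides an isomorphism of $\mathbb{G}$-$C^*$-algebras between $\big(A\underset{r, (\delta,\Omega)}{\rtimes} \mathbb{G}\big)\underset{r, \widehat{\delta}}{\rtimes}\widehat{\mathbb{G}}^{cop}$ carrying $\widehat{\widehat{\delta}}$ and $A\otimes \mathcal{K}(L^2(\mathbb{G}))$ carrying $\widetilde{\delta} = Ad_{(W^{\Omega})_{32}}\circ \delta_{13}$. Since the reduced crossed product $-\underset{r}{\rtimes}\mathbb{G}$ is functorial with respect to $\mathbb{G}$-equivariant isomorphisms, applying it to this isomorphism gives
\[
\mathcal{D} \cong \big(A\otimes \mathcal{K}(L^2(\mathbb{G}))\big)\underset{r, \widetilde{\delta}}{\rtimes}\mathbb{G},
\]
and combining the two computations of $\mathcal{D}$ yields the result.

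The main point requiring care is the bookkeeping of conventions: one must verify that the double dual action $\widehat{\widehat{\delta}}$ produced by the definition in Section \ref{sec.TTDuality} — including the passage through the $U_{\mathbb{G}}$-twist $\widehat{\delta}_U$ used there — is literally the action to which Baaj-Skandalis duality for $\widehat{\mathbb{G}}^{cop}$ applies, and that the $\mathbb{G}$-equivariance asserted in Theorem \ref{theo.TwistTakesakiTakai} is exactly the equivariance invoked when applying the crossed-product functor. Both are routine once the identity $W_{\mathbb{G}} = \check{V}_{\mathbb{G}}$ and the commutation relations of the Kac system (Theorem-Definition \ref{theo.KacSystemG}) are used, so I expect no genuinely new difficulty beyond aligning these conventions.
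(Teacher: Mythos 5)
Your proposal is correct and follows essentially the same route as the paper's own proof: the paper likewise sets $B:=A\underset{r,(\delta,\Omega)}{\rtimes}\mathbb{G}$, applies the ordinary Takesaki--Takai (biduality) theorem to the dual action $\widehat{\delta}$ to obtain $B\otimes\mathcal{K}(L^2(\mathbb{G}))\cong\big(B\underset{r,\widehat{\delta}}{\rtimes}\widehat{\mathbb{G}}^{cop}\big)\underset{r,\widehat{\widehat{\delta}}}{\rtimes}\mathbb{G}$, and then identifies the inner double crossed product with $\big(A\otimes\mathcal{K}(L^2(\mathbb{G}))\big)$ carrying $\widetilde{\delta}$ via the $\mathbb{G}$-equivariant isomorphism of Theorem \ref{theo.TwistTakesakiTakai} together with functoriality of the crossed product. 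The only (cosmetic) divergence is terminological: what you call ``Baaj--Skandalis duality'' in the first step is what the paper calls ``the usual Takesaki--Takai duality'', since the paper reserves the name Baaj--Skandalis for the $KK$-theoretic statement (cf.\ the Note in Section \ref{sec.TTDuality}).
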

	\begin{proof}
		Put $B:=A\underset{r, (\delta,\Omega)}{\rtimes} \mathbb{G}$, which is a $\widehat{\mathbb{G}}^{cop}$-C$^*$-algebra with action $\widehat{\mathbb{G}}^{cop} \overset{\widehat{\delta}}{\curvearrowright} B$. Using the usual version of Takesaki-Takai duality for quantum groups, we can write $B\otimes \mathcal{K}(L^2(\mathbb{G}))\cong \big(B\underset{r,\widehat{\delta}}{\rtimes} \widehat{\mathbb{G}}^{cop}\big)\underset{r, \widehat{\widehat{\delta}}}{\rtimes} \mathbb{G}$. Next, using the twisted version of Takesaki-Takai duality from Theorem \ref{theo.TwistTakesakiTakai} we have the following:
		$$\big(B\underset{r,\widehat{\delta}}{\rtimes} \widehat{\mathbb{G}}^{cop}\big)\underset{r, \widehat{\widehat{\delta}}}{\rtimes} \mathbb{G}=\big(\big(A\underset{r, (\delta,\Omega)}{\rtimes} \mathbb{G}\big)\underset{r,\widehat{\delta}}{\rtimes} \widehat{\mathbb{G}}^{cop}\big)\underset{r, \widehat{\widehat{\delta}}}{\rtimes} \mathbb{G}\cong\big(A\otimes \mathcal{K}(L^2(\mathbb{G})\big)\underset{r, \widetilde{\delta}}{\rtimes} \mathbb{G}.$$
	\end{proof}
	
	This trick helps to establish a twisted version of the well known \emph{Baaj-Skandalis duality} \cite{SkandalisUnitaries,VergniouxThesis}.
		\begin{note}
		``Takesaki-Takai duality'' for quantum groups is also referred as ``Baaj-Skandalis duality'' in the literature. We prefer to reserve the latter terminology for such duality at the level of $KK$-theory.
	\end{note}
	
	\begin{theo}[Twisted Baaj-Skandalis duality]\label{theo.TwistedBaajSkandalis}
		Let $(\mathbb{G}, A, \delta, \Omega)$ and $(\mathbb{G}, B, \vartheta, \Omega)$ be two twisted dynamical systems with respect to a given $2$-cocycle $\Omega$ of finite type on $\mathbb{G}$. Then there exists a canonical group isomorphism:
		$$J^{\Omega}_{\mathbb{G}}: KK^{\mathbb{G}_{\Omega}}(A, B)\overset{\sim}{\longrightarrow} KK^{\widehat{\mathbb{G}}^{cop}}(A\underset{r, (\delta,\Omega)}{\rtimes} \mathbb{G}, B\underset{r, (\vartheta,\Omega)}{\rtimes} \mathbb{G})\mbox{,}$$
		which is compatible with the Kasparov product, that is, if $(\mathbb{G}, C, \nu, \Omega)$ is another twisted dynamical system with respect to $\Omega$, then we have:
		$$J^{\Omega}_{\mathbb{G}}(\mathcal{X}\underset{C}{\otimes}\mathcal{Y})=J^{\Omega}_{\mathbb{G}}(\mathcal{X})\underset{C\underset{r, (\nu,\Omega)}{\rtimes} \mathbb{G}}{\otimes} J^{\Omega}_{\mathbb{G}}(\mathcal{Y})\mbox{ and } J^{\Omega}_{\mathbb{G}}(\mathbbold{1}_A)=\mathbbold{1}_{A\underset{r, (\delta,\Omega)}{\rtimes} \mathbb{G}}\mbox{,}$$
		for all $\mathcal{X}\in KK^{\mathbb{G}_{\Omega}}(A, C)$ and $\mathcal{Y}\in KK^{\mathbb{G}_{\Omega}}(C, B)$.
	\end{theo}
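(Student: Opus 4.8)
The plan is to follow the strategy of Baaj--Skandalis \cite{SkandalisUnitaries} (in the form adapted to regular quantum groups by Vergnioux \cite{VergniouxThesis}), replacing the ordinary descent and Takesaki--Takai duality by their twisted counterparts established above. Concretely, I would first construct $J^{\Omega}_{\mathbb{G}}$ directly on Kasparov cycles, then exhibit a candidate inverse built from the ordinary descent for the regular discrete quantum group $\widehat{\mathbb{G}}^{cop}$ together with the twisted Takesaki--Takai isomorphism of Theorem \ref{theo.TwistTakesakiTakai} and a stabilisation, and finally show that the two resulting composites agree with the canonical stabilisation isomorphisms, so that $J^{\Omega}_{\mathbb{G}}$ is bijective and multiplicative.

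For the construction, recall that a twisted dynamical system is exactly the datum of a $\mathbb{G}_{\Omega}$-$C^*$-algebra (Definition \ref{defi.TwistDynamSyst}). Given a class in $KK^{\mathbb{G}_{\Omega}}(A,B)$ represented by an equivariant Kasparov module $(E,\phi,F)$, I would form the twisted crossed product Hilbert module $E\rtimes_{r,(\delta_E,\Omega)}\mathbb{G}$ over $B\rtimes_{r,(\vartheta,\Omega)}\mathbb{G}$, exactly as in Definition \ref{defi.TwistedCrossedProduct} but at the level of Hilbert modules, together with the descended operator $F\rtimes 1$. The $\widehat{\mathbb{G}}^{cop}$-equivariance of the resulting cycle is inherited from the twisted dual action of Proposition-Definition \ref{pro.TwistedDualAction}, so that we obtain a well-defined element of $KK^{\widehat{\mathbb{G}}^{cop}}(A\rtimes\mathbb{G},B\rtimes\mathbb{G})$. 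Additivity and homotopy invariance are formal, as is the fact that forgetting the dual equivariance recovers the plain twisted descent, which yields the final identity $j^{\Omega}_{\mathbb{G}}=\mathcal{O}_{\mathbb{G}}\circ J^{\Omega}_{\mathbb{G}}$; the compatibility with the Kasparov product and $J^{\Omega}_{\mathbb{G}}(\mathbbold{1}_A)=\mathbbold{1}_{A\rtimes\mathbb{G}}$ follow from the monoidality of the descent construction.

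To produce the inverse, I would apply the ordinary descent $j_{\widehat{\mathbb{G}}^{cop}}$, landing in the $\mathbb{G}$-equivariant $KK$-group of the double crossed products. Twisted Takesaki--Takai duality (Theorem \ref{theo.TwistTakesakiTakai}) then identifies these double crossed products $\mathbb{G}$-equivariantly with $A\otimes\mathcal{K}(L^2(\mathbb{G}))$ and $B\otimes\mathcal{K}(L^2(\mathbb{G}))$, each carrying the untwisted action $\widetilde{\delta}=Ad_{(W^{\Omega})_{32}}\circ\delta_{13}$. The essential point is that this untwisting --- precisely the mechanism of the quantum Packer--Raeburn trick (Proposition \ref{pro.QPackerRaeburnUntwistTrick}) --- provides a canonical identification of the $\mathbb{G}$-equivariant theory of $(A\otimes\mathcal{K}(L^2(\mathbb{G})),\widetilde{\delta})$ with the $\mathbb{G}_{\Omega}$-equivariant theory of the amplified twisted system $(A\otimes\mathcal{K}(L^2(\mathbb{G})),\delta_{13})$, which in turn destabilises $\mathbb{G}_{\Omega}$-equivariantly to $(A,\delta)$ since $\delta_{13}$ is trivial on the $\mathcal{K}(L^2(\mathbb{G}))$-factor. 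Composing these steps I obtain a map $J'\colon KK^{\widehat{\mathbb{G}}^{cop}}(A\rtimes\mathbb{G},B\rtimes\mathbb{G})\to KK^{\mathbb{G}_{\Omega}}(A,B)$, and I would check that $J'\circ J^{\Omega}_{\mathbb{G}}$ and $J^{\Omega}_{\mathbb{G}}\circ J'$ coincide with the stabilisation isomorphisms by applying twisted Takesaki--Takai duality twice, exactly as the double crossed product is unwound in the proof of Proposition \ref{pro.QPackerRaeburnUntwistTrick}.

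The main obstacle is the careful management of the three equivariance structures (the $\mathbb{G}_{\Omega}$-action on $A,B$, the $\widehat{\mathbb{G}}^{cop}$-dual action on the crossed products, and the $\mathbb{G}$-action on the double crossed products) and of the untwisting that connects them. Two points require real care. First, one must show that the twisted descent is well defined on $KK$-classes --- respecting operator homotopies, degenerate cycles and internal Kasparov products --- in the presence of the cocycle; this amounts to proving that $E\mapsto E\rtimes_{r,(\delta_E,\Omega)}\mathbb{G}$ is functorial and monoidal on equivariant Hilbert modules, which gives at once additivity, multiplicativity and the unit relation. Second, and more delicate, one must verify that conjugation by the projective regular representation $W^{\Omega}$ really implements a \emph{natural} isomorphism between the $\mathbb{G}$-equivariant $KK$-theory of $(A\otimes\mathcal{K}(L^2(\mathbb{G})),\widetilde{\delta})$ and the $\mathbb{G}_{\Omega}$-equivariant $KK$-theory of $(A\otimes\mathcal{K}(L^2(\mathbb{G})),\delta_{13})$, naturally in $A$ and $B$; this is where the explicit formulas of Theorem \ref{theo.TwistTakesakiTakai} and the compatibility recorded in Remark \ref{rem.switchOmOmStar} (that $\Omega^*$ is a finite-type cocycle for $\mathbb{G}_{\Omega}$ with $(\mathbb{G}_{\Omega})_{\Omega^*}=\mathbb{G}$) are needed to guarantee that the two composites are the stabilisation isomorphisms rather than merely abstract ones. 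Once this naturality is in place, the bijectivity and multiplicativity of $J^{\Omega}_{\mathbb{G}}$ follow as in the classical argument \cite{SkandalisUnitaries,VergniouxThesis}.
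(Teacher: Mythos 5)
Your overall architecture (cycle-level twisted descent, plus an inverse built from ordinary descent, twisted Takesaki--Takai duality and an ``untwisting'') is the classical Baaj--Skandalis strategy and could in principle be completed, but as written it has a genuine gap at exactly the step you flag as delicate. The natural isomorphism you require between the $\mathbb{G}$-equivariant $KK$-theory of $(A\otimes\mathcal{K}(L^2(\mathbb{G})),\widetilde{\delta})$ and the $\mathbb{G}_{\Omega}$-equivariant $KK$-theory of $(A\otimes\mathcal{K}(L^2(\mathbb{G})),\delta_{13})$ is not something that conjugation by $W^{\Omega}$ can implement by a verification: the two sides are equivariant with respect to \emph{different} quantum groups ($C(\mathbb{G})$ versus $C(\mathbb{G}_{\Omega})$, which in general differ inside the common $L^{\infty}(\mathbb{G})$), and $W^{\Omega}=W_{\mathbb{G}}\Omega^{*}$ is only a measurable unitary, since $\Omega\in L^{\infty}(\mathbb{G})\overline{\otimes}L^{\infty}(\mathbb{G})$ is typically not cohomologous to a continuous cocycle even when of finite type. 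So $Ad_{(W^{\Omega})_{32}}$ does not act on the relevant C$^{*}$-multiplier algebras, let alone transport the equivariance structures of Hilbert modules naturally and compatibly with Kasparov products. Likewise, Proposition \ref{pro.QPackerRaeburnUntwistTrick} is an isomorphism of C$^{*}$-algebras obtained \emph{after} taking crossed products; it does not by itself upgrade to an equivalence of equivariant Kasparov categories. What your inverse $J'$ actually needs is a theorem in its own right: the equivalence of triangulated categories induced by the monoidal co-Morita equivalence between $\widehat{\mathbb{G}}$ and $\widehat{\mathbb{G}_{\Omega}}$, encoded by the co-linking quantum groupoid $C^*_r(\mathbb{G}_{\Omega})\oplus C^*_r(\mathbb{G}_{\Omega},\Omega^*)\oplus C^*_r(\mathbb{G},\Omega)\oplus C^*_r(\mathbb{G})$ and established in \cite[Section 4.5]{BaajCrespo}. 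Without invoking (or reproving) that result, the pivotal step of your plan is unproven, and proving it by hand is essentially reproducing the Baaj--Crespo machinery.

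This is also where your route diverges from the paper's proof, which never performs a cycle-level twisted descent at all. Since a twisted dynamical system is by definition a $\mathbb{G}_{\Omega}$-action (Definition \ref{defi.TwistDynamSyst}) and $\mathbb{G}_{\Omega}$ is a compact, hence regular, quantum group by Theorem \ref{theo.TwistCompact}, the paper defines $J^{\Omega}_{\mathbb{G}}$ as a composite of known isomorphisms: (i) stabilisation by $\mathcal{K}(L^2(\mathbb{G}))$; (ii) the ordinary, untwisted Baaj--Skandalis duality $J_{\mathbb{G}_{\Omega}}$ for $\mathbb{G}_{\Omega}$; (iii) the Baaj--Crespo equivalence $J_{\widehat{\mathbb{G}_{\Omega}},\ \widehat{\mathbb{G}}^{cop}}:\mathscr{K}\mathscr{K}^{\widehat{\mathbb{G}_{\Omega}}^{cop}}\longrightarrow\mathscr{K}\mathscr{K}^{\widehat{\mathbb{G}}^{cop}}$, which is shown via the exterior comultiplication to send $A\underset{r,\delta}{\rtimes}\mathbb{G}_{\Omega}$ to $A\underset{r,(\delta,\Omega)}{\rtimes}\mathbb{G}$; and (iv) the identifications provided by twisted Takesaki--Takai duality (Theorem \ref{theo.TwistTakesakiTakai}) together with equivariant Morita equivalence. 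Multiplicativity, unitality and the relation $j^{\Omega}_{\mathbb{G}}=\mathcal{O}_{\mathbb{G}}\circ J^{\Omega}_{\mathbb{G}}$ are then automatic, since each constituent is compatible with Kasparov products. If you wish to salvage your plan, the efficient fix is to replace your hand-made untwisting by the citation to \cite{BaajCrespo}; at that point your explicit cycle-level descent construction becomes redundant, as bijectivity comes for free from the composite of equivalences.
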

	\begin{proof}
		Given the twisted dynamical system $(\mathbb{G}, A, \delta, \Omega)$, put $A_1:=A\underset{r, \delta}{\rtimes} \mathbb{G}_{\Omega}$. Thus $(A_1, \widehat{\delta})$ is an object in $\mathscr{K}\mathscr{K}^{\widehat{\mathbb{G}_{\Omega}}^{cop}}$. It is known that $\widehat{\mathbb{G}_{\Omega}}$ and $\widehat{\mathbb{G}}$ are monoidally co-Morita equivalent in the sense of \cite{KennyComonoidalMorita}, \cite{KennyRegularSemiRegular}. The corresponding co-linking quantum groupoid takes the form $C^*_r(\mathbb{G}_{\Omega}) \oplus C^*_r(\mathbb{G}_{\Omega},\Omega^*) \oplus C^*_r(\mathbb{G}, \Omega) \oplus C^*_r(\mathbb{G})$. Next, following the notations from \cite[Section 2.4]{BaajCrespo}, we consider the \emph{exterior comultiplication} $\widehat{\Delta}_{ext}\equiv ({}_{\Omega}\widehat{\Delta}_{\Omega^*})_{11}^{2}: C^*_r(\mathbb{G}_{\Omega}) \longrightarrow \widetilde{M}(C^*_r(\mathbb{G}_{\Omega},\Omega^*)  \otimes C^*_r(\mathbb{G},\Omega))$. 
Following \cite[Proposition 4.1]{BaajCrespo}, we consider the C$^*$-algebra:
				\begin{equation*}
				\begin{split}
					A_2:&=\overline{span}\{(id\otimes id\otimes \eta)(id_{A_1}\otimes \widehat{\Delta}_{ext}^{cop})\widehat{\delta}(a')\ |\  a' \in A_1, \eta\in\mathcal{B}(L^2(\mathbb{G}))_*\}\\
					&=\overline{span}\{(id\otimes id\otimes \eta)(\delta(a)\otimes 1)(1\otimes \widehat{\Delta}_{ext}^{cop}(x))\ |\  a'\in A, x\in C^*_r(\mathbb{G}_\Omega), \eta\in\mathcal{B}(L^2(\mathbb{G}))_*\}.
				\end{split}
				\end{equation*}
Since $\{(id\otimes \eta)\widehat{\Delta}_{ext}^{cop}(x)\mid x\in C^*_r(\mathbb{G}_{\Omega}),\eta\in \mathcal{B}(L^2(\mathbb{G}))_*\}$ is norm-dense in $C_r^*(\mathbb{G},\Omega)$, we have by construction that $A_2=A\underset{r, (\delta,\Omega)}{\rtimes} \mathbb{G}$. Now, $A_2$ is an object in $\mathscr{K}\mathscr{K}^{\widehat{\mathbb{G}}^{cop}}$ with $\widehat{\mathbb{G}}^{cop}$-action given by Proposition \ref{pro.TwistedDualAction}, which we still denote by $\widehat{\delta}$.
		
		In other words, the equivalence of triangulated categories $\mathscr{K}\mathscr{K}^{\widehat{\mathbb{G}_{\Omega}}^{cop}}\overset{J_{\widehat{\mathbb{G}_{\Omega}},\ \widehat{\mathbb{G}}^{cop}}}{\cong} \mathscr{K}\mathscr{K}^{\widehat{\mathbb{G}}^{cop}}$ from \cite[Section 4.5]{BaajCrespo} sends $A\underset{r, \delta}{\rtimes} \mathbb{G}_{\Omega}$ to $A\underset{r, (\delta,\Omega)}{\rtimes} \mathbb{G}$. An application of the equivariant Morita equivalence of \cite[Section 4.4]{BaajCrespo} yields that $(A\otimes \mathcal{K}(L^2(\mathbb{G})))\underset{r, \widetilde{\delta}}{\rtimes}\mathbb{G}_{\Omega}\cong A_1\underset{r, \widehat{\delta}}{\rtimes} \widehat{\mathbb{G}_{\Omega}}^{cop}\underset{r, \widehat{\widehat{\delta}}}{\rtimes} \mathbb{G}_{\Omega}$ in $\mathscr{K}\mathscr{K}^{\widehat{\mathbb{G}_{\Omega}}^{cop}}$ (here $\widetilde{\delta}$ is defined by Takesaki-Takai duality for $\mathbb{G}_{\Omega}$) is sent to $(A\otimes \mathcal{K}(L^2(\mathbb{G})))\underset{r, \widetilde{\delta}}{\rtimes} \mathbb{G}\cong A_2\underset{r, \widehat{\delta}}{\rtimes} \widehat{\mathbb{G}}^{cop}\underset{r, \widehat{\widehat{\delta}}}{\rtimes} \mathbb{G}$ in $\mathscr{K}\mathscr{K}^{\widehat{\mathbb{G}}^{cop}}$ (here $\widetilde{\delta}$ is defined through the twisted Takesaki-Takai from Theorem \ref{theo.TwistTakesakiTakai}) through $J_{\widehat{\mathbb{G}_{\Omega}},\ \widehat{\mathbb{G}}^{cop}}$. 
		
		Therefore, the twisted Baaj-Skandalis is obtained as follows:
		\begin{equation*}
		\begin{split}
			KK^{\mathbb{G}_{\Omega}}(A, B)&\cong KK^{\mathbb{G}_{\Omega}}(A\otimes \mathcal{K}(L^2(\mathbb{G}), B\otimes \mathcal{K}(L^2(\mathbb{G}))\\
			&\overset{J_{\mathbb{G}_{\Omega}}}{\cong} KK^{\widehat{\mathbb{G}_{\Omega}}^{cop}}((A\otimes \mathcal{K}(L^2(\mathbb{G}))\underset{r, \widetilde{\delta}}{\rtimes}\mathbb{G}_{\Omega}, (B\otimes \mathcal{K}(L^2(\mathbb{G}))\underset{r, \widetilde{\delta}}{\rtimes}\mathbb{G}_{\Omega})\\
			&\overset{J_{\widehat{\mathbb{G}_{\Omega}},\ \widehat{\mathbb{G}}^{cop}}}{\cong} KK^{\widehat{\mathbb{G}}^{cop}}((A\otimes \mathcal{K}(L^2(\mathbb{G}))\underset{r, \widetilde{\delta}}{\rtimes}\mathbb{G}, (B\otimes \mathcal{K}(L^2(\mathbb{G}))\underset{r, \widetilde{\delta}}{\rtimes}\mathbb{G})\\
			&\cong KK^{\widehat{\mathbb{G}}^{cop}}(A\underset{r, (\delta,\Omega)}{\rtimes} \mathbb{G}, B\underset{r, (\vartheta,\Omega)}{\rtimes} \mathbb{G}).
		\end{split}
		\end{equation*}
	\end{proof}
	
	As a consequence, we obtain a twisted version of the well-known \emph{descent map} for quantum groups \cite{VergniouxThesis}.
	\begin{cor}[Twisted descent map]
		Let $(\mathbb{G}, A, \delta, \Omega)$ and $(\mathbb{G}, B, \vartheta, \Omega)$ be two twisted dynamical systems with respect to a given $2$-cocycle $\Omega$ of finite type on $\mathbb{G}$. Then there exists a canonical group homomorphism:
		$$j^{\Omega}_{\mathbb{G}}: KK^{\mathbb{G}_{\Omega}}(A, B)\longrightarrow KK(A\underset{r, (\delta,\Omega)}{\rtimes} \mathbb{G}, B\underset{r, (\vartheta,\Omega)}{\rtimes} \mathbb{G})$$
		called \emph{twisted descent map (with respect to $\mathbb{G}$)}. Moreover, $j^{\Omega}_{\mathbb{G}}$ is compatible with the Kasparov product, that is, if $(\mathbb{G}, C, \nu, \Omega)$ is another twisted dynamical system with respect to $\Omega$, then we have:
		$$j^{\Omega}_{\mathbb{G}}(\mathcal{X}\underset{C}{\otimes}\mathcal{Y})=j^{\Omega}_{\mathbb{G}}(\mathcal{X})\underset{C\underset{r, (\nu,\Omega)}{\rtimes} \mathbb{G}}{\otimes} j^{\Omega}_{\mathbb{G}}(\mathcal{Y})\mbox{ and } j^{\Omega}_{\mathbb{G}}(\mathbbold{1}_A)=\mathbbold{1}_{A\underset{r, (\delta,\Omega)}{\rtimes} \mathbb{G}}\mbox{,}$$
		for all $\mathcal{X}\in KK^{\mathbb{G}_{\Omega}}(A, C)$ and $\mathcal{Y}\in KK^{\mathbb{G}_{\Omega}}(C, B)$. Moreover, we have $j^{\Omega}_{\mathbb{G}}= \mathcal{O}_{\mathbb{G}}\circ J^{\Omega}_{\mathbb{G}}$, where $\mathcal{O}_{\mathbb{G}}$ is the obvious forgetful functor.
	\end{cor}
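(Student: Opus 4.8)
The plan is to obtain this corollary as an immediate consequence of the twisted Baaj-Skandalis duality of Theorem \ref{theo.TwistedBaajSkandalis}, simply by post-composing the isomorphism $J^{\Omega}_{\mathbb{G}}$ with the functor that forgets the $\widehat{\mathbb{G}}^{cop}$-equivariant structure. Indeed, the last line of that theorem already asserts the identity $j^{\Omega}_{\mathbb{G}}=\mathcal{O}_{\mathbb{G}}\circ J^{\Omega}_{\mathbb{G}}$, so the present statement amounts to checking that this composite inherits the required functorial properties.

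First I would recall from Theorem \ref{theo.TwistedBaajSkandalis} the canonical group isomorphism
$$J^{\Omega}_{\mathbb{G}}: KK^{\mathbb{G}_{\Omega}}(A, B)\overset{\sim}{\longrightarrow} KK^{\widehat{\mathbb{G}}^{cop}}(A\underset{r, (\delta,\Omega)}{\rtimes} \mathbb{G}, B\underset{r, (\vartheta,\Omega)}{\rtimes} \mathbb{G}),$$
which is multiplicative for the Kasparov product and sends $\mathbbold{1}_A$ to $\mathbbold{1}_{A\underset{r, (\delta,\Omega)}{\rtimes} \mathbb{G}}$. I would then bring in the forgetful functor $\mathcal{O}_{\mathbb{G}}:\mathscr{K}\mathscr{K}^{\widehat{\mathbb{G}}^{cop}}\longrightarrow \mathscr{K}\mathscr{K}$, which discards the dual action and acts on each morphism group as the canonical group homomorphism $KK^{\widehat{\mathbb{G}}^{cop}}(\cdot,\cdot)\to KK(\cdot,\cdot)$. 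Setting $j^{\Omega}_{\mathbb{G}}:=\mathcal{O}_{\mathbb{G}}\circ J^{\Omega}_{\mathbb{G}}$ produces a group homomorphism with the stated source and target, since a composite of a group isomorphism with a group homomorphism is again a group homomorphism.

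For the compatibility with products, I would chain the two multiplicativity statements: $J^{\Omega}_{\mathbb{G}}$ intertwines the $\mathbb{G}_{\Omega}$-equivariant Kasparov product with the $\widehat{\mathbb{G}}^{cop}$-equivariant one, while $\mathcal{O}_{\mathbb{G}}$ is a monoidal functor, carrying the $\widehat{\mathbb{G}}^{cop}$-equivariant Kasparov product over $C\underset{r, (\nu,\Omega)}{\rtimes} \mathbb{G}$ to the ordinary Kasparov product over the same algebra. Thus, for $\mathcal{X}\in KK^{\mathbb{G}_{\Omega}}(A, C)$ and $\mathcal{Y}\in KK^{\mathbb{G}_{\Omega}}(C, B)$ one has
$$j^{\Omega}_{\mathbb{G}}(\mathcal{X}\underset{C}{\otimes}\mathcal{Y})=\mathcal{O}_{\mathbb{G}}\big(J^{\Omega}_{\mathbb{G}}(\mathcal{X})\underset{C\underset{r, (\nu,\Omega)}{\rtimes} \mathbb{G}}{\otimes}J^{\Omega}_{\mathbb{G}}(\mathcal{Y})\big)=j^{\Omega}_{\mathbb{G}}(\mathcal{X})\underset{C\underset{r, (\nu,\Omega)}{\rtimes} \mathbb{G}}{\otimes}j^{\Omega}_{\mathbb{G}}(\mathcal{Y}),$$
and $j^{\Omega}_{\mathbb{G}}(\mathbbold{1}_A)=\mathcal{O}_{\mathbb{G}}(\mathbbold{1}_{A\underset{r, (\delta,\Omega)}{\rtimes} \mathbb{G}})=\mathbbold{1}_{A\underset{r, (\delta,\Omega)}{\rtimes} \mathbb{G}}$. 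Since every ingredient is furnished by Theorem \ref{theo.TwistedBaajSkandalis} or is a standard property of the forgetful functor on equivariant $KK$-theory, there is essentially no obstacle here; the only point deserving care is to make explicit that forgetting the $\widehat{\mathbb{G}}^{cop}$-action respects the Kasparov product, which is classical but should be referenced.
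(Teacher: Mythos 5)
Your proposal is correct, but it takes a genuinely different route from the paper's own proof. The paper constructs $j^{\Omega}_{\mathbb{G}}$ concretely as a composition: it first runs the chain of isomorphisms from the proof of Theorem \ref{theo.TwistedBaajSkandalis} to pass from $KK^{\mathbb{G}_{\Omega}}(A,B)$ to $KK^{\mathbb{G}}\big(A\otimes \mathcal{K}(L^2(\mathbb{G})), B\otimes \mathcal{K}(L^2(\mathbb{G}))\big)$ via (inverse) Baaj-Skandalis duality, then applies the \emph{ordinary} descent map $j_{\mathbb{G}}$ for the untwisted quantum group, and finally identifies the resulting crossed products $\big(A\otimes \mathcal{K}(L^2(\mathbb{G}))\big)\underset{r,\widetilde{\delta}}{\rtimes}\mathbb{G}$ with stabilizations of the twisted crossed products via the quantum Packer-Raeburn untwisting trick (Proposition \ref{pro.QPackerRaeburnUntwistTrick}). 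You instead simply set $j^{\Omega}_{\mathbb{G}}:=\mathcal{O}_{\mathbb{G}}\circ J^{\Omega}_{\mathbb{G}}$ and invoke the standard fact that the forgetful functor on equivariant $KK$-theory respects Kasparov products and units; this is shorter, and the multiplicativity is immediate from the two cited compatibilities. What the paper's longer route buys is precisely the content of the final sentence of Theorem \ref{theo.TwistedBaajSkandalis}: taken literally, the identity $j^{\Omega}_{\mathbb{G}}=\mathcal{O}_{\mathbb{G}}\circ J^{\Omega}_{\mathbb{G}}$ presupposes an independent construction of $j^{\Omega}_{\mathbb{G}}$ (which the corollary's proof supplies), and it exhibits the twisted descent map as the ordinary descent map up to stabilization and untwisting — a fact used implicitly later, e.g.\ in the discussion of the assembly map. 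So if you adopt your shortcut, you should present the theorem's last line as the \emph{definition} of $j^{\Omega}_{\mathbb{G}}$ rather than as an already-proved identity, and, as you correctly flag, cite the classical reference (e.g.\ Baaj-Skandalis) for the multiplicativity of the forgetful natural transformation $KK^{\widehat{\mathbb{G}}^{cop}}\to KK$; with that bookkeeping in place your argument is complete.
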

	\begin{proof}
		Similarly as in Theorem \ref{theo.TwistedBaajSkandalis} we have $KK^{\mathbb{G}_{\Omega}}(A, B)\cong KK^{\widehat{\mathbb{G}}^{cop}}((A\otimes \mathcal{K}(L^2(\mathbb{G}))\underset{r, \widetilde{\delta}}{\rtimes}\mathbb{G}, (B\otimes \mathcal{K}(L^2(\mathbb{G}))\underset{r, \widetilde{\delta}}{\rtimes}\mathbb{G})$. Applying Baaj-Skandalis duality for $\mathbb{G}$, the latter is isomorphic to $KK^{\mathbb{G}}(A\otimes \mathcal{K}(L^2(\mathbb{G}), B\otimes \mathcal{K}(L^2(\mathbb{G}))$. Applying the ordinary descent map $j_{\mathbb{G}}$, we get $KK((A\otimes \mathcal{K}(L^2(\mathbb{G}))\underset{r, \widetilde{\delta}}{\rtimes}\mathbb{G}, (B\otimes \mathcal{K}(L^2(\mathbb{G}))\underset{r, \widetilde{\delta}}{\rtimes}\mathbb{G})$, which is isomophic to $KK(A\underset{r, (\delta,\Omega)}{\rtimes} \mathbb{G}, B\underset{r, (\vartheta,\Omega)}{\rtimes} \mathbb{G})$ thanks to the untwisting trick from Proposition \ref{pro.QPackerRaeburnUntwistTrick}. The map $j^{\Omega}_{\mathbb{G}}$ of the statement is obtained as result of these compositions.
	\end{proof}

\section{\textsc{Application: quantum assembly map for permutation torsion-free discrete quantum groups}}\label{sec.QuantumAssemblyMapProj}
	\subsection{The Baum-Connes property for discrete quantum groups}\label{sec.BCQG}
	
	The framework for the formulation of the Baum-Connes property for discrete quantum groups following the approach of R. Meyer and R. Nest is based on triangulated categories and Bousfield localisation techniques. We refer to \cite{MeyerNest} or \cite{Jorgensen} for a complete presentation of the subject.
	
	Let $\widehat{\mathbb{G}}$ be a discrete quantum group and consider the corresponding equivariant Kasparov category, $\mathscr{K}\mathscr{K}^{\widehat{\mathbb{G}}}$, with canonical suspension functor denoted by $\Sigma$. Then $\mathscr{K}\mathscr{K}^{\widehat{\mathbb{G}}}$ is a triangulated category whose distinguished triangles are given by mapping cone triangles. The word \emph{homomorphism (resp.\ isomorphism)} will mean \emph{homomorphism (resp.\ isomorphism) in the corresponding Kasparov category}; it will be a true homomorphism (resp.\ isomorphism) between $\widehat{\mathbb{G}}$-C$^*$-algebras or any Kasparov triple between $\widehat{\mathbb{G}}$-C$^*$-algebras (resp.\ any equivariant $KK$-equivalence between $\widehat{\mathbb{G}}$-C$^*$-algebras). Analogously, we can consider the equivariant Kasparov category $\mathscr{K}\mathscr{K}^{\mathbb{G}}$.
	
	Assume for the moment that $\widehat{\mathbb{G}}$ is \emph{torsion-free}. In that case, consider the usual complementary pair of localizing subcategories in $\mathscr{K}\mathscr{K}^{\widehat{\mathbb{G}}}$, $(\mathscr{L}_{\widehat{\mathbb{G}}}, \mathscr{N}_{\widehat{\mathbb{G}}})$. Denote by $(L,N)$ the canonical triangulated functors associated to this complementary pair. More precisely we have that $\mathscr{L}_{\widehat{\mathbb{G}}}$ is defined as the \emph{localizing subcategory of $\mathscr{K}\mathscr{K}^{\widehat{\mathbb{G}}}$ generated by the objects of the form $Ind^{\widehat{\mathbb{G}}}_{\mathbb{E}}(C)=C\otimes c_0(\widehat{\mathbb{G}})$ with $C$ any C$^*$-algebra in the Kasparov category $\mathscr{K}\mathscr{K}$} and $\mathscr{N}_{\widehat{\mathbb{G}}}$ is defined as the \emph{localizing subcategory of objects which are isomorphic to $0$ in $\mathscr{K}\mathscr{K}$}: $\mathscr{L}_{\widehat{\mathbb{G}}}:=\langle\{Ind^{\widehat{\mathbb{G}}}_{\mathbb{E}}(C)=C\otimes c_0(\widehat{\mathbb{G}})\ |\ C\in Obj.(\mathscr{K}\mathscr{K})\}\rangle$ and $\mathscr{N}_{\widehat{\mathbb{G}}}=\{A\in Obj.(\mathscr{K}\mathscr{K}^{\widehat{\mathbb{G}}})\ |\ Res^{\widehat{\mathbb{G}}}_{\mathbb{E}}(A)=0\}$.
	
	If $\widehat{\mathbb{G}}$ is \emph{not} torsion-free, then a technical property lacked in the literature in order to define a suitable complementary pair. The natural candidate used in the related works (see for instance \cite{MeyerNestTorsion} and \cite{VoigtBaumConnesAutomorphisms}) is given by the following localizing subcategories of $\mathscr{K}\mathscr{K}^{\widehat{\mathbb{G}}}$:
	$$\mathscr{L}_{\widehat{\mathbb{G}}}:=\langle\{C\otimes T\underset{r}{\rtimes} \mathbb{G}\ |\  C\in Obj.(\mathscr{K}\mathscr{K})\mbox{, } T\in\text{Tor}(\widehat{\mathbb{G}})\}\rangle,$$
	$$\mathscr{N}_{\widehat{\mathbb{G}}}:=\mathscr{L}^{\dashv}_{\widehat{\mathbb{G}}}=\{A\in Obj(\mathscr{K}\mathscr{K}^{\widehat{\mathbb{G}}})\ |\ KK^{\widehat{\mathbb{G}}}(L, A)=0\mbox{, $\forall$ $L\in Obj(\mathscr{L}_{\widehat{\mathbb{G}}})$}\}.$$
	
	\begin{rem}
			We put $\widehat{\mathscr{L}}_{\widehat{\mathbb{G}}}:=\langle\{T\otimes C\ |\  C\in Obj.(\mathscr{K}\mathscr{K})\mbox{, } T\in\text{Tor}(\widehat{\mathbb{G}})\}\rangle$, so that we have $\widehat{\mathscr{L}}_{\widehat{\mathbb{G}}}\rtimes \mathbb{G}=\mathscr{L}_{\widehat{\mathbb{G}}}$ by definition. Similarly we put $\widehat{\mathscr{N}}_{\widehat{\mathbb{G}}}:=\mathscr{N}_{\widehat{\mathbb{G}}}\rtimes \widehat{\mathbb{G}}$.
	\end{rem}
	
In \cite{YukiBCTorsion}, Y. Arano and A. Skalski have showed that these two subcategories form indeed a complementary pair of localizing subcategories in $\mathscr{K}\mathscr{K}^{\widehat{\mathbb{G}}}$ (note that, by \cite[Proposition 2.9]{MeyerNest} any complementary pair $(\mathscr{L},\mathscr{N})$ must have $\mathscr{N} = \mathscr{L}^{\dashv}$). By Baaj-Skandalis duality, one then also obtains that the subcategories $\widehat{\mathscr{L}}_{\widehat{\mathbb{G}}}$ and $\widehat{\mathscr{N}}_{\widehat{\mathbb{G}}}$ form a complementary pair of localising subcategories in $\mathscr{K}\mathscr{K}^{\mathbb{G}}$. 

The key step is a generalization of the \emph{Green-Julg isomorphism}. In this section, we will provide a different proof of this generalization in the case of projective torsion, see Section \ref{sec.TwistedGreenJulg}.



\subsection{Two-sided crossed products}\label{sec.TwoSidedCrossedProducts}
	In this section we give some technical tools necessary for the next section. In order to define an assembly map for discrete quantum groups, we need to define a suitable pair of adjoint functors on $\mathscr{K}\mathscr{K}^{\widehat{\mathbb{G}}}$ taking into account the torsion phenomena of $\widehat{\mathbb{G}}$. For classical discrete groups, the torsion phenomena are completely described in terms of finite subgroups. Hence, the induction and restriction functors will provide such an adjunction. In the quantum case, the torsion is described in terms of $\mathbb{G}$-C$^*$-algebras, so the induction-restriction approach is no longer valid since finite discrete quantum groups do not exhaust the torsion phenomena for $\widehat{\mathbb{G}}$. Moreover, the torsion objects in the Kasparov category are also $\mathbb{G}$-C$^*$-algebras. In this sense, we need a construction encoding both the induction process from the classical setting and the \emph{diagonal} action with respect to $\mathbb{G}$. We call it a \emph{two-sided crossed product} and it is already used in \cite{YukiBCTorsion}, see also \cite[Section 2.6]{NikshychVainerman} for an algebraic precursor.

	\begin{defi}
		Let $\mathbb{G}$ be a compact quantum group. If $(B, \beta)$ is a left $\mathbb{G}$-C$^*$-algebra and $(A, \alpha)$ is a right $\mathbb{G}$-C$^*$-algebra, then the two-sided crossed product of $B$ and $A$ by $\mathbb{G}$, denoted by $B\underset{r, \beta}{\rtimes}\mathbb{G}\underset{r, \alpha}{\ltimes}A$, is the C$^*$-algebra defined by:
		$$B\underset{r, \beta}{\rtimes}\mathbb{G}\underset{r, \alpha}{\ltimes}A:=C^*\langle ((id\otimes \lambda)\beta(B)\otimes 1)(1\otimes \widehat{\lambda}(c_0(\widehat{\mathbb{G}}))\otimes 1)(1\otimes (\rho\otimes id)(\alpha(A)))\rangle\subset \mathcal{L}_{B\otimes A}(B\otimes L^2(\mathbb{G})\otimes A).$$
	\end{defi}
	\begin{rem}
		First, to lighten the notations we will omit the representations $\lambda$, $\widehat{\lambda}$ and $\rho$ in the definition of $B\underset{r, \beta}{\rtimes}\mathbb{G}\underset{r, \alpha}{\ltimes}A$, and note that $\rho(x) = U_{\mathbb{G}}xU_{\mathbb{G}}$ for $x\in C(\mathbb{G})$. We also write $\alpha_U(x) = (U_{\mathbb{G}}\otimes id)\alpha(x)(U_{\mathbb{G}}\otimes id)$ for $x\in A$. Next, it is easy to show that $B\underset{r, \beta}{\rtimes}\mathbb{G}\underset{r, \alpha}{\ltimes}A=\overline{span}\{(\beta(B)\otimes 1)(1\otimes c_0(\widehat{\mathbb{G}})\otimes 1)(1\otimes \alpha_U(A))\}$, cf.\ Lemma \ref{lem.TwistedCrossedProd}. From now on we will use these two descriptions of $B\underset{r, \beta}{\rtimes}\mathbb{G}\underset{r, \alpha}{\ltimes}A$ interchangeably. As a consequence, we see that the maps $B\longrightarrow\mathcal{L}_{B\otimes A}(B\otimes L^2(\mathbb{G})\otimes A)$, $A\longrightarrow\mathcal{L}_{B\otimes A}(B\otimes L^2(\mathbb{G})\otimes A)$ and $c_0(\widehat{\mathbb{G}})\longrightarrow\mathcal{L}_{B\otimes A}(B\otimes L^2(\mathbb{G})\otimes A)$ given by $b\mapsto \beta(b)\otimes 1$, $a\mapsto 1\otimes \alpha_U(a)$ and $x\mapsto 1\otimes \widehat{\lambda}(x)\otimes 1$ respectively, send $B$, $A$ and $c_0(\widehat{\mathbb{G}})$ respectively onto non-degenerate C$^*$-subalgebras of $M(B\underset{r, \beta}{\rtimes}\mathbb{G}\underset{r, \alpha}{\ltimes}A)$.
	\end{rem}
	
\begin{rem}
In \cite{YukiBCTorsion}, a universal version $B\underset{\beta}{\rtimes}\mathbb{G}\underset{\alpha}{\ltimes}A$ of the double crossed product is introduced. It is not hard to see that our definition is compatible with theirs, in the sense that there is a natural surjective $*$-homomorphism: 
\begin{equation}\label{EqUnivProp2cross}
\pi_{r}: B\underset{\beta}{\rtimes}\mathbb{G}\underset{\alpha}{\ltimes}A \rightarrow B\underset{r,\beta}{\rtimes}\mathbb{G}\underset{r,\alpha}{\ltimes}A.
\end{equation}
Indeed, take some univeral representations of $B, A$ on Hilbert spaces $H_B,H_A$. Then one simply needs to observe that the representations $\pi: b \mapsto \beta(b)\otimes 1$ and $\theta:a\mapsto 1\otimes (\rho\otimes \mathrm{id})\alpha(a)$ of resp.\ $B$ and $A$ on $H = H_B \otimes L^2(\mathbb{G})\otimes H_A$, together with the unitary representation $U := V_{24}$ of $\mathbb{G}$,  form a covariant representation in the sense of \cite[Section 3]{YukiBCTorsion}: it follows immediately from the formulas in Theorem \ref{theo.RRegularRepresentation} that: 
\[
U_{12}(\pi(b)\otimes 1)U_{12}^* = (\pi\otimes \mathrm{id})\beta(b),\qquad U_{12}^*(\rho(a)\otimes 1)U_{12} = (\theta\otimes \mathrm{id})\alpha^{\mathrm{op}}(a). 
\]
The universal property of \cite[Proposition 3.2]{YukiBCTorsion} now provides \eqref{EqUnivProp2cross}. 

It is furthermore not hard to see that $\pi_r$ will be an isomorphism \emph{if $A$ is finite-dimensional}: By e.g.\ \cite[Theorem 5.31]{KennyActions}, $B\underset{\beta}{\rtimes} \mathbb{G} = B\underset{r,\beta}{\rtimes} \mathbb{G}$, and then both    $B\underset{\beta}{\rtimes} \mathbb{G}\underset{\alpha}{\ltimes}A$ and $B\underset{r,\beta}{\rtimes} \mathbb{G}\underset{\alpha}{\ltimes} A$ are implemented on the algebraic tensor product vector space $(B\underset{r,\beta}{\rtimes} \mathbb{G})\odot A$, so $\pi_r$ must be an isomorphism.
\end{rem}


	As for usual crossed products, we can show that the two-sided crossed product construction is functorial. More precisely, we have the following:
	\begin{pro}
		Let $\mathbb{G}$ be a compact quantum group. Let $(B, \beta)$, $(B', \beta')$ be left $\mathbb{G}$-C$^*$-algebras and $(A, \alpha)$, $(A', \alpha')$ right $\mathbb{G}$-C$^*$-algebras. 
		\begin{enumerate}[i)]
			\item If $\phi: B\longrightarrow M(B')$ is a non-degenerate $\mathbb{G}$-equivariant $*$-homomorphism, then there exists a non-degenerate $*$-homomorphism:
		$$\phi\rtimes id\ltimes id: B\underset{r, \beta}{\rtimes}\mathbb{G}\underset{r, \alpha}{\ltimes}A\longrightarrow M(B'\underset{r, \beta'}{\rtimes}\mathbb{G}\underset{r, \alpha}{\ltimes}A)$$
		such that $\phi\rtimes id\ltimes id\big((\beta(b)\otimes 1)(1\otimes x\otimes 1)(1\otimes\alpha_U(a))\big)=(\beta'(\phi(b))\otimes 1)(1\otimes x\otimes 1)(1\otimes\alpha_U(a))$, for all $b\in B$, $a\in A$ and $x\in c_0(\widehat{\mathbb{G}})$.
			\item If $\psi: A\longrightarrow M(A')$ is a non-degenerate $\mathbb{G}$-equivariant $*$-homomorphism, then there exists a non-degenerate $*$-homomorphism:
		$$id\rtimes id\ltimes \psi: B\underset{r, \beta}{\rtimes}\mathbb{G}\underset{r, \alpha}{\ltimes}A\longrightarrow M(B\underset{r, \beta}{\rtimes}\mathbb{G}\underset{r, \alpha'}{\ltimes}A')$$
		such that $id\rtimes id\ltimes \psi\big((\beta(b)\otimes 1)(1\otimes x\otimes 1)(1\otimes\alpha_U(a))\big)=(\beta(b)\otimes 1)(1\otimes x\otimes 1)(1\otimes\alpha_U'(\psi(a)))$, for all $b\in B$, $a\in A$ and $x\in c_0(\widehat{\mathbb{G}})$.
		\end{enumerate}
	\end{pro}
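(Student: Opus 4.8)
The plan is to realise both homomorphisms \emph{spatially}, using the functoriality of adjointable operators under an interior tensor product, with $\mathbb{G}$-equivariance of $\phi$ (resp.\ $\psi$) ensuring that the generators are sent to the prescribed elements. This parallels the usual functoriality of reduced crossed products. Throughout write $H := B\otimes L^2(\mathbb{G})\otimes A$, regarded as a Hilbert $B\otimes A$-module, so that $B\underset{r,\beta}{\rtimes}\mathbb{G}\underset{r,\alpha}{\ltimes}A$ is a $C^*$-subalgebra of $\mathcal{L}_{B\otimes A}(H)$.

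For item i), recall that non-degeneracy of $\phi$ lets us extend it to $M(B)\to M(B')$ and, tensoring, to $\phi\otimes id: M(B\otimes C(\mathbb{G}))\to M(B'\otimes C(\mathbb{G}))$; $\mathbb{G}$-equivariance of $\phi$ reads $\beta'\circ\phi=(\phi\otimes id_{C(\mathbb{G})})\circ\beta$. I would view $F_\phi := B'\otimes A$ as a $(B\otimes A, B'\otimes A)$-correspondence with left action the non-degenerate homomorphism $\phi\otimes id_A$. The interior tensor product $H\underset{\phi\otimes id_A}{\otimes} F_\phi$ is then canonically unitarily isomorphic to $B'\otimes L^2(\mathbb{G})\otimes A$, since $B\underset{\phi}{\otimes}B' \cong B'$ and $A\underset{A}{\otimes}A\cong A$. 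The standard construction $T\mapsto T\underset{\phi\otimes id_A}{\otimes}1$ therefore yields a $*$-homomorphism $\mathcal{L}_{B\otimes A}(H)\to\mathcal{L}_{B'\otimes A}(B'\otimes L^2(\mathbb{G})\otimes A)$, which I restrict to the two-sided crossed product.

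It then remains to compute this map on generators. The left action only affects the $B$-leg, so $(1\otimes x\otimes 1)\underset{\phi\otimes id_A}{\otimes}1 = 1\otimes x\otimes 1$ for $x\in c_0(\widehat{\mathbb{G}})$ and $(1\otimes\alpha_U(a))\underset{\phi\otimes id_A}{\otimes}1 = 1\otimes\alpha_U(a)$, as these legs commute with the correspondence. For $b\in B$, multiplication by $\beta(b)\in M(B\otimes C(\mathbb{G}))$ on the $B\otimes L^2(\mathbb{G})$ part is transported, under the above isomorphism, to multiplication by $(\phi\otimes id)(\beta(b))$, which by equivariance equals $\beta'(\phi(b))$; hence $(\beta(b)\otimes 1)\underset{\phi\otimes id_A}{\otimes}1 = \beta'(\phi(b))\otimes 1$. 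Since $T\mapsto T\underset{\phi\otimes id_A}{\otimes}1$ is multiplicative, the product of the three legs is sent to the claimed element. Item ii) is entirely symmetric: one uses instead the $(B\otimes A, B\otimes A')$-correspondence $F_\psi := B\otimes A'$ with left action $id_B\otimes\psi$, together with $\alpha'\circ\psi=(id_{C(\mathbb{G})}\otimes\psi)\circ\alpha$; here the $U_{\mathbb{G}}$'s in $\alpha_U$ act only on the $L^2(\mathbb{G})$-leg and so pass through the correspondence unchanged, yielding $(1\otimes\alpha_U(a))\underset{id_B\otimes\psi}{\otimes}1 = 1\otimes\alpha_U'(\psi(a))$.

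The main point to check, and the only genuinely delicate step, is that the image lands in $M(B'\underset{r,\beta'}{\rtimes}\mathbb{G}\underset{r,\alpha}{\ltimes}A)$ (a priori it only lands in $\mathcal{L}_{B'\otimes A}$) and that the resulting homomorphism is non-degenerate. For this I would use that $\phi$ (resp.\ $\psi$) is non-degenerate: since $[\phi(B)B'] = B'$, the elements $\beta'(\phi(b))\,(1\otimes x\otimes 1)\,(1\otimes\alpha_U(a))$ multiply the generators of $B'\underset{r,\beta'}{\rtimes}\mathbb{G}\underset{r,\alpha}{\ltimes}A$ back into the crossed product, so they are genuine multipliers, and their span acts non-degenerately. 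The verification is routine once the density descriptions of the crossed product (cf.\ the span formula following its definition, analogous to Lemma \ref{lem.TwistedCrossedProd}) are invoked; no new idea is needed beyond the bookkeeping of the three legs.
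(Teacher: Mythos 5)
Your proof is correct. The paper states this proposition without proof, treating it as the routine analogue of functoriality for ordinary reduced crossed products, and your argument — implementing the map spatially as $T\mapsto T\otimes 1$ on the interior tensor product $H\underset{\phi\otimes id_A}{\otimes}(B'\otimes A)\cong B'\otimes L^2(\mathbb{G})\otimes A$ (resp.\ over $id_B\otimes\psi$), computing the three legs of the generators via the equivariance identities $\beta'\circ\phi=(\phi\otimes id)\circ\beta$ and $\alpha'\circ\psi=(id\otimes\psi)\circ\alpha$, and then using non-degeneracy of $\phi$ (resp.\ $\psi$) together with the span description of the two-sided crossed product to verify multiplier membership and non-degeneracy — is exactly the standard route the authors implicitly invoke.
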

		
	If a torsion action of projective type of $\mathbb{G}$ is involved in a two-sided crossed product, then we can give an alternative description of the latter, which is useful for our purpose. Recall first from Lemma \ref{lem.DecompVTand} that, given a  torsion action of projective type $\delta$ on $T = \mathcal{B}(H)$ with implementing $\Omega^*$-representation $u$ for some $\Omega$, we can construct the $\Omega$-representation $u^{\circ}$ on $\overline{H}$ with associated coaction $\overline{\delta}$ on $T^{op} \cong \mathcal{B}(\overline{H})$. By Remark \ref{rem.switchOmOmStar} we can view $u^\circ$ as an $\Omega = (\Omega^*)^*$-representation of $\mathbb{G}_\Omega$, and correspondingly $\delta_{u^{\circ}}$ as a left action of $\mathbb{G}_\Omega$ on $T^{op}$. 



	\begin{pro}\label{pro.2CrossedProdT}
		Let $\mathbb{G}$ be a compact quantum group. Let $(T, \delta)$ be a torsion action of projective type of $\mathbb{G}$. Let $u$ be an $\Omega^*$-representation of $\mathbb{G}$ implementing $\delta$ for some $2$-cocycle $\Omega$ (necessarily of finite type). Let $(B, \beta)$ be a $\mathbb{G}$-C$^*$-algebra. Then $B\underset{r, \beta}{\rtimes}\mathbb{G}\underset{r, \overline{\delta}}{\ltimes}T^{op}\cong (B\otimes T^{op})\underset{\widetilde{\beta}}{\rtimes} \mathbb{G}_\Omega$, where $\widetilde{\beta}:=Ad_{u^\circ_{23}}\circ \beta_{13}$.
	\end{pro}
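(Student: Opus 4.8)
The plan is to produce an explicit spatial $*$-isomorphism by conjugating the standard faithful representation of the two-sided crossed product into that of the ordinary $\mathbb{G}_\Omega$-crossed product, and then to match the two generating sets as closed linear spans. Concretely, $B\underset{r,\beta}{\rtimes}\mathbb{G}\underset{r,\overline{\delta}}{\ltimes}T^{op}$ acts on $B\otimes L^2(\mathbb{G})\otimes T^{op}$ and is the closed span of the products $\beta(b)_{12}\,y_{2}\,\overline{\delta}_U(x)_{23}$ with $b\in B$, $y\in c_0(\widehat{\mathbb{G}})$, $x\in T^{op}$, whereas $(B\otimes T^{op})\underset{\widetilde{\beta}}{\rtimes}\mathbb{G}_\Omega$ acts on $B\otimes T^{op}\otimes L^2(\mathbb{G})$ and is the closed span of $\widetilde{\beta}(b\otimes x)\,z_{3}$ with $z\in C^*_r(\mathbb{G}_\Omega)=c_0(\widehat{\mathbb{G}_\Omega})$. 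Since $T^{op}=\mathcal{B}(\overline{H})$ is a finite dimensional matrix algebra, both are represented on Hilbert modules over the \emph{same} coefficient algebra $B\otimes T^{op}$, differing only by the flip $\Sigma_{23}$ of the $L^2(\mathbb{G})$- and $T^{op}$-legs. Hence the isomorphism should be implemented by a unitary $W=M\,\Sigma_{23}\in\mathcal{L}_{B\otimes T^{op}}\big(B\otimes L^2(\mathbb{G})\otimes T^{op},\,B\otimes T^{op}\otimes L^2(\mathbb{G})\big)$, where $M$ acts on the $T^{op}\otimes L^2(\mathbb{G})$-legs and is assembled from $u^\circ$ and the symmetry $U_\mathbb{G}$.

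First I would rewrite the generators on $B\otimes L^2(\mathbb{G})\otimes T^{op}$ entirely through $u^\circ$. Using Lemma \ref{lem.DecompVTand} and the resulting identity $\overline{\delta}(x)=\Sigma\big((u^\circ)^*(x\otimes 1)u^\circ\big)$ for $x\in T^{op}$, the third family of generators becomes, after applying $\Sigma_{23}$, an expression of the form $(U_\mathbb{G})_3\,(u^\circ_{23})^*\,x_2\,u^\circ_{23}\,(U_\mathbb{G})_3$, while $\beta(b)_{12}$ becomes $\beta(b)_{13}$ and $y_2$ becomes $y_3$. The factor $u^\circ_{23}$ inside $M$ is then what produces the $\widetilde{\beta}$-conjugation $u^\circ_{23}\beta(b)_{13}(u^\circ_{23})^*$ of the $\beta$-part, so that the $B$-leg is dealt with, and the remaining factors of $M$ (built from $U_\mathbb{G}$ and regular-representation data) are there to turn the $U_\mathbb{G}$-twisted $\overline{\delta}_U$-pieces and the dual $y_3$ into $\widetilde{\beta}(1\otimes x)=\delta_{u^\circ}(x)$ and into $C^*_r(\mathbb{G}_\Omega)$. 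The computational inputs here are the $\Omega$-representation identity $(\mathrm{id}\otimes{}_{\Omega}\Delta)(u^\circ)=u^\circ_{12}u^\circ_{13}$, equivalently $(\mathrm{id}\otimes\Delta)(u^\circ)=(1\otimes\Omega^*)u^\circ_{12}u^\circ_{13}$, together with the Kac-system relations of Theorem \ref{theo.KacSystemG} for $U_\mathbb{G}$, $\widetilde{V}_\mathbb{G}$ and $\check{V}_\mathbb{G}=W_\mathbb{G}$, which govern how $u^\circ_{23}$ and the $U_\mathbb{G}$-conjugation interact with the regular representation.

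The crucial and most delicate step — and what I expect to be the main obstacle — is the identification of the image of the dual: one must show that, after conjugation by $M$, the algebra generated by $c_0(\widehat{\mathbb{G}})$ together with the $U_\mathbb{G}$-twisted factors coming from $\overline{\delta}_U$ is exactly $c_0(\widehat{\mathbb{G}_\Omega})=C^*_r(\mathbb{G}_\Omega)$ sitting in the $L^2(\mathbb{G})$-leg. This is the crossed-product incarnation of the comonoidal (co-linking) Morita equivalence between $\widehat{\mathbb{G}}$ and $\widehat{\mathbb{G}_\Omega}$, and it is where projectivity of the torsion is used: $u^\circ$ is finite dimensional, $\Omega$ is of finite type, and $\delta_{u^\circ}$ is an inner right action of $\mathbb{G}_\Omega$ on $T^{op}$ (Remark \ref{rem.switchOmOmStar}). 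To pin down the correct regular representation I would invoke Remark \ref{rem.TwistedCStarGOmega} and Remark \ref{rems.rightdualreg}, which relate $C^*_r(\mathbb{G}_\Omega)$, $C^*_r(\mathbb{G},\Omega)$ and $C^*_r(\Omega,\mathbb{G})$ to $V_\mathbb{G}$, $V^\Omega$, $J_\mathbb{G}$, $\widehat{J}_\mathbb{G}$ and $X_\Omega$. The entire difficulty is the bookkeeping of these conventions — tracking $U_\mathbb{G}=J_\mathbb{G}\widehat{J}_\mathbb{G}$, the modular conjugations, and the flips so that the conjugated $c_0(\widehat{\mathbb{G}})$ lands on the \emph{honest} dual of $\mathbb{G}_\Omega$ rather than on a twisted variant.

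Finally I would close the argument by density and by checking well-definedness at both ends. One verifies that $\widetilde{\beta}=Ad_{u^\circ_{23}}\circ\beta_{13}$ is indeed a genuine continuous $\mathbb{G}_\Omega$-action (so the right-hand crossed product is defined), the $\Omega$-representation property of $u^\circ$ being exactly what corrects the cocycle discrepancy between $\Delta$ and $\Delta_{\mathbb{G}_\Omega}$; then one checks that $W$ carries the closed span of the left-hand generators onto the closed span of the right-hand generators, and conversely, using the module-density computations in the style of Lemma \ref{lem.TwistedCrossedProd}. As a sanity check that also serves to calibrate $M$, specializing to $B=\mathbb{C}$ reduces the claim to $\mathbb{G}\underset{r,\overline{\delta}}{\ltimes}T^{op}\cong T^{op}\underset{\delta_{u^\circ}}{\rtimes}\mathbb{G}_\Omega$, which can be obtained independently from the inner-action computations of Proposition \ref{pro.OmegaInnerDynamicalSystem} and Remark \ref{rem.OmegaInnerDynamicalSystem}, both sides being identified with $C^*_r(\Omega,\mathbb{G})\otimes T^{op}$ up to the conjugations above.
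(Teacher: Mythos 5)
Your proposal is correct and takes essentially the same route as the paper: the published proof first verifies that $\widetilde{\beta}=Ad_{u^\circ_{23}}\circ\beta_{13}$ is a genuine $\mathbb{G}_\Omega$-action (using the $\Omega$-representation identity for $u^\circ$, exactly as you indicate) and then matches closed spans of generators through a chain of explicit unitary conjugations — by $(u^\circ_{23})^*$, by $\Sigma_{23}(U_{\mathbb{G}}X_{\Omega}U_{\mathbb{G}})_3$, and by $u^{\circ}_{21}$ — whose inputs are precisely the inner-crossed-product identifications $T^{op}\underset{r, Ad_{u^\circ}}{\rtimes}\mathbb{G}_\Omega\cong T^{op}\otimes C^*_r(\mathbb{G}_\Omega,\Omega^*)$ and $\mathbb{G}\underset{r,\overline{\delta}}{\ltimes}T^{op}\cong C^*_r(\Omega,\mathbb{G})\otimes T^{op}$ from Remark \ref{rem.OmegaInnerDynamicalSystem}, together with the relations $C^*_r(\mathbb{G}_\Omega,\Omega^*)=JC^*_r(\mathbb{G},\Omega)J$ and $C^*_r(\Omega,\mathbb{G})=X_{\Omega}\widehat{J}C^*_r(\mathbb{G},\Omega)\widehat{J}X_{\Omega}^*$ from Remarks \ref{rem.TwistedCStarGOmega} and \ref{rems.rightdualreg} that you cite. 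The only cosmetic difference is direction: the paper computes from $(B\otimes T^{op})\underset{\widetilde{\beta}}{\rtimes}\mathbb{G}_\Omega$ toward the two-sided crossed product, so the dual $c_0(\widehat{\mathbb{G}_{\Omega}})$ sits in the initial description and your anticipated ``delicate step'' of recognizing it after conjugation is handled by exactly the bookkeeping you describe.
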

We use here that the twisted quantum group $\mathbb{G}_{\Omega}$ is again a compact quantum group by Theorem \ref{theo.TwistCompact}. 
	\begin{proof}
		First of all, it is straightforward to check that $\widetilde{\beta}:=Ad_{u^\circ_{23}}\circ \beta_{13}$ is an action of $\mathbb{G}_\Omega$ on $B\otimes T^{op}$. For given $b\in B$ and $x\in T^{op}$ we have:
	\begin{equation*}
	\begin{split}
		(id\otimes {}_{\Omega}\Delta_{\Omega^*})\widetilde{\beta}(b\otimes x)&=(id\otimes {}_{\Omega}\Delta_{\Omega^*})\big(u^\circ_{23}(b_{(0)}\otimes x \otimes b_{(1)})(u^\circ_{23})^*\big)\\
		&=\Omega_{34}\Delta_3(u^\circ_{23})(b_{(0)}\otimes x\otimes \Delta(b_{(1)}))\Delta_3(u^\circ_{23})^*\Omega^*_{34}\\
		&\overset{(*)}{=}u^{\circ}_{23}u^\circ_{24}(b_{(0)}\otimes x\otimes \Delta(b_{(1)}))(u^\circ_{24})^*(u^{\circ}_{23})^*\\
		&\overset{(**)}{=}u^{\circ}_{23}u^\circ_{24}(b_{(0)(0)}\otimes x\otimes b_{(0)(1)}\otimes b_{(1)})(u^\circ_{24})^*(u^{\circ}_{23})^*\\
		&=(\widetilde{\beta}\otimes id)\big(u^\circ_{23}(b_{(0)}\otimes x \otimes b_{(1)})(u^\circ_{23})^*\big)=(\widetilde{\beta}\otimes id)\widetilde{\beta}(b\otimes x),
	\end{split}
	\end{equation*}
	where in $(*)$ we used the fact that $u^\circ$ is an $\Omega$-representation of $\mathbb{G}$ and in $(**)$ we use the fact that $\beta$ is a $\mathbb{G}$-action on $B$.
	
	Next, recall that $u^\circ$ implements $\overline{\delta}$. Then on the one hand, by Remark \ref{rem.OmegaInnerDynamicalSystem} we have $\mathbb{G}\underset{r, \overline{\delta}}{\ltimes} T^{op}\cong C^*_r(\Omega,\mathbb{G})\otimes T^{op}$ (here the identification is given by conjugating with $u^{\circ}_{21}(U_{\mathbb{G}}\otimes 1)$). On the other hand, by Remark \ref{rem.switchOmOmStar} we view $u^\circ$ as a $\Theta^*:=(\Omega^*)^*$-representation of $\mathbb{G}_\Omega$ so that, by applying again Remark \ref{rem.OmegaInnerDynamicalSystem}, we obtain that $T^{op}\underset{r, Ad_{u^\circ}}{\rtimes} \mathbb{G}_\Omega\cong T^{op}\otimes C^*_r(\mathbb{G}_\Omega, \Theta)=T^{op}\otimes C^*_r(\mathbb{G}_\Omega, \Omega^*)$ (here the identification is given by conjugating with $(u^\circ)^*$). Recall from Remark \ref{rem.TwistedCStarGOmega} that $C^*_r(\mathbb{G}_\Omega, \Omega^*)=JC^*_r(\mathbb{G}, \Omega)J$. Therefore, by Remark \ref{rems.rightdualreg} we see that $T^{op}\underset{r, Ad_{u^\circ}}{\rtimes} \mathbb{G}_\Omega$ is $*$-isomorphic to $\mathbb{G}\underset{r, \overline{\delta}}{\ltimes} T^{op}$ by introducing an extra conjugation with $X_{\Omega}U_{\mathbb{G}}$ (and a flip map). This allows to conclude the isomorphism of the statement. Indeed, since $U_{\mathbb{G}} X_{\Omega}U_{\mathbb{G}} \in L^{\infty}(\mathbb{G})'$, we compute
	\begin{equation*}
	\begin{split}
		(B&\otimes T^{op})\underset{\widetilde{\beta}}{\rtimes} \mathbb{G}_\Omega=\overline{span}\{(id\otimes id\otimes \lambda)\widetilde{\beta}(B\otimes T^{op})(1\otimes 1\otimes c_0(\widehat{\mathbb{G}_{\Omega}}))\}\\
		&=\overline{span}\{(id\otimes id\otimes \lambda)u^\circ_{23}(B_{(0)}\otimes T^{op}\otimes B_{(1)})(u^\circ_{23})^*(1\otimes 1\otimes c_0(\widehat{\mathbb{G}_{\Omega}}))\}\\
		&\overset{Ad_{(u^\circ_{23})^*}}{\cong}\overline{span}\{(id\otimes id\otimes \lambda)(B_{(0)}\otimes T^{op}\otimes B_{(1)})(u^\circ_{23})^*(1\otimes 1\otimes c_0(\widehat{\mathbb{G}_{\Omega}}))u^\circ_{23}\}\\
		&=\overline{span}\{(id\otimes id\otimes \lambda)(B_{(0)}\otimes 1\otimes B_{(1)})(1\otimes \underbracket[0.8pt]{T^{op}\otimes 1})(1\otimes \underbracket[0.8pt]{(u^\circ)^*(1\otimes c_0(\widehat{\mathbb{G}_{\Omega}}))u^\circ})\}\\
		&\underset{(*)}{=} \overline{span}\{(id\otimes id\otimes \lambda)(B_{(0)}\otimes 1\otimes B_{(1)})(1\otimes \underbracket[0.8pt]{T^{op}\otimes C^*_r(\mathbb{G}_\Omega, \Omega^*)})\}\\
		&\cong \overline{span}\{(id\otimes id\otimes \lambda)(B_{(0)}\otimes 1\otimes B_{(1)})(1\otimes T^{op}\otimes JC^*_r(\mathbb{G}, \Omega)J)\}\\
		&\overset{Ad_{\Sigma_{23}(U_{\mathbb{G}}X_{\Omega}U_{\mathbb{G}})_3}}{\cong}\overline{span}\{(id\otimes \lambda\otimes id)(B_{(0)}\otimes B_{(1)}\otimes 1)(1\otimes \underbracket[0.8pt]{U_{\mathbb{G}}C^*_r(\Omega,\mathbb{G})U_{\mathbb{G}}\otimes T^{op}})\}\\
		&\overset{Ad_{u_{21}^{\circ}}}{\underset{(**)}{\cong}}\overline{span}\{(id\otimes \lambda\otimes id)(B_{(0)}\otimes B_{(1)}\otimes 1)   (1\otimes \underbracket[0.8pt]{c_0(\widehat{\mathbb{G}})\otimes 1})(1\otimes \underbracket[0.8pt]{\rho\otimes 1}) (1\otimes \underbracket[0.8pt]{\Sigma (u^{\circ})^*(1\otimes T^{op})u^\circ \Sigma})\}\\
		&=\overline{span}\{((id\otimes \lambda)\beta(B)\otimes 1)(1\otimes c_0(\widehat{\mathbb{G}})\otimes 1)(1\otimes (\rho\otimes 1)\overline{\delta}(T^{op}))\}=B\underset{r, \beta}{\rtimes}\mathbb{G}\underset{r, \overline{\delta}}{\ltimes}T^{op},
	\end{split}
	\end{equation*}
	where in $(*)$ and $(**)$ we have used the identifications $T^{op}\underset{r, Ad_{u^\circ}}{\rtimes} \mathbb{G}_\Omega\cong T^{op}\otimes C^*_r(\mathbb{G}_\Omega, \Omega^*)$ and $\mathbb{G}\underset{r, \overline{\delta}}{\ltimes} T^{op}\cong C^*_r(\Omega,\mathbb{G})\otimes T^{op}$ explained above, respectively. 
	\end{proof}

	The two-sided crossed product construction can also be defined for Hilbert modules in a similar way as we do for usual crossed products.
	\begin{defi}\label{defi.DescentTwoSidedCrossedProd}
		Let $\mathbb{G}$ be a compact quantum group. Let $(B, \beta)$ be a left $\mathbb{G}$-C$^*$-algebra and $(A, \alpha)$ a right $\mathbb{G}$-C$^*$-algebra. If $(E, \delta_E)$ is a $\mathbb{G}$-equivariant Hilbert $B$-module, we define the two-sided crossed product of $E$ and $A$ by $\mathbb{G}$, denoted by $E\underset{r, \delta_E}{\rtimes}\mathbb{G}\underset{r, \alpha}{\ltimes}A$, as the following Hilbert $B\underset{r, \beta}{\rtimes}\mathbb{G}\underset{r, \alpha}{\ltimes}A$-module $E\underset{r, \delta_E}{\rtimes}\mathbb{G}\underset{r, \alpha}{\ltimes}A:=E\underset{B}{\otimes} B\underset{r, \beta}{\rtimes}\mathbb{G}\underset{r, \alpha}{\ltimes}A$.
	\end{defi}
	
	As for the usual crossed products, the embeddings of $E\cong \mathcal{K}_{B}(B, E)$ and $B\cong \mathcal{K}_B(B)$ into $\mathcal{K}_{B}(B\oplus E)$ induce an embedding of $E\underset{r, \delta_E}{\rtimes}\mathbb{G}\underset{r, \alpha}{\ltimes}A$ into $\mathcal{K}_{B}(B\oplus E)\underset{r}{\rtimes}\mathbb{G}\underset{r, \alpha}{\ltimes}A$. In this way we have the following (see for instance \cite[Lemme 5.2]{VergniouxThesis} for a proof):
	\begin{pro}
		Let $\mathbb{G}$ be a compact quantum group. Let $(B, \beta)$ be a left $\mathbb{G}$-C$^*$-algebra and $(A, \alpha)$ a right $\mathbb{G}$-C$^*$-algebra. If $(E, \delta_E)$ is a $\mathbb{G}$-equivariant Hilbert $B$-module, then $\mathcal{K}_{B}(E)\underset{r}{\rtimes}\mathbb{G}\underset{r, \alpha}{\ltimes}A\cong \mathcal{K}_{B\underset{r, \beta}{\rtimes}\mathbb{G}\underset{r, \alpha}{\ltimes}A}\big(E\underset{r, \delta_E}{\rtimes}\mathbb{G}\underset{r, \alpha}{\ltimes}A\big)$.
	\end{pro}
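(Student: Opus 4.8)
The plan is to follow the argument for ordinary reduced crossed products, cf.\ \cite[Lemme 5.2]{VergniouxThesis}, letting the left $\mathbb{G}$-$C^*$-algebra $A$ enter only passively through the leg $\alpha_U$. Throughout write $D := B\underset{r, \beta}{\rtimes}\mathbb{G}\underset{r, \alpha}{\ltimes}A$ and $M := E\underset{r, \delta_E}{\rtimes}\mathbb{G}\underset{r, \alpha}{\ltimes}A = E\underset{B}{\otimes}D$, where $B$ acts on $D$ through the non-degenerate $*$-homomorphism $\pi\colon B\to M(D)$, $\pi(b)=\beta(b)\otimes 1$, recorded in the Remark following the definition of the two-sided crossed product. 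As an organizing device I would introduce the linking algebra $\mathcal{L} := \mathcal{K}_B(B\oplus E)$, which is a $\mathbb{G}$-$C^*$-algebra containing $B=\mathcal{K}_B(B)$, $\mathcal{K}_B(E)$ and $E\cong\mathcal{K}_B(B,E)$ as its corners relative to two $\mathbb{G}$-invariant complementary projections $p,q\in M(\mathcal{L})$; the embedding $M\hookrightarrow \mathcal{L}\underset{r}{\rtimes}\mathbb{G}\underset{r, \alpha}{\ltimes}A$ noted just before the statement is precisely the off-diagonal corner $q(\,\cdot\,)p$.

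The first key input is a purely module-theoretic identity. For a non-degenerate $*$-homomorphism $\pi\colon B\to M(D)$ and a Hilbert $B$-module $E$, the rank-one operators satisfy
\[
\theta_{\xi\otimes d,\,\eta\otimes e} = (1_E\otimes de^*)(\theta_{\xi,\eta}\otimes 1),\qquad \xi,\eta\in E,\ d,e\in D,
\]
where $\theta_{\xi,\eta}\otimes 1$ denotes the image of $\theta_{\xi,\eta}\in\mathcal{K}_B(E)$ under the canonical map $\mathcal{K}_B(E)\to \mathcal{L}_D(E\underset{\pi}{\otimes}D)$ and $1_E\otimes d$ denotes left multiplication by $d$ on the $D$-leg. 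Since $\overline{span}\{de^*\mid d,e\in D\}=D$, this yields
\[
\mathcal{K}_D\big(E\underset{\pi}{\otimes}D\big)=\overline{span}\big\{(1_E\otimes d)(k\otimes 1)\mid d\in D,\ k\in\mathcal{K}_B(E)\big\}.
\]
Thus $\mathcal{K}_D(M)$ is generated by the image of $\mathcal{K}_B(E)$ together with left multiplication by $D$, and the balancing $\xi\otimes\pi(b)d = \xi b\otimes d$ over $B$ will let me move $\beta(b)$ freely across the tensor product.

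Next I would match these generators with those of the crossed product. Representing everything on the Hilbert $B\otimes A$-module $E\otimes L^2(\mathbb{G})\otimes A$, the identity $\delta_{\mathcal{K}_B(E)}(\theta_{\xi,\eta})=\delta_E(\xi)\delta_E(\eta)^* = Ad_{V_E}(\theta_{\xi,\eta}\otimes 1)$ (with $V_E$ the admissible operator of $(E,\delta_E)$) realizes the generator $\delta_{\mathcal{K}_B(E)}(\theta_{\xi,\eta})\otimes 1$ of $\mathcal{K}_B(E)\underset{r}{\rtimes}\mathbb{G}\underset{r, \alpha}{\ltimes}A$ as the image of $\theta_{\xi,\eta}\in\mathcal{K}_B(E)$, while the generators $1\otimes\widehat{\lambda}(x)\otimes 1$ and $1\otimes\alpha_U(a)$, together with $\beta(b)\otimes 1$, are exactly the generators $(\beta(b)\otimes 1)(1\otimes\widehat{\lambda}(x)\otimes 1)(1\otimes\alpha_U(a))$ of $D$ entering through left multiplication on the $D$-leg. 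This produces a $*$-isomorphism $\mathcal{K}_B(E)\underset{r}{\rtimes}\mathbb{G}\underset{r, \alpha}{\ltimes}A\xrightarrow{\ \sim\ }\mathcal{K}_D(M)$; equivalently, applying the functorial two-sided crossed product to $\mathcal{L}$ gives $\mathcal{L}\underset{r}{\rtimes}\mathbb{G}\underset{r, \alpha}{\ltimes}A\cong \mathcal{K}_D\big((B\oplus E)\underset{B}{\otimes}D\big)=\mathcal{K}_D(D\oplus M)$, and reading off the $q$-corner yields the statement.

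The main obstacle is the three-leg bookkeeping rather than any conceptual point: one must verify that the middle $c_0(\widehat{\mathbb{G}})$-leg and the $\alpha_U$-twisted right $A$-leg are absorbed into $D$ consistently with the balancing over $B$, and, crucially, that non-degeneracy of $\pi$ and the relevant density statements (as in Lemma \ref{lem.TwistedCrossedProd} and Remark \ref{rem.nondegact}) upgrade the evident inclusions of generating sets to equalities of closed linear spans. Once the two generating sets are identified, the algebraic and adjoint relations are automatic, so the only genuine work is these density and leg-tracking arguments, which run exactly as in the untwisted case of \cite[Lemme 5.2]{VergniouxThesis} with $A$ carried along unchanged.
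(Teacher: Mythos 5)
Your overall architecture --- embed everything into the two-sided crossed product of the linking algebra $\mathcal{K}_B(B\oplus E)$ and run Vergnioux's untwisted argument with the $A$-leg carried along passively --- is exactly the route the paper takes: its ``proof'' consists precisely of the embedding remark preceding the statement together with the citation of \cite[Lemme 5.2]{VergniouxThesis}. However, your ``first key input'' is false as stated. On the balanced tensor product $E\underset{\pi}{\otimes}D$ the operator $1_E\otimes de^*$ (left multiplication on the $D$-leg) is \emph{not well defined}: the balancing identifies $\xi b\otimes f$ with $\xi\otimes \pi(b)f$, and left multiplication by an element $m\in M(D)$ respects this identification only when $m$ commutes with $\pi(B)$. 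The displayed identity also fails at the formal level: one computes $\theta_{\xi\otimes d,\,\eta\otimes e}(\zeta\otimes f)=\xi\otimes de^{*}\pi(\langle\eta,\zeta\rangle)f$, whereas $(\theta_{\xi,\eta}\otimes 1)(\zeta\otimes f)=\xi\otimes\pi(\langle\eta,\zeta\rangle)f$, so passing from the latter to the former requires inserting $de^*$ \emph{to the left of} $\pi(\langle\eta,\zeta\rangle)$, i.e.\ ``before'' the balancing --- which no operator of the form $1_E\otimes m$ can accomplish. The same confusion reappears in your concrete picture: the generators $(\beta(b)\otimes 1)(1\otimes \widehat{\lambda}(x)\otimes 1)(1\otimes\alpha_U(a))$ of $D$ do not act from the left on $E\otimes L^2(\mathbb{G})\otimes A$, since $\beta(b)\otimes 1$ is an operator on $B\otimes L^2(\mathbb{G})\otimes A$; the $B$-corner of $D$ is absorbed through the bimodule structure via $\delta_E(\xi)(\beta(b)\otimes 1)=\delta_E(\xi b)$, never as a left multiplication.

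The repair is standard and is what the cited argument actually does. Representing $M:=E\underset{r, \delta_E}{\rtimes}\mathbb{G}\underset{r, \alpha}{\ltimes}A$ concretely as $[\delta_E(E)(1\otimes \widehat{\lambda}(c_0(\widehat{\mathbb{G}}))\otimes 1)(1\otimes \alpha_U(A))]$ inside $\mathcal{L}_{B\otimes A}(B\otimes L^2(\mathbb{G})\otimes A,\, E\otimes L^2(\mathbb{G})\otimes A)$, one has $\mathcal{K}_D(M)=\overline{span}\,MM^*$, and one evaluates $MM^*$ by moving the $c_0(\widehat{\mathbb{G}})$- and $\alpha_U(A)$-legs across $\delta_E(\eta)^*$ via the coaction relations (exactly as in Lemma \ref{lem.TwistedCrossedProd}) and then using $\delta_E(\xi)\delta_E(\eta)^*=\delta_{\mathcal{K}_B(E)}(\theta_{\xi,\eta})$; this yields $[MM^*]=\mathcal{K}_B(E)\underset{r}{\rtimes}\mathbb{G}\underset{r, \alpha}{\ltimes}A$, whence the isomorphism. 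Relatedly, you cannot simply ``apply the functorial two-sided crossed product to $\mathcal{L}$'' to obtain $\mathcal{L}\underset{r}{\rtimes}\mathbb{G}\underset{r, \alpha}{\ltimes}A\cong\mathcal{K}_D(D\oplus M)$ and read off a corner: that isomorphism \emph{is} the proposition for the module $B\oplus E$, so invoking it is circular. What the linking algebra legitimately provides is the corner decomposition of $\mathcal{L}\underset{r}{\rtimes}\mathbb{G}\underset{r, \alpha}{\ltimes}A$ along the invariant projections $p,q$; identifying the $q$-corner with $\mathcal{K}_D(M)$ still requires exactly the fullness relation $[MM^*]$ above. With these two corrections your proof goes through and coincides with the paper's.
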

	
	Following similar arguments as for usual crossed products (see for instance \cite[Proposition 5.3]{VergniouxThesis} for more details), it is easy to show that Definition \ref{defi.DescentTwoSidedCrossedProd} above passes also at the level of Kasparov triples. More precisely, we have the following:
	\begin{pro}
		Let $\mathbb{G}$ be a compact quantum group. Let $(B, \beta)$, $(B', \beta')$ be left $\mathbb{G}$-C$^*$-algebras and $(A, \alpha)$ a right $\mathbb{G}$-C$^*$-algebra. If $((E, \delta_E), \pi, F)$ is a $\mathbb{G}$-equivariant Kasparov $(B', B)$-module in $KK^{\mathbb{G}}(B', B)$, then the triple $\big(E\underset{r, \delta_E}{\rtimes}\mathbb{G}\underset{r, \alpha}{\ltimes}A, \pi\rtimes id\ltimes id, F\underset{B}{\otimes} id\big)$ defines a Kasparov $\big(B'\underset{r, \beta'}{\rtimes}\mathbb{G}\underset{r, \alpha}{\ltimes}A, B\underset{r, \beta}{\rtimes}\mathbb{G}\underset{r, \alpha}{\ltimes}A\big)$-module in $KK\big(B'\underset{r, \beta'}{\rtimes}\mathbb{G}\underset{r, \alpha}{\ltimes}A, B\underset{r, \beta}{\rtimes}\mathbb{G}\underset{r, \alpha}{\ltimes}A\big)$.
	\end{pro}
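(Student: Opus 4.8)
The plan is to verify directly that the proposed triple satisfies the three Kasparov axioms, following \emph{verbatim} the template of the ordinary equivariant descent map for usual crossed products (the cited \cite[Proposition 5.3]{VergniouxThesis}), the only genuinely new feature being the extra left leg $A$. Throughout I abbreviate $\mathcal{E} = E\underset{r, \delta_E}{\rtimes}\mathbb{G}\underset{r, \alpha}{\ltimes}A$, $J = B\underset{r, \beta}{\rtimes}\mathbb{G}\underset{r, \alpha}{\ltimes}A$, $J' = B'\underset{r, \beta'}{\rtimes}\mathbb{G}\underset{r, \alpha}{\ltimes}A$, $\widetilde{\pi} = \pi\rtimes id\ltimes id$ and $\widetilde{F} = F\underset{B}{\otimes} id$. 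By the two preceding propositions $\mathcal{E}$ is a Hilbert $J$-module, $\widetilde{\pi}: J'\to \mathcal{L}_J(\mathcal{E})$ is a non-degenerate $*$-homomorphism, $\widetilde{F}$ is adjointable with $\widetilde{F}^* = F^*\underset{B}{\otimes} id$, and there is a canonical identification $\mathcal{K}_B(E)\underset{r}{\rtimes}\mathbb{G}\underset{r, \alpha}{\ltimes}A \cong \mathcal{K}_J(\mathcal{E})$. This last isomorphism is the workhorse: it converts ``compact over $B$, then crossed'' into ``compact over $J$''.

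Since the three defect operators $\widetilde{\pi}(z)(\widetilde{F}-\widetilde{F}^*)$, $\widetilde{\pi}(z)(\widetilde{F}^2-1)$ and $[\widetilde{\pi}(z), \widetilde{F}]$ behave additively and, modulo $\mathcal{K}_J(\mathcal{E})$, multiplicatively in $z$, it suffices to test them on the generating elements $(\beta'(b')\otimes 1)(1\otimes x\otimes 1)(1\otimes\alpha_U(a))$ with $b'\in B'$, $x\in c_0(\widehat{\mathbb{G}})$, $a\in A$. For the first leg $z = \beta'(b')\otimes 1$, the operators $\pi(b')(F-F^*)$, $\pi(b')(F^2-1)$ and $[\pi(b'), F]$ all lie in $\mathcal{K}_B(E)$ because $((E,\delta_E), \pi, F)$ is a Kasparov module over $B$; pushing these through $\mathcal{K}_B(E)\to \mathcal{L}_J(\mathcal{E})$ and invoking the identification above places them in $\mathcal{K}_J(\mathcal{E})$. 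This already disposes of the self-adjointness and ellipticity defects together with the commutator against the $B'$-leg.

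The remaining point, and the main obstacle, is to show that $[\widetilde{\pi}(1\otimes x\otimes 1), \widetilde{F}]$ is compact. The $A$-leg poses no difficulty: it acts on a tensor factor on which $F$ is the identity, so $\widetilde{\pi}(1\otimes\alpha_U(a))$ commutes with $\widetilde{F}$ exactly. Only the regular-representation leg $c_0(\widehat{\mathbb{G}})$ is genuinely at stake, and here $\widetilde{\pi}(1\otimes x\otimes 1)$ does \emph{not} literally commute with $\widetilde{F}$: the internal tensor product $\mathcal{E} = E\underset{B}{\otimes} J$ entangles the $E$-leg with the $L^2(\mathbb{G})$-leg through the admissible operator $V_E$ of the coaction $\delta_E$, and $V_E$ does not commute with $F\otimes 1$. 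The key is to use the $\mathbb{G}$-equivariance of $F$ \emph{modulo compacts} --- the defining condition of a class in $KK^{\mathbb{G}}(B', B)$ in the Baaj--Skandalis picture, namely that $\pi(b')\big(V_E(F\otimes 1)V_E^* - F\otimes 1\big)$ is compact --- to rewrite the commutator as a norm limit of elements of $\mathcal{K}_B(E)\underset{r}{\rtimes}\mathbb{G}\underset{r,\alpha}{\ltimes}A$, which land in $\mathcal{K}_J(\mathcal{E})$ by the workhorse isomorphism. This is precisely the step where the $KK^{\mathbb{G}}$-hypothesis is consumed, and the only place where a computation (rather than bookkeeping) is required.

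To upgrade ``satisfies the Kasparov axioms'' to ``defines a class in $KK(J', J)$'' one checks in addition that the construction is compatible with homotopies and sends degenerate modules to degenerate ones; both follow leg-by-leg from the same functoriality used for the ordinary descent, so no new ideas intervene. As an alternative that prepackages the equivariance computation of the third paragraph, one may instead identify $B\underset{r, \beta}{\rtimes}\mathbb{G}\underset{r, \alpha}{\ltimes}A$ with an ordinary crossed product of $B\otimes A$ by a diagonal $\mathbb{G}$-action (the general form of Proposition \ref{pro.2CrossedProdT}), under which $\mathcal{E}$ becomes $(E\otimes A)\underset{r}{\rtimes}\mathbb{G}$ and the statement reduces directly to the already-established functoriality of $j_{\mathbb{G}}$ applied to the external product of the given class with the identity on $A$.
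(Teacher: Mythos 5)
Your argument is correct and is precisely what the paper intends, since the paper's own proof is a one-line appeal to the template of \cite[Proposition 5.3]{VergniouxThesis} for ordinary crossed products: you flesh out exactly that route --- testing the Kasparov defects on the generators $(\beta'(b')\otimes 1)(1\otimes x\otimes 1)(1\otimes \alpha_U(a))$, absorbing the $c_0(\widehat{\mathbb{G}})$-commutator via the almost-equivariance of $F$ together with the identification $\mathcal{K}_{B}(E)\underset{r}{\rtimes}\mathbb{G}\underset{r, \alpha}{\ltimes}A\cong \mathcal{K}_{J}(\mathcal{E})$, and observing that the $A$-leg commutes exactly because $\rho(C(\mathbb{G}))=U_{\mathbb{G}}\lambda(C(\mathbb{G}))U_{\mathbb{G}}$ lies in the commutant of $\lambda(L^{\infty}(\mathbb{G}))$. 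One caveat: the ``alternative'' in your final paragraph should be dropped, because Proposition \ref{pro.2CrossedProdT} has no general form --- its proof relies on the left action on $T^{op}$ being implemented by an $\Omega$-representation $u^{\circ}$, and for an arbitrary left $\mathbb{G}$-$C^*$-algebra $A$ there is no diagonal $\mathbb{G}$-action on $B\otimes A$ (the tensor product of coactions of a non-cocommutative quantum group is not a coaction, which is exactly why the two-sided crossed product is introduced in the first place).
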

	
	Finally, as for usual crossed products, the two-sided crossed product functor intertwines the suspension of $\mathbb{G}$-C$^*$-algebras and transforms mapping cone triangles into mapping cone triangles. In other words, the functor $(\ \cdot\ )\underset{r}{\rtimes}\mathbb{G}\underset{r, \alpha}{\ltimes}A$ preserves \emph{semi-split extensions} i.e.\ extensions of $\mathbb{G}$-equivariant C$^*$-algebras that split through a $\mathbb{G}$-equivariant completely positive contractive linear section, see for instance \cite{MeyerCatAspects}; and the class of all triangles in $\mathscr{K}\mathscr{K}^{\mathbb{G}}$ isomorphic to mapping cone triangles is the same as the class of all triangles in $\mathscr{K}\mathscr{K}^{\mathbb{G}}$ isomorphic to extension triangles (see for instance \cite[Lemma 1.2.3.7]{RubenThesis}). In conclusion, we have obtained that, for a fixed right $\mathbb{G}$-C$^*$-algebra $(A_0, \alpha_0)$, the association $(B, \beta)\mapsto B\underset{r, \beta}{\rtimes}\mathbb{G}\underset{r, \alpha_0}{\ltimes}A_0$, for all left $\mathbb{G}$-C$^*$-algebra $(B, \beta)$ defines a \emph{triangulated functor} $j_{\mathbb{G}, A_0}:=(\ \cdot\ )\underset{r}{\rtimes}\mathbb{G}\underset{r, \alpha_0}{\ltimes}A_0:\mathscr{K}\mathscr{K}^\mathbb{G}\longrightarrow\mathscr{K}\mathscr{K}$.

\subsection{Twisted Green-Julg isomorphism}\label{sec.TwistedGreenJulg}

	First of all, let us recall briefly the Green-Julg isomorphism for compact quantum groups (see \cite{VergniouxThesis} for more details). If $C$ is a C$^*$-algebra equipped with the trivial action of $\mathbb{G}$, then we have that $\Psi: KK^\mathbb{G}(C, B)\overset{\sim}{\longrightarrow} KK(C, B\underset{r, \beta}{\rtimes} \mathbb{G})$, for all $\mathbb{G}$-C$^*$-algebra $(B,\beta)$. Since $C$ is equipped with the trivial action of $\mathbb{G}$, then $C\underset{r}{\rtimes} \mathbb{G}\cong C\otimes c_0(\widehat{\mathbb{G}})$ and we have a natural $*$-homomorphism:
	$$
		\begin{array}{rcclccl}
			\phi_C:&C& \longrightarrow & C\otimes  c_0(\widehat{\mathbb{G}}), &c& \longmapsto & \phi(c):=c\otimes p_0\mbox{,}
		\end{array}
	$$
	where $p_0:=(id\otimes h_{\mathbb{G}})(V_{\mathbb{G}})\in c_0(\widehat{\mathbb{G}})$ is the canonical projection onto the subspace of invariant vectors of $(V_\mathbb{G}, L^2(\mathbb{G}))$. In this way we obtain a Kasparov triple $[\phi_C]\in KK(C, C\otimes c_0(\widehat{\mathbb{G}}))$. The Green-Julg isomorphism is given precisely by $\Psi(\mathcal{X}):=\phi_C^*(j_{\mathbb{G}}(\mathcal{X}))=[\phi_C]\underset{C\otimes  c_0(\widehat{\mathbb{G}})}{\otimes} j_{\mathbb{G}}(\mathcal{X})$, for all $\mathcal{X}\in KK^\mathbb{G}(C, B)$. It is also possible to give an explicit expression of its inverse. Given any C$^*$-algebra $C$ in $\mathscr{K}\mathscr{K}$, we denote by $\tau(C)$ the same C$^*$-algebra $C$ equipped with the trivial action of $\mathbb{G}$ and so we regard it as an object in $\mathscr{K}\mathscr{K}^\mathbb{G}$. In this way, we define the Kasparov triple $\mathcal{E}_B:=[(B\otimes L^2(\mathbb{G}), \pi_r, 0)]\in KK^\mathbb{G}\Big(\tau\big(B\underset{r, \beta}{\rtimes} \mathbb{G}\big), B\Big)$, where $\pi_r$ denotes the canonical representation of $B\underset{r, \beta}{\rtimes} \mathbb{G}$ in $B\otimes L^2(\mathbb{G})$. The action of $\mathbb{G}$ on $B\otimes L^2(\mathbb{G})$ is defined as the tensor product action of $\beta$ with the action of $\mathbb{G}$ on $L^2(\mathbb{G})$ induced by the unitary $\Sigma \check{V}_{\mathbb{G}}\Sigma=(U_{\mathbb{G}}\otimes 1)V_{\mathbb{G}}(U_{\mathbb{G}}\otimes 1)$ (see \cite{VergniouxThesis} for the precise definitions). Then we have $\Psi^{-1}(\mathcal{Y})=\tau(\mathcal{Y})\underset{\tau\big(B\underset{r, \beta}{\rtimes} \mathbb{G}\big)}{\otimes} \mathcal{E}_B$, for all $\mathcal{Y}\in KK(C, B\underset{r, \beta}{\rtimes} \mathbb{G})$. In other words, the Green-Julg isomorphism can be rephrased by saying that the functors $\mathscr{K}\mathscr{K}^\mathbb{G}\overset{j_{\mathbb{G}}}{\longrightarrow} \mathscr{K}\mathscr{K} \mbox{ and } \mathscr{K}\mathscr{K}\overset{\tau}{\longrightarrow} \mathscr{K}\mathscr{K}^\mathbb{G}$
	are adjoint: $\tau$ is a left adjoint of $j_\mathbb{G}$. Precisely, the unit of the adjunction is given by $\eta_C:=[\phi_C]$ and the counit by $\varepsilon_B:=\mathcal{E}_B$, for all $C\in Obj(\mathscr{K}\mathscr{K})$ and all $B\in Obj(\mathscr{K}\mathscr{K}^\mathbb{G})$. 
	
	The goal of this section is to generalise these constructions when $C$ is replaced by an object of the form $C\otimes T\in\widehat{\mathscr{L}}_{\widehat{\mathbb{G}}}$, where $(T, \delta)$ is a torsion action of $\mathbb{G}$ of projective type. Recall that a torsion action of projective type of $\mathbb{G}$, $(T, \delta)$, means simply that $T=\mathcal{M}_k(\mathbb{C})$ for some $k\in\mathbb{N}$ and that $\delta$ is ergodic such that $T$ is not $\mathbb{G}$-Morita equivalent to $\mathbb{C}$. We fix a state $\varphi_T=\text{Tr}(\varrho\ \cdot)$ on $T$ (recall Section \ref{sec.Conven}). Recall as well that, by virtue of Theorem \ref{theo.vNCleftProjectiveTorsion}, $\delta$ is implemented by an $\Omega^*$-representation of $\mathbb{G}$, say $u$, for some (measurable)  $2$-cocycle $\Omega$ on $\mathbb{G}$. The $2$-cocycle $\Omega$ is necessarily of finite type (recall Definition \ref{def.fintype}). Hence $\mathbb{G}_\Omega$ is again a compact quantum group by Theorem \ref{theo.TwistCompact}. Following Equation \eqref{eq.oppcoact}, we denote by $(T^{op}, \overline{\delta})$ the corresponding opposite twisted dynamical system. In this case, $\overline{\delta}$ is implemented (in the sense of \eqref{eq.deltau}) by an $\Omega$-representation of $\mathbb{G}$ that we denote by $u^\circ$. The  representation of $\mathbb{G}$ on $L^2(T)$ implementing $\delta$ according to Proposition \ref{theo.UnitaryImplementation} is denoted by $V_T$. Given such a projective torsion action of $\mathbb{G}$, we define the following triangulated functors:
	$$
		\begin{array}{rcclccl}
			j_{\mathbb{G}, T}:&\mathscr{K}\mathscr{K}^\mathbb{G}& \longrightarrow & \mathscr{K}\mathscr{K}, &(B, \beta)& \longmapsto & j_{\mathbb{G}, T}(B,\beta):= B\underset{r, \beta}{\rtimes}\mathbb{G}\underset{r, \overline{\delta}}{\ltimes}T^{op},
		\end{array}
	$$
	$$
		\begin{array}{rcclccl}
			\tau_{ T}:&\mathscr{K}\mathscr{K}& \longrightarrow & \mathscr{K}\mathscr{K}^\mathbb{G}, &C& \longmapsto & \tau_{T}(C):= (C\otimes T, id\otimes \delta).
		\end{array}
	$$
	
	We are going to show that $\tau_T$ is a left adjoint of $j_{\mathbb{G}, T}$ for every torsion action of projective type $(T, \delta)$ of $\mathbb{G}$. To do so we start by showing an appropriate equivalence of triangulated categories between $\mathscr{K}\mathscr{K}^\mathbb{G}$ and $\mathscr{K}\mathscr{K}^{\mathbb{G}_\Omega}$. Then the adjunction between $\tau_T$ and $j_{\mathbb{G}, T}$ will result from the usual Green-Julg isomorphism applied to $\mathbb{G}_\Omega$.
	
	Let us consider the following triangulated functors:
	$$
		\begin{array}{rcclccl}
			\Pi_{T^{op}}:&\mathscr{K}\mathscr{K}^\mathbb{G}& \longrightarrow & \mathscr{K}\mathscr{K}^{\mathbb{G}_\Omega}, &(B, \beta)& \longmapsto & \Pi_{T^{op}}(B,\beta):= (B\otimes T^{op}, \widetilde{\beta}:=Ad_{u^\circ_{23}}\circ \beta_{13}),
		\end{array}
	$$
	$$
		\begin{array}{rcclccl}
			\Pi_{T}:&\mathscr{K}\mathscr{K}^{\mathbb{G}_\Omega}& \longrightarrow & \mathscr{K}\mathscr{K}^{\mathbb{G}}, &(C, \gamma)& \longmapsto & \Pi_{T}(C,\gamma):= (C\otimes T, \widetilde{\gamma}:=Ad_{u_{23}}\circ \gamma_{13}).
		\end{array}
	$$
	
	First of all, observe that these functors are well defined. On the one hand, given $(B, \beta)\in Obj(\mathscr{K}\mathscr{K}^\mathbb{G})$,  we have proved in Proposition \ref{pro.2CrossedProdT} that $\widetilde{\beta}:=Ad_{u^\circ_{23}}\circ \beta_{13}$ is an action of $\mathbb{G}_\Omega$ on $B\otimes T^{op}$. So  $\Pi_{T^{op}}(B,\beta)\in Obj(\mathscr{K}\mathscr{K}^{\mathbb{G}_\Omega})$. A similar computation yields that if $(C, \gamma)\in Obj(\mathscr{K}\mathscr{K}^{\mathbb{G}_\Omega})$, then $\widetilde{\gamma}:=Ad_{u_{23}}\circ \gamma_{13}$ is an action of $\mathbb{G}$ on $C\otimes T$. So  $\Pi_{T}(C,\gamma)\in Obj(\mathscr{K}\mathscr{K}^{\mathbb{G}})$. 
	
	On the other hand, given two objects $(B_1, \beta_1), (B_2, \beta_2)\in Obj(\mathscr{K}\mathscr{K}^\mathbb{G})$ and a Kasparov triple $\mathcal{X}\in KK^\mathbb{G}(B_1, B_2)$, then $\Pi_{T^{op}}(\mathcal{X})$ is given by the right exterior tensor product of Kasparov triples with respect to $T^{op}$ i.e. $\Pi_{T^{op}}(\mathcal{X})=\mathcal{X}\otimes T^{op}\in KK^{\mathbb{G}_\Omega}(B_1\otimes T^{op}, B_2\otimes T^{op})$ (if $\mathcal{X}$ is represented by the $\mathbb{G}$-equivariant Hilbert $B_2$-module $E$ with action $\delta_{E}$, then $\Pi_{T^{op}}(\mathcal{X})$ is represented by the Hilbert $B_2\otimes T^{op}$-module $E\otimes T^{op}$ with action of $\mathbb{G}_\Omega$ given by $Ad_{v_{23}}\circ (\delta_E)_{13}$). Similarly, $\Pi_{T}(\mathcal{Y})=\mathcal{Y}\otimes T$, for all $\mathcal{Y}\in KK^{\mathbb{G}_\Omega}(C_1, C_2)$ with $(C_1, \gamma_1), (C_2, \gamma_2)\in Obj(\mathscr{K}\mathscr{K}^{\mathbb{G}_{\Omega}})$. Clearly, both $\Pi_{T^{op}}$ and $\Pi_{T}$ intertwine the suspensions of each category. Moreover, they transform mapping cone triangles into mapping cone triangles. This is true by the following general fact: if $\phi: A\longrightarrow B$ is a homomorphism between C$^*$-algebras and $D$ is any other C$^*$-algebra, then we have that $C_{\phi}\otimes D\cong C_{\phi\otimes id}$ induced by the canonical identification $C_0\big((0,1], B\big)\otimes D\cong C_0\big((0,1], B\otimes D\big)$, $f\otimes d\mapsto \Big(t\mapsto f(t)\otimes d\Big)$. If in addition $\phi$ is a $\mathbb{G}$-equivariant homomorphism between the $\mathbb{G}$-C$^*$-algebras $(A,\alpha)$ and $(B,\beta)$, then $C_\phi$ is a $\mathbb{G}$-C$^*$-algebra with action $\gamma\big((a,h)\big):=(\alpha(a), \beta\circ h)$, for all $(a,h)\in C_\phi$. In this way, it is straightforward to check that given a $\mathbb{G}$-equivariant homomorphism between two $\mathbb{G}$-C$^*$-algebras $(B_1,\beta_1)$ and $(B_2, \beta_2)$, say $\phi: B_1\longrightarrow B_2$, then the isomorphism $C_{\phi}\otimes T^{op}\cong C_{\phi\otimes id}$ is $\mathbb{G}_\Omega$-equivariant. As a consequence, the functor $\Pi_{T^{op}}$ preserves mapping cone triangles; and similarly for $\Pi_T$. In conclusion, both $\Pi_{T^{op}}$ and $\Pi_T$ are well defined triangulated functors.
	
	\begin{lem}\label{lem.EquivalenceGGOmega}
		Following the previous notations, the pair of functors $(\Pi_{T^{op}}, \Pi_T)$ defines an equivalence of triangulated categories between $\mathscr{K}\mathscr{K}^\mathbb{G}$ and $\mathscr{K}\mathscr{K}^{\mathbb{G}_\Omega}$.
	\end{lem}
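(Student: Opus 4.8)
The plan is to show that $\Pi_{T^{op}}$ and $\Pi_T$ are mutually quasi-inverse, i.e.\ to produce natural isomorphisms $\Pi_T\circ\Pi_{T^{op}}\cong \mathrm{id}_{\mathscr{K}\mathscr{K}^{\mathbb{G}}}$ and $\Pi_{T^{op}}\circ\Pi_T\cong \mathrm{id}_{\mathscr{K}\mathscr{K}^{\mathbb{G}_\Omega}}$ in the two Kasparov categories. Since both functors are already known to be triangulated (see the discussion preceding the lemma), it suffices to build these isomorphisms at the level of objects and morphisms; triangulated compatibility is then automatic, as the composites and the identity are all triangulated. I would treat the first composition in detail and obtain the second by the built-in symmetry $\mathbb{G}\leftrightarrow\mathbb{G}_\Omega$, $\Omega\leftrightarrow\Omega^*$, $u\leftrightarrow u^\circ$, $T\leftrightarrow T^{op}$, using $(\mathbb{G}_\Omega)_{\Omega^*}=\mathbb{G}$ and $(T^{op})^{op}=T$ from Remark \ref{rem.switchOmOmStar}.

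First I would compute $\Pi_T\Pi_{T^{op}}(B,\beta)$ on objects. Unwinding the two successive twists with the leg-numbering conventions gives the underlying algebra $B\otimes T^{op}\otimes T$ equipped with the action $\widetilde{\widetilde{\beta}}=Ad_{u_{34}u^\circ_{24}}\circ\beta_{14}$. The key input is Lemma \ref{lem.DecompVTand}: the product $u_{13}u^\circ_{23}=V_T$ is a \emph{genuine} unitary representation of $\mathbb{G}$ on $L^2(T)$, namely the unitary implementation of $\delta$ from Proposition \ref{theo.UnitaryImplementation}. Hence $u_{34}u^\circ_{24}$ is merely $V_T$ placed on the legs $2,3$ (which together form a copy of $\mathcal{B}(L^2(T))\cong T^{op}\otimes T$) and the $\mathbb{G}$-leg. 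In other words $\widetilde{\widetilde{\beta}}=Ad_{V_T}\circ(\beta\otimes\mathrm{triv})$ on $B\otimes\mathcal{B}(L^2(T))$, with $V_T$ acting only on the $\mathcal{B}(L^2(T))$-leg and the group leg.

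Because $V_T$ satisfies the representation identity $(id\otimes\Delta)(V_T)=(V_T)_{12}(V_T)_{13}$, a short verification shows that $1_B\otimes V_T$ is a $1$-cocycle for the action $\beta\otimes\mathrm{triv}$, so that $\widetilde{\widetilde{\beta}}$ is \emph{exterior equivalent} to $\beta\otimes\mathrm{triv}$. Exterior equivalent actions are $\mathbb{G}$-equivariantly Morita equivalent and hence define an invertible class in $KK^{\mathbb{G}}$, giving $(B\otimes\mathcal{B}(L^2(T)),\widetilde{\widetilde{\beta}})\cong(B\otimes\mathcal{B}(L^2(T)),\beta\otimes\mathrm{triv})$ in $\mathscr{K}\mathscr{K}^{\mathbb{G}}$. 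Finally $\mathcal{B}(L^2(T))$ with the trivial action is equivariantly Morita equivalent to $\mathbb{C}$ through the ($\mathbb{G}$-equivariantly trivial) finite dimensional Hilbert space $L^2(T)$, so tensoring with $(B,\beta)$ yields $(B\otimes\mathcal{B}(L^2(T)),\beta\otimes\mathrm{triv})\cong(B,\beta)$. Composing the two equivalences gives $\Pi_T\Pi_{T^{op}}(B,\beta)\cong(B,\beta)$. Equivalently, one can package both steps into the single $\mathbb{G}$-equivariant imprimitivity bimodule $B\otimes L^2(T)$, carrying the diagonal coaction built from $\beta$ and $V_T$, whose compact operators realise $\widetilde{\widetilde{\beta}}$.

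For naturality I would observe that on a Kasparov morphism $\mathcal{X}$ both composites act by the exterior tensor product $\mathcal{X}\mapsto\mathcal{X}\otimes\mathcal{B}(L^2(T))$, while the implementing bimodule $B\otimes L^2(T)$ depends functorially on $B$; hence the object-wise isomorphisms assemble into a natural isomorphism of triangulated functors. The statement for $\Pi_{T^{op}}\Pi_T$ then follows verbatim after the symmetric substitution, using that $u^\circ$ plays for $\mathbb{G}_\Omega$ exactly the role $u$ played for $\mathbb{G}$ and likewise assembles into a genuine unitary representation of $\mathbb{G}_\Omega$ on $L^2(T)$. The main obstacle is the leg-numbering bookkeeping: one must track the two twists carefully to recognise the composite cocycle as the leg-permuted $V_T$ and to confirm the cocycle identity. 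Once this is done, the finite-dimensionality of $T$ (guaranteed by $\Omega$ being of finite type, Theorem \ref{theo.TwistCompact}) removes all analytic subtleties and the argument closes.
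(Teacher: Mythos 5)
Your proof is correct and follows essentially the same route as the paper's: both arguments identify the composite action, via Lemma \ref{lem.DecompVTand}, with the twist of the amplified action by the \emph{genuine} unitary representation $V_T=u_{13}u^{\circ}_{23}$ (resp.\ $V_{T^{op}}=u^{\circ}_{13}u_{23}$ for the other composite), yielding the $\mathbb{G}$-equivariant (resp.\ $\mathbb{G}_{\Omega}$-equivariant) Morita equivalence with the identity functor. Your two-step packaging through the $1$-cocycle $1\otimes V_T$ (exterior equivalence) followed by the Morita triviality of the trivially-acted $\mathcal{B}(L^2(T))$ is just an unfolding of the paper's one-step Morita equivalence implemented by the bimodule $B\otimes L^2(T)$ with the $Ad_{V_T}$-twisted coaction.
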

	\begin{proof}
		It only remains to show that $\Pi_{T}\circ \Pi_{T^{op}}\cong id_{\mathscr{K}\mathscr{K}^\mathbb{G}}$ and $\Pi_{T^{op}}\circ \Pi_{T}\cong id_{\mathscr{K}\mathscr{K}^{\mathbb{G}_\Omega}}$. 
		
		On the one hand, given an object $(B, \beta)\in Obj(\mathscr{K}\mathscr{K}^\mathbb{G})$, we have $\Pi_{T}( \Pi_{T^{op}}(B, \beta))=B\otimes T^{op}\otimes T\overset{id\otimes \Sigma}{\cong} B\otimes T\otimes T^{op}$ equipped with the $\mathbb{G}$-action $\widetilde{\widetilde{\beta}}=Ad_{u_{24}}\circ Ad_{u^\circ_{34}}\circ \beta_{14}$. By identifying $T\otimes T^{op}\cong \mathcal{B}(L^2(T))=\mathcal{K}(L^2(T))$ along with the $\mathbb{G}$-action $Ad_{V_T}$, where $V_T=u_{13}u^\circ_{23}$ as in Lemma \ref{lem.DecompVTand}, we obtain that $\Pi_{T}( \Pi_{T^{op}}(B, \beta))$ is $\mathbb{G}$-equivariantly Morita equivalent to $(B, \beta)$; and this identification is natural. So $\Pi_{T}\circ \Pi_{T^{op}}\cong id_{\mathscr{K}\mathscr{K}^\mathbb{G}}$. On the other hand, given an object $(C, \gamma)\in Obj(\mathscr{K}\mathscr{K}^{\mathbb{G}_\Omega})$, we have $\Pi_{T^{op}}( \Pi_{T}(C, \gamma))=C\otimes T\otimes T^{op}\overset{id\otimes \Sigma}{\cong} B\otimes T^{op}\otimes T$ equipped with the $\mathbb{G}$-action $\widetilde{\widetilde{\gamma}}=Ad_{u^\circ_{24}}\circ Ad_{u_{34}}\circ \gamma_{14}$. By identifying $T^{op}\otimes T=(T\otimes T^{op})^{op}\cong \mathcal{B}(L^2(T))^{op}\cong \mathcal{B}(L^2(T^{op}))=\mathcal{K}(L^2(T^{op}))$ along with the $\mathbb{G}_{\Omega}$-action $Ad_{V_{T^{op}}}$, where we define the (ordinary) $\mathbb{G}_{\Omega}$-representation $V_{T^{op}}=u^\circ_{13}u_{23}$, we obtain that $\Pi_{T^{op}}( \Pi_{T}(C, \gamma))$ is $\mathbb{G}_\Omega$-equivariantly Morita equivalent to $(C, \gamma)$; and this identification is natural. So $\Pi_{T^{op}}\circ \Pi_{T}\cong id_{\mathscr{K}\mathscr{K}^{\mathbb{G}_\Omega}}$.
	\end{proof}
	\begin{rem}
Using the theory of (bi)Galois objects for compact quantum groups \cite{BichonVaesRijdt}, there was proven in \cite[Section 8]{VoigtBaumConnesFree} an equivalence of triangulated categories $\mathscr{K}\mathscr{K}^{\mathbb{G}_1}\cong \mathscr{K}\mathscr{K}^{\mathbb{G}_2}$ when the compact quantum groups $\mathbb{G}_1$ and $\mathbb{G}_2$ are \emph{monoidally equivalent}. In our setting, with $\Omega$ a $2$-cocycle of finite type for the compact quantum group $\mathbb{G}$, it is rather the discrete quantum group \emph{duals} $\widehat{\mathbb{G}}$ and $\widehat{\mathbb{G}_{\Omega}}$ that are monoidally equivalent (\cite{KennyGaloisObjTwistings}). We then obtain equivalences:
\begin{equation}\label{EqEquiAlt}
\mathscr{K}\mathscr{K}^{\mathbb{G}} \cong \mathscr{K}\mathscr{K}^{\widehat{\mathbb{G}}} \cong \mathscr{K}\mathscr{K}^{\widehat{\mathbb{G}_{\Omega}}} \cong \mathscr{K}\mathscr{K}^{\mathbb{G}_{\Omega}},
\end{equation}
where the outer equivalences are by Baaj-Skandalis duality, and where in the middle we use the extension of the results of \cite[Section 8]{VoigtBaumConnesFree} to the (regular) locally compact quantum group setting \cite[Theorem 4.36]{BaajCrespo}. For lack of space, we refrain from showing that \eqref{EqEquiAlt} agrees with the equivalence we obtain - the argument is based on the observation that, up to matrix amplification, $T\rtimes \mathbb{G}$ gives a Galois object $C^*(\mathbb{G},\Omega)$ for the discrete quantum group $\widehat{\mathbb{G}}^{cop}$.
	\end{rem}

	\begin{theo}[Twisted Green-Julg isomorphism]\label{theo.TwistedGreenJulg}
		Let $\mathbb{G}$ be a compact quantum group. Let $(T, \delta)$ be a torsion action of projective type of $\mathbb{G}$. Let $u$ be an $\Omega^*$-representation of $\mathbb{G}$ implementing $\delta$ for some $2$-cocycle $\Omega$ (necessarily of finite type). Then $\tau_T: \mathscr{K}\mathscr{K}\longrightarrow \mathscr{K}\mathscr{K}^{\mathbb{G}}$ is a left adjoint of $j_{\mathbb{G}, T}: \mathscr{K}\mathscr{K}^{\mathbb{G}}\longrightarrow \mathscr{K}\mathscr{K}$ as triangulated functors. More precisely,
		$$\Psi_T: KK^\mathbb{G}(C\otimes T, B)\overset{\sim}{\longrightarrow} KK\big(C, B\underset{r, \beta}{\rtimes}\mathbb{G}\underset{r, \overline{\delta}}{\ltimes}T^{op}\big),$$
		for all $C\in Obj(\mathscr{K}\mathscr{K})$ and $(B, \beta)\in Obj(\mathscr{K}\mathscr{K}^\mathbb{G})$.
	\end{theo}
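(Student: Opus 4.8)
The plan is to deduce the twisted Green--Julg isomorphism from the \emph{untwisted} Green--Julg isomorphism for the compact quantum group $\mathbb{G}_\Omega$ (which is genuinely compact by Theorem \ref{theo.TwistCompact}, since $\Omega$ is of finite type), transported across the equivalence of triangulated categories $(\Pi_{T^{op}}, \Pi_T)$ established in Lemma \ref{lem.EquivalenceGGOmega}. Two structural identities drive the argument. The first is a factorisation $\tau_T\cong \Pi_T\circ\tau$, where $\tau:\mathscr{K}\mathscr{K}\to\mathscr{K}\mathscr{K}^{\mathbb{G}_\Omega}$ equips a $C^*$-algebra with the trivial $\mathbb{G}_\Omega$-action. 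The second is $j_{\mathbb{G}, T}\cong j_{\mathbb{G}_\Omega}\circ \Pi_{T^{op}}$, which is precisely the content of Proposition \ref{pro.2CrossedProdT}.

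For the first factorisation I would check directly on objects: for $C$ carrying the trivial $\mathbb{G}_\Omega$-action, $\Pi_T(\tau(C)) = (C\otimes T, Ad_{u_{23}}\circ \gamma_{13})$ with $\gamma$ trivial, so $\gamma_{13}(c\otimes t)=c\otimes t\otimes 1$; since $u$ implements $\delta$ via $\delta(t)=u(t\otimes 1)u^*$, the resulting action is exactly $id_C\otimes\delta$, that is $\tau_T(C)$. Combined with $\Pi_{T^{op}}\circ\Pi_T\cong id_{\mathscr{K}\mathscr{K}^{\mathbb{G}_\Omega}}$ from Lemma \ref{lem.EquivalenceGGOmega}, this yields a natural isomorphism $\Pi_{T^{op}}(\tau_T(C))\cong\tau(C)$ in $\mathscr{K}\mathscr{K}^{\mathbb{G}_\Omega}$; concretely it encodes the $\mathbb{G}_\Omega$-equivariant Morita triviality of $T^{op}\otimes T\cong \mathcal{K}(L^2(T^{op}))$ with the $Ad_{V_{T^{op}}}$-action used in the proof of that lemma.

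The isomorphism $\Psi_T$ is then obtained by concatenating natural isomorphisms, for $C\in Obj(\mathscr{K}\mathscr{K})$ and $(B,\beta)\in Obj(\mathscr{K}\mathscr{K}^\mathbb{G})$. Using full faithfulness of $\Pi_{T^{op}}$ together with the factorisation above,
\begin{equation*}
KK^\mathbb{G}(C\otimes T, B)\cong KK^{\mathbb{G}_\Omega}\big(\Pi_{T^{op}}(C\otimes T), \Pi_{T^{op}}(B)\big)\cong KK^{\mathbb{G}_\Omega}\big(\tau(C), \Pi_{T^{op}}(B)\big);
\end{equation*}
the ordinary Green--Julg isomorphism for $\mathbb{G}_\Omega$ (applied to $\tau(C)$ with trivial action) then gives
\begin{equation*}
KK^{\mathbb{G}_\Omega}\big(\tau(C), \Pi_{T^{op}}(B)\big)\cong KK\big(C, \Pi_{T^{op}}(B)\underset{r,\widetilde{\beta}}{\rtimes}\mathbb{G}_\Omega\big);
\end{equation*}
and finally Proposition \ref{pro.2CrossedProdT} identifies $\Pi_{T^{op}}(B)\underset{r,\widetilde{\beta}}{\rtimes}\mathbb{G}_\Omega = (B\otimes T^{op})\underset{r,\widetilde{\beta}}{\rtimes}\mathbb{G}_\Omega \cong B\underset{r,\beta}{\rtimes}\mathbb{G}\underset{r,\overline{\delta}}{\ltimes}T^{op}$. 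Reading off the composite produces the asserted $\Psi_T$. The adjointness is then formal: $\tau$ is left adjoint to $j_{\mathbb{G}_\Omega}$ (untwisted Green--Julg), $\Pi_T$ is left adjoint to $\Pi_{T^{op}}$ (being quasi-inverse halves of an equivalence), and the composite of left adjoints is left adjoint to the composite of the corresponding right adjoints; hence $\tau_T=\Pi_T\circ\tau$ is left adjoint to $j_{\mathbb{G},T}=j_{\mathbb{G}_\Omega}\circ\Pi_{T^{op}}$.

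The genuinely substantive input has already been isolated into Lemma \ref{lem.EquivalenceGGOmega} and Proposition \ref{pro.2CrossedProdT}, so the remaining work is bookkeeping rather than analysis. The point demanding the most care is \emph{naturality}: I must ensure that each isomorphism in the chain is natural in both $C$ and $B$ and compatible with the Kasparov product, so that $\Psi_T$ genuinely witnesses an adjunction of triangulated functors rather than a mere bijection of groups. In practice I would track the unit and counit, exhibiting them as composites of the unit and counit of untwisted Green--Julg for $\mathbb{G}_\Omega$ with those of the equivalence $(\Pi_{T^{op}},\Pi_T)$, so that the triangle identities for $(\tau_T, j_{\mathbb{G},T})$ are inherited from the two constituent adjunctions.
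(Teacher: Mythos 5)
Your proposal is correct and follows essentially the same route as the paper's proof: transporting along the equivalence $(\Pi_{T^{op}},\Pi_T)$ of Lemma \ref{lem.EquivalenceGGOmega}, identifying $\Pi_{T^{op}}(C\otimes T)$ with $\tau(C)$ via the equivariant Morita triviality of $T\otimes T^{op}$, applying the untwisted Green--Julg adjunction for $\mathbb{G}_{\Omega}$, and invoking Proposition \ref{pro.2CrossedProdT} to identify the crossed products, so that $\Psi_T=\Psi\circ\Pi_{T^{op}}$. Your explicit tracking of the unit/counit and the factorisations $\tau_T\cong\Pi_T\circ\tau$, $j_{\mathbb{G},T}\cong j_{\mathbb{G}_{\Omega}}\circ\Pi_{T^{op}}$ merely makes precise the naturality that the paper leaves implicit.
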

	\begin{proof}
		Since the $2$-cocycle $\Omega$ is necessarily of finite type, the twisted quantum group $\mathbb{G}_{\Omega}$ is compact by Theorem \ref{theo.TwistCompact}. Given a C$^*$-algebra $C\in Obj(\mathscr{K}\mathscr{K})$ and a $\mathbb{G}$-C$^*$-algebra $(B, \beta)\in Obj(\mathscr{K}\mathscr{K}^\mathbb{G})$, the previous lemma allows to write the following:
		$$KK^\mathbb{G}(C\otimes T, B)\cong KK^{\mathbb{G}_\Omega}(C\otimes T\otimes T^{op}, B\otimes T^{op})\cong KK^{\mathbb{G}_\Omega}(C, B\otimes T^{op}).$$
		
		Next, by applying the usual Green-Julg isomorphism we obtain that:
		$KK^{\mathbb{G}_\Omega}(C, B\otimes T^{op})\overset{\Psi}{\cong} KK(C, (B\otimes T^{op})\underset{\widetilde{\beta}}{\rtimes} \mathbb{G}_\Omega).$ To conclude, we observe that $ (B\otimes T^{op})\underset{\widetilde{\beta}}{\rtimes} \mathbb{G}_\Omega\cong B\underset{r, \beta}{\rtimes}\mathbb{G}\underset{r, \overline{\delta}}{\ltimes}T^{op}$ by virtue of Proposition \ref{pro.2CrossedProdT}. Therefore $\Psi_T=\Psi\circ\Pi_{T^{op}}$.
	\end{proof}
	
\begin{rem}
By the twisted Green-Julg isomorphism, obtained in Theorem \ref{theo.TwistedGreenJulg} for the specific case of torsion actions of projective type and in \cite[Theorem 4.5]{YukiBCTorsion} for general torsion actions, one easily obtains that $(T, \delta)$ is a compact object in $\mathscr{K}\mathscr{K}^{\mathbb{G}}$, that is, the functor $KK^{\mathbb{G}}(T,\ \cdot\ )$ is compatible with countable direct sums. Indeed, if $\{(B_n, \beta_n)\}_{n\in\mathbb{N}}$ is a countable family of $\mathbb{G}$-C$^*$-algebras, the twisted Green-Julg isomorphism with $C:=\mathbb{C}$ gives: 
		\begin{equation*}
		KK^\mathbb{G}(T, \underset{n\in\mathbb{N}}{\bigoplus}B_n)\cong KK\big(\mathbb{C}, \big(\underset{n\in\mathbb{N}}{\bigoplus}B_n\big)\underset{r, \beta}{\rtimes}\mathbb{G}\underset{r, \overline{\delta}}{\ltimes}T^{op}\big)\cong K_0\big(\big(\underset{n\in\mathbb{N}}{\bigoplus}B_n\big)\underset{r, \beta}{\rtimes}\mathbb{G}\underset{r, \overline{\delta}}{\ltimes}T^{op}\big),
		\end{equation*}
and both the $K_0$ functor and the two-sided crossed product functor $(\cdot)\underset{r}{\rtimes}\mathbb{G}\underset{r, \overline{\delta}}{\ltimes}T^{op}$ are compatible with countable direct sums.

	We believe that this property will be useful to study the equivariant Kasparov category $\mathscr{K}\mathscr{K}^{\mathbb{G}}$ from a geometrical and topological perspective according to works by I. Dell'Ambrogio and his collaborators (see for example \cite{AmbrogioBCSpec}, \cite{AmbrogioMeyerSpec}). For instance, the above compactness result yields that the subcategory $\mathscr{P}_{D(\mathbb{G})}:=\langle\{T\ |\ T\in\text{Tor}(\widehat{D(\mathbb{G})})\}\rangle$ is a compactly generated tensor triangular subcategory of $\mathscr{K}\mathscr{K}^{D(\mathbb{G})}$ when $\mathbb{G}$ is finite (note that we need to consider the Drinfeld double construction to provide a tensor structure on the Kasparov category). In particular, $(\mathscr{P}_{D(\mathbb{G})}, \mathscr{P}_{D(\mathbb{G})}^{\dashv})$ is a complementary pair of localizing subcategories in $\mathscr{K}\mathscr{K}^{D(\mathbb{G})}$ as a consequence of Brown representability theorem. Hence it will be interesting to compute its spectrum, $Spc(\mathscr{P}_{D(\mathbb{G})})$, in the sense of Balmer \cite{BalmerSpc} and to make a connection with the Baum-Connes property for $\widehat{\mathbb{G}}$. 
\end{rem}
\bibliographystyle{acm}
\bibliography{quantumBCI}

\begin{thebibliography}{10}

\bibitem{YukiKenny}
{\sc Arano, Y., and De~Commer, K.}
\newblock Torsion-freeness for fusion rings and tensor ${C}^*$-categories.
\newblock {\em {J}. {N}oncommut. {G}eom. 13}, 1 (2019), 35--58.

\bibitem{YukiBCTorsion}
{\sc Arano, Y., and Skalski, A.}
\newblock {O}n the {B}aum-{C}onnes conjecture for discrete quantum groups with
  torsion and the quantum {R}osenberg conjecture.
\newblock {\em {P}roc. {A}mer. {M}ath. {S}oc. 149\/} (2021), 5237--5254.

\bibitem{BaajCrespo}
{\sc Baaj, S., and Crespo, J.}
\newblock {E}quivalence monoïdale de groupes quantiques et ${K}$-th\'eorie
  bivariante.
\newblock {\em {B}ulletin de la société mathématique de {F}rance 145}, 4
  (2017), 711--802.

\bibitem{BaajSkandalisQuantumKK}
{\sc Baaj, S., and Skandalis, G.}
\newblock ${C}^*$-algèbres de {H}opf et théorie de {K}asparov équivariante.
\newblock {\em K-theory 2}, 6 (1989), 683--721.

\bibitem{SkandalisUnitaries}
{\sc Baaj, S., and Skandalis, G.}
\newblock Unitaires multiplicatifs et dualité pour les produits croisés de
  ${C}^*$-algèbres.
\newblock {\em Annales Scientifiques de l'Ecole Normale Supérieure 26}, 4
  (1993), 425--488.

\bibitem{BalmerSpc}
{\sc Balmer, P.}
\newblock {T}he spectrum of prime ideals in tensor triangulated categories.
\newblock {\em {J}. {R}eine {A}ngew. {M}ath. 588\/} (2005), 149--168.

\bibitem{BedosDiscDynSyst}
{\sc Bedos, E.~C., and Conti, R.}
\newblock {O}n discrete twisted ${C}^*$-dynamical systems, {H}ilbert
  ${C}^*$-modules and regularity.
\newblock {\em {M}\"{u}nster {J}ournal of {M}athematics}, 5 (2012), 183--208.

\bibitem{BichonVaesRijdt}
{\sc Bichon, J., De~Rijdt, A., and Vaes, S.}
\newblock Ergodic coactions with large multiplicity and monoidal equivalence of
  quantum groups.
\newblock {\em Comm. {M}ath. {P}hys 262\/} (2006), 703--728.

\bibitem{Boca}
{\sc Boca, F.}
\newblock Ergodic actions of compact matrix pseudogroups on ${C}^*$-algebras.
\newblock {\em Recent {A}dvances in {O}perator {A}lgebras, {A}stérisque 232\/}
  (1995), 93--110.

\bibitem{ChengProyectiveRep}
{\sc Cheng, C.}
\newblock A character theory for projective representations of finite groups.
\newblock {\em Linear Algebra and its applications 469\/} (2015), 230--242.

\bibitem{KennyCocycles}
{\sc De~Commer, K.}
\newblock {O}n cocycle twisting of compact quantum groups.
\newblock {\em Journal of Functional Analysis 258\/} (2010), 3362--3375.

\bibitem{KennyComonoidalMorita}
{\sc De~Commer, K.}
\newblock {C}omonoidal ${W}^*$-{M}orita equivalence for von {N}eumann
  bialgebras.
\newblock {\em Journal of Noncommutative Geometry 5\/} (2011), 547--571.

\bibitem{KennyGaloisObjTwistings}
{\sc De~Commer, K.}
\newblock {G}alois objects and cocycle twisting for locally compact quantum
  groups.
\newblock {\em Journal of Operator Theory 66}, 1 (2011), 59--106.

\bibitem{KennyProjective}
{\sc De~Commer, K.}
\newblock {P}rojective representations for compact quantum groups.
\newblock {\em Journal of Functional Analysis 260}, 12 (2011), 3596--3644.

\bibitem{KennyRegularSemiRegular}
{\sc De~Commer, K.}
\newblock {O}n a {M}orita equivalence between the duals of quantum ${S}{U}(2)$
  and quantum ${E}(2)$.
\newblock {\em Adv. Math. 2}, 229 (2012), 1047--1079.

\bibitem{KennyActions}
{\sc De~Commer, K.}
\newblock {A}ctions of compact quantum groups.
\newblock {\em Banach Center Publications 111\/} (2017), 33--100.

\bibitem{AmbrogioBCSpec}
{\sc Dell'Ambrogio, I.}
\newblock {T}ensor triangular geometry and {K}{K}-theory.
\newblock {\em {J}. {H}omotopy {R}elat. {S}truct. 5}, 1 (2010), 319--358.

\bibitem{AmbrogioMeyerSpec}
{\sc Dell'Ambrogio, I., and Meyer, R.}
\newblock {T}he spectrum of equivariant {K}asparov theory for cyclic groups of
  prime order.
\newblock {\em {A}nn. ${K}$-{T}heory 6}, 3 (2021), 543--558.

\bibitem{DixmierCNew}
{\sc Dixmier, J.}
\newblock {\em ${C}^*$-algebras (revised edition)}, vol.~15.
\newblock {N}orth {H}olland {M}athematical {L}ibrary, 1982.

\bibitem{RubenAmauryTorsion}
{\sc Freslon, A., and Martos, R.}
\newblock Torsion and ${K}$-theory for some free wreath products.
\newblock {\em {I}nt. {M}ath. {R}es. {N}ot. 2020}, 6 (2018), 1639--1670.

\bibitem{Phillips}
{\sc Giordano, T., Kerr, D., Phillips, N.~C., and Toms, A.}
\newblock {\em {C}rossed {P}roducts of ${C}*$-{A}lgebras, {T}opological
  {D}ynamics, and {C}lassification}.
\newblock Birkhäuser, 2018.

\bibitem{GoffengTorsionQDeformations}
{\sc Goffeng, M.}
\newblock {\em {A} {R}emark on {T}wists and the {N}otion of {T}orsion-free
  {D}iscrete {Q}uantum {G}roups}.
\newblock PhD thesis, 2012.

\bibitem{HigsonKasparovHaagerup}
{\sc Higson, N., and Kasparov, G.}
\newblock E-theory and {K}{K}-theory for groups which act properly and
  isometrically on hilbert space.
\newblock {\em Inventiones mathematicae 144}, 1 (2001), 23--74.

\bibitem{Jorgensen}
{\sc Holm, T., Jorgensen, P., and Rouquier, R.}
\newblock {\em Triangulated {C}ategories}.
\newblock Cambridge University Press, 2010.

\bibitem{Lafforgue}
{\sc Lafforgue, V.}
\newblock La conjecture de {B}aum-{C}onnes à coefficients pour les groupes
  hyperboliques.
\newblock {\em J. Noncommutative Geometry\/} (2012), 1--197.

\bibitem{RubenThesis}
{\sc Martos, R.}
\newblock {\em {T}he {B}aum-{C}onnes conjecture for {Q}uantum {G}roups.
  Stability properties and ${K}$-theory computations}.
\newblock PhD thesis, Université Paris VII, 2018.

\bibitem{MeyerCatAspects}
{\sc Meyer, R.}
\newblock {C}ategorical aspects of bivariant ${K}$-theory.
\newblock {\em ${K}$-theory and {N}oncommutative {G}eometry, {E}{M}{S} {S}er.
  {C}ongr. {R}ep\/} (2008), 1--39.

\bibitem{MeyerNestHomological2}
{\sc Meyer, R.}
\newblock Homological algebra in bivariant {K}-theory and other triangulated
  categories {I}{I}.
\newblock {\em Tbilisi Mathematical Journal 1\/} (2008), 165--210.

\bibitem{MeyerNest}
{\sc Meyer, R., and Nest, R.}
\newblock The {B}aum--{C}onnes conjecture via localisation of categories.
\newblock {\em Topology 45}, 2 (2006), 209--259.

\bibitem{MeyerNestTorsion}
{\sc Meyer, R., and Nest, R.}
\newblock An analogue of the {B}aum-{C}onnes isomorphism for coactions of
  compact groups.
\newblock {\em Mathematica Scandinavica 100}, 2 (2007), 301--316.

\bibitem{Sergey}
{\sc Neshveyev, S., and Tuset, L.}
\newblock {\em Compact quantum groups and their representation categories}.
\newblock Soci{\'e}t{\'e} math{\'e}matique de France, 2013.

\bibitem{SergeyDeformation2}
{\sc Neshveyev, S., and Tuset, L.}
\newblock {D}eformation of ${C}^*$-algebras by cocycles on locally compact
  quantum groups.
\newblock {\em {A}dvances in {M}athematics 254\/} (2014), 454--496.

\bibitem{NikshychVainerman}
{\sc Nikshych, D., and Vainerman, L.}
\newblock {F}inite quantum groupoids and their applications.
\newblock In {\em New directions in {H}opf algebras}, vol.~43 of {\em Math.
  Sci. Res. Inst. Publ.} Cambridge Univ. Press, Cambridge, 2002, pp.~211--262.

\bibitem{PackerTwistSurvey}
{\sc Packer, J.}
\newblock {T}ransformation group ${C}^*$-algebras: a selective survey.
\newblock {\em {C}ontemporary {M}athematics 167\/} (1994), 183--183.

\bibitem{PackerRaeburnTrick1}
{\sc Packer, J., and Raeburn, I.}
\newblock {T}wisted crossed products of ${C}^*$-algebras.
\newblock {\em {M}ath. {P}roc. {C}amb. {P}hil. {S}oc. 106\/} (1989), 293--311.

\bibitem{PodlesQuantumSpaces}
{\sc Podle\'{s}, P.}
\newblock Symmetries of {Q}uantum {S}paces. {S}ubgroups and {Q}uotient {S}paces
  of {Q}uantum ${S}{U}(2)$ and ${S}{O}(3)$ {G}roups.
\newblock {\em {C}ommun. {M}ath. {P}hys. 170\/} (1995), 1--20.

\bibitem{QuiggTwistDuality}
{\sc Quigg, J.~C.}
\newblock {D}uality for {R}educed {T}wisted {C}rossed {P}roducts of
  ${C}^*$-algebras.
\newblock {\em {I}ndiana {U}niversity {M}athematics {J}ournal 35}, 3 (1986),
  549--571.

\bibitem{SoltanKac}
{\sc So\textrm{\l}tan, P.~M.}
\newblock Quantum {B}ohr compactification.
\newblock {\em Illinois J. Math. 49}, 4 (2005), 1245--1270.

\bibitem{SoltanSO3}
{\sc So\textrm{\l}tan, P.~M.}
\newblock Quantum ${S}{O}(3)$ groups and quantum group actions on ${M}_2$.
\newblock {\em J. {N}oncommut. {G}eom. 4}, 1 (2010), 1--28.

\bibitem{Takesaki1}
{\sc Takesaki, M.}
\newblock {\em {T}heory of operator algebras {I}}, vol.~124.
\newblock {E}ncyclopaedia {M}ath. {S}ci. {S}pringer-{V}erlag, 2002.

\bibitem{Takesaki2}
{\sc Takesaki, M.}
\newblock {\em {T}heory of operator algebras {I}{I}}, vol.~125.
\newblock {E}ncyclopaedia {M}ath. {S}ci. {S}pringer-{V}erlag, 2003.

\bibitem{Timmermann}
{\sc Timmermann, T.}
\newblock {\em An invitation to quantum groups and duality: {F}rom {H}opf
  algebras to multiplicative unitaries and beyond}.
\newblock European Mathematical Society, 2008.

\bibitem{VaesUnitImpl}
{\sc Vaes, S.}
\newblock {T}he unitary implementation of a locally compact quantum group
  action.
\newblock {\em Journal of Functional Analysis 180}, 2 (2001), 426--480.

\bibitem{VaesBiproduit}
{\sc Vaes, S., and Vainerman, L.}
\newblock Extensions of locally compact quantum groups and the bicrossed
  product construction.
\newblock {\em Advances in Mathematics 175\/} (2003), 1--101.

\bibitem{VergniouxThesis}
{\sc Vergnioux, R.}
\newblock {\em {K}{K}-théorie équivariante et opérateur de {J}ulg-{V}alette
  pour les groupes quantiques}.
\newblock PhD thesis, Université Paris VII, 2002.

\bibitem{VoigtBaumConnesUnitaryFree}
{\sc Vergnioux, R., and Voigt, C.}
\newblock The ${K}$-theory of free quantum groups.
\newblock {\em Mathematische {A}nnalen 357}, 1 (2013), 355--400.

\bibitem{VoigtBaumConnesFree}
{\sc Voigt, C.}
\newblock The {B}aum--{C}onnes conjecture for free orthogonal quantum groups.
\newblock {\em Advances in Mathematics 227}, 5 (2011), 1873--1913.

\bibitem{VoigtBaumConnesAutomorphisms}
{\sc Voigt, C.}
\newblock On the structure of quantum automorphism groups.
\newblock {\em Journal f{\"u}r die reine und angewandte {M}athematik ({C}relles
  {J}ournal)\/} (2015).

\bibitem{WangSimple}
{\sc Wang, S.}
\newblock Simple compact quantum groups {I}.
\newblock {\em {J}. {F}unct. {A}nal. 256}, 10 (2009), 3313--3341.

\bibitem{Woronowicz}
{\sc Woronowicz, S.~L.}
\newblock Compact quantum groups.
\newblock {\em Symétries quantiques\/} (1998), 845--884.

\bibitem{MakotoDeformation}
{\sc Yamashita, M.}
\newblock {D}eformation of algebras associated to group cocycles.
\newblock {\em arXiv\/} (2011).

\end{thebibliography}

 \end{document}